\pdfoutput=1 
\documentclass{amsart}
\usepackage[svgnames]{xcolor}
\usepackage[unicode,colorlinks=true,linktocpage=true,citecolor=ForestGreen,linkcolor=MediumOrchid]{hyperref}
\usepackage{tikz-cd}
\usepackage{amssymb}
\usepackage{mathtools}
\usepackage{microtype}
\usepackage[capitalise,noabbrev]{cleveref}
\usepackage{mathrsfs}
\usepackage{enumitem}
\usepackage{pinlabel}

\DeclareRobustCommand{\SkipTocEntry}[5]{}

\newtheorem{theorem}{Theorem}[section]
\newtheorem{thmx}{Theorem}

\newtheorem{proposition}[theorem]{Proposition}
\newtheorem{corollary}[theorem]{Corollary}
\newtheorem{lemma}[theorem]{Lemma}
\theoremstyle{definition}
\newtheorem{definition}[theorem]{Definition}
\newtheorem{example}[theorem]{Example}
\newtheorem{remark}[theorem]{Remark}
\newtheorem*{spremark}{Remark}

\newtheorem{notation}[theorem]{Notation}
\newtheorem{construction}[theorem]{Construction}
\newtheorem{convention}[theorem]{Convention}

\newcommand{\rat}{\rightarrowtail}

\DeclareFontFamily{U}{min}{}
\DeclareFontShape{U}{min}{m}{n}{<-> udmj30}{}
\newcommand{\yon}{\!\text{\usefont{U}{min}{m}{n}\symbol{'210}}\!}

\newcommand{\actrm}{\mathrm{act}}
\newcommand{\intrm}{\mathrm{int}}
\newcommand{\elrm}{\mathrm{el}}
\newcommand{\oprm}{\mathrm{op}}

\newcommand{\finset}{\mathbf{FinSet}}
\newcommand{\finsetskel}{\mathbf{F}}
\newcommand{\properad}{\mathbf{Ppd}}
\newcommand{\set}{\mathbf{Set}}
\newcommand{\sset}{\mathbf{sSet}}
\newcommand{\cat}{\mathbf{Cat}}
\newcommand{\perm}{\mathbf{Perm}}
\newcommand{\gpd}{\mathbf{Gpd}}
\newcommand{\pgcat}{\mathbf{G}}
\newcommand{\csp}{\mathbf{Csp}}
\newcommand{\slcc}{\mathbf{SLCC}}
\newcommand{\lcc}{\mathbf{LCC}}
\DeclareMathOperator{\id}{id}
\DeclareMathOperator{\ob}{ob}
\DeclareMathOperator{\mor}{mor}
\DeclareMathOperator{\dom}{dom}
\DeclareMathOperator{\cod}{cod}
\DeclareMathOperator*{\colim}{colim}

\newcommand{\finsetstarop}{\finset_\ast^\oprm}

\newcommand{\name}[1]{\ulcorner #1\urcorner}

\DeclareMathOperator{\map}{Map}

\DeclareMathAlphabet\EuScript{U}{eus}{m}{n}
\SetMathAlphabet\EuScript{bold}{U}{eus}{b}{n}
\newcommand{\igpd}{\EuScript{S}}

\newcommand{\LL}{\mathbf{L}}
\newcommand{\simpcat}{\mathbf{\Delta}}
\newcommand{\LLc}{\mathbf{L}_\mathrm{c}}

\newcommand{\kong}{\mathbf{K}}
\newcommand{\WW}{\mathbf{W}}
\newcommand{\XX}{\mathbf{X}}

\DeclareMathOperator{\pre}{Psh}
\DeclareMathOperator{\seg}{Seg}
\DeclareMathOperator{\segcore}{Sc}

\DeclareMathOperator{\fun}{Fun}

\newcommand{\homcon}{\hom^{\mathrm{c}}}
\newcommand{\homred}{\hom^{\mathrm{r}}}
\newcommand{\morred}{\mor^{\mathrm{r}}}
\newcommand{\morcon}{\mor^{\mathrm{c}}}

\newcommand{\uk}{\underline{k}}
\newcommand{\uj}{\underline{j}}
\newcommand{\um}{\underline{m}}
\newcommand{\un}{\underline{n}}

\newcommand{\twplus}[1]{\underset{#1}{+}}
\newcommand{\twten}{\mathrel{\tilde\otimes}}
\newcommand{\ten}{\otimes}


\title{Labelled cospan categories and properads}

\author{Jonathan Beardsley}
\address{Department of Mathematics and Statistics, University of Nevada, Reno, USA}
\email{jbeardsley@unr.edu}
\urladdr{https://www.jonathanbeardsley.com/}
\author{Philip Hackney}
\address{Department of Mathematics, University of Louisiana at Lafayette, USA}
\email{philip@phck.net} 
\urladdr{http://phck.net}
\thanks{This work was supported by grants from the Simons Foundation (\#853272, JB; \#850849, PH). 
This material is partially based upon work supported by the National Science Foundation under Grant No. DMS-1928930 while the second author participated in a program supported by the Mathematical Sciences Research Institute. 
The program was held in the summer of 2022 in partnership with the Universidad Nacional Autónoma de México}
\date{April 27, 2023}

\subjclass[2020]
{18M85, 
18B10, 
18F20, 
18M60, 
55P48, 
55U10, 
05C20} 

\keywords{cospan, properad, labelled cospan category, Segal condition}

\begin{document}

\begin{abstract}
We prove Steinebrunner's conjecture on the biequivalence between (colored) properads and labelled cospan categories.
The main part of the work is to establish a 1-categorical, strict version of the conjecture, showing that the category of properads is equivalent to a category of strict labelled cospan categories via the symmetric monoidal envelope functor.
\end{abstract}

\maketitle

\tableofcontents

One way to encode the data of a cobordism is as a cospan $M_1\to P \leftarrow M_2$, where $P$ is a manifold and $M_1$ and $M_2$ are its ``left'' and ``right'' boundaries, respectively.
By taking connected components, each such cospan gives a cospan of finite sets.
This functor from cobordisms to cospans of finite sets precisely relates decompositions of objects and morphisms between the two categories.
An abstract version of this appears under the name \emph{labelled cospan category} in \cite{Steinebrunner}.
Therein, Steinebrunner develops the general theory of labelled cospan categories and uses it to show, among other things, that the classifying space of the category of 2-dimensional cobordisms is rationally equivalent to $S^1$. 

\begin{figure}
\labellist
\small\hair 2pt
 \pinlabel {$a$} [B] at 11 203
 \pinlabel {$b$} [B] at 45 203
 \pinlabel {$c$} [B] at 127 203
 \pinlabel {$d$} [B] at 158 203
 \pinlabel {$f$} [B] at 225 203
 \pinlabel {$g$} [B] at 271 203
 \pinlabel {$h$} [B] at 317 203
 \pinlabel {$f$} [B] at 494 237
 \pinlabel {$g$} [B] at 541 237
 \pinlabel {$h$} [B] at 587 237
 \pinlabel {$c$} [B] at 651 237
 \pinlabel {$f$} [B] at 747 203
 \pinlabel {$g$} [B] at 794 203
 \pinlabel {$h$} [B] at 839 203
 \pinlabel {$c$} [B] at 866 203
 \pinlabel {$e$} [B] at 87 28
 \pinlabel {$i$} [B] at 153 28
 \pinlabel {$j$} [B] at 208 28
 \pinlabel {$a$} [B] at 273 28
 \pinlabel {$b$} [B] at 312 28
 \pinlabel {$d$} [B] at 341 28
 \pinlabel {$i$} at 426 0
 \pinlabel {$j$} at 480 0
 \pinlabel {$e$} at 588 0
 \pinlabel {$i$} [B] at 676 28
 \pinlabel {$j$} [B] at 733 28
 \pinlabel {$e$} [B] at 796 28
 \pinlabel {$a$} [ ] at 530 115
 \pinlabel {$b$} [ ] at 555 118
 \pinlabel {$d$} [ ] at 608 118
 \pinlabel {$\rightsquigarrow$} [ ] at 394 120
 \pinlabel {$\rightsquigarrow$} [ ] at 679 120
\endlabellist
\centering
\includegraphics[width=\textwidth]{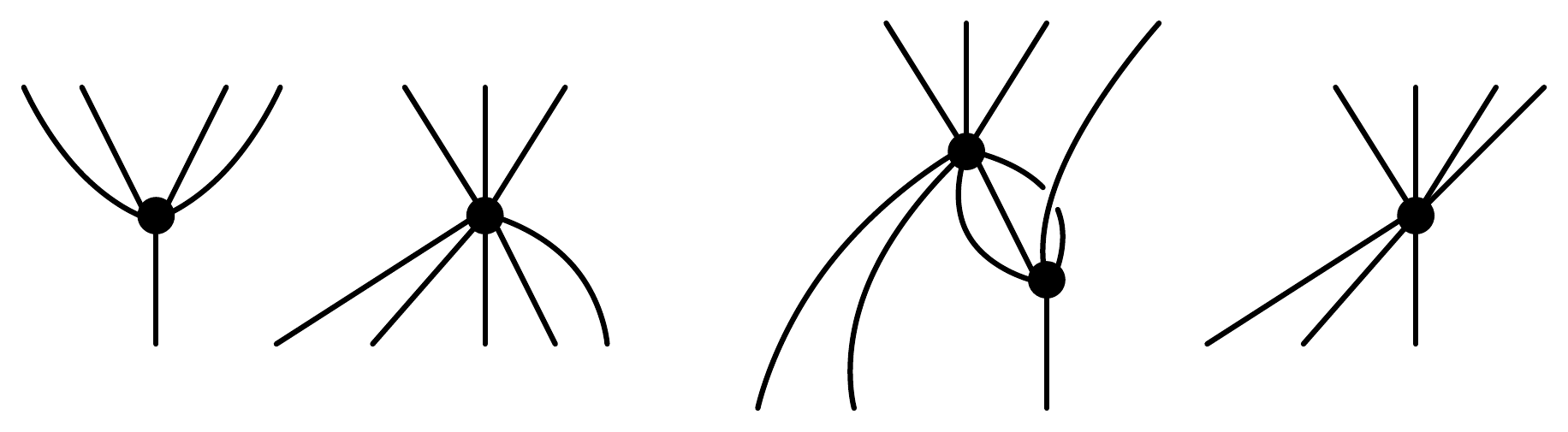}
\caption{Properadic composition of type $124;345$}
\label{fig:properadic composition}
\end{figure}

The cobordism category is a prop in the sense of Adams and Mac Lane. 
It is in fact the free prop on a \textit{properad} of connected cobordisms. 
Properads, introduced in \cite{Vallette:KDP} and independently under the name compact symmetric polycategory in \cite{Duncan:TQC}, are like props but without horizontal composition.
Operations may have multiple inputs and outputs, and can be composed by attaching some (nonzero number) of inputs of one to the outputs of another.
We do not give the precise definition (for that, see \cite[Chapter 3]{HRYbook} or \cite[11.7]{YauJohnson:FPAM}), but rather an illustration in \cref{fig:properadic composition} where the letters represent colors in the properad. 
In the example of cobordisms, the connected cobordisms form a properad and then become a prop when we allow disjoint unions.
Steinebrunner conjectured that a similar connection between properads and labelled cospan categories holds in generality.
Our main result is the following:

\begin{thmx}\label{theorem A}
The 2-category of properads is biequivalent to the 2-category of labelled cospan categories.
\end{thmx}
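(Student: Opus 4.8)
The plan is to prove the biequivalence by reducing it to a strict, $1$-categorical statement, exactly as the abstract advertises, and then bootstrapping. First I would set up the \emph{symmetric monoidal envelope} $\mathbb{E}\colon\properad\to\slcc$, sending a properad $P$ to the prop it freely generates—i.e. the symmetric monoidal category obtained from $P$ by adjoining horizontal (disjoint-union) composition—together with its canonical structure functor to the category $\csp$ of cospans of finite sets, which records for each operation the cospan of its input and output components. The target is the category $\slcc$ of \emph{strict} labelled cospan categories, where the structure functor and the monoidal coherences hold on the nose. Because the $2$-cells on both sides are detected by the structure functor to $\csp$, and because the inclusion $\slcc\hookrightarrow\lcc$ will be a biequivalence by a coherence/strictification argument, it suffices to establish the strict $1$-categorical equivalence $\properad\simeq\slcc$ and then run a short local, hom-category argument.

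Next I would construct the inverse $\Phi\colon\slcc\to\properad$ by passing to the \emph{connected part}. Given a strict labelled cospan category $(C,p\colon C\to\csp)$, let $\Phi(C)$ be the properad whose operations are the morphisms of $C$ lying over the connected cospans $A\to\ast\leftarrow B$, with input and output profiles read off from $A$ and $B$, with the symmetric-group actions coming from relabelling, and with properadic composition the restriction of composition in $C$ to those connected composites whose apex remains a point. The biequivariant structure and the associativity and unit identities follow from functoriality of $p$ together with the corresponding identities in $\csp$. The composite $\Phi\mathbb{E}\cong\id_{\properad}$ is the easy direction: since the envelope adjoins only horizontal composition, a morphism of $\mathbb{E}(P)$ has a connected apex precisely when it is a connected composite of generators, and such a composite is again an operation of $P$ with $P$'s own composition.

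The crux is the other composite $\mathbb{E}\Phi\cong\id_{\slcc}$, and I expect this to be the main obstacle. Here I would lean on the Segal/decomposition condition built into the definition of a labelled cospan category: every object of $C$ is canonically the tensor of its components (objects over a one-point set), and every morphism factors, uniquely up to the permitted coherence, as a tensor of connected morphisms composed along the apex. The content is that the comparison functor $\mathbb{E}(\Phi(C))\to C$—sending a formal horizontal-then-vertical composite of connected morphisms to its realization in $C$—is an isomorphism of strict labelled cospan categories: surjectivity on objects and morphisms is the existence part of the Segal decomposition, while injectivity, i.e. the absence of any relations beyond those generated, is its uniqueness part. The hard part will be checking that this decomposition is natural and respects both composition laws and the monoidal structure strictly; this is where the strictness hypothesis does real work, letting me discard associativity and unit isomorphisms and reduce everything to a combinatorial statement about cospans of finite sets and their connected constituents.

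Finally I would upgrade to the biequivalence $\properad\simeq\lcc$. The envelope is a $2$-functor, so by the standard criterion it is enough to verify essential surjectivity up to equivalence together with local equivalence on hom-categories. Essential surjectivity follows from a coherence theorem identifying every labelled cospan category with a strict one, which then lies in the essential image of $\mathbb{E}$ by the $1$-categorical equivalence. For the local statement, the strict equivalence already supplies a bijection between properad maps $P\to Q$ and strict morphisms $\mathbb{E}P\to\mathbb{E}Q$; it remains to match the $2$-cells—transformations of properad maps against monoidal natural transformations over $\csp$—and to see that every weak morphism of labelled cospan categories is $2$-isomorphic to a strict one. Both comparisons are again controlled by the connected part, so they reduce to the strict statement together with the coherence identifying weak and strict morphisms.
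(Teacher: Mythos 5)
Your overall architecture coincides with the paper's: establish a strict $1$-categorical equivalence $\properad\simeq\slcc$ via the envelope and a ``connected part'' inverse, show the inclusion $\slcc\hookrightarrow\lcc$ is a biequivalence by strictification plus a local hom-category argument, and compose. The genuine difference is in how the middle step is implemented. The paper never manipulates properads directly: it first replaces $\properad$ by Segal presheaves on the category $\LL$ of level graphs (\cref{prop properads as presheaves}), builds the envelope as a left Kan extension along a fibration of congruence classes of graphs, and---crucially---constructs the inverse $\slcc\to\seg(\LL)$ by extending an $\LL_{\intrm}$-presheaf along active maps, where the properad axioms become simplicial identities verified by coskeletality arguments (\cref{def inner face,def degeneracies,prop face relation,simpop mixed}). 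You instead propose to define $\Phi(C)$ directly as a properad whose operations are the connected morphisms, with composition restricted from $C$. That route is viable in principle, but the sentence ``the biequivariant structure and the associativity and unit identities follow from functoriality of $p$ together with the corresponding identities in $\csp$'' is where essentially all of the paper's Section~5 lives: realizing a single properadic composition inside $C$ already requires tensoring with identities and conjugating by symmetry isomorphisms, and checking that the resulting operation is independent of those choices, lands in connected morphisms, and satisfies the full list of properad axioms is the hard part, not a corollary of functoriality. The level-graph model buys a uniform bookkeeping device for exactly these verifications; your direct approach buys conceptual transparency at the cost of having to reprove that bookkeeping by hand. Two smaller remarks: the paper establishes the $1$-equivalence from faithfulness of the envelope together with $\mathbb{E}\Phi\cong\id$ alone (\cref{prop C is faithful,equivalence of presheaves and slcc}), rather than checking both composites as you propose; and your appeal to ``a coherence theorem'' for essential surjectivity of $\slcc\to\lcc$ needs slightly more than the standard strictification---one must also arrange that the object monoid is \emph{free} on the connected objects and strictify the structure functor to $\csp$ itself, which is the content of \cref{lem strictify lcc,lem slcc lcc surj equiv}.
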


This appears as \cref{main theorem as corollary} below, and establishes the first part of Conjecture~2.31 of \cite{Steinebrunner} 
(we do not address the second part of the conjecture which concerns $\infty$-properads).
The underlying functor of 1-categories from properads to labelled cospan categories is not an equivalence, so 2-categorical structure is essential.
It is also the case (see \cref{prop uniqueness of lcc structure}) that if a symmetric monoidal category admits the structure of a labelled cospan category, then this structure is unique up to equivalence.
Thus another interpretation of \cref{theorem A} is that it provides a faithful inclusion of properads into symmetric monoidal categories and identifies its image.

The main effort of this paper consists of a careful analysis of the symmetric monoidal envelope, which takes properads to symmetric monoidal categories; this functor extends the classical envelope of an operad \cite[5.4.1]{LodayVallette:AO}, which goes back to Boardman--Vogt.
The envelope of the terminal properad is the category of cospans of finite sets, and applying the envelope to the unique map from a properad to the terminal properad yields a labelled cospan category.
This construction actually produces a more rigid kind of object, which we have called a \textit{strict labelled cospan category}.
In the last section we establish a biequivalence (\cref{theorem slcc lcc biequiv}) between labelled cospan categories and strict labelled cospan categories, which we combine with the following to obtain the main result.

\begin{thmx}[\cref{cor main theorem,thm properad vs slcc 2-cat}]
There is a strict 2-equivalence between the 2-category of properads and the 2-category of strict labelled cospan categories.
\end{thmx}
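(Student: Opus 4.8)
The plan is to build the strict $2$-equivalence on top of the $1$-categorical equivalence furnished by the symmetric monoidal envelope, promoting it one categorical level at a time. Write $E \colon \properad \to \slcc$ for the envelope functor, sending a properad $P$ to its envelope equipped with the projection $E(P) \to \csp$ induced by the terminal map $P \to \ast$. The substantive input is \cref{cor main theorem}, which I would invoke to know that $E$ is an equivalence of underlying $1$-categories: essentially surjective on objects and fully faithful on $1$-cells. What remains is entirely $2$-categorical, namely to check that $E$ respects the $2$-cells on both sides and does so tightly enough to yield a strict, rather than merely weak, equivalence.

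First I would verify that $E$ is a strict $2$-functor. A $2$-cell between properad maps $f, g \colon P \to Q$ is a natural family of $1$-ary operations interpolating the two color assignments (subject to the naturality and equivariance constraints), and the envelope carries such a family to a monoidal natural transformation between the induced strict symmetric monoidal functors over $\csp$. Strict functoriality — compatibility with identities and with both horizontal and vertical composition of $2$-cells — is then a direct computation, using that the envelope is strictly monoidal and that composition in the envelope is computed componentwise on connected pieces.

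Next, appealing to \cref{thm properad vs slcc 2-cat}, I would show that for each pair of properads $P, Q$ the induced functor on hom-categories
\[
E_{P,Q} \colon \properad(P,Q) \longrightarrow \slcc(E(P), E(Q))
\]
is an equivalence — indeed an isomorphism — of categories. Faithfulness and fullness on the objects of these hom-categories is exactly the full faithfulness of $E$ on $1$-cells from \cref{cor main theorem}. For the $2$-cells one exploits the Segal/connectedness structure of a strict labelled cospan category to recognize every monoidal natural transformation over $\csp$ as arising, uniquely, from a natural family of connected $1$-ary operations: restricting such a transformation to the corolla-shaped (active, connected) morphisms recovers a properadic natural transformation, and the Segal condition guarantees both that this restriction loses no information and that the reconstruction is well defined. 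Combining essential surjectivity on objects with these local equivalences, the standard recognition criterion identifies $E$ as a $2$-equivalence; strictness follows because the inverse extracted from a strict labelled cospan category is again a strict $2$-functor, and the two round-trip composites are connected to the identities by $2$-natural isomorphisms, each comparison being forced on generators and hence everywhere.

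The main obstacle I anticipate is the local fullness and essential surjectivity on $2$-cells: showing that an arbitrary $2$-cell of $\slcc$ between envelopes is pinned down by, and can be rebuilt from, its values on connected generating morphisms. This is precisely where the rigidity of the \emph{strict} notion is essential — it is the failure of this reconstruction for general (non-strict) labelled cospan categories that forces the weaker conclusion of Theorem~A — and keeping track of the coherence data carefully enough to obtain genuine $2$-naturality, rather than mere pseudonaturality, of the comparison isomorphisms is the delicate point.
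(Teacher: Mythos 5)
Your proposal is correct and follows essentially the same route as the paper: the statement is obtained by combining the 1-categorical equivalence of \cref{cor main theorem} with the observation that 2-cells on both sides are determined by, and reconstructed from, their components on connected objects checked only against connected morphisms, which is exactly the content of \cref{proposition reduction} feeding into \cref{thm properad vs slcc 2-cat}. The only cosmetic difference is that the paper carries out the reduction to connected generators using the labelled cospan axioms (free generation of $\hom(\mathbf{1},\mathbf{1})$, the reduced-tensor-free splitting, and the pullback condition) rather than invoking the Segal condition directly.
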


In particular, this yields an equivalence of categories between the underlying 1-categories.
Though our focus in this introduction has been on 2-categories, it is this 1-categorical equivalence (\cref{cor main theorem}) that is the core of the result.

A key ingredient in this paper is an efficient description of the symmetric monoidal envelope of a properad $P$.
We use an alternative description of properads as Segal presheaves on a category of level graphs $\LL$.
That is, we use an equivalence of categories $\properad \simeq \seg(\LL)$; which appears below as \cref{prop properads as presheaves} (the hard work of establishing this equivalence was already done in \cite{HRYbook} and \cite{ChuHackney}).
The upshot is that a Segal $\LL$-presheaf looks much more like a symmetric monoidal category than a properad does (in particular, it is important that the category $\LL$ contains disconnected graphs, as these account for the monoidal structure).
To form the envelope is then a relatively straightforward quotienting process, though managing symmetry isomorphisms still requires some care.

It is much more involved to go back in the other direction.
Namely, given a strict labelled cospan category $C$, we would like to produce a Segal $\LL$-presheaf that it comes from.
There is an active-inert factorization system on $\LL$, and there is essentially only one possible choice of an $\LL_{\intrm}$-presheaf that could be the restriction of our desired $\LL$-presheaf.
We must then extend the $\LL_{\intrm}$-presheaf by defining its action on active maps.
Here we make use of additional simplicial structure: by using heights of level graphs, $\LL$ is fibered over the simplicial category $\simpcat$.
The main thing that is missing are the inner face and degeneracy operators arising from this additional simplicial structure, and coskeletality arguments allow us to extend from low simplicial degree (using the composition and identities in $C$) to arbitrary simplicial degree.

\begin{spremark}[Related work]
As mentioned above, we can describe a properad by giving its envelope, along with map to the envelope of the terminal properad. 
The same thing can be done for operads, though we are not aware of any classical literature which pursues this idea. 
However, this is precisely the approach that is taken by recent work of Haugseng--Kock \cite{HaugsengKock:IOSMIC} for $\infty$-operads (see also \cite{BarkanHaugsengSteinebrunner:EAP}).
Following the initial public version of this paper, the preprints \cite{KaufmannMonaco:PCPUFC} and \cite{Barkan-Steinebrunner} appeared and offered related theorems to ours. 
The work by Barkan--Steinebrunner on $\infty$-properads is related to the Haugseng--Kock approach for $\infty$-operads.
Some important results of \cite{Barkan-Steinebrunner} are $\infty$-categorical versions our theorems, and by restricting to discrete objects they recover the $(2,1)$-categorical version  of \cref{theorem A} (which is also an immediate consequence of \cref{theorem A}).
Their elegant theory relies on the idea of `equifibered maps of $\mathbb{E}_\infty$-monoids,' and we highly recommend \cite{Barkan-Steinebrunner} for an important alternative perspective.
They also give information concerning the relationship with the `hereditary unique factorization categories' of Kaufmann--Monaco \cite{KaufmannMonaco:PCPUFC}.
\end{spremark}

\addtocontents{toc}{\SkipTocEntry}
\subsection*{Acknowledgements}
We are grateful to Jan Steinebrunner for feedback on an earlier version of this manuscript, as well as a number of interesting suggestions.
We are thankful for several suggestions and comments from anonymous referees, which have helped us improve the presentation and correct several oversights.
We also thank Joe Moeller, Marcy Robertson, and Donald Yau for helpful discussions.

\section{Background}

The category $\finset$ is the category of finite sets and arbitrary functions between them.
Let $\finsetskel \subseteq \finset$ be the full subcategory whose objects are the ordered sets $\underline{k} = \{ 1, 2, \dots, k \}$ for $k\geq 0$.
We will write $\simpcat$ for the (topologist's) simplicial category, whose objects are the ordered sets $[n] = \{ 0 < 1 < \dots < n \}$ for $n\geq 0$ and morphisms preserve the $\leq$ relation.
Note the shift in cardinality if you regard $\simpcat$ as a subcategory of $\finsetskel$.

The category $\csp$ has objects the same as $\finset$, and morphisms from $A$ to $B$ are equivalence classes of cospans $A \to C \leftarrow B$, where two are identified if there is an isomorphism on the middle term:
\[ \begin{tikzcd}[sep=tiny]
& C \ar[dd,"\cong"] \\
A \ar[ur] \ar[dr] & & B \ar[ul] \ar[dl] \\
& C'
\end{tikzcd} \]
Composition of morphisms in $\csp$ is given by pushout
\[ \begin{tikzcd}[sep=tiny]
A \ar[dr] \ar[ddrr, bend right] & & B \ar[dl] \ar[dr] \ar[dd,phantom, "\rotatebox{-45}{$\ulcorner$}" very near end]& & C \ar[dl] \ar[ddll, bend left] \\
& D \ar[dr] & & E \ar[dl]\\
& & \bullet
\end{tikzcd} \]
Choices of coproducts of finite sets yields a monoidal structure on $\csp$ with $\varnothing$ as the monoidal unit.
This is not a cocartesian monoidal category, however.

\subsection{Labelled cospan categories}
This work is concerned with Steinebrunner's notion of labelled cospan categories from \cite{Steinebrunner}, which are certain symmetric monoidal categories living over $\csp$.
We briefly recall some definitions.

\begin{definition}\label{def connected reduced}
Let $\pi \colon C \to \csp$ be a symmetric monoidal functor.
For the moment, it is convenient to write 
\[ \begin{tikzcd}[sep=small]
\pi c \ar[dr,"l f"'] & & \pi d \ar[dl,"r f"] \\
& mf
\end{tikzcd} \]
for $\pi(f \colon c \to d)$.
\begin{itemize}
\item An object $c\in C$ is \emph{connected} if $\pi(c)$ has cardinality one.
\item A morphism $f\colon c \to d$ is \emph{connected} if $m(f)$ has cardinality one.
\item A morphism $f\colon c \to d$ is \emph{reduced} if the cospan $\pi(f)$ is jointly surjective.
\item $\homcon(c,d) \subset \hom(c,d)$ denotes the connected morphisms and $\homred(c,d) \subset \hom(c,d)$ denotes the reduced morphisms.
\end{itemize}
\end{definition}

By examining the diagrams
\[ \begin{tikzcd}[sep=tiny]
\underline{0} \ar[dr] & & \underline{0} \ar[dl] \ar[dr] & &  \underline{0} \ar[dl] \\
& \underline{1} \ar[dr] & & \underline{1} \ar[dl]\\
& & \underline{2}
\end{tikzcd} \qquad \& \qquad
\begin{tikzcd}[sep=tiny]
\underline{0} \ar[dr] & & \underline{1} \ar[dl] \ar[dr] & &  \underline{0} \ar[dl] \\
& \underline{1} \ar[dr] & & \underline{1} \ar[dl]\\
& & \underline{1}
\end{tikzcd} 
\]
whose diamonds are pushouts, we see that the sets of connected morphisms and reduced morphisms are not closed under composition.

If $A \rightarrow X \leftarrow B$ represents an isomorphism in $\csp$, then both legs of the cospan are bijections.
Thus every isomorphism in $C$ is reduced, and every isomorphism involving a connected object is connected.

\begin{definition}[Steinebrunner]\label{def lcc}
A \emph{labelled cospan category} is a symmetric monoidal functor $\pi \colon C \to \csp$ satisfying the following:
\begin{enumerate}
\item If $\pi(c)$ has cardinality $n$, then we can find $n$ objects $c_1, \dots, c_n$ which are connected so that $c$ is isomorphic to $c_1 \otimes \dots \otimes c_n$.
\label{lcc: obj decomp}
\item If $\mathbf{1}$ is the tensor unit of $C$, then the abelian monoid $\hom(\mathbf{1},\mathbf{1})$ is freely generated by the set $\homcon(\mathbf{1},\mathbf{1})$ of connected morphisms.
\label{lcc: free gen}
\item The map
\[ \begin{tikzcd}
  \homred(c,d) \times \hom(\mathbf{1},\mathbf{1}) \rar{\otimes} & \hom(c,d)
\end{tikzcd} \]
is a bijection.
\label{lcc: reduced and free}
\item For each four objects $c,d,c',d'$ in $C$, the following square is cartesian.
\[ \begin{tikzcd}
\homred(c,d) \times \homred(c',d') \rar{\otimes} \dar{\pi} \arrow[dr, phantom, "\lrcorner" very near start] & \homred(c\otimes c', d\otimes d') \dar{\pi} \\
\homred_\csp(\pi c,\pi d) \times \homred_\csp(\pi c',\pi d') \rar{\ten} & \homred_\csp(\pi c \ten \pi c',\pi d \ten \pi d') 
\end{tikzcd} \]
\label{lcc: pullback}
\end{enumerate}
A map of labelled cospan categories is a symmetric monoidal functor and a choice of monoidal natural isomorphism making the triangle over $\csp$ commute.
\[ \begin{tikzcd}[row sep=small, column sep=tiny]
C \ar[rr] \ar[ddr] & & C' \ar[ddl] \\
\ar[rr,"\text{\scriptsize $\cong$}", phantom] & & {} \\
& \csp & 
\end{tikzcd} \]
\end{definition}

\begin{remark}\label{rmk unique ni}
For a fixed symmetric monoidal functor $f \colon C \to C'$, there is at most one monoidal natural isomorphism as displayed above; in other words, the forgetful functor from labelled cospan categories to symmetric monoidal categories is faithful.
Uniqueness of the monoidal natural isomorphism follows from \cref{def lcc}\eqref{lcc: obj decomp} and the fact that there are unique isomorphisms in $\csp$ between objects of cardinality zero or one.
Given two such natural isomorphisms, the following diagram of isomorphisms commutes for each of them, where $c \cong c_1 \otimes \dots \otimes c_n$ is the decomposition into connected objects guaranteed by \eqref{lcc: obj decomp}.
\[ \begin{tikzcd}
\pi' f c_1 \otimes \dots \otimes \pi'f c_n \rar \dar{\cong} & \pi c_1 \otimes \dots \otimes \pi c_n  \dar{\cong} \\
\pi' f (c_1 \otimes \dots \otimes c_n) \rar \dar{\cong} & \pi' f (c_1 \otimes \dots \otimes c_n) \dar{\cong}  \\
\pi' f c \rar & \pi c
\end{tikzcd} \]
Since the top map is the same for both natural isomorphisms, so too is the bottom map.
\end{remark}

\begin{remark}[2-categorical structure]\label{rmk: 2-cat struct}
We can regard the collection of labelled cospan categories as a 2-category, where a map between morphisms $(f,\alpha)$ and $(g,\beta)$ having the same source and target
\[ \begin{tikzcd}[row sep=small, column sep=tiny]
C \ar[rr,"f"] \ar[ddr] & & C' \ar[ddl] 
&&[+2em] &
C \ar[rr,"g"] \ar[ddr] & & C' \ar[ddl]
\\
\ar[rr,"\text{\scriptsize $\alpha \cong$}", phantom] & & {} 
&\rar[color=blue] &{} &
\ar[rr,"\text{\scriptsize $\beta \cong$}", phantom] & & {} 
\\
& \csp & 
&&&
& \csp & 
\end{tikzcd} \]
is a monoidal natural transformation $\gamma \colon f \Rightarrow g$ so that the composite natural transformation
\[ \begin{tikzcd}
C \ar[rr,"g"', bend right=20] \ar[rr,"f", bend left=20] \ar[rr,"\text{\scriptsize $\gamma \Downarrow$}",phantom] \ar[ddr] & & C' \ar[ddl]
\\
\ar[rr,"\text{\scriptsize $\beta \cong$}", phantom, bend right=10] & & {} 
\\
& \csp & 
\end{tikzcd} \]
is an isomorphism. 
By \cref{rmk unique ni}, we then have this composite is $\alpha$.
Steinebrunner only needs this when $\gamma$ is an isomorphism, that is, he considers labelled cospan categories as forming a $(2,1)$-category.
\end{remark}

We will return to this 2-categorical structure of labelled cospan categories in \cref{section 2-cat structures}.

\subsection{The category of level graphs}\label{subsec level graphs}
In this section we recall the category of level graphs $\LL$ from \cite[\S 2.1]{ChuHackney}. 
As mentioned below \cite[Lemma 2.1.9]{ChuHackney}, this category is closely related to the \emph{double} category of cospans, an enhancement of $\csp$.
In \cref{properad to categories} will see how to rederive $\csp$ (up to equivalence) from $\LL$.

The category $\mathscr{L}^n$ has, as its objects, pairs $(i,j)$ with $0 \leq i \leq j \leq n$, and as morphisms, unique maps $(i,j) \to (k,\ell)$ whenever $0\leq k \leq i \leq j \leq \ell \leq n$.
This presentation is as in \cite[Definition 2.16]{ChuHackney}, though it is also true that $\mathscr{L}^n$ is isomorphic to the twisted arrow category of $[n] = \{ 0 \to 1 \to \dots \to n \}$.
Note that every square in $\mathscr{L}^n$ commutes and is a pushout.
Here is $\mathscr{L}^3$:
\[ \begin{tikzcd}[column sep=tiny, row sep=tiny]
(0,0) \arrow[dr] & & (1,1) \arrow[dr]\arrow[dl] & & (2, 2) \arrow[dr]\arrow[dl] & & (3,3) \arrow[dl] \\
& (0,1) \arrow[dr]& &  (1,2)  \arrow[dr]\arrow[dl] & & (2,3)\arrow[dl] \\
& & (0,2)\arrow[dr] & & (1,3)\arrow[dl] \\
& & & (0,3)
\end{tikzcd} \]

A \emph{level graph of height $n$} is a functor $G\colon \mathscr{L}^n \to \finsetskel$ that sends every square to a pushout.\footnote{This is slightly different from \cite{ChuHackney}, where the target category was all finite sets and the functor only concerned the top layers $\mathscr{L}_0^n \to \finset$. This makes no substantial difference, and we arrive at an equivalent category of level graphs below; compare with \cite[2.1.19]{ChuHackney}.}
In particular, level graphs of height $0$ are just objects of $\finsetskel$, and level graphs of height $1$ are just cospans in $\finsetskel$.
We write $G_{ij}$ or $G_{i,j}$ for the value of $G$ on the object $(i,j)$ and do not label the structural maps $G_{ij} \to G_{k\ell}$.
\cref{figure height 2} gives a pictorial example of a particular height 2 level graph the following shape:
\[ \begin{tikzcd}[sep=tiny]
\underline{6} \ar[dr] & & \underline{6} \ar[dl] \ar[dr] & &  \underline{7} \ar[dl] \\
& \underline{4} \ar[dr] & & \underline{3} \ar[dl]\\
& & \underline{2}
\end{tikzcd} \]

\begin{figure}
\includegraphics[width=0.6\textwidth]{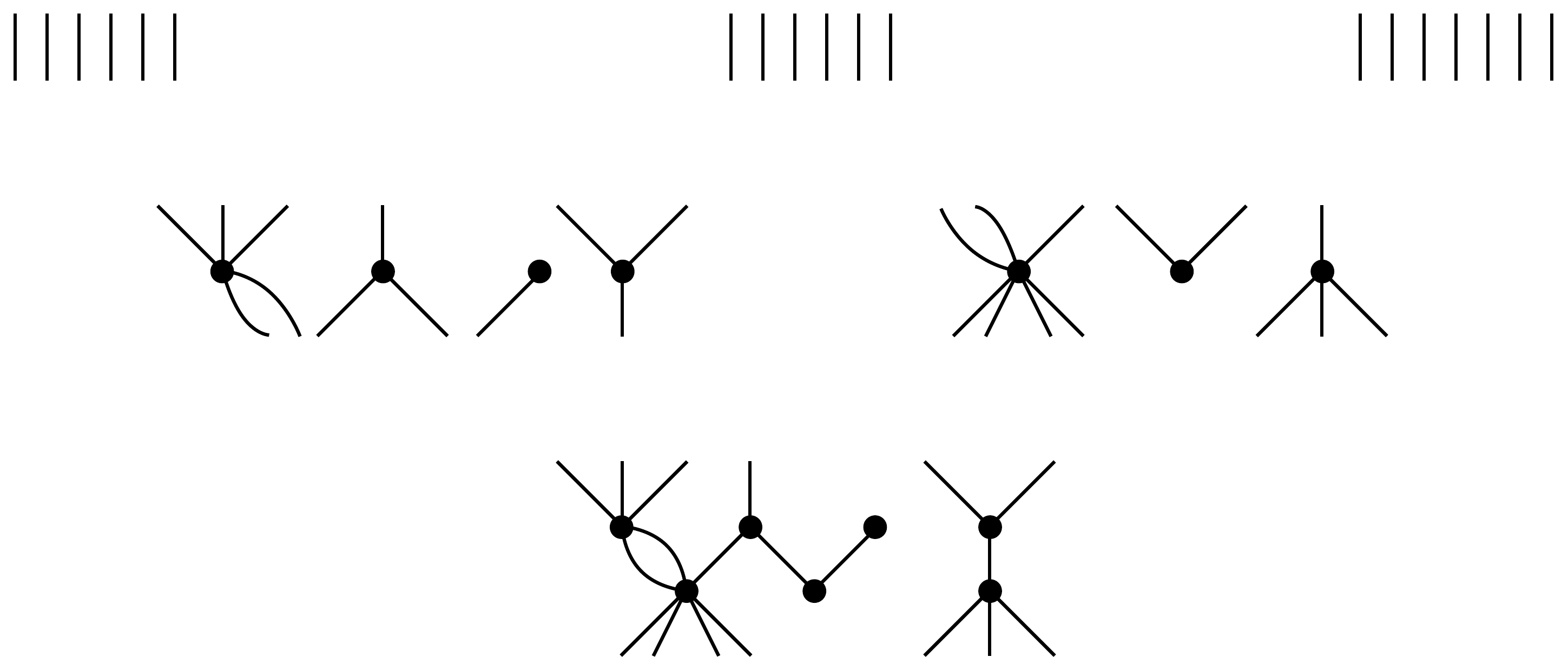}
\caption{A level graph of height 2}\label{figure height 2}
\end{figure}

We consider the subcategory $\LL_n$ of $\fun(\mathscr{L}^n, \finsetskel)$ whose objects are the level graphs and whose morphisms are the natural transformations $G\Rightarrow H$ satisfying the following two properties for each $0\leq i \leq j \leq n$:
\begin{enumerate}
\item The map $G_{ij} \to H_{ij}$ is a monomorphism.
\item The naturality square
\[ \begin{tikzcd}
G_{ij} \rar \dar \arrow[dr, phantom, "\lrcorner" very near start] & H_{ij} \dar  \\
G_{0n} \rar & H_{0n}
\end{tikzcd} \]
is cartesian.
\end{enumerate}
By the pasting law for pullbacks, the second condition is equivalent to the natural transformation being cartesian, as was required in \cite[Definition 2.1.16]{ChuHackney}.
This defines a functor $\LL_\bullet \colon \simpcat^\oprm \to \cat$, and we write $\LL \to \simpcat$ for the associated Grothendieck fibration. (The version of $\LL$ appearing in \cite{ChuHackney} is a skeleton of this one.)

Some remarks on the simplicial category $\LL_{\bullet}$ will make later proofs easier to understand (see \cite{ChuHackney} for complete details).
First we note that, in fact, the categories $\mathscr{L}^n$ assemble into a cosimplicial category.
Given a morphism $\alpha\colon \left[m\right]\to\left[n\right]$ in $\simpcat$ the functor $\alpha^\ast\colon\mathscr{L}^m\to\mathscr{L}^n$ takes the pair $(i,j)$ to the $(\alpha i, \alpha j)$.
For instance all of the induced functors $\mathscr{L}^0\to \mathscr{L}^n$ take the unique object $(0,0)$ to some object $(i,i)$ in $\mathscr{L}^n$ (and never objects of the form $(i,j)$ with $i\neq j$).
One can also consider the three coface morphisms $\mathscr{L}^1\to\mathscr{L}^2$.
These can be visualized with color as follows:

\begin{center}
\vspace{0.5cm}
\begin{tikzcd}[sep=small]
  \textcolor{red}{(0,0)} && \textcolor{red}{(1,1)} &[1cm]&& \textcolor{red}{(0,0)} && \textcolor{red}{(1,1)} && {(2,2)} \\
  & \textcolor{red}{(0,1)} &&&&& \textcolor{red}{(0,1)} && {(1,2)} \\
  &&&&&&& {(0,2)}
  \arrow[color={red}, from=1-1, to=2-2]
  \arrow[color={red}, from=1-3, to=2-2]
  \arrow[color={red}, from=1-6, to=2-7]
  \arrow[color={red}, from=1-8, to=2-7]
  \arrow[from=2-7, to=3-8]
  \arrow[from=1-10, to=2-9]
  \arrow[from=1-8, to=2-9]
  \arrow[from=2-9, to=3-8]
  \arrow[shift right=7, shorten <=15pt, shorten >=15pt, maps to, from=1-3, to=1-6, "d_2"]
\end{tikzcd}

\vspace{0.25cm}

\begin{tikzcd}[sep=small]
  \textcolor{red}{(0,0)} && \textcolor{red}{(1,1)} &[1cm]&& \textcolor{red}{(0,0)} && {(1,1)} && \textcolor{red}{(2,2)} \\
  & \textcolor{red}{(0,1)} &&&&& \textcolor{red}{(0,1)} && \textcolor{red}{(1,2)} \\
  &&&&&&& \textcolor{red}{(0,2)}
  \arrow[color={red}, from=1-1, to=2-2]
  \arrow[color={red}, from=1-3, to=2-2]
  \arrow[color={red}, from=1-6, to=2-7]
  \arrow[from=1-8, to=2-7]
  \arrow[color={red}, from=2-7, to=3-8]
  \arrow[color={red}, from=1-10, to=2-9]
  \arrow[from=1-8, to=2-9]
  \arrow[color={red}, from=2-9, to=3-8]
  \arrow[shift right=7, shorten <=15pt, shorten >=15pt, maps to, from=1-3, to=1-6, "d_1"]
\end{tikzcd}

\vspace{0.25cm}

\begin{tikzcd}[sep=small]
  \textcolor{red}{(0,0)} && \textcolor{red}{(1,1)} &[1cm]&& {(0,0)} && \textcolor{red}{(1,1)} && \textcolor{red}{(2,2)} \\
  & \textcolor{red}{(0,1)} &&&&& {(0,1)} && \textcolor{red}{(1,2)} \\
  &&&&&&& {(0,2)}
  \arrow[color={red}, from=1-1, to=2-2]
  \arrow[color={red}, from=1-3, to=2-2]
  \arrow[from=1-6, to=2-7]
  \arrow[from=1-8, to=2-7]
  \arrow[from=2-7, to=3-8]
  \arrow[color={red}, from=1-10, to=2-9]
  \arrow[color={red}, from=1-8, to=2-9]
  \arrow[from=2-9, to=3-8]
  \arrow[shift right=7, shorten <=15pt, shorten >=15pt, maps to, from=1-3, to=1-6, "d_0"]
\end{tikzcd}

\vspace{0.5cm}

\end{center}

These induce functors $\LL^2\to \LL^1$ that, respectively, truncate level 2 of a height 2 level graph, contract level 1 of a height 2 level graph, and truncate level 0 of a height 2 level graph.
More generally, on a height $n$ level graph $G\colon \mathscr{L}^n\to \finsetskel$, a morphism $\alpha\colon [m]\to [n]$ in $\simpcat$ induces a functor $\LL^n\to \LL^m$ which precomposes with $\mathscr{L}^m\to\mathscr{L}^n$, i.e.~$(\alpha^\ast G)_{ij}=G_{\alpha i,\alpha j}$ for each $(i,j)\in\mathscr{L}^m$.

If $G \to H$ is a map in $\LL_n$, then it is uniquely determined by $\{G_{ii} \to H_{ii}\}_{0\leq i \leq n}$ and $\{ G_{i-1,i} \to H_{i-1,i} \}_{1\leq i \leq n}$, since the functors $G$ and $H$ are pushout-preserving.
This implies that $\LL_n \to \LL_k \times_{\LL_0} \LL_{n-k}$ is fully faithful.
Further, this functor is surjective on objects, hence an equivalence of categories.
It is not injective on objects (unless $k=0$ or $n$).

\subsection{Properads as Segal \texorpdfstring{$\LL$}{L}-presheaves}\label{subsec properads as L presheaves}

If $G$ is a level graph of height $m$ and $H$ is a level graph of height $n$, then an \emph{active map} $G\to H$ is one whose image in $\simpcat$ is active (preserves the top and bottom elements) and where $G_{0m}\to H_{0n}$ is a bijection \cite[Remark 2.1.24]{ChuHackney}.
\emph{Inert maps} $G \rat H$ are precisely those whose image in $\simpcat$ is inert (distance preserving).

\begin{definition}[Elementary objects]\label{def elementary}
The category $\LL$ contains several \emph{elementary objects}.
These consist of the edge $\mathfrak{e} \in \LL_0$, which corresponds to $\underline{1}$ using $\ob(\LL_0) = \ob(\finsetskel)$, and a collection of $m,n$-corollas $\mathfrak{c}_{m,n} \in \LL_1$ for non-negative integers $m$ and $n$.
The graph $\mathfrak{c}_{m,n}$ is the following:
\[ \begin{tikzcd}[sep=tiny]
\underline{m} \ar[dr] & & \underline{n} \ar[dl] \\
& \underline{1}
\end{tikzcd} \]
\end{definition}

\begin{definition}\label{def Lint Lel and Lact}
The above allows to define three subcategories of $\LL$:
\begin{itemize}
    \item Write $\LL_\intrm$ for the wide subcategory of $\LL$ on the inert morphisms.
    \item Write $\LL_\elrm$ for the full subcategory of $\LL_\intrm$ spanned by the elementary graphs of \cref{def elementary}. 
    \item Write $\LL_\actrm$ for the wide subcategory of $\LL$ on the active morphisms. 
\end{itemize}
\end{definition}

\begin{remark}
The three subcategories of \cref{def Lint Lel and Lact} make $\LL^\oprm$ into an algebraic pattern in the sense of \cite[Definition 2.1]{ChuHaugseng}.
\end{remark}

\begin{definition}[Segal objects]\label{def Segal}
Let $\LL^\elrm_{/G}$ denote the category whose objects are inert maps $E\rat G$ with $E$ elementary, and whose morphisms are commutative triangles with all maps inert.
A presheaf $X \in \pre(\LL)$ is said to be \emph{Segal} if
\[
X_G \to \lim_{E\in (\LL^\elrm_{/G})^\oprm} X_E
\]
is a bijection for all $G\in \LL$.
We write $\seg(\LL) \subseteq \pre(\LL)$ for the full subcategory consisting of the Segal presheaves.
\end{definition}

\begin{definition}[Segal core]
Given $G\in \LL$, the \emph{Segal core} is the following colimit in $\pre(\LL)$
\[
\segcore(G) \coloneqq \colim_{E \in \LL^\elrm_{/G}} \yon(E) \to \yon(G)
\]
which comes equipped with a map to the representable object $\yon(G) = \hom_{\LL}(-,G)$.
\end{definition}

A presheaf $X \in \pre(\LL)$ is Segal if and only if it is local with respect to the Segal core inclusions, that is, if and only if 
\[
  \hom(\yon(G), X) \to \hom(\segcore(G), X)
\]
is a bijection for all graphs $G$.

\begin{remark}\label{examples segality}
Suppose $X\in \pre(\LL)$ is Segal.
We list explicit consequences. 
The first two are related to the `simplicial' direction of $\LL$; the second of these will play a role in the proof of \cref{prop SX segal}.
The third of these will be used repeatedly, and is about the behavior in the fibers.
\begin{itemize}[leftmargin=\parindent]

\item If $G\in \LL_n$ and $0 < k < n$, let $\alpha \colon [k] \rat [n]$ be the inert inclusion into the first part of the interval ($\alpha(t) = t$), $\beta \colon [n-k] \rat [n]$ the inert inclusion into the last part of the interval ($\beta(t) = t + k$), and $k\colon [0] \rat [n]$ pick out $k$.
Then \[
  X_G \to X_{\alpha^*(G)} \times_{X_{k^*(G)}} X_{\beta^*(G)}
\]
is an isomorphism.
At the level of presheaves, we have a commutative square
\[ \begin{tikzcd}
\segcore(\alpha^*(G)) \amalg_{\segcore(k^*(G))} \segcore(\beta^*(G)) \rar \dar{\cong} & \yon(\alpha^*(G)) \amalg_{\yon(k^*(G))} \yon(\beta^*(G))\dar \\
\segcore(G) \rar & \yon(G)
\end{tikzcd} \]
whose left edge is an isomorphism.
The claimed statement follows by applying $\hom(-,X)$ (which transforms pushouts to pullbacks) and using 2-of-3 for isomorphisms.

\item 
We can iterate the previous observation.
Suppose $G\in \LL$ is a height $n$ level graph and for $1\leq i \leq n$ that $\rho_i \colon [1] \rat [n]$ in $\simpcat$ is the inert map picking out $i-1,i$, and for $0\leq i \leq n$ the map $\kappa_i \colon [0] \rat [n]$ picks out $i$.
If $\mathfrak{c}_{p,q} \rat G$ is an inert map in $\LL$, then it factors uniquely through $\rho_i^*G$ (which has height 1) for some $i$.
Likewise, any $\mathfrak{e} \rat G$ factors uniquely through some $\kappa_i^*G$ (which has height 0). 
Further,
\[
  X_G \to X_{\rho_1^*G} \times_{X_{\kappa_1^*G}} X_{\rho_2^*G} \times_{X_{\kappa_2^*G}} \dots \times_{X_{\kappa_{n-1}^*G}} X_{\rho_n^*G}
\]
is a bijection. See \cite[Proposition 3.2.9]{ChuHackney}.

\item
Suppose $G, H \in \LL_n$ are two height $n$ level graphs.
We can define a new height $n$ level graph $G+H$ with $(G+H)_{ij} = G_{ij} + H_{ij}$, which is a coproduct in the fiber $\LL_n$ (see \cref{subsec monoidal} where this will be used more).
Then $\LL^\elrm_{/(G+H)} = \LL^\elrm_{/G} \amalg \LL^\elrm_{/H}$, so it follows that the left hand map in the following square is an isomorphism:
\[ \begin{tikzcd}
\segcore(G) \amalg \segcore(H) \dar{\cong} \rar & \yon(G) \amalg \yon(H) \dar \\
\segcore(G+H) \rar & \yon(G+H).
\end{tikzcd} \]
Applying $\hom(-,X)$ and using 2-of-3 for isomorphisms, we see that $X_{G+H} \to X_G \times X_H$ is a bijection:
\[ \begin{tikzcd}
X_{G+H} \dar{\cong} \rar & X_G \times X_H \dar{\cong} \\
\lim\limits_{E\in (\LL^\elrm_{/(G+H)})^\oprm} X_E  \rar{\cong} & \left(\lim\limits_{E\in (\LL^\elrm_{/G})^\oprm} X_E\right) \times \left(\lim\limits_{E\in (\LL^\elrm_{/H})^\oprm} X_E\right).
\end{tikzcd} \]
\end{itemize}
\end{remark}

We also have the full subcategory $\LLc \subset \LL$ whose objects are the connected level graphs (a level graph $G$ of height $m$ is connected just when $G_{0m}$ is a point).
This subcategory inherits an active-inert factorization system from the larger category $\LL$.
The elementary objects in $\LLc$ are precisely the elementary objects of $\LL$ and there is a corresponding notion of Segal $\LLc$-presheaf.
Finally, there is the graphical category defined in \cite{HRYbook} whose objects are certain connected  graphs (see \cite[Definition 6.46]{HRYbook} or \cite[Definition 2.2.11]{ChuHackney}).
Following \cite{ChuHackney}, we call this category $\pgcat$, and it has its own notion of Segal presheaf.
We will not need any low-level details about this category or its algebraic pattern structure.

There is a zig-zag of functors over $\finsetstarop$
\[ \begin{tikzcd}
\LL \ar[dr] & \LLc \rar{\tau} \lar[swap]{\iota} \dar & \pgcat \ar[dl]\\
& \finsetstarop
\end{tikzcd} \]
where the downward functors take sets of vertices.

\begin{proposition}\label{prop equiv of segal}
The functors $\iota^*\colon \pre(\LL) \to \pre(\LLc)$ and $\tau^* \colon \pre(\pgcat) \to \pre(\LLc)$ restrict to functors $\seg(\LL) \to \seg(\LLc) \leftarrow \seg(\pgcat)$ which are equivalences of categories.
\end{proposition}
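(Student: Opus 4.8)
The plan is to handle the two functors by separate arguments, both organized around the single observation that every elementary object is connected: the edge $\mathfrak{e}$ has $G_{00} = \underline{1}$ and each corolla $\mathfrak{c}_{m,n}$ has $G_{01} = \underline{1}$. Consequently the elementary subcategories of $\LL$, $\LLc$, and $\pgcat$ all coincide with a single category $\LL_\elrm$, and for any connected graph $G$ the inert slice $\LL^\elrm_{/G}$ is computed identically whether $G$ is regarded as an object of $\LL$ or of $\LLc$. This is what lets the Segal condition be transported across the three settings.

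For the right-hand functor $\tau^* \colon \seg(\pgcat) \to \seg(\LLc)$, I would invoke \cite{ChuHackney}, where exactly this comparison between the graphical category and the connected level-graph category is carried out; the only discrepancy is that the categories used there are skeleta of the present ones (as flagged in \cref{subsec level graphs}). Since passing to a skeleton is an equivalence of categories, it induces an equivalence of presheaf categories that preserves the Segal condition (the elementary objects and their inert slices being unchanged), so the cited equivalence transports verbatim to the $\tau^*$ defined here.

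For the left-hand functor $\iota^* \colon \seg(\LL) \to \seg(\LLc)$ I would argue directly and produce an explicit inverse. That $\iota^*$ preserves the Segal condition is immediate from the observation above: for connected $G$ the limit defining the Segal condition for $\iota^* X$ at $G$ is literally the limit defining it for $X$ at $G$. To construct an inverse, I would use the canonical decomposition of a height-$n$ level graph into connected components: writing $k = |G_{0n}|$ and letting $G^{(x)}$ be the fiber of $G$ over $x \in G_{0n}$, one has $G \cong \sum_{x \in G_{0n}} G^{(x)}$ in the fiber $\LL_n$, with each $G^{(x)} \in \LLc$. I would then define $R \colon \seg(\LLc) \to \seg(\LL)$ by the formula $(RY)_G = \prod_{x \in G_{0n}} Y_{G^{(x)}}$ and verify that it lands in $\seg(\LL)$; the key input is the third bullet of \cref{examples segality}, the identity $X_{G+H} \cong X_G \times X_H$ forced on every Segal presheaf, together with its iteration over all components. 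The same identity shows $\iota^* R \cong \id$ (for connected $G$ the product has a single factor) and $R \iota^* \cong \id$ (for arbitrary $X$ the Segal condition gives $X_G \cong \prod_x X_{G^{(x)}}$), which together yield the equivalence.

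The step I expect to require the most care is checking that $R$ is genuinely a functor on all of $\LL$, that is, that the product formula is natural for every morphism. Active maps are the easy case: an active map is a bijection on top components $G_{0m} \to H_{0n}$, so it decomposes as a sum of active maps between corresponding components. For inert maps one must check that an inert map $G \rat H$ restricts to inert maps $G^{(x)} \rat H^{(y)}$ on components (where $x \mapsto y$ under $G_{0m} \to H_{0n}$), which follows from the cartesian and monomorphism conditions defining morphisms of $\LL$; the general case then follows by active–inert factorization. An alternative packaging is to define $R$ as the right Kan extension $\iota_*$, which is functorial for free and satisfies $\iota^* \iota_* \cong \id$ since $\iota$ is fully faithful, leaving only the verification that $\iota_* Y$ is Segal and that it agrees with the product formula above.
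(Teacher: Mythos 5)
Your treatment of $\iota^*$ is a genuinely different route from the paper's and looks viable: the paper does not argue directly at all, but instead deduces both equivalences from the $\infty$-categorical statements of Chu--Hackney by showing (\cref{truncation lemma}) that $\seg(\XX)$ is the $0$-truncation of $\seg^\infty(\XX)$ and that $0$-truncation, as a functor on presentable $\infty$-categories, carries the known equivalences $\seg^\infty(\LL)\simeq\seg^\infty(\LLc)\simeq\seg^\infty(\pgcat)$ to the desired $1$-categorical ones. Your explicit inverse $(RY)_G=\prod_{x}Y_{G^{(x)}}$ is essentially the mechanism behind the paper's $\chi_1$/$N_1$ construction and its \cref{lem N1 segal}, and the paper itself remarks that one can prove the $\iota^*$ half by imitating the $\infty$-categorical proof in this way; the functoriality check you flag (components mapping to components under inert maps, plus active--inert factorization) is real but routine, and the $\iota_*$ packaging you mention would discharge most of it.

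The gap is in the $\tau^*$ half. The results you want to invoke from Chu--Hackney (their Proposition 3.2.29 and Theorem 5.1.4) are statements about $\seg^\infty$, i.e.\ \emph{space-valued} Segal presheaves, not about the set-valued $\seg(-)$ appearing in \cref{prop equiv of segal}. They do not ``transport verbatim'': the discrepancy is not merely that the categories there are skeleta of the present ones (that part is indeed harmless), but that the cited equivalence lives one categorical level up. Bridging that is exactly the content of the paper's \cref{truncation lemma}, whose proof requires identifying $\tau_{\leq 0}\seg^\infty(\XX)$ with the full subcategory of Segal objects valued in $0$-truncated spaces and then with $\seg(\XX)$ --- a nontrivial argument using monadicity of the localization and the behavior of truncation in $\mathbf{Pr}^L$. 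The alternative, imitating the proof of Theorem 5.1.4 directly in the $1$-categorical setting, is described by the authors as ``very involved,'' so it cannot be waved through either. As written, your argument for $\tau^*$ cites a theorem that does not literally say what you need it to say, and the missing step is the substantive one.
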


One can prove that $\iota^*$ is an equivalence on Segal presheaves by imitating the proof of Proposition 3.2.29 of \cite{ChuHackney}, which is an $\infty$-categorical analogue.
Similarly, one can imitate the proof of Theorem 5.1.4 of \cite{ChuHackney} to show that $\tau^* \colon \seg(\pgcat) \to \seg(\LLc)$ is an equivalence.
However, the latter proof is very involved.
Instead, we deduce both of these results by recognizing that $\seg(\XX)$ is equivalent to the subcategory of \emph{discrete objects} of the $\infty$-categorical version $\seg^\infty(\XX) \subseteq \pre^\infty(\XX) \coloneqq \fun(\XX^\oprm, \igpd)$ when $\XX \in \{ \LL, \LLc, \pgcat \}$.
Recall from \cite[5.5.6.2]{LurieHTT} that an object $x$ in a quasi-category is called \emph{discrete} if, for every object $y$, the Kan complex $\map(y,x)$ is equivalent to a set.
We utilize the more general notion of algebraic pattern from \cite[Definition 2.1]{ChuHaugseng}; in that source $\seg^\infty(\XX)$ would be denoted by $\seg_{\XX^\oprm}(\igpd)$ and $\seg(\XX)$ would be denoted by $\seg_{\XX^\oprm}(\set)$.

\begin{lemma}\label{lemma:discreterestriction}
    Let $\XX$ be a category equipped with an active-inert factorization system and a class of elementary objects so that $\XX^\oprm$, together with this structure, is an algebraic pattern. 
    There is a fully faithful functor $\seg(\XX)\hookrightarrow \seg^\infty(\XX)$ whose essential image is the full subcategory of discrete objects.
\end{lemma}
\begin{proof}
Let $\set \hookrightarrow \igpd$ be the fully-faithful inclusion of the $\infty$-category of sets into that of spaces.
As postcomposition with a fully faithful functor is fully faithful (see \cref{rmk postcomp} below), there is a fully faithful composite $\seg(\XX)\hookrightarrow\pre(\XX)\hookrightarrow\pre^\infty(\XX).$
This functor factors through $\seg^\infty(\XX)$ since $\set \hookrightarrow \igpd$ preserves limits \cite[5.5.6.18]{LurieHTT}.
It is automatic that $\seg(\XX) \hookrightarrow \seg^\infty(\XX)$ is fully faithful.
The essential image of the functor consists of those Segal objects $F \colon \XX^\oprm \to \igpd$ so that $F(x)$ is discrete for all $x\in \XX$.
We now show this is the same thing as $F\in \seg^\infty(\XX)$ being a discrete object.

Suppose $G \colon \XX^\oprm \to \igpd$ is an arbitrary presheaf, and write $G \simeq \colim_\alpha \yon(x_\alpha)$, where $\yon \colon \XX \to \pre^\infty(\XX)$ is the Yoneda embedding.
Then there are equivalences
\[
  \lim_\alpha F(x_\alpha) \simeq \lim_\alpha \map(\yon(x_\alpha), F) \simeq \map(G,F) \simeq \map(LG,F)
\]
where $L\colon \pre^\infty(\XX) \to \seg^\infty(\XX)$ is the localization functor guaranteed by \cite[Lemma 2.11(iii)]{ChuHaugseng}.
If $F(x)$ is discrete for all $x \in \XX$, then since $\set$ is closed under limits, we see that $\map(G,F)$ is discrete hence $F \in \seg^\infty(\XX)$ is discrete.
Conversely, if $F$ is a discrete object of $\seg^\infty(\XX)$ and $x\in \XX$ is arbitrary, taking $G = \yon(x)$ in the string of equivalences above implies that $F(x)$ is discrete.
We have thus identified the essential image of $\seg(\XX) \hookrightarrow \seg^\infty(\XX)$ with the discrete objects of $\seg^\infty(\XX)$.
\end{proof}

\begin{remark}\label{rmk postcomp}
It is well known that if $\mathcal{C} \to \mathcal{D}$ is a fully faithful functor of quasi-categories, then the postcomposition functor $\fun(\mathcal{B},\mathcal{C}) \to \fun(\mathcal{B},\mathcal{D})$ is also fully faithful.
One way to verify involves first observing that \cite[5.6]{RiehlVerity:construction} and \cite[3.5.6(iv)]{RiehlVerity:EOICT} together imply that being fully faithful as a functor of quasi-categories is equivalent to being fully faithful as a functor in the $\infty$-cosmos of quasi-categories.
Since this $\infty$-cosmos is cartesian closed, the assertion follows from the characterization given in \cite[3.5.6(iii)]{RiehlVerity:EOICT}.
\end{remark}

\begin{proof}[Proof of \cref{prop equiv of segal}]
As a result of \cite[5.5.6.28]{LurieHTT}, an equivalence of quasi-categories preserves and reflects discrete objects.
Therefore it restricts to an equivalence of categories between the full subcategories of its domain and codomain. 
Proposition 3.2.29 and Theorem 5.1.4 of \cite{ChuHackney} show that $\iota$ and $\tau$ induce equivalences $\seg^\infty(\LL)\to\seg^\infty(\LLc)\leftarrow\seg^\infty(\pgcat)$. 
The result now follows from \cref{lemma:discreterestriction}.
\end{proof}

Let $\chi_3 \colon \pgcat \to \properad$ be the functor which takes a graph to a properad freely generated by it; see \cite[\S5.1.2]{HRYbook}. 
We let $\chi_2 = \chi_3 \circ \tau$, and now describe $\chi_1$ appearing in the following diagram.
\[ \begin{tikzcd}
\LL \ar[dr,"\chi_1"'] & \LLc \rar{\tau} \lar[swap]{\iota} \dar{\chi_2} & \pgcat \ar[dl, "\chi_3"]\\
& \properad
\end{tikzcd} \]
If $G$ is a height $m$ level graph, we can decompose $G$ into connected components $G \cong \coprod_{x \in G_{0m}} G_x$ (in \cref{def canonical splitting} we will fix a particular such isomorphism). 
The functor $\chi_1 \colon \LL \to \properad$ is defined by sending $G$ to $\coprod \chi_2(G_x) = \coprod \chi_3 \tau (G_x)$.\footnote{Let us describe $\chi_1$ applied to a morphism $f \colon G \to H \cong \coprod_{y\in H_{0n}} H_y$ lying over $\alpha \colon [m] \to [n]$ in $\simpcat$. For $x\in G_{0m}$ write $\bar{x}$ for the image of $x$ under the composite $G_{0m} \to H_{\alpha(0)\alpha(m)} \to H_{0n}$.
Then the composite $G_x \to G \to H$ factors through a map between connected level graphs $f_x \colon G_x \to H_{\bar x}$, and $\chi_1(f) \colon \coprod \chi_2(G_x) \to \coprod \chi_2(H_y)$ is induced from the properad maps $\chi_2(f_x)$.}


We obtain from these functors the following commutative diagram:
\[ \begin{tikzcd}
\pre(\LL) \ar[from=dr,"N_1"] \rar{\iota^*} & \pre(\LLc) \ar[from=d, "N_2"'] & \pre(\pgcat) \ar[from=dl, "N_3"'] \lar[swap]{\tau^*} \\
& \properad
\end{tikzcd} \]
with $N_i(P)_G = \hom(\chi_i(G),P)$; this commutes since if $G$ is a connected level graph, then
\[
\iota^*N_1(P)_G = N_1(P)_{\iota G} = \hom(\chi_1(\iota G), P) = \hom( \chi_2(G),P) = N_2(P)_G 
\]
and likewise for the other triangle.
By \cite[Lemma 7.38]{HRYbook}, $N_3$ takes values in the subcategory of Segal objects.

Since we know that $\tau^*$ takes Segal objects to Segal objects and $N_2 = \tau^* \circ N_3$, we have that $N_2$ lands in the subcategory of Segal objects. 
This same argument does not apply to $N_1$, so we must check the following:

\begin{lemma}\label{lem N1 segal}
If $P$ is a properad, then $N_1(P)$ is Segal.
\end{lemma}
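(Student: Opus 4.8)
The plan is to deduce the Segal condition for $N_1(P)$ from the Segal condition for $N_2(P)$, which we already know because $N_2=\tau^*\circ N_3$ and $\tau^*$ preserves Segal objects. The bridge between the two is that $\chi_1$ converts the connected-component decomposition of a level graph into a coproduct of properads, and $\hom_{\properad}$ turns coproducts into products.

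First I would compute the value of $N_1(P)$ on an arbitrary height $m$ level graph $G$. Since $\chi_1(G)=\coprod_{x\in G_{0m}}\chi_2(G_x)$ by definition and $\hom_{\properad}$ carries coproducts to products,
\[
N_1(P)_G=\hom\Bigl(\coprod_{x\in G_{0m}}\chi_2(G_x),\,P\Bigr)=\prod_{x\in G_{0m}}\hom(\chi_2(G_x),P)=\prod_{x\in G_{0m}}N_2(P)_{G_x}.
\]
In particular, when $G$ is connected $N_1(P)_G=N_2(P)_G$, so $N_1(P)$ and $N_2(P)$ agree on the (connected) elementary objects.

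Next I would decompose the indexing category of the Segal limit. The splitting $G\cong\coprod_{x}G_x$ is a coproduct in the fiber $\LL_m$, so iterating the third item of \cref{examples segality} yields an identification of categories $\LL^\elrm_{/G}\cong\coprod_{x\in G_{0m}}\LL^\elrm_{/G_x}$; concretely, every inert map into $G$ from an elementary (hence connected) object factors uniquely through exactly one component inclusion $G_x\rat G$, and inert triangles never connect objects lying over distinct components. For each connected $G_x$ the slice $\LL^\elrm_{/G_x}$ is the same whether formed in $\LL$ or in $\LLc$. Since a limit over a coproduct of categories is the product of the limits, and $N_1(P)_E=N_2(P)_E$ on elementary $E$, we obtain
\[
\lim_{E\in(\LL^\elrm_{/G})^\oprm}N_1(P)_E=\prod_{x\in G_{0m}}\;\lim_{E\in(\LL^\elrm_{/G_x})^\oprm}N_2(P)_E.
\]

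To finish, I would observe that each factor is controlled by the Segal condition for $N_2(P)$: because $N_2(P)$ is Segal and each $G_x$ is connected, the map $N_2(P)_{G_x}\to\lim_{E\in(\LL^\elrm_{/G_x})^\oprm}N_2(P)_E$ is a bijection. The remaining and most delicate task is to check that, under the identifications above, the product of these bijections is precisely the Segal comparison map of $N_1(P)$ at $G$; this amounts to tracing that restriction along an inert $E\rat G$ becomes projection to the $x$-th factor followed by restriction along $E\rat G_x$. A product of bijections being a bijection then gives that $N_1(P)_G\to\lim_{E\in(\LL^\elrm_{/G})^\oprm}N_1(P)_E$ is a bijection for every $G$, which is the Segal condition. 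I expect the real work to lie not in any single computation but in this compatibility check, together with the verification that the component inclusions are inert and that every elementary inert map factors uniquely through one of them.
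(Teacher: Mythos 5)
Your proposal is correct and follows essentially the same route as the paper's proof: decompose $G$ into its connected components, use that $\chi_1$ sends this to a coproduct of properads (so $N_1(P)_G \cong \prod_x N_1(P)_{G_x}$) and that $\LL^\elrm_{/G}$ splits as $\coprod_x \LL^\elrm_{/G_x}$, then invoke Segality of $N_2(P)=\iota^*N_1(P)$ on each connected piece. The paper packages the slice-category decomposition and the reduction to $\LLc$ via Segal cores and the adjunction $\iota_!\dashv\iota^*$ rather than via limits directly, but the content is identical.
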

\begin{proof}
Let $G\in \LL$ be a height $m$ level graph, and for $x\in G_{0m}$ let $G_x \in \LLc$ be the corresponding connected level graph with $\coprod_x G_x = G$ in the fiber $\LL_m$.
Notice that the canonical map $N_1(P)_G \to \prod_x N_1(P)_{G_x}$ is an isomorphism, using our description of $\chi_1$:
\[
  N_1(P)_G = \hom(\chi_1(G),P) = \hom\left(\coprod_x \chi_2(G_x), P\right) 
  = \prod_x N_1(P)_{G_x}.
\]
The map $\coprod_{x} \segcore(G_x) \to \segcore(G)$ is an isomorphism of presheaves, where $\segcore(G)$ is the Segal core of $G$.
In the commutative square
\[ \begin{tikzcd}
\coprod\limits_x \segcore(G_x) \rar{=} \dar & \segcore(G) \dar \\
\coprod\limits_x \yon(G_x) \rar & \yon(G) 
\end{tikzcd} \]
we know that the bottom map becomes an isomorphism after applying $\hom(-,N_1P)$.
Hence to see that $N_1(P)$ is Segal it is enough to check that $\hom(\yon(H), N_1P) \to \hom(\segcore H, N_1P)$ is an isomorphism for connected graphs $H$.
But this is true: if $H \in \LLc$ is connected, then $\iota_!\yon(H) = \yon(\iota H)$ and $\iota_!\segcore(H) = \segcore(\iota H)$, and by Segality of $N_2(P) = \iota^*N_1(P)$ the bottom map in the following square
\[ \begin{tikzcd}
\hom(\iota_! \yon H, N_1 P)  \rar \dar{=} & \hom(\iota_! \segcore H, N_1 P) \dar{=} \\
\hom(\yon H, \iota^*N_1 P) \rar & \hom(\segcore H, \iota^*N_1 P)
\end{tikzcd} \] 
is an isomorphism.
\end{proof}

\begin{proposition}\label{prop properads as presheaves}
The functors $N_1$ and $N_2$ induces equivalences of categories
\begin{align*}
N_1 \colon \properad &\simeq \seg(\LL) \\
N_2 \colon \properad &\simeq \seg(\LLc).
\end{align*}
\end{proposition}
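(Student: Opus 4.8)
The plan is to treat \cref{prop properads as presheaves} as a formal consequence of three inputs, only one of which is genuinely hard: the nerve theorem identifying properads with Segal presheaves on the graphical category $\pgcat$, namely $N_3 \colon \properad \xrightarrow{\simeq} \seg(\pgcat)$, imported from \cite{HRYbook} (with its $\infty$-categorical refinement, Theorem 5.1.4 of \cite{ChuHackney}); the comparison equivalences of \cref{prop equiv of segal}; and the Segality of \cref{lem N1 segal}. Everything beyond the nerve theorem is a diagram chase using the two-out-of-three property for equivalences of categories, applied to the commutative triangles already verified in the excerpt,
\[ \iota^* \circ N_1 = N_2 = \tau^* \circ N_3. \]

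First I would dispose of $N_2$. By \cref{prop equiv of segal}, the functor $\tau^*$ restricts to an equivalence $\seg(\pgcat) \xrightarrow{\simeq} \seg(\LLc)$, and in particular it preserves Segal objects, so $N_2 = \tau^* \circ N_3$ lands in $\seg(\LLc)$. Since $N_3$ is an equivalence onto $\seg(\pgcat)$ by the nerve theorem, the composite $N_2$ is a composite of equivalences, hence an equivalence $\properad \xrightarrow{\simeq} \seg(\LLc)$.

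Next I would deduce the statement for $N_1$. By \cref{lem N1 segal}, $N_1$ factors through the full subcategory $\seg(\LL) \subseteq \pre(\LL)$, and by \cref{prop equiv of segal} the functor $\iota^*$ restricts to an equivalence $\seg(\LL) \xrightarrow{\simeq} \seg(\LLc)$. The identity $\iota^* \circ N_1 = N_2$ then holds as functors into $\seg(\LLc)$, and since both $\iota^*$ and $N_2$ are now known to be equivalences, two-out-of-three forces $N_1 \colon \properad \to \seg(\LL)$ to be an equivalence as well.

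The main obstacle is thus not in this argument, which is purely formal, but in having the nerve theorem available in the strict $1$-categorical form required: one must invoke $\properad \simeq \seg(\pgcat)$ for ordinary properads and \emph{set}-valued Segal presheaves, not merely their $\infty$-analogues. Concretely this amounts to checking that $N_3(P)$ is Segal for every properad $P$ --- which is precisely the encoding of the associativity, unitality, and equivariance axioms of a properad as the Segal condition --- together with full faithfulness and essential surjectivity of $N_3$ onto $\seg(\pgcat)$. I would also note, as a minor point, that the choice of splitting $G \cong \coprod_x G_x$ used to define $\chi_1$ (fixed in \cref{def canonical splitting}) alters $N_1$ only up to natural isomorphism, so it does not disturb the commutativity $\iota^* \circ N_1 = N_2$ on which the two-out-of-three step rests.
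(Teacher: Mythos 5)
Your proposal is correct and follows essentially the same route as the paper: the paper also deduces both equivalences from the commutative triangle $\iota^* \circ N_1 = N_2 = \tau^* \circ N_3$, using \cref{lem N1 segal} to see that $N_1$ lands in Segal objects, \cref{prop equiv of segal} for $\iota^*$ and $\tau^*$, the nerve theorem of \cite{HRYbook} for $N_3$, and two-out-of-three. Your extra remarks about the $1$-categorical (set-valued) form of the nerve theorem and the independence from the choice of splitting are sensible but not needed beyond what the paper already records.
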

\begin{proof}
Each $N_i$ lands in the full subcategory of Segal objects (using \cref{lem N1 segal} for $N_1$), hence we have the commutative diagram
\[ \begin{tikzcd}
\seg(\LL) \ar[from=dr,"N_1"] \rar["\iota^*"] & \seg(\LLc) \ar[from=d, "N_2"'] & \seg(\pgcat) \ar[from=dl, "N_3"'] \lar[swap]{\tau^*} \\
& \properad.
\end{tikzcd} \]
By \cref{prop equiv of segal}, $\iota^*$ and $\tau^*$ are equivalences, and $N_3$ is an equivalence by \cite{HRYbook}.
By 2-of-3, $N_1$ and $N_2$ are equivalences as well.
\end{proof}

We will have no further need for $\LLc$ or $\pgcat$ in this paper.

\section{The symmetric monoidal envelope of a properad}\label{properad to categories}

The envelope of a properad is the prop freely generated by it.
In this section we give a detailed description of this structure, in a way that will make our later comparisons more transparent.
We take as an input a Segal $\LL$-presheaf, relying on the equivalence of categories from \cref{prop properads as presheaves}.

\begin{remark}
In \cite[\S4.A]{HackneyRobertsonYau:SMIP}, a left adjoint to the forgetful functor from props to properads is exhibited.
We caution the reader that this is not quite the same as what we are doing here, since the kind of prop used in that paper is slightly weaker than the original version \cite{MacLane:CA,HackneyRobertson:OCP}.
See \cite[Remark 10.5]{BataninBerger:HTAPM} and \cite[Remark 3.5]{HackneyRobertson:HTSP} for details.
\end{remark}

\subsection{Congruences of level graphs}

\begin{definition}
A \emph{congruence} in $\LL_n$ is an isomorphism $G \to H$ so that for $0 \leq i \leq n$, the map $G_{ii} \to H_{ii}$ is an identity.
We denote such a congruence by
\[
  G \xrightarrow{\equiv} H.
\]
We say that $G$ and $H$ are \emph{congruent}, denoted $G\equiv H$, if there is a congruence between them.
\end{definition}

If $G\in \LL_0$, then the only congruence involving $G$ is the identity.
The same is true for the corollas $\mathfrak{c}_{n,m} \in \LL_1$.

\begin{lemma}
If $\alpha \colon [m] \to [n]$ is a map in $\simpcat$ and $G \to H$ is a congruence in $\LL_n$, then $\alpha^*G \to \alpha^*H$ is a congruence in $\LL_m$.
\end{lemma}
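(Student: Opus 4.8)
The plan is to unwind the definitions and show that the two conditions defining a congruence are preserved by the restriction functor $\alpha^*$. Recall that $\alpha^* G$ is defined by precomposition with the cosimplicial structure map $\alpha^* \colon \mathscr{L}^m \to \mathscr{L}^n$, so that $(\alpha^* G)_{ij} = G_{\alpha i, \alpha j}$ for each $(i,j) \in \mathscr{L}^m$. Since $G \to H$ is in particular a natural transformation of functors $\mathscr{L}^n \to \finsetskel$, its restriction $\alpha^* G \to \alpha^* H$ is simply the natural transformation whose component at $(i,j)$ is the component of $G \to H$ at $(\alpha i, \alpha j)$. The functor $\alpha^*$ carries morphisms of $\LL_n$ to morphisms of $\LL_m$ (it is the structure map of the fibration $\LL \to \simpcat$), so $\alpha^* G \to \alpha^* H$ is automatically a morphism in $\LL_m$; in particular it is an isomorphism because $G \to H$ is.

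It then remains to verify the single additional defining property of a congruence, namely that the diagonal maps are identities. By definition, $G \to H$ being a congruence means that for each $0 \leq i \leq n$ the component $G_{ii} \to H_{ii}$ is an identity. For $\alpha^* G \to \alpha^* H$ to be a congruence in $\LL_m$, I must check that for each $0 \leq i \leq m$ the component $(\alpha^* G)_{ii} \to (\alpha^* H)_{ii}$ is an identity. But $(\alpha^* G)_{ii} = G_{\alpha i, \alpha i}$ and $(\alpha^* H)_{ii} = H_{\alpha i, \alpha i}$, and the relevant component of the natural transformation is precisely the component of $G \to H$ at $(\alpha i, \alpha i)$, which is a diagonal object of $\mathscr{L}^n$. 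Since $G \to H$ is a congruence, this component $G_{\alpha i, \alpha i} \to H_{\alpha i, \alpha i}$ is an identity, which is exactly what is required.

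The proof is essentially a direct bookkeeping argument, and there is no serious obstacle; the only point requiring a moment's care is the observation that $\alpha$ takes diagonal objects $(i,i)$ to diagonal objects $(\alpha i, \alpha i)$, so that the ``identity on the diagonal'' condition is tested against exactly the components that are already known to be identities. I would present this as a short paragraph noting that $\alpha^*$ preserves membership in $\LL_m$ (so we need not recheck that the components are monomorphisms or that the naturality squares are cartesian), followed by the one-line verification that the diagonal components remain identities.
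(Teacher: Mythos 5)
Your proof is correct and takes essentially the same approach as the paper: both reduce to the observation that $(\alpha^*G)_{ii} \to (\alpha^*H)_{ii}$ is the component of $G \to H$ at the diagonal object $(\alpha i, \alpha i)$, hence an identity by the definition of congruence. The paper's version is just a terser two-sentence form of your argument.
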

\begin{proof}
If $0 \leq i \leq j \leq m$, then $(\alpha^*G)_{ij} \to (\alpha^*H)_{ij}$ is equal to $G_{\alpha i, \alpha j} \to H_{\alpha i, \alpha j}$. 
When $i=j$ this map is an identity by definition of congruence.
\end{proof}

\begin{remark}\label{rmk congruence height 1}
Suppose $G\in \LL_1$ is a graph where every vertex has an input or output.
Then the only congruence $G \xrightarrow{\equiv} G$ is the identity, as the hypothesis tells us the horizontal morphisms in the following diagram are epimorphisms
\[ \begin{tikzcd}
G_{00} + G_{11} \rar[two heads] \dar{=} & G_{01} \dar{\cong} \\
G_{00} + G_{11} \rar[two heads] & G_{01} 
\end{tikzcd} \]
which implies that $G_{01} \to G_{01}$ is the identity as well.
This is not to say that $G$ is not involved in any congruence at all, only that if one permutes the elements of $G_{01}$ then one must also alter at least one of the functions $G_{00}\to G_{01}$ and $G_{11}\to G_{01}$. Indeed, if $G_{01} = \un$, then there are $n!$ different congruences with domain $G$.
\end{remark}

\begin{definition}[Congruence category]
Let $\kong_n \subseteq \LL_n$ denote the wide subcategory consisting of the congruences.
This defines a functor $\kong_\bullet \colon \simpcat^\oprm \to \gpd \subseteq \cat$, and we write $\kong \to \simpcat$ for the associated (right) fibration. 
\end{definition}

It will be useful to collapse further:

\begin{definition}
Let $\WW \to \simpcat$ be the discrete fibration associated to the composite
\[ \begin{tikzcd}
\simpcat^\oprm \rar{\kong_\bullet} & \gpd \rar{\pi_0} & \set.
\end{tikzcd} \]
That is, $\WW_n = \pi_0(\kong_n)$ is the set of height $n$ level graphs modulo congruence. 
Note that $\WW_n$ is a discrete category, but $\WW$ itself is not.
\end{definition}

\begin{remark}\label{Remark WW as nerve Csp}
The simplicial set $\WW_\bullet$ turns out to be isomorphic to the nerve of a skeleton of $\csp$.
Elements of $\WW_0$ can be identified with the set $\{ \uk \}$ for $k\in \mathbb{N}$.
Elements of $\WW_1$ are congruence classes of height 1 level graphs, and this relation means these are exactly the same thing as morphisms in $\csp$ between the sets appearing in $\{ \uk \}_{k\in \mathbb{N}}$.
In \cref{segality W} below we will formally show that $\WW_\bullet$ is Segal, and by inspection one can see that the compositions coincide in $\WW_\bullet$ and $N\csp$. 
See \cref{example C star Csp} and \cref{example C star monoidal} for more details.
\end{remark}

The zig-zags $\LL_n \hookleftarrow \kong_n \twoheadrightarrow \pi_0(\kong_n) = \WW_n$ as $n$ varies constitute natural transformations
\[ \begin{tikzcd}[column sep=large]
\simpcat^\oprm 
\rar["\kong_\bullet" description] \rar[bend left=50,"\LL_\bullet"] \rar[bend right=50, "\WW_\bullet"'] 
\rar[bend left=25, phantom,"\Uparrow"] \rar[bend right=25, phantom, "\Downarrow"']
&
\cat
\end{tikzcd} \]
(considering $\set \subseteq \gpd \subseteq \cat$).
This yields the following commutative diagram of fibrations over $\simpcat$.
\[ \begin{tikzcd}
\WW \ar[dr,"q"'] & \kong \lar[swap]{\pi} \rar{i} \dar &  \LL \ar[dl,"p"] \\
& \simpcat
\end{tikzcd} \]

We saw above that $\LL_\bullet \colon \simpcat^\oprm \to \cat$ is Segal, meaning that the Segal morphisms $\LL_n \to \LL_1 \times_{\LL_0} \dots \times_{\LL_0} \LL_1$ are all equivalences of categories.
The same holds for $\kong_\bullet$. 

\begin{proposition}\label{segality K}
The simplicial category $\kong_\bullet$ is Segal.
Moreover, the Segal morphisms are surjective on objects.
\end{proposition}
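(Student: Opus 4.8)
The plan is to compare the Segal morphism for $\kong_\bullet$ with the one for $\LL_\bullet$, which we already know is an equivalence that is moreover surjective on objects (see \cref{subsec level graphs}). Write $S^{\kong}_n \colon \kong_n \to \kong_1 \times_{\kong_0} \cdots \times_{\kong_0} \kong_1$ and $S^{\LL}_n \colon \LL_n \to \LL_1 \times_{\LL_0} \cdots \times_{\LL_0} \LL_1$ for the respective spine maps. Since $\kong_\bullet$ is a simplicial subcategory of $\LL_\bullet$, the wide-subcategory inclusions $\kong_m \hookrightarrow \LL_m$ assemble into a commutative square
\[ \begin{tikzcd}
\kong_n \rar{S^{\kong}_n} \dar[hook] & \kong_1 \times_{\kong_0} \cdots \times_{\kong_0} \kong_1 \dar[hook]{j} \\
\LL_n \rar{S^{\LL}_n} & \LL_1 \times_{\LL_0} \cdots \times_{\LL_0} \LL_1,
\end{tikzcd} \]
in which every vertical map is faithful. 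I would prove that $S^{\kong}_n$ is fully faithful and surjective on objects; since surjectivity on objects forces essential surjectivity, this delivers both assertions of the proposition simultaneously.

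First I would settle the object level. The category $\kong_0$ is discrete (the only congruence of a height $0$ graph is the identity), so the strict pullbacks above are unproblematic. Since $\kong_m$ and $\LL_m$ have the same objects, and the object set of a strict pullback of categories is the pullback of the underlying object sets, the inclusion $j$ is a \emph{bijection} on objects: an object on either side is simply a tuple of height $1$ graphs whose endpoints match along the shared height $0$ graphs. Consequently $S^{\kong}_n$ and $S^{\LL}_n$ have literally the same effect on objects, and surjectivity on objects for $S^{\kong}_n$ is immediate from that of $S^{\LL}_n$.

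It remains to prove full faithfulness of $S^{\kong}_n$. Faithfulness is formal, since $j \circ S^{\kong}_n = S^{\LL}_n \circ (\kong_n \hookrightarrow \LL_n)$ is faithful. For fullness, given $G, H \in \kong_n$ and a morphism $\beta$ between their images in the $\kong$-pullback, I push $\beta$ forward along $j$ and apply fullness of $S^{\LL}_n$ to obtain a morphism $\psi \colon G \to H$ in $\LL_n$ with $S^{\LL}_n(\psi) = j(\beta)$. The key point is to check that $\psi$ is a congruence. Here I would invoke the description of morphisms in $\LL_n$ recalled at the end of \cref{subsec level graphs}: such a $\psi$ is determined by its diagonal components $\psi_{ii} \colon G_{ii} \to H_{ii}$ together with its level-$1$ off-diagonal components, and each $\psi_{ii}$ is recovered as a leg of the corresponding component of $\beta$. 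Because every component of $\beta$ is by definition a congruence in $\kong_1$, those legs are identities, whence each $\psi_{ii}$ is an identity and $\psi$ lies in $\kong_n$. Faithfulness of $j$ then gives $S^{\kong}_n(\psi) = \beta$, so $S^{\kong}_n$ is an equivalence that is surjective on objects. I expect the main obstacle to be precisely this last verification that the lift $\psi$ lands in the congruence subcategory, together with the bookkeeping ensuring that the strict pullback over the discrete category $\kong_0$ is compared correctly, on both objects and morphisms, with the pullback over $\LL_0$.
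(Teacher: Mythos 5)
Your proof is correct and follows essentially the same route as the paper: both arguments exploit that $\kong_m$ and $\LL_m$ share objects (so surjectivity on objects transfers from the known Segal map for $\LL_\bullet$), and that a lift along the fully faithful Segal map for $\LL_\bullet$ of a tuple of congruences has identity diagonal components and is therefore itself a congruence. The only cosmetic difference is that you work with the full spine map $\kong_n \to \kong_1 \times_{\kong_0}\cdots\times_{\kong_0}\kong_1$ where the paper uses the binary decomposition $\kong_n \to \kong_k \times_{\kong_0}\kong_{n-k}$; in your final step you should also record that the lift $\psi$ is an isomorphism (immediate, since fully faithful functors reflect isomorphisms and $j(\beta)$ is one), as the definition of congruence requires this in addition to the identity diagonal components.
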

\begin{proof}
Let $n > 1$, $1 < k < n$, and consider the pushout square in $\simpcat_\intrm$ whose vertical maps preserve last elements and horizontal maps preserve first elements.
\[ \begin{tikzcd}
{[0]} \ar[d,"k",tail] \ar[r,"0",tail]  \ar[dr, phantom, "\ulcorner" very near end] & {[n-k]} \ar[d,"\beta", tail] \\
{[k]} \ar[r,"\alpha", tail] &{[n]}
\end{tikzcd} \]
We must show
\[
\alpha^* \times \beta^* \colon \kong_n \to \kong_{k} \times_{\kong_{0}} \kong_{n-k}
\]
is an equivalence of categories.
We already know that the map is surjective on objects since $\LL_n \to \LL_{k} \times_{\LL_{0}} \LL_{n-k}$ is.
Since $\LL_n \to \LL_{k} \times_{\LL_{0}} \LL_{n-k}$ is fully faithful, it is enough to show that if $G, H \in \kong_n$ are height $n$ level graphs and $\alpha^*G \to \alpha^*H$ and $\beta^*G \to \beta^*H$ are congruences, then they define a congruence $G\to H$.
But this is clear, since
\[
  (G_{ii} \to H_{ii}) = 
  \begin{cases}
  (\alpha^*G)_{ii} \to (\alpha^*H)_{ii} & \text{if } 0 \leq i \leq k \\
  (\beta^*G)_{i-k,i-k} \to (\beta^*H)_{i-k,i-k} & \text{if } k \leq i \leq n
  \end{cases}
\]
which are identities.
\end{proof}

Though $\pi_0 \colon \gpd \to \set$ does not preserve pullbacks in general, we still have the following result.

\begin{corollary}\label{segality W}
The simplicial set $\WW_\bullet$ is Segal.
\end{corollary}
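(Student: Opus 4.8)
The plan is to deduce the Segal condition for $\WW_\bullet$ from the one for $\kong_\bullet$ established in \cref{segality K}, by applying $\pi_0$ to the Segal equivalences of groupoids. Concretely, I will show that for each $n>1$ and $0<k<n$ the map
\[
  \WW_n \to \WW_k \times_{\WW_0} \WW_{n-k}
\]
is a bijection; iterating the two-fold decomposition over $k$ (exactly as the iterated Segal map factors through the two-fold ones) then yields the full Segal condition. The whole argument rests on a single structural feature: although $\pi_0\colon \gpd \to \set$ does not preserve pullbacks in general, it \emph{does} preserve a strict pullback of groupoids whose base is \emph{discrete}, and the base $\kong_0$ appearing here is discrete.

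First I would isolate that last fact as a general observation: if $B$ is a discrete groupoid and $A \xrightarrow{f} B \xleftarrow{g} C$ are functors of groupoids, then the comparison $\pi_0(A\times_B C) \to \pi_0(A)\times_B \pi_0(C)$ is a bijection. Surjectivity is immediate since $f(a)=g(c)$ holds on the nose whenever $([a],[c])$ lies in the target. For injectivity, given $(a,c)$ and $(a',c')$ with $a\cong a'$ in $A$ and $c\cong c'$ in $C$, one picks isomorphisms $\phi\colon a\to a'$ and $\psi\colon c\to c'$; since $B$ is discrete, $f\phi$ and $g\psi$ are forced to be identities, so $f(a)=f(a')=g(c)=g(c')$ and $(\phi,\psi)$ is a legitimate morphism in the strict pullback $A\times_B C$, witnessing that $(a,c)$ and $(a',c')$ lie in the same component. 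I would then remark that $\kong_0$ is discrete, because the only congruence in $\LL_0$ is the identity, so that $\pi_0(\kong_0)=\ob(\kong_0)=\WW_0$.

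With this in hand, the corollary is short. By \cref{segality K} the functor $\alpha^*\times\beta^*\colon \kong_n \to \kong_k \times_{\kong_0}\kong_{n-k}$ is an equivalence of groupoids, so applying $\pi_0$ gives a bijection $\WW_n = \pi_0(\kong_n) \to \pi_0(\kong_k \times_{\kong_0}\kong_{n-k})$. Since $\kong_0$ is discrete, the observation above identifies the right-hand side with $\pi_0(\kong_k)\times_{\WW_0}\pi_0(\kong_{n-k}) = \WW_k \times_{\WW_0} \WW_{n-k}$, where one uses that the restriction functors $k^*$ and $0^*$ descend along $\pi_0$ (congruences restrict to identities on $\kong_0$). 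The composite is precisely the Segal map, which is therefore a bijection.

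The step I expect to be the main obstacle is the discrete-base observation, precisely because it is the place where the general failure of $\pi_0$ to preserve pullbacks (flagged just before the statement) is circumvented; the delicate point is the injectivity argument, where one must produce an actual morphism in the \emph{strict} pullback rather than merely compatible components, and this is exactly what discreteness of $\kong_0$ supplies. Everything else is formal: $\pi_0$ sends equivalences of groupoids to bijections, and the two-fold Segal maps assemble into the iterated ones.
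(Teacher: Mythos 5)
Your proof is correct and follows essentially the same route as the paper's: both deduce the result by applying $\pi_0$ to the equivalence of \cref{segality K} and then checking that the comparison map into the pullback of path-component sets is a bijection, with injectivity hinging on the fact that a pair of congruences over the discrete groupoid $\kong_0$ automatically constitutes a morphism of the strict pullback. The only cosmetic difference is that you isolate the discrete-base preservation of pullbacks by $\pi_0$ as a standalone observation, whereas the paper carries out the same surjectivity/injectivity check inline.
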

\begin{proof}
We use the same notation as in the previous proof.
We are to show that the bottom map in the following square is a bijection.
\[ \begin{tikzcd}
\kong_n \rar["\alpha^* \times \beta^*"',"\simeq"] \dar & \kong_{k} \times_{\kong_{0}} \kong_{n-k} \dar \\
\pi_0(\kong_n) \rar["\alpha^* \times \beta^*"] & \pi_0(\kong_{k}) \times_{\pi_0(\kong_{0})} \pi_0(\kong_{n-k})
\end{tikzcd} \]
The bottom map is surjective, since the top-right composite is surjective on objects.
Suppose $w,v\in \WW_n$ map to the same element in $\WW_{k} \times_{\WW_{0}} \WW_{n-k}$. 
Let $G,H\in \kong_n$ be graphs in these path components. 
The assumption is that $\alpha^*G \equiv \alpha^*H$ and $\beta^*G \equiv \beta^*H$.
Choose congruences witnessing these relations, giving a morphism $(\alpha^*G, \beta^*G) \to (\alpha^*H, \beta^*H)$ in $\kong_{k} \times_{\kong_{0}} \kong_{n-k}$. Since the top map in the square above is an equivalence, this comes from a congruence in $\kong_n$.
Hence $G \equiv H$, that is, $w=v$.
\end{proof}

\subsection{The category \texorpdfstring{$CX$}{CX}}
Our next task is to describe the envelope of a properad.
For now, we content ourselves in producing this as a category, rather than as a symmetric monoidal category.
More specifically, we produce a functor $S$ as in the following diagram by using restriction along $i$ and left Kan extension along $q\pi = pi$,
\[ \begin{tikzcd}
\seg(\LL) \dar[hook] \ar[rr,dashed] &  &\seg(\simpcat) \dar[hook] & \cat \lar["N","\simeq"'] \\
\pre(\LL) \rar{i^*} \ar[rr,bend right=20, "S"] &  \pre(\kong) \rar{q_!\pi_!} & \pre(\simpcat)  
\end{tikzcd} \]
and show that it takes Segal presheaves to Segal presheaves. We write $C \colon \seg(\LL) \to \cat$ for the composite of $S$ with the left adjoint of the nerve functor $N$.

\begin{lemma}
Given a commutative triangle of small categories
\[ \begin{tikzcd}[column sep=small]
E \ar[rr,"p"] \ar[dr,"q"'] & & B \ar[dl,"r"] \\
& A 
\end{tikzcd} \]
If $q$ is a fibration and $r$ is a discrete fibration, then $p$ is a fibration.
\end{lemma}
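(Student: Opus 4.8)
The plan is to unwind the two universal properties and feed one into the other. Recall that $q$ being a (Grothendieck) fibration means that for every object $e \in E$ and every morphism $\alpha \colon a \to q(e)$ of $A$ there is a $q$-cartesian lift ending at $e$, while $r$ being a discrete fibration means that for every $b \in B$ and every $\alpha \colon a \to r(b)$ there is a \emph{unique} arrow of $B$ lying over $\alpha$ and ending at $b$. I will use the commutativity $rp = q$ throughout, and the whole argument is a formal transport of structure across $r$.

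First I would produce a candidate cartesian lift. Given $e \in E$ and $\beta \colon b \to p(e)$ in $B$, I apply $r$ to obtain $r(\beta) \colon r(b) \to r(p(e)) = q(e)$ in $A$ and choose a $q$-cartesian lift $\bar\beta \colon e' \to e$ with $q(\bar\beta) = r(\beta)$. Now $p(\bar\beta) \colon p(e') \to p(e)$ and $\beta$ are two arrows of $B$ ending at $p(e)$ that both map under $r$ to $r(\beta)$; since $r$ is a discrete fibration such an arrow is unique, so $p(\bar\beta) = \beta$ (and in particular $p(e') = b$). Thus $\bar\beta$ is a $p$-lift of $\beta$.

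The substance of the argument is to check that $\bar\beta$ is $p$-cartesian, and here I would again transport the universal property of $q$-cartesianness across $r$. Given $\gamma \colon e'' \to e$ in $E$ together with $\delta \colon p(e'') \to b$ in $B$ satisfying $p(\gamma) = \beta \circ \delta$, applying $r$ yields $q(\gamma) = q(\bar\beta) \circ r(\delta)$, so $q$-cartesianness of $\bar\beta$ supplies a unique $\tilde\delta \colon e'' \to e'$ with $q(\tilde\delta) = r(\delta)$ and $\bar\beta \circ \tilde\delta = \gamma$. It then remains to see $p(\tilde\delta) = \delta$: both arrows end at $b$ and have the same image $r(\delta)$ under $r$, so they agree by the uniqueness clause for the discrete fibration $r$. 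Uniqueness of $\tilde\delta$ as a $p$-cartesian filler follows from its uniqueness as a $q$-cartesian filler, since any competitor $\tilde\delta'$ with $p(\tilde\delta') = \delta$ automatically satisfies $q(\tilde\delta') = r(\delta)$.

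The only mildly delicate point---and the step I expect to require the most care---is the repeated appeal to the \emph{uniqueness} half of the discrete fibration $r$ in order to move between morphisms of $B$ and morphisms of $A$. It is precisely this uniqueness that lets me identify the lift ($p(\bar\beta) = \beta$) and recover the $B$-component of the filler ($p(\tilde\delta) = \delta$) at no cost, so that a $q$-cartesian filler is automatically a $p$-cartesian filler. Everything else is a routine diagram chase.
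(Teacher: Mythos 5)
Your proof is correct, but it takes a different route from the paper's. You argue directly from the pointwise definition of a Grothendieck fibration: you build the candidate lift by pushing $\beta$ down to $A$ along $r$, taking a $q$-cartesian lift, and then using the uniqueness clause of the discrete fibration twice --- once to identify $p(\bar\beta)$ with $\beta$, and once to see that the $q$-cartesian filler $\tilde\delta$ satisfies $p(\tilde\delta)=\delta$ --- so that $q$-cartesianness transports to $p$-cartesianness essentially for free. The paper instead invokes the characterization of fibrations via right adjoint right inverses to the slice functors $E_{/e}\to B_{/pe}$ (citing Gray and Loregian--Riehl): its key observation is that $r$ being a discrete fibration makes each $B_{/b}\to A_{/rb}$ an \emph{isomorphism} of categories, whence the right adjoint right inverse of $E_{/e}\to A_{/qe}$ can simply be precomposed with the inverse $B_{/pe}\cong A_{/qe}$. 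The two arguments encode the same idea (the discrete fibration $r$ makes $B$-data over a fixed target interchangeable with $A$-data); yours is more elementary and self-contained, requiring no external characterization theorem, while the paper's is shorter once that characterization is granted and packages both of your uniqueness appeals into the single statement that the slice comparison is an isomorphism.
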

\begin{proof}
If $b\in B$ is any object, then $B_{/b} \to A_{/rb}$ is an isomorphism. Indeed, it is bijective on objects since any $a \to rb$ has a unique lift $\tilde a \to b$. It is also fully faithful since a string $a_0 \to a_1 \to rb$ has a unique lift $\tilde a_0 \to \tilde a_1 \to b$ in $B$; but the composite $\tilde a_0 \to b$ is the unique lift of $a_0 \to rb$.

To show that $p$ is a fibration it is enough to exhibit a right adjoint right inverse to $E_{/e} \to B_{/pe}$ for every object $e$ (see \cite[Theorem 2.10]{Gray:FCC} or \cite[Theorem 2.2.2]{LoregianRiehl:CNF}).
But since $q$ is a fibration, for each $e$ the functor $E_{/e} \to A_{/qe}$ admits a right adjoint right inverse.
So choose one and this defines the desired functor $B_{/pe} \cong A_{/rpe} \to E_{/e}$.
\end{proof}

Since $q\pi \colon \kong \to \simpcat$ is a fibration and $q$ is a discrete fibration, we have:

\begin{lemma}
The functor $\pi \colon \kong \to \WW$ is a Grothendieck fibration.
\qed
\end{lemma}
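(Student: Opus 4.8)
The plan is to read this off directly from the commutative-triangle lemma proved just above, which is tailored for exactly this situation. I would apply it to the triangle
\[ \begin{tikzcd}[column sep=small]
\kong \ar[rr,"\pi"] \ar[dr,"q\pi"'] & & \WW \ar[dl,"q"] \\
& \simpcat
\end{tikzcd} \]
sitting inside the commutative diagram of fibrations over $\simpcat$ recalled above. In the notation of that lemma, the apex $E$ is $\kong$, the base corner $B$ is $\WW$, the terminal corner $A$ is $\simpcat$, the top map ``$p$'' is our $\pi$, the left leg ``$q$'' is our composite $q\pi = pi$, and the right leg ``$r$'' is our projection $q$. The conclusion of the lemma, applied in these roles, is precisely that $\pi \colon \kong \to \WW$ is a fibration.

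Both hypotheses of the lemma are already in hand. The left leg $q\pi \colon \kong \to \simpcat$ is a fibration: it is the fibration classified by the simplicial groupoid $\kong_\bullet \colon \simpcat^\oprm \to \gpd$, as recorded when $\kong \to \simpcat$ was introduced. The right leg $q \colon \WW \to \simpcat$ is a discrete fibration by construction, since $\WW$ was defined to be the discrete fibration (category of elements) of the $\set$-valued presheaf $\WW_\bullet = \pi_0 \kong_\bullet$. Feeding these two facts into the lemma finishes the argument.

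There is no genuine obstacle here; the only point requiring care is the bookkeeping, because the symbol $q$ is overloaded. In the present diagram $q$ names the discrete fibration $\WW \to \simpcat$, whereas in the statement of the lemma $q$ names the fibration out of the apex. I would therefore spell out the correspondence of roles explicitly — the discrete fibration $\WW \to \simpcat$ plays the part of the lemma's ``$r$'', and the composite $\kong \to \simpcat$ plays the part of the lemma's ``$q$'' — before invoking the lemma, so that no confusion arises.
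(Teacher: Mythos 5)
Your proof is correct and is exactly the paper's argument: the lemma is stated with \qed precisely because it follows by applying the preceding commutative-triangle lemma to the triangle $\kong \to \WW \to \simpcat$, using that $q\pi$ is a fibration and $q \colon \WW \to \simpcat$ is a discrete fibration. Your careful matching of the overloaded notation to the roles in that lemma is exactly the right bookkeeping.
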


It is classical that left Kan extension along an opfibration may be computed by taking the colimit over the fibers (see \cite{nlabKan}), which gives the following.

\begin{lemma}\label{lem pushforwards}
If $A$ is a $\kong$-presheaf, then its left Kan extension $\pi_!A$ is given by
\[
(\pi_!A)_w = \colim_{G \in \kong_w^\oprm} A_G.    
\] 
where $\kong_w \subseteq \kong$ is the relevant connected component of $\kong_{q(w)}$.
Likewise, if $B$ is a $\WW$-presheaf, then \[ (q_!B)_n = \sum_{w\in\WW_n} B_w.\] 
\end{lemma}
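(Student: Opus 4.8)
The plan is to invoke the classical fiberwise formula for a left Kan extension along an opfibration, taking care with variances. Since $\pre(\kong) = \fun(\kong^\oprm,\set)$, the functor $\pi_!$ is the left Kan extension along the opposite functor $\pi^\oprm \colon \kong^\oprm \to \WW^\oprm$. By the preceding lemma $\pi$ is a Grothendieck fibration, so $\pi^\oprm$ is a Grothendieck \emph{op}fibration. For such a functor, the inclusion of a fiber into the relevant comma category $(\pi^\oprm \downarrow w)$ admits a left adjoint (sending $(G,\psi)$ to the cocartesian lift of $\psi$ at $G$), hence is final; therefore the pointwise colimit computing the left Kan extension may be taken over the fiber rather than over the full comma category (cf.~\cite{nlabKan}).

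It then remains to identify the fiber of $\pi^\oprm$ over an object $w$. This fiber is $(\kong_w)^\oprm$, where $\kong_w$ is the fiber of $\pi$ over $w$. Because $\pi$ restricts on the groupoid $\kong_{q(w)}$ to the projection onto $\pi_0(\kong_{q(w)}) = \WW_{q(w)}$, this fiber is exactly the connected component of $\kong_{q(w)}$ indexed by $w$, as in the statement. This yields
\[
(\pi_!A)_w = \colim_{G\in\kong_w^\oprm} A_G.
\]

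For the second formula I would run the same argument for $q^\oprm \colon \WW^\oprm \to \simpcat^\oprm$. As $q$ is a discrete fibration, $q^\oprm$ is a discrete opfibration, whose fiber over $[n]$ is the discrete set $\WW_n$. A colimit indexed by a discrete category is simply a coproduct, so
\[
(q_!B)_n = \colim_{w\in\WW_n} B_w = \sum_{w\in\WW_n} B_w.
\]

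The argument is essentially immediate once the classical opfibration formula is in hand; the only point demanding care is the variance bookkeeping, namely confirming that passing to opposite categories turns the given fibrations into opfibrations, so that the \emph{colimit} formula for left Kan extensions applies (rather than the dual limit formula that would govern right Kan extensions along fibrations).
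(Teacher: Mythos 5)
Your proof is correct and follows the same route as the paper, which simply cites the classical fact that left Kan extension along an opfibration is computed by fiberwise colimits; you additionally spell out the finality argument (the fiber inclusion into the comma category is a right adjoint, hence final) and the variance bookkeeping that turns the fibrations $\pi$ and $q$ into opfibrations on opposite categories. Both of these added details are accurate, and your identification of the fiber of $\pi$ over $w$ with the corresponding connected component of $\kong_{q(w)}$ matches the statement.
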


\begin{definition}\label{def category SX}
Let $S$ denote the composition
\[ \begin{tikzcd}
\pre(\LL) \rar{i^*} & \pre(\kong) \rar{q_!\pi_!} & \pre(\simpcat).
\end{tikzcd} \]
That is, if $X\in \pre(\LL)$, define $SX \in \pre(\simpcat)$ as
\[
  SX = (q\pi)_!i^*X.
\]
\end{definition}
In light of \cref{lem pushforwards}, this means that
\begin{equation}\label{EQ SX levels}
  SX_n = \sum_{w\in \WW_n}  \colim_{G \in \kong_w^\oprm} X_G.
\end{equation}
We write $\overline{X}_G$ for $\colim_{G \in \kong_w^\oprm} X_G$, which is a quotient of $X_G$ by the action of self-congruences of $G$. 
We occasionally write $\overline{X}_w$ for this set, where $w = [G] = \pi(G)\in \WW$.

\begin{remark}\label{reduced graphs remark}
If every vertex of $G \in \LL_1$ has an input or output, then the canonical map $X_G \to \overline{X}_G = \colim_{G \in \kong_w^\oprm} X_G$ is an isomorphism by \cref{rmk congruence height 1}.
\end{remark}

\begin{proposition}\label{prop SX segal}
If $X \in \seg(\LL)$, then $SX \in \seg(\simpcat)$.
\end{proposition}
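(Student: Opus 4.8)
The plan is to verify the Segal condition for $SX_\bullet$ fiberwise over the map of simplicial sets $SX_\bullet \to \WW_\bullet$ that records congruence classes, via the description \eqref{EQ SX levels}. For $n \geq 2$, consider the commutative square
\[ \begin{tikzcd}
SX_n \rar \dar & SX_1 \times_{SX_0} \cdots \times_{SX_0} SX_1 \dar \\
\WW_n \rar & \WW_1 \times_{\WW_0} \cdots \times_{\WW_0} \WW_1
\end{tikzcd} \]
whose horizontal maps are the Segal maps and whose vertical maps are the projections to indexing sets. The bottom map is a bijection by \cref{segality W}, so the top map is a bijection if and only if it restricts to a bijection on the fiber over each $w \in \WW_n$. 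Fixing a representative $G \in \kong_n$ with $[G] = w$ and writing $\rho_i^* w = [\rho_i^* G]$, $\kappa_i^* w = [\kappa_i^* G]$, the fiber over $w$ of the left map is $\overline{X}_w$, while that of the right map is $\overline{X}_{\rho_1^* w} \times_{\overline{X}_{\kappa_1^* w}} \cdots \times_{\overline{X}_{\kappa_{n-1}^* w}} \overline{X}_{\rho_n^* w}$. Thus it remains to produce a bijection between these two sets.

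To do this I would assemble three ingredients. First, since $X$ is Segal, the second bullet of \cref{examples segality} gives a bijection $X_G \cong X_{\rho_1^* G} \times_{X_{\kappa_1^* G}} \cdots \times_{X_{\kappa_{n-1}^* G}} X_{\rho_n^* G}$. Second, $\overline{X}_w$ is the quotient of $X_G$ by the group $\mathrm{Aut}_{\kong}(G)$ of self-congruences (the colimit over the connected groupoid $\kong_w^\oprm$); because congruences of height $0$ graphs are identities, we have $\overline{X}_{\kappa_i^* w} = X_{\kappa_i^* G}$, and iterating the fully faithful Segal equivalence of \cref{segality K} decomposes $\mathrm{Aut}_{\kong}(G) \cong \prod_{i=1}^n \mathrm{Aut}_{\kong}(\rho_i^* G)$ (the conditions gluing consecutive factors over $\kong_0$ being automatic). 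Third, by naturality of the inert restriction maps, the bijection above is equivariant for this decomposition, with the $i$-th factor acting on $X_{\rho_i^* G}$ and acting \emph{trivially} on every gluing object $X_{\kappa_j^* G}$, since a self-congruence restricts to the identity on the height $0$ graph $\kappa_j^* G$.

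The final step is to pass to quotients, and this is where I expect the main obstacle: group quotients (colimits) do not commute with fiber products (limits) in general. The feature that rescues the argument is precisely that each factor group acts only on its own $X_{\rho_i^* G}$ and trivially on the height $0$ gluing objects. I would isolate this as an elementary lemma: for sets $A, B$ over $C$ with a $\Gamma_A$-action on $A$ and a $\Gamma_B$-action on $B$, both trivial on $C$, the canonical map $(A \times_C B)/(\Gamma_A \times \Gamma_B) \to (A/\Gamma_A) \times_C (B/\Gamma_B)$ is a bijection; one checks this on the fiber over each $c \in C$, where it reads $(A_c \times B_c)/(\Gamma_A \times \Gamma_B) = (A_c/\Gamma_A) \times (B_c/\Gamma_B)$. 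Iterating this identity along the chain of fiber products and feeding in the three ingredients above yields the desired bijection $\overline{X}_w \cong \overline{X}_{\rho_1^* w} \times_{\overline{X}_{\kappa_1^* w}} \cdots \times_{\overline{X}_{\kappa_{n-1}^* w}} \overline{X}_{\rho_n^* w}$, and hence that $SX$ is Segal.
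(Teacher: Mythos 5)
Your argument is correct, and it assembles exactly the same three inputs as the paper's proof --- the bijectivity of the Segal maps for $\WW_\bullet$ (\cref{segality W}), the fully faithful gluing of congruences from \cref{segality K}, and the Segal condition for $X$ from \cref{examples segality} --- but it packages them differently. The paper works with the \emph{binary} Segal map $SX_n \to SX_k \times_{SX_0} SX_\ell$ and runs a direct element chase: for injectivity it glues the two restricted congruences into a single congruence $\chi\colon G \to G'$ and uses Segality of $X$ to conclude $\chi^*x' = x$; for surjectivity it picks a graph $H$ restricting to the given pair and lifts along the Segal bijection $X_H \cong X_G \times_{X_{\underline m}} X_{G'}$. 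You instead fix the fiber over a single $w\in\WW_n$ from the start, identify $\overline{X}_w$ as the quotient of $X_G$ by $\mathrm{Aut}_{\kong}(G)$, decompose that group as $\prod_i \mathrm{Aut}_{\kong}(\rho_i^*G)$ (which is the full-faithfulness half of \cref{segality K}, with the gluing conditions vacuous because $\kong_0$ is discrete), and reduce everything to an elementary lemma saying that quotients commute with fiber products when the groups act factorwise and trivially on the gluing objects. That lemma is correct (checked fiberwise over $C$ exactly as you say, and it iterates along the spine because each $\Gamma_i$ fixes the height-zero objects), and the equivariance of the Segal bijection follows from naturality of the inert restrictions as you claim. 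What your version buys is a clean isolation of \emph{why} the colimit over congruences does not interfere with the limit over the spine --- the independence of the automorphism actions --- at the cost of setting up the group-action formalism; the paper's version is more pedestrian but needs no auxiliary lemma. Your use of the $n$-ary spine rather than the binary decomposition is an inessential difference.
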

\begin{proof}
Suppose we have a pushout square of non-identity maps in $\simpcat_\intrm$ whose vertical maps preserve last elements and horizontal maps preserve first elements.
\[ \begin{tikzcd}
{[0]} \ar[d,"k",tail] \ar[r,"0",tail]  \ar[dr, phantom, "\ulcorner" very near end] & {[\ell]} \ar[d,"\beta", tail] \\
{[k]} \ar[r,"\alpha", tail] &{[n]}
\end{tikzcd} \]
It is enough to show that $SX$ sends pushouts of this form to pullbacks.

We use the description of $SX_n$ from \eqref{EQ SX levels} above.
Suppose we have two elements $s,s'$ of $SX_n$, represented by $(w,G,x)$ and $(w',G',x')$, which are sent to the same element of $SX_k \times_{SX_0} SX_\ell$
(Here $G$ is a graph in the path component $w$ of $\kong_n$ and $x$ is an element of $X_G$ with $s$ the image of $x$ under $X_G \to \colim X_H \subseteq SX_n$).
Then $w=w'$ by \cref{segality W}.
Write $\alpha^*x \in X_{\alpha^*G}$, $\beta^*x \in X_{\beta^*G}$, $\alpha^*x' \in X_{\alpha^*G'}$, and $\beta^*x' \in X_{\beta^*G'}$ for the images of $x$ and $x'$.
Since $\alpha^*x$ and $\alpha^*x'$ represent the same element in the colimit
\[
  \colim_{H \in \kong_{\alpha^* w}^\oprm} X_H
\]
there is a congruence $\gamma \colon \alpha^*G \to \alpha^*G'$ with $\gamma^*(\alpha^*x') = \alpha^*x$.
Likewise, there is a congruence $\delta \colon \beta^*G \to \beta^*G'$ with $\delta^*(\beta^*x') = \beta^*x$.
By \cref{segality K}, there is a unique congruence $\chi \colon G \to G'$ with $\alpha^*(\chi) = \gamma$ and $\beta^*(\chi) = \delta$.
This implies, in particular, that $G_{kk} = G_{kk}'$, and we write $\underline{m}$ for this common value.
The following diagram commutes and consists of bijections since $X$ is a Segal $\LL$-presheaf (see \cref{examples segality}).
\[ \begin{tikzcd}
X_{G'} \rar \dar["\chi^*"'] & X_{\alpha^*G'} \times_{X_{\underline{m}}} X_{\beta^*G'} \dar \\
X_G \rar & X_{\alpha^*G} \times_{X_{\underline{m}}} X_{\beta^*G}
\end{tikzcd} \]
It follows that $\chi^*x' = x$.
Since $x\in X_G$ and $x'\in X_{G'}$ represent the same element in the colimit $\colim_{H \in \kong_w^\oprm} X_H$, it follows that $s=s'$ in $SX_n$.
We have thus shown the Segal map is injective.

We now turn to surjectivity.
Suppose $(s,s') \in SX_k \times_{SX_0} SX_\ell$, with $s$ represented by $(w,G,x)$ and $s'$ represented by $(w',G',x')$.
We have $G_{kk} = G'_{00} = \underline{m}$.
Choose a height $n$ level graph $H$ with $\alpha^*H = G$ and $\beta^*H = G'$.
Since $X$ is a Segal $\LL$-presheaf, there is a (unique) element $y \in X_H$ which maps to $(x,x')$ under
\[ \begin{tikzcd}
\alpha^* \times \beta^* \colon X_H \xrightarrow{\cong} X_G \times_{X_{\underline{m}}} X_{G'}.
\end{tikzcd} \]
Now the element of $SX_n$ represented by $(\pi(H), H, y)$ is sent to $(s,s')$ by the Segal map.
\end{proof}

\begin{definition}
If $X \in \seg(\LL)$, we write $CX = \tau_1 (SX) \in \cat$ for the associated category (where $\tau_1 \colon \sset \to \cat$ is left adjoint to $N$). This defines a functor $C \colon \seg(\LL) \to \cat$ with $NCX = SX$.
\end{definition}

The following example has previously been discussed in \cref{Remark WW as nerve Csp}, but it is extremely important for the rest of the paper and there is no harm in revisiting it.
We will return to the monoidal structure in \cref{example C star monoidal} below.

\begin{example}[Canonical inclusion]\label{example C star Csp}
If $\ast \in \pre(\LL)$ is the terminal presheaf, then there is an injective-on-objects equivalence $C(\ast) \to \csp$.
From the formula following Definition \ref{def category SX}, we see $S(\ast)$ is isomorphic to the simplicial set $\WW_\bullet$.
Then the objects of $C(\ast)$ are identified with the height 0 level graphs, that is, with $\ob (\finsetskel) = \{ \underline{0}, \underline{1}, \underline{2}, \dots \}$.
The morphisms of $C(\ast)$ are the elements of $\WW_1$, which under the identification on objects are precisely the same as the morphisms between these objects in $\csp$.
Finally, compositions in both categories are given by pushouts of equivalence classes of cospans.
\end{example}

\subsection{Monoidal structure}\label{subsec monoidal}
In this section, we explain why $CX$ is a strict monoidal category.
It turns out to also be symmetric monoidal (see \cref{thm permutative}), but this is more difficult to show since it requires working at several simplicial levels of $SX$.
The main result of this section is that if $X$ is Segal, then $SX$ is a (strict) monoid in $\sset$.
This means (see \cref{prop strict monoidal cat}) that $CX$ is a monoid in $\cat$, that is, a strict monoidal category.

For each $n\geq 0$, the category $\LL_n \subseteq \fun(\mathscr{L}^n, \finsetskel)$ is monoidal, using the ordinal sum in $\finsetskel$. 
More specifically, if $G$ and $H$ are height $n$ level graphs, then $G+H$ is the height $n$ level graph with $(G+H)_{ij} = G_{ij} + H_{ij}$.
The maps $G_{ij} \hookrightarrow G_{ij} + H_{ij} \hookleftarrow H_{ij}$ are order-preserving, and every element in $G_{ij}$ appears before every element in $H_{ij}$.
In fact, $\LL_n$ is actually a permutative category since $\finsetskel$ is (i.e.~a symmetric strict monoidal category \cite[Definition 4.1]{May74}).
From \cref{examples segality} we know that a Segal presheaf $X$ takes sums of height $n$ level graphs to products of sets, which we use in the following.

\begin{definition}\label{def tensor before S}
Suppose $X \in \seg(\LL)$, $G$ and $H$ are height $n$ level graphs, and let
$i \colon G \to G+H$ and $j\colon H \to G+H$ be the inclusions. 
Then define
\[
  {-}\ten{-} \colon X_G \times X_H \to X_{G+H}
\]
to be the inverse of the bijection $i^* \times j^* \colon X_{G+H} \to  X_G \times X_H$.
\end{definition}

We will make use of the following lemma in \cref{sec symmetry}.
\begin{lemma}\label{lem symmetry}
Let $G,H \in \LL_n$ and let $\sigma \colon H+G \to G+H$ be the flip map.
If $x\in X_G$ and $y\in X_H$, then $\sigma^*(x\ten y) = y\ten x$.
\end{lemma}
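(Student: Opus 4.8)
The plan is to exploit the defining property of the tensor from \cref{def tensor before S} rather than computing anything explicitly. Write $i \colon G \to G+H$ and $j\colon H \to G+H$ for the summand inclusions, so that $x\ten y \in X_{G+H}$ is characterized by $i^*(x\ten y) = x$ and $j^*(x\ten y) = y$; and write $i'\colon H \to H+G$, $j'\colon G \to H+G$ for the inclusions into $H+G$, so that $y\ten x \in X_{H+G}$ is characterized by $i'^*(y\ten x) = y$ and $j'^*(y\ten x) = x$. Because $i'^*\times j'^* \colon X_{H+G} \to X_H \times X_G$ is a bijection (this is exactly the Segal-provided bijection of \cref{examples segality} underlying \cref{def tensor before S}), it suffices to show that $\sigma^*(x\ten y)$ has the same two images, namely $i'^*\sigma^*(x\ten y) = y$ and $j'^*\sigma^*(x\ten y) = x$.

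First I would reduce these to images of $x\ten y$ by contravariant functoriality, $i'^*\sigma^* = (\sigma i')^*$ and $j'^*\sigma^* = (\sigma j')^*$. The only substantive input is the pair of compositional identities $\sigma\circ i' = j$ and $\sigma\circ j' = i$: the flip $\sigma$, being the symmetry of the permutative structure on $\LL_n$, carries the first summand $H$ of $H+G$ onto the $H$-summand of $G+H$ (which is $j$) and the second summand $G$ onto the $G$-summand of $G+H$ (which is $i$). Granting these, I compute $i'^*\sigma^*(x\ten y) = j^*(x\ten y) = y$ and $j'^*\sigma^*(x\ten y) = i^*(x\ten y) = x$, where the last equalities are the defining properties of $x\ten y$. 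Injectivity of $i'^*\times j'^*$ then yields $\sigma^*(x\ten y) = y\ten x$.

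I expect no serious obstacle here; the argument is formal, driven entirely by the universal property defining $\ten$ together with the triangle identities $\sigma i' = j$, $\sigma j' = i$. The two points deserving a moment's care are both routine: (i) that $\sigma$ really is a morphism of $\LL_n$, so that $\sigma^*$ is defined — this holds because $\sigma$ is a levelwise bijection, hence a monomorphism with automatically cartesian naturality squares over $G_{0n}$; and (ii) that the compositional identities hold at the level of the functors $\mathscr{L}^n \to \finsetskel$, which reduce objectwise to the corresponding identities for the symmetry of the ordinal-sum permutative structure on $\finsetskel$. The Segal hypothesis on $X$ enters only insofar as it guarantees the bijectivity of $i^*\times j^*$ and $i'^*\times j'^*$, which is precisely what makes $\ten$ well-defined in the first place.
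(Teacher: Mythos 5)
Your proof is correct and follows essentially the same route as the paper's: both arguments use the commuting triangles $\sigma i' = j$, $\sigma j' = i$ together with the bijectivity of the Segal map $i'^*\times j'^*\colon X_{H+G}\to X_H\times X_G$ to identify $\sigma^*(x\ten y)$ with $y\ten x$. The paper merely packages the two componentwise computations into a single commuting square with the flip map $\tau$ on product sets.
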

\begin{proof}
The diagram
\[
\begin{tikzcd}
H \rar{i_L} \ar[dr,"i_R"'] & H + G \dar["\sigma" description] \\
& G + H & G \lar["i_L"] \ar[ul,"i_R"']
\end{tikzcd} 
\]
commutes in $\LL_n$, which implies the diagram
\[ \begin{tikzcd}
X_G \times X_H \dar{\tau} & X_{G + H} \lar["\cong" swap,"i_L^*\times i_R^*"]  \dar{\sigma^*}  \\
X_H \times X_G & X_{H + G} \lar["\cong" swap,"i_L^*\times i_R^*"] 
\end{tikzcd} \]
commutes, where $\tau$ is the flip map. The result follows.
\end{proof}

\begin{lemma}\label{lem assoc unit before S}
Let $X\in \seg(\LL)$ and suppose that $G,H,K\in \LL_n$.
If $x\in X_G$, $y\in X_H$, and $z\in X_K$, then \[(x\ten y) \ten z = x \ten (y\ten z)\] in $X_{(G+H)+K} = X_{G+(H+K)}$.
If $\varnothing_n$ is the empty height $n$ level graph and $\ast$ is the unique element of $X_{\varnothing_n}$, then 
\[
\ast \ten x = x = x\ten \ast
\]
in $X_{\varnothing_n + G} = X_G = X_{G+ \varnothing_n}$.
\end{lemma}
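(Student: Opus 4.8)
The plan is to prove both statements by exploiting that $\ten$ is \emph{defined} as the inverse of a restriction bijection, so that every identity between tensors can be checked after restricting to the individual summands, where it becomes a triviality. The only inputs are the sum-to-product bijection coming from Segality (\cref{examples segality}) and the fact that the ordinal sum on $\finsetskel$, hence on $\LL_n$, is strictly associative and unital.

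For associativity I would first record the iterated form of the sum-to-product bijection: since $\LL^\elrm_{/(G+H+K)} = \LL^\elrm_{/G} \amalg \LL^\elrm_{/H} \amalg \LL^\elrm_{/K}$ (the obvious threefold extension of the twofold decomposition in \cref{examples segality}), the Segal condition yields a bijection
\[
  X_{G+H+K} \xrightarrow{\cong} X_G \times X_H \times X_K
\]
given by restriction along the three summand inclusions. Because $+$ is \emph{strictly} associative, $(G+H)+K$ and $G+(H+K)$ are literally the same level graph and the summand inclusions do not depend on the bracketing, so this triple-restriction map is a single well-defined bijection out of the common set $X_{(G+H)+K}$. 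It then suffices to check that $(x\ten y)\ten z$ and $x\ten(y\ten z)$ both map to $(x,y,z)$.

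Writing $a\colon G+H \hookrightarrow (G+H)+K$ and $b\colon K\hookrightarrow (G+H)+K$ for the two inclusions, the definition of $\ten$ gives $a^*\bigl((x\ten y)\ten z\bigr)=x\ten y$ and $b^*\bigl((x\ten y)\ten z\bigr)=z$. The summand inclusion $G\hookrightarrow (G+H)+K$ factors as $G\hookrightarrow G+H \xrightarrow{a}(G+H)+K$, so by functoriality and the defining property of $x\ten y$ the restriction of $(x\ten y)\ten z$ to $X_G$ is $x$; similarly its restrictions to $X_H$ and $X_K$ are $y$ and $z$. The symmetric computation handles $x\ten(y\ten z)$, and equality follows from injectivity of the triple restriction. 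For unitality, I would first note that $\LL^\elrm_{/\varnothing_n}$ is empty, since no elementary graph admits an inert map to the empty graph; hence the Segal condition identifies $X_{\varnothing_n}$ with the empty limit, a one-point set, justifying the existence and uniqueness of $\ast$. As $+$ is strictly unital, $\varnothing_n + G = G = G+\varnothing_n$ and the inclusion $G\hookrightarrow \varnothing_n+G$ is the identity, so the defining bijection $X_{\varnothing_n+G}\to X_{\varnothing_n}\times X_G$ becomes, after collapsing the singleton factor, the identity on $X_G$; its inverse applied to $(\ast,x)$ is $x$, giving $\ast\ten x=x$, and symmetrically $x\ten\ast=x$.

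The computations are essentially bookkeeping; the one step deserving care — and the main thing to get right — is the compatibility of the summand inclusions with the strict associator, namely that restricting along $G\hookrightarrow(G+H)+K$ agrees with restricting first to $G+H$ and then to $G$. Everything else is a direct consequence of the sum-to-product bijection of \cref{examples segality} together with the permutative structure on $\LL_n$.
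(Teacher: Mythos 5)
Your proof is correct and follows essentially the same route as the paper's: both arguments reduce associativity to the strictness of $+$ on $\LL_n$ together with compatibility of the Segal restriction bijections under composite summand inclusions (the paper packages this as a commutative rectangle of bijections $X_{(G+H)+K} \to X_{G+H}\times X_K \to (X_G\times X_H)\times X_K$ rather than your single threefold restriction, but the content is identical), and both handle the unit by identifying $X_{\varnothing_n}$ with the empty limit. No gaps.
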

\begin{proof}
Associativity follows from commutativity of the following rectangle.
\[ \begin{tikzcd}
(X_G \times X_H) \times X_K \dar & X_{G+H} \times X_K \lar["\cong" swap]  & X_{(G+H) + K} \lar["\cong" swap] \dar{=} \\
X_G \times (X_H \times X_K) & X_G \times X_{H+K} \lar["\cong" swap] & X_{G+ (H + K)} \lar["\cong" swap]
\end{tikzcd} \]
This uses that the monoidal structure on $\LL_n$ is strict, so $(G+H)+K = G+(H+K)$.

For the second statement, the Segal map at $\varnothing_n$
\[
  X_{\varnothing_n} \xrightarrow{\cong} \lim_{K \in (\LL_\elrm^\oprm)_{\varnothing_n/}} X_K
\]
identifies $X_{\varnothing_n}$ with a limit over the empty category.
Hence there is indeed a unique element $\ast$ in $X_{\varnothing_n}$.
The Segal maps are
\[ \begin{tikzcd}[row sep=0]
X_G = X_{\varnothing_n + G} \rar & X_{\varnothing_n} \times X_G & X_G = X_{ G + \varnothing_n} \rar &  X_G \times X_{\varnothing_n} \\
x \rar[mapsto] & (\ast, x) & x \rar[mapsto] & (x,\ast)
\end{tikzcd} \]
which establishes the result.
\end{proof}

\begin{lemma}\label{lem ten descends}
Suppose $X\in \seg(\LL)$ and $n\geq 0$.
The collection of functions $\ten$ from \cref{def tensor before S} descend to a function
\[
  {-}\ten{-} \colon SX_n \times SX_n \to SX_n.
\]
\end{lemma}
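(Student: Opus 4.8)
The plan is to show that the graph-level tensor maps of \cref{def tensor before S} are compatible with congruences, so that they descend through the colimits defining $SX_n$. Recall from \eqref{EQ SX levels} that $SX_n = \sum_{w \in \WW_n} \overline{X}_w$, so a typical element is a pair $(w, \bar x)$ with $\bar x \in \overline{X}_w = \colim_{G \in \kong_w^\oprm} X_G$; concretely $\bar x$ is represented by a pair $(G, x)$ with $\pi(G) = w$ and $x \in X_G$, and two such pairs $(G,x)$, $(G', x')$ represent the same element precisely when there is a congruence $\phi \colon G \to G'$ with $\phi^* x' = x$ (using that $\kong_w$ is a connected groupoid, so the colimit relation is generated by single congruences). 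First I would check that $+$ descends to a binary operation on $\WW_n$: if $\phi \colon G \to G'$ and $\psi \colon H \to H'$ are congruences, then $\phi + \psi \colon G + H \to G' + H'$ is again a congruence, since $(\phi + \psi)_{ii} = \phi_{ii} + \psi_{ii}$ is an identity for each $i$. Writing $w + w' \coloneqq [G+H]$ for representatives $G, H$, this is then independent of choices.

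The key step is the naturality of $\ten$ with respect to congruences: for congruences $\phi \colon G \to G'$ and $\psi \colon H \to H'$ and elements $x \in X_{G'}$, $y \in X_{H'}$, I claim that
\[ (\phi + \psi)^*(x \ten y) = \phi^* x \ten \psi^* y \]
in $X_{G+H}$. This follows from the naturality of the Segal bijection $i^* \times j^* \colon X_{G+H} \to X_G \times X_H$ of \cref{examples segality}. Indeed, the inclusions satisfy $(\phi + \psi) \circ i = i' \circ \phi$ and $(\phi + \psi) \circ j = j' \circ \psi$ (where $i', j'$ are the inclusions into $G' + H'$), so applying the presheaf $X$ shows that the square
\[
\begin{tikzcd}
X_{G' + H'} \rar{\cong} \dar{(\phi + \psi)^*} & X_{G'} \times X_{H'} \dar{\phi^* \times \psi^*} \\
X_{G + H} \rar{\cong} & X_{G} \times X_{H}
\end{tikzcd}
\]
commutes. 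Since $\ten$ is by definition the inverse of the horizontal bijections, inverting the horizontal arrows yields exactly the claimed identity.

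With these two facts in hand, descent is immediate. Suppose $(G, x) \sim (G', x')$ and $(H, y) \sim (H', y')$ represent elements $\bar x \in \overline{X}_w$ and $\bar x' \in \overline{X}_{w'}$, witnessed by congruences $\phi \colon G \to G'$ and $\psi \colon H \to H'$ with $\phi^* x' = x$ and $\psi^* y' = y$. Then the key step gives $x \ten y = \phi^* x' \ten \psi^* y' = (\phi + \psi)^*(x' \ten y')$, so $(G + H, x \ten y) \sim (G' + H', x' \ten y')$ via the congruence $\phi + \psi$; hence $\overline{x \ten y} = \overline{x' \ten y'}$ in $\overline{X}_{w + w'}$. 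Therefore $(\bar x, \bar x') \mapsto \overline{x \ten y}$ is a well-defined map $\overline{X}_w \times \overline{X}_{w'} \to \overline{X}_{w + w'} \subseteq SX_n$, and assembling over all $w, w' \in \WW_n$ produces the desired function $SX_n \times SX_n \to SX_n$. I expect the only real subtlety to lie in the bookkeeping of this last paragraph — correctly identifying the colimit equivalence relation and verifying that varying the representatives of \emph{both} tensor factors at once is absorbed by the single congruence $\phi + \psi$ — while the genuine mathematical content is supplied entirely by the Segal bijection $X_{G+H} \cong X_G \times X_H$ that is already available.
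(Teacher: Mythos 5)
Your proof is correct and follows essentially the same route as the paper: both define $\bar x \ten \bar x'$ on representatives via the inverse of the Segal bijection $X_{G+G'} \xrightarrow{\cong} X_G \times X_{G'}$ and verify independence of representatives by the commutativity of the square relating $(f+g)^*$ to $f^* \times g^*$. Your explicit remarks that $\phi + \psi$ is again a congruence and that the colimit relation over the connected groupoid $\kong_w$ is generated by single congruences are points the paper leaves implicit, but the mathematical content is identical.
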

\begin{proof}
Recall the description of $SX_n$ from \eqref{EQ SX levels} on page~\pageref{EQ SX levels}.
Let $\bar{x} \in \overline{X}_w \subseteq SX_n$ and $\bar{x}' \in \overline{X}_{w'} \subseteq SX_n$.
Choose a representative $x\in X_G$ and $x'\in X_{G'}$ for these elements and define $\bar{x} \ten \bar{x}' \in SX_n$ to be the image of $(x,x')$ under the following composite:
\[ \begin{tikzcd}
X_G \times X_{G'} & X_{G + G'} \lar["\cong" swap] \rar & \overline{X}_{G+G'} \rar[hook] & SX_n.
\end{tikzcd} \]
The leftward map is a bijection since $X$ is Segal (\cref{examples segality}).
This definition does not depend on our choice of representatives: indeed, suppose we have $(y,y') \in X_H \times X_{H'}$ along with congruences $f \colon H \to G$ and $g \colon H' \to G'$ so that $f^*(x) = y$ and $g^*(x') = y'$. We then have a commutative diagram
\[ \begin{tikzcd}
X_G \times X_{G'} \dar{f^* \times g^*} & X_{G + G'} \lar["\cong" swap] \rar \dar{(f + g)^*}  & \overline{X}_{G+G'} \rar[hook] \dar{=} & SX_n \dar{=} \\
X_H \times X_{H'} & X_{H + H'} \lar["\cong" swap] \rar & \overline{X}_{H+H'} \rar[hook] & SX_n
\end{tikzcd} \]
which establishes that $\bar x \ten \bar x'$ does not depend on choice of representative.
\end{proof}

\begin{proposition}\label{prop strict monoidal cat}
If $X \in \seg(\LL)$, then $CX$ is a strict monoidal category. 
\end{proposition}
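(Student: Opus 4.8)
The plan is to realize $SX$ as a monoid object in $(\sset,\times)$ and then transport this structure along the nerve to obtain a monoid in $(\cat,\times)$, which is exactly a strict monoidal category $CX$. The degreewise operations ${-}\ten{-}\colon SX_n\times SX_n\to SX_n$ from \cref{lem ten descends} are the candidate components of a simplicial multiplication $m\colon SX\times SX\to SX$, and the element of each $SX_n$ represented by the empty graph $\varnothing_n$ together with its unique label is the candidate unit $e\colon\ast\to SX$, where $\ast$ is the terminal simplicial set.

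First I would check that the operations ${-}\ten{-}$ assemble into a map of simplicial sets, i.e.\ that they are natural in $[n]\in\simpcat$. For $\phi\colon[m]\to[n]$ the induced operator on $SX$ sends the class of $(w,G,x)$ to the class of $(\phi^*w,\phi^*G,\phi^*x)$, and since the fiberwise sum is computed levelwise we have $\phi^*(G+G')=\phi^*G+\phi^*G'$ in $\LL_m$. Applying $\phi^*$ to the defining bijection $i^*\times j^*\colon X_{G+G'}\xrightarrow{\cong}X_G\times X_{G'}$ of \cref{def tensor before S} then shows $\phi^*(x\ten x')=\phi^*x\ten\phi^*x'$, and passing to the quotients $\overline{X}$ (using the compatibility with congruences recorded in the proof of \cref{lem ten descends}) yields naturality of $m$. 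The same reasoning shows $e$ is a simplicial map, since $\phi^*\varnothing_n=\varnothing_m$.

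Next I would verify the monoid axioms for $(SX,m,e)$. These are equalities of maps of simplicial sets, so they can be checked in each simplicial degree $n$, where $(SX\times SX)_n=SX_n\times SX_n$. After choosing representatives they reduce exactly to the associativity and two-sided unit identities of \cref{lem assoc unit before S}, together with the fact that those identities are compatible with the quotient maps $X_G\to\overline{X}_G$; the associativity clause uses the strict equality $(G+H)+K=G+(H+K)$ in the permutative category $\LL_n$. This exhibits $SX$ as a strict monoid in $\sset$.

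Finally I would transport the structure through the nerve. The functor $N\colon\cat\to\sset$ is fully faithful and, being a right adjoint (to $\tau_1$), preserves finite products; moreover $SX=NCX$ and $\ast\cong N(\mathbf 1)$ for the terminal category $\mathbf 1$. Hence $m$ and $e$ are the nerves of unique functors $CX\times CX\to CX$ and $\mathbf 1\to CX$, and the monoid axioms, being equalities of maps between nerves, descend to equalities of functors by full faithfulness (using product preservation to identify the sources). Thus $CX$ is a monoid in $(\cat,\times)$, that is, a strict monoidal category. I expect the only genuine work to lie in the naturality verification of the first step; the remaining steps are either already recorded in the preceding lemmas or follow formally from these standard properties of the nerve.
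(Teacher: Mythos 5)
Your proposal is correct and follows essentially the same route as the paper, which likewise announces the strategy of showing $SX$ is a strict monoid in $\sset$ and then transporting along $NCX = SX$; the paper's proof compresses your first step to the observation that the simplicial operators $\alpha^*$ preserve $+$, and cites \cref{lem assoc unit before S} for associativity and unitality exactly as you do. Your extra detail on naturality of $\ten$ and on the nerve being fully faithful and product-preserving just makes explicit what the paper leaves implicit.
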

\begin{proof}
The assignment $\ten$ on objects and morphisms from \cref{lem ten descends} of $CX$ is a functor $CX\times CX \to CX$.
This follows because the simplicial structure maps $\alpha^* \colon \LL_m \to \LL_n$ preserve $+$.
The tensor unit is the image of the unique element of $X_{\varnothing_0}$ in $SX_0 = \ob CX$, and the tensor is strictly associative and unital by \cref{lem assoc unit before S}.
\end{proof}

Essentially by definition of the monoidal structure, if $X \to Y$ is a map in $\seg(\LL)$ then $CX \to CY$ is a strict monoidal functor.

\subsection{The symmetry}\label{sec symmetry}
In order to most efficiently produce the symmetry isomorphism and prove its properties, we first produce a more general twisted tensor of any two morphisms in $CX$, whose codomain is the opposite from the normal tensor. 
That is, if $f\colon x \to x'$ and $g\colon y \to y'$ are two morphisms, then we construct $f\twten g \colon x\ten y \to y' \ten x'$.
The symmetry isomorphism is then given in \cref{def symmetry iso} as $\id_x \twten \id_y \colon x\ten y \to y \ten x$.

As an auxiliary useful construction, we first define twisted sums of height $n$ graphs.
Though these are defined in general, we only need them for $n\leq 2$ which can be illustrated directly.
For $G,H \in \LL_n$ height $n$ level graphs and an integer $t$, define a new graph $G \twplus{t} H \in \LL_n$ by
\[
  (G \twplus{t} H)_{ij} = \begin{cases}
    G_{ij} + H_{ij} & i < t \\
    H_{ij} + G_{ij} & i \geq t.
  \end{cases}
\]
The structure maps are constructed as sums and flips of sums of those appearing in $G$ and $H$. 

\begin{example}
Special cases include $G \twplus0 H = H+G$ and $G \twplus{n+1} H = G + H$.
This covers all cases when $n=0$; when $n=1$ there is one interesting case $G \twplus1 H$
\[ \begin{tikzcd}[row sep=small, column sep=tiny]
G_{00} + H_{00} \ar[dr] & & H_{11} + G_{11} \ar[dl] \\
& G_{01} + H_{01}
\end{tikzcd} \]
while when $n=2$ we have $G \twplus1 H$
\[ \begin{tikzcd}[row sep=small, column sep=tiny]
G_{00} + H_{00} \ar[dr] & & H_{11} + G_{11} \ar[dl] \ar[dr] & &  H_{22} + G_{22} \ar[dl] \\
& G_{01} + H_{01} \ar[dr] & & H_{12} + G_{12} \ar[dl]\\
& & G_{02} + H_{02}
\end{tikzcd} \]
and $G \twplus2 H$
\[ \begin{tikzcd}[row sep=small, column sep=tiny]
G_{00} + H_{00} \ar[dr] & & G_{11} + H_{11} \ar[dl] \ar[dr] & &  H_{22} + G_{22} \ar[dl] \\
& G_{01} + H_{01} \ar[dr] & & G_{12} + H_{12} \ar[dl]\\
& & G_{02} + H_{02}
\end{tikzcd} \]
\end{example}
Define the isomorphism
\[ \begin{tikzcd}
G \twplus{t} H \rar["\sigma_t", "\cong" swap] & G+H
\end{tikzcd} \]
which is the identity in the $ij$ component for $i < t$ and is the flip map for $i \geq t$.
We denote the inclusions by $i_t$ and $j_t$, and the following diagram commutes in $\LL_n$.
\[ \begin{tikzcd}
G \rar{i_t} \ar[dr,bend right,"i"'] & G \twplus{t} H  \dar["\sigma_t", "\cong"'] & \lar[swap]{j_t} H  \ar[dl, bend left, "j"] \\
& G + H
\end{tikzcd} \]

The following is an essential lemma about the behavior of this structure with respect to coface maps. 
\begin{lemma}\label{twplus sigma d interchange}
If $k < t$, then $d_k(G\twplus{t} H) = (d_kG) \twplus{t-1} (d_kH)$ and the following diagram in $\LL$ commutes.
\[ \begin{tikzcd}
(d_kG) \twplus{t-1} (d_kH) \dar{\sigma_{t-1}} \rar{=} & d_k(G\twplus{t} H) \dar{d_k(\sigma_t)}  \rar{d^k} & G\twplus{t} H \dar{\sigma_t}  \\
d_kG + d_kH \rar{=} & d_k(G + H)  \rar{d^k} & G+H
\end{tikzcd} \]
If $k\geq t$, then $d_k(G\twplus{t} H) = (d_kG) \twplus{t} (d_kH)$ and the following diagram commutes.
\[ \begin{tikzcd}
(d_kG) \twplus{t} (d_kH) \dar{\sigma_{t}} \rar{=} & d_k(G\twplus{t} H) \dar{d_k(\sigma_t)}  \rar{d^k} & G\twplus{t} H \dar{\sigma_t}  \\
d_kG + d_kH \rar{=} & d_k(G + H)  \rar{d^k} & G+H
\end{tikzcd} \]
More concisely,
\[
  \sigma_t d^k = \begin{cases}
  d^k \sigma_{t-1} & \text{if } k < t \\
  d^k \sigma_t & \text{if } k \geq t.
  \end{cases}
\]
\end{lemma}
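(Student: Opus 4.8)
The plan is to reduce the entire statement to a componentwise computation in the total category $\LL$. Recall that a morphism in $\LL$ lying over a simplicial map $\alpha\colon [m]\to[n]$ is the same datum as a morphism $G'\to\alpha^*G$ in the fibre $\LL_m$, hence is completely determined by its components $G'_{ij}\to G_{\alpha i,\alpha j}$ for $(i,j)\in\mathscr{L}^m$, and two such morphisms agree precisely when all of their components agree. The structural map $d^k$ is the cartesian lift of the coface $d^k\colon[n-1]\to[n]$, so under the definitional identification $(d_kG)_{ij}=G_{d^k i,\,d^k j}$ each of its components is an identity. With this in hand, both halves of the lemma---the object identities and the concise equation $\sigma_t d^k=d^k\sigma_{t-1}$ (resp.\ $d^k\sigma_t$)---follow from tracking a single inequality through $d^k$.

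First I would record the elementary bookkeeping for the coface $d^k\colon[n-1]\to[n]$, namely $d^k(i)=i$ for $i<k$ and $d^k(i)=i+1$ for $i\geq k$. A two-line case split on whether $i<k$ or $i\geq k$ yields: if $k<t$ then $d^k(i)<t\iff i<t-1$, while if $k\geq t$ then $d^k(i)<t\iff i<t$. This inequality is exactly the predicate deciding whether the $i$-th row of a twisted sum is ordered as $G_{ij}+H_{ij}$ or as $H_{ij}+G_{ij}$, and whether the matching component of $\sigma$ is an identity or a flip.

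Next I would establish the object identities. The $(i,j)$-entry of $d_k(G\twplus{t}H)$ is $(G\twplus{t}H)_{d^k i,\,d^k j}$, which equals $G_{d^k i,d^k j}+H_{d^k i,d^k j}$ or its flip according to whether $d^k(i)<t$; the bookkeeping step rewrites this predicate as $i<t-1$ (when $k<t$) or $i<t$ (when $k\geq t$), which is exactly the predicate defining $(d_kG)\twplus{t-1}(d_kH)$ or $(d_kG)\twplus{t}(d_kH)$. The structure maps agree on both sides because they are assembled as the same sums and flips of the structure maps of $G$ and $H$.

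Finally, for the concise equation I would compare the two composites componentwise. Because $d^k$ has identity components, $(\sigma_t d^k)_{ij}=(\sigma_t)_{d^k i,\,d^k j}$, whereas $(d^k\sigma_{t-1})_{ij}=(\sigma_{t-1})_{ij}$ (and likewise with $\sigma_t$ when $k\geq t$). By definition $(\sigma_s)_{ab}$ is the identity for $a<s$ and the flip $H_{ab}+G_{ab}\to G_{ab}+H_{ab}$ for $a\geq s$; the bookkeeping lemma shows the two sides switch from identity to flip at exactly the same index, and where both are flips they are literally the flip of the same pair of sets. Hence all components agree, giving the concise equation, and the two displayed squares are then just its repackaging together with the object identities. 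I expect the only real obstacle to be index discipline: keeping the off-by-one in the coface straight, and being scrupulous that the cartesian lift $d^k$ is componentwise the identity under the identification $(d_kG)_{ij}=G_{d^k i,d^k j}$, so that the flips on the two sides are genuinely the same map.
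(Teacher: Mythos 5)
Your proposal is correct and is exactly the argument the paper intends: the published proof consists of the single line ``This is a direct verification from the definitions,'' and your componentwise computation---reducing everything to the equivalence $d^k(i)<t \iff i<t-1$ (for $k<t$) or $i<t$ (for $k\geq t$), together with the observation that the cartesian lift $d^k$ has identity components---is precisely that verification, carried out with the correct index bookkeeping.
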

\begin{proof}
This is a direct verification from the definitions. 
\end{proof}

Next is a special case which will be used in \cref{lemma once twisted}.

\begin{lemma}\label{lemma d-one}
If $G,H \in \LL_n$ for $n \geq 1$, then the following diagram commutes.
\[ \begin{tikzcd}[column sep=tiny]
& d_1G \twplus1 d_1H \ar[dl,"d^1"'] \ar[dr,"d^1"] \\
G \twplus1 H \ar[rr, dashed] \ar[dr,"\sigma_1"'] & & G\twplus2 H \ar[dl,"\sigma_2"] \\
& G + H
\end{tikzcd} \]
\end{lemma}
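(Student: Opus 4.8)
The plan is to first pin down the dashed arrow. Since both $\sigma_1$ and $\sigma_2$ are isomorphisms, the lower triangle forces the dashed map to be $\sigma_2^{-1}\sigma_1\colon G\twplus1 H \to G\twplus2 H$ in $\LL_n$; concretely this is the identity at every level $i\neq 1$ and the symmetry swapping the two summands at level $i=1$. With this reading the lower triangle commutes by construction, so all the content of the lemma lies in the upper triangle. After cancelling the invertible $\sigma_2$ it suffices to prove the identity
\[
  \sigma_1\circ d^1 = \sigma_2\circ d^1
\]
of morphisms $d_1G\twplus1 d_1H \to G+H$ in $\LL$, where the left $d^1$ targets $G\twplus1 H$ and the right $d^1$ targets $G\twplus2 H$.

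The first thing I would check is that the two cofaces genuinely share the domain $d_1G\twplus1 d_1H$. This is exactly the index bookkeeping already recorded in \cref{twplus sigma d interchange}: taking $k=1$, the case $k\geq t$ with $t=1$ gives $d_1(G\twplus1 H)=d_1G\twplus1 d_1H$, while the case $k<t$ with $t=2$ gives $d_1(G\twplus2 H)=d_1G\twplus{2-1} d_1H=d_1G\twplus1 d_1H$. So both $d^1$'s emanate from the same height $n-1$ graph and the displayed equation typechecks.

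Then I would simply invoke \cref{twplus sigma d interchange} twice. Its conclusion $\sigma_t d^k=d^k\sigma_{t-1}$ (for $k<t$), respectively $\sigma_t d^k=d^k\sigma_t$ (for $k\geq t$), specialized at $k=1$, yields
\[
  \sigma_1\circ d^1 = d^1\circ\sigma_1 \qquad\text{and}\qquad \sigma_2\circ d^1 = d^1\circ\sigma_1,
\]
where on both right-hand sides $\sigma_1\colon d_1G\twplus1 d_1H \to d_1G+d_1H$ and $d^1\colon d_1(G+H)=d_1G+d_1H \to G+H$ are literally the same maps. Hence the two left-hand sides agree, which is precisely $\sigma_1\circ d^1=\sigma_2\circ d^1$, and the lemma follows.

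I do not expect a genuine obstacle here: the statement is a clean corollary of \cref{twplus sigma d interchange}, and the only place one could slip is the index arithmetic $2-1=1$ that makes the two applications land on the same source and produce the same composite $d^1\circ\sigma_1$ on the right. Should the intended dashed map be specified independently rather than as $\sigma_2^{-1}\sigma_1$, I would first reconcile the two descriptions via the explicit level-$1$-flip formula above and then run the identical argument.
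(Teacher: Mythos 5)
Your proof is correct and is essentially identical to the paper's: both pin the dashed arrow down as $\sigma_2^{-1}\sigma_1$ and then apply \cref{twplus sigma d interchange} twice, once with $k=1<t=2$ to get $\sigma_2 d^1 = d^1\sigma_1$ and once with $k=t=1$ to get $\sigma_1 d^1 = d^1\sigma_1$, noting both composites share the domain $d_1G \twplus1 d_1H$. Nothing to add.
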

The dashed map is the isomorphism $\sigma_2^{-1}\sigma_1$.
\begin{proof}
The first part of \cref{twplus sigma d interchange} with $k=1 < 2 = t$ tells us that $d^1 \sigma_1 = \sigma_2 d^1$, while the second part with $t=k=1$ tells us that $d^1 \sigma_1 = \sigma_1 d^1$, and both maps having domain $d_1G \twplus1 d_1H$.
\end{proof}

\begin{definition}\label{def rep twten}
Suppose $G,H \in \LL_1$, $x \in X_G$, and $y\in X_H$. 
Let $x \twten y \in X_{G\twplus{1} H}$ be the element defined by the following diagram
\[ \begin{tikzcd}
X_G \times X_H \dar{=} & X_{G + H} \lar["i^* \times j^*", "\cong" swap] \dar{\sigma_1^*} \\
X_G \times X_H & X_{G\twplus{1} H} \lar["i_1^* \times j_1^*", "\cong" swap] 
\end{tikzcd} \] 
That is, $x \twten y = (i_1^* \times j_1^*)^{-1}(x,y) = \sigma_1^*(x\ten y)$.
\end{definition}

\begin{proposition}
The preceding definition descends to a well-defined function
\[
  {-}\twten{-} \colon \mor CX \times \mor CX \to \mor CX.
\]
\end{proposition}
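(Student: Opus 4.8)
The plan is to follow the proof of \cref{lem ten descends} essentially verbatim, replacing the ordinal sum $+$ by the twisted sum $\twplus{1}$ and carrying along the comparison isomorphisms $\sigma_1$. Recall that, since $SX$ is Segal, $\mor CX = SX_1 = \sum_{w\in\WW_1} \overline{X}_w$, so a morphism of $CX$ is represented by a pair $(G,x)$ with $G\in\LL_1$ and $x\in X_G$, and two such pairs represent the same morphism exactly when a congruence identifies them in the colimit defining $\overline{X}_w$. Given representatives $x\in X_G$ and $y\in X_H$, the element $x\twten y\in X_{G\twplus{1}H}$ of \cref{def rep twten} has a well-defined image in $\overline{X}_{G\twplus{1}H}\hookrightarrow SX_1$, and this image is what we declare $\bar x\twten\bar y$ to be; the entire content of the proposition is that it is independent of the chosen representatives.

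First I would record the behavior of $\twplus{1}$ on congruences. Given congruences $f\colon G_2\to G$ and $g\colon H_2\to H$ in $\LL_1$, define $f\twplus{1}g \coloneqq \sigma_1^{-1}\circ(f+g)\circ\sigma_1\colon G_2\twplus{1}H_2\to G\twplus{1}H$, which is manifestly a morphism of $\LL_1$ and is natural in the sense that $\sigma_1\circ(f\twplus{1}g)=(f+g)\circ\sigma_1$. A componentwise inspection shows that $f\twplus{1}g$ is again a congruence: being assembled from $f$ and $g$ by sums and flips of sums, it agrees with $f+g$ on the $(0,0)$ and $(0,1)$ components and with $g_{11}+f_{11}$ on the $(1,1)$ component, and since the congruences $f$ and $g$ restrict to the identity on every diagonal component $(i,i)$, so does $f\twplus{1}g$. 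Dualizing the naturality identity gives $(f\twplus{1}g)^*\circ\sigma_1^* = \sigma_1^*\circ(f+g)^*$ on presheaf values.

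With these in place the verification is formal. Suppose $(x_2,y_2)\in X_{G_2}\times X_{H_2}$ is a second pair of representatives, obtained from congruences $f,g$ as above with $f^*(x)=x_2$ and $g^*(y)=y_2$. Using $x\twten y=\sigma_1^*(x\ten y)$, the naturality of $\sigma_1$, and the identity $(f+g)^*(x\ten y)=f^*(x)\ten g^*(y)=x_2\ten y_2$ established inside the proof of \cref{lem ten descends}, we compute
\[
(f\twplus{1}g)^*(x\twten y) = \sigma_1^*(f+g)^*(x\ten y) = \sigma_1^*(x_2\ten y_2) = x_2\twten y_2.
\]
Because $f\twplus{1}g$ is a congruence, $x\twten y$ and $x_2\twten y_2$ are identified in the colimit $\overline{X}_{G\twplus{1}H}=\overline{X}_{G_2\twplus{1}H_2}$ and hence represent the same morphism of $CX$. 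This is conveniently packaged into a single commuting diagram extending the one from \cref{lem ten descends} by two further columns, the map $\sigma_1^*$ and the quotient $\overline{X}_{G\twplus{1}H}\hookrightarrow SX_1$. I expect the only point requiring genuine care to be the bookkeeping of the twisted sum, namely confirming that $f\twplus{1}g$ is honestly a congruence and that $\sigma_1$ is natural; unlike $+$, the twisted sum reorders the top-level sets, so it is not a priori compatible with the congruence conditions. Once these two facts are checked, the argument is identical to the untwisted case.
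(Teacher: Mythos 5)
Your proof is correct and follows essentially the same route as the paper: both arguments rest on the observation that congruences $G_2\xrightarrow{\equiv}G$ and $H_2\xrightarrow{\equiv}H$ induce a congruence $G_2\twplus{1}H_2\xrightarrow{\equiv}G\twplus{1}H$ compatible with the comparison to $G+H$, so that the twisted tensor of representatives is identified in the colimit $\overline{X}_{[G\twplus{1}H]}$. The only cosmetic difference is that you phrase the compatibility via the formula $x\twten y=\sigma_1^*(x\ten y)$ and the naturality of $\sigma_1$, whereas the paper's diagram uses the equivalent characterization via $(i_1^*\times j_1^*)^{-1}$; your explicit check that $f\twplus{1}g$ is a congruence is exactly the detail the paper leaves implicit.
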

\begin{proof}
If $G' \xrightarrow{\equiv} G$ and $H' \xrightarrow{\equiv} H$ are congruences in $\kong_1$, then there is an associated congruence $G' \twplus{1} H' \xrightarrow{\equiv} G \twplus1 H$ as well, and the following diagram commutes.
\[ \begin{tikzcd}[row sep=tiny]
& X_G \times X_H \ar[dd,"\cong"] \ar[dl] & X_{G\twplus{1} H} \lar["i_1^* \times j_1^*", "\cong" swap] \ar[dd,"\cong"] \ar[dr] \\ 
SX_1 \times SX_1 & & &  SX_1
\\
& X_{G'} \times X_{H'} \ar[ul] & X_{G'\twplus{1} H'} \lar["i_1^* \times j_1^*", "\cong" swap]  \ar[ur]
\end{tikzcd} \]
Thus we may define $[x] \twten [y] \coloneqq [x \twten y]$ with no ambiguity about choice of representatives.
\end{proof}

\begin{lemma}\label{lemma on dom and codom}
If $f \colon x_0 \to x_1$ and $g \colon y_0 \to y_1$ are morphisms in $CX$, then 
$f\twten g$ is a map $x_0\ten y_0 \to y_1 \ten x_1$.
\end{lemma}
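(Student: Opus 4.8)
The plan is to reduce everything to two representative-level identities and then read off the domain and codomain from the simplicial structure of $SX$. Represent the two morphisms as $f = [x]$ and $g = [y]$ with $x \in X_G$ and $y \in X_H$ for height $1$ graphs $G, H \in \LL_1$, so that $f \twten g = [x \twten y]$ with $x \twten y = \sigma_1^*(x \ten y) \in X_{G \twplus 1 H}$ by \cref{def rep twten}. The faces $d_1, d_0 \colon SX_1 \to SX_0$ are induced by the cofaces $d^1, d^0 \colon [0] \to [1]$, so on a representative $z \in X_K$ they are computed by restricting along the corresponding cartesian lifts in $\LL$, i.e.\ $d_1[z] = [(d^1)^*z]$ and $d_0[z] = [(d^0)^*z]$. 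Since $(d^1)^*G = G_{00}$ and $(d^0)^*G = G_{11}$, the domain and codomain of $f$ are $x_0 = [(d^1)^*x]$ and $x_1 = [(d^0)^*x]$, and similarly $y_0 = [(d^1)^*y]$, $y_1 = [(d^0)^*y]$. Thus it suffices to prove
\[ (d^1)^*(x \twten y) = (d^1)^*x \ten (d^1)^*y, \qquad (d^0)^*(x \twten y) = (d^0)^*y \ten (d^0)^*x, \]
after which passing to congruence classes and using the definition of $\ten$ on $SX_0$ (\cref{lem ten descends}) gives $\dom(f \twten g) = x_0 \ten y_0$ and $\cod(f \twten g) = y_1 \ten x_1$.

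To establish the two identities I would first record one auxiliary fact: restriction along a coface commutes with $\ten$. Indeed, since $\alpha^*$ preserves $+$ (as noted in the proof of \cref{prop strict monoidal cat}) it carries the inclusions $G, H \hookrightarrow G + H$ to the inclusions $(d^k)^*G, (d^k)^*H \hookrightarrow (d^k)^*G + (d^k)^*H$, so uniqueness of the Segal bijection defining $\ten$ (\cref{def tensor before S}) forces $(d^k)^*(a \ten b) = (d^k)^*a \ten (d^k)^*b$. For the domain I would expand $(d^1)^*(x \twten y) = (\sigma_1 d^1)^*(x \ten y)$ and apply the second case ($k \geq t$) of \cref{twplus sigma d interchange} with $k = 1 = t$: this rewrites $\sigma_1 d^1 = d^1 \sigma_1$ where the resulting $\sigma_1$ is the identity of the height $0$ graph $G_{00} + H_{00}$, leaving $(d^1)^*(x \ten y) = (d^1)^*x \ten (d^1)^*y$. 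For the codomain the same manipulation uses instead the first case ($k < t$) of \cref{twplus sigma d interchange} with $k = 0 < t = 1$, yielding $\sigma_1 d^0 = d^0 \sigma_0$ where now $\sigma_0 \colon H_{11} + G_{11} \to G_{11} + H_{11}$ is the flip (because $G \twplus 0 H = H + G$). Hence $(d^0)^*(x \twten y) = \sigma_0^*\bigl((d^0)^*x \ten (d^0)^*y\bigr)$, and \cref{lem symmetry} converts the flip into the transposition $(d^0)^*y \ten (d^0)^*x$.

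The computations are routine once the framework is in place; the part that requires care is the bookkeeping in \cref{twplus sigma d interchange}. The whole asymmetry of the statement — that the target is $y_1 \ten x_1$ rather than $x_1 \ten y_1$ — comes precisely from the fact that the bottom coface $d^1$ meets the untwisted region of $G \twplus 1 H$ (no flip) while the top coface $d^0$ meets the twisted region (flip $\sigma_0$). Keeping the two cases of \cref{twplus sigma d interchange} straight, together with correctly matching $\sigma_0$ to the hypothesis of \cref{lem symmetry}, is therefore the main thing to get right; everything else is naturality of the Segal bijections.
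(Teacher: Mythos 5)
Your proof is correct and follows essentially the same route as the paper's: reduce to representatives, use \cref{twplus sigma d interchange} to commute $\sigma_1^*$ past the two cofaces (identity on the $d^1$ side, the flip $\sigma_0$ on the $d^0$ side), and then apply \cref{lem symmetry} to convert the flip into the transposition of tensor factors. The paper packages this as one large commutative diagram for $d_0$ and a remark that $d_1$ is simpler, but the content is identical.
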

\begin{proof}
We may prove the result by working with representatives.
The diagram
\[ \begin{tikzcd}
X_{G+H} \ar[dr,"d_0"] \ar[rrr,"\cong"] \ar[dd,"\sigma_1^*"] & & & X_G \times X_H \dar{d_0 \times d_0} \\
& X_{d_0(G+H)} \rar{=}  
& X_{d_0G + d_0H} \rar{\cong} \dar["\sigma_0^*=\sigma^*"] & X_{d_0G} \times X_{d_0H} \dar{\tau} \\
X_{G\twplus1 H} \rar{d_0} \dar & X_{d_0(G\twplus1 H)} \rar{=} & X_{d_0H + d_0G} \rar{\cong} \dar & X_{d_0H} \times X_{d_0G} \dar \\
SX_1 \ar[rr,"d_0"] & & SX_0 & SX_0 \times SX_0 \lar["-\ten-" swap] 
\end{tikzcd} \]
commutes, using $d_0(G\twplus1 H) = d_0G \twplus0 d_0H = d_0H + d_0G$ along with \cref{lem symmetry} and \cref{twplus sigma d interchange}.
The two outer maps $X_G \times X_H \to SX_0$ are equal, implying that $\cod[ x\twten y ] = (\cod [y]) \ten (\cod [x])$.
A similar, but simpler diagram can be constructed for $d_1$ to see  $\dom[ x\twten y ] = (\dom [x]) \ten (\dom [y])$; it is simpler since $d_1(G\twplus1 H) = d_1G \twplus1 d_1H = d_1G + d_1H$.
\end{proof}

\begin{lemma}\label{lemma once twisted}
Given morphisms
\[ \begin{tikzcd}[row sep=tiny, column sep=small]
x_0 \rar{f_1} & x_1 \rar{f_2} & x_2 \\
y_0 \rar{g_1} & y_1 \rar{g_2} & y_2
\end{tikzcd} \]
in $CX$, we have
\[
(f_2\twten g_2) \circ (f_1 \ten g_1) = (f_2f_1) \twten (g_2g_1) = (g_2\ten f_2) \circ (f_1 \twten g_1).
\]
\end{lemma}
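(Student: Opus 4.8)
The plan is to realise each of the two composites as the $d_1$-face of a single $2$-simplex in the Segal simplicial set $SX$, exploiting the fact that the twisted sums $A\twplus{1}B$ and $A\twplus{2}B$ of a pair of height $2$ level graphs restrict, along the three coface maps, to exactly the tensors and twisted tensors appearing in the statement. I will use throughout that composition in $CX=\tau_1(SX)$ is governed by the Segal condition (\cref{prop SX segal}): composable $1$-simplices $u,v$ extend uniquely to a $2$-simplex $\theta$ with $d_2\theta=u$ and $d_0\theta=v$, and then $v\circ u=d_1\theta$.

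First I would choose representatives. Since $f_1,f_2$ are composable, the Segal property of $SX$ produces a height $2$ level graph $A$ and an element $a\in X_A$ whose faces $d_2a$, $d_0a$, $d_1a$ represent $f_1$, $f_2$, $f_2f_1$; likewise choose $B$ and $b\in X_B$ for the $g_i$. Form $a\ten b\in X_{A+B}$ as in \cref{def tensor before S}. The whole argument then rests on computing the three faces of $\sigma_2^*(a\ten b)\in X_{A\twplus{2}B}$ and of $\sigma_1^*(a\ten b)\in X_{A\twplus{1}B}$, using \cref{twplus sigma d interchange} and the fact that the simplicial operators commute with $+$, hence with $\ten$ (as in the proof of \cref{prop strict monoidal cat}).

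For the first equality I would use $\sigma_2^*(a\ten b)$. Reading \cref{twplus sigma d interchange} at $t=2$: the case $k=2\geq t$ (where $\sigma_2$ is the identity at height $1$, since $\twplus{2}=+$ there) gives $d_2$-face $d_2a\ten d_2b$, representing $f_1\ten g_1$; the case $k=0<t$ gives $d_0$-face $\sigma_1^*(d_0a\ten d_0b)=d_0a\twten d_0b$ (by \cref{def rep twten}), representing $f_2\twten g_2$; and the case $k=1<t$ gives $d_1$-face $\sigma_1^*(d_1a\ten d_1b)=d_1a\twten d_1b$, representing $(f_2f_1)\twten(g_2g_1)$. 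Since the $d_2$- and $d_0$-faces are composable by \cref{lemma on dom and codom}, reading off the Segal composite yields $(f_2\twten g_2)\circ(f_1\ten g_1)=(f_2f_1)\twten(g_2g_1)$. For the second equality I would instead use $\sigma_1^*(a\ten b)$. Now \cref{twplus sigma d interchange} at $t=1$ gives $d_2$-face $d_2a\twten d_2b$, representing $f_1\twten g_1$, and $d_0$-face $\sigma_0^*(d_0a\ten d_0b)=d_0b\ten d_0a$; the crucial input here is \cref{lem symmetry}, since $\sigma_0$ is a genuine flip, and this face represents $g_2\ten f_2$. Reading off the composite gives $(g_2\ten f_2)\circ(f_1\twten g_1)=d_1(\sigma_1^*(a\ten b))$.

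It remains to identify the two $d_1$-faces with the common middle term. Both cases $k=1<t=2$ and $k=1\geq t=1$ of \cref{twplus sigma d interchange} produce $\sigma_1^*d_1(a\ten b)=\sigma_1^*(d_1a\ten d_1b)=d_1a\twten d_1b$, so $d_1(\sigma_1^*(a\ten b))=d_1(\sigma_2^*(a\ten b))=(f_2f_1)\twten(g_2g_1)$; this coincidence is precisely the packaged statement of \cref{lemma d-one}, since $\sigma_1^*(a\ten b)=(\sigma_2^{-1}\sigma_1)^*\sigma_2^*(a\ten b)$ and that lemma asserts $\sigma_2^{-1}\sigma_1$ intertwines the two $d^1$ coface maps out of $d_1A\twplus{1}d_1B$. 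Combining the two displays gives the chain of equalities. I expect the only real obstacle to be organisational: correctly tracking how the twist index $t$ shifts under each coface map and confirming that $\sigma_0,\sigma_1,\sigma_2$ specialise at height $1$ to flip, twist, and identity respectively. Once the interchange bookkeeping of \cref{twplus sigma d interchange} is in place, every face is a direct substitution into the definitions of $\ten$ and $\twten$.
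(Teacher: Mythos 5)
Your proposal is correct and follows essentially the same route as the paper's proof: both arguments represent the composable pairs by elements $x\in X_G$, $y\in X_H$ over height $2$ level graphs, compute the three faces of $\sigma_1^*(x\ten y)$ and $\sigma_2^*(x\ten y)$ via \cref{twplus sigma d interchange} and \cref{lem symmetry}, and identify the two $d_1$-faces using \cref{lemma d-one}. The bookkeeping of which coface sends $\sigma_t$ to $\sigma_{t-1}$ versus $\sigma_t$ matches the paper exactly.
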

\begin{proof}
The composable pairs given are elements in $SX_2$, and hence we can find height 2 level graphs $G$ and $H$ and elements $x\in X_G$ and $y\in X_H$ so that $d_2x \in X_{d_2G}$ represents $f_1$ and $d_0x \in X_{d_0G}$ represents $f_2$, and likewise for $y$ and $g_1,g_2$.
We have $d_1(G\twplus1 H) = (d_1 G) \twplus 1 (d_1 G) = d_1(G\twplus2 H)$.
Let $\sigma_1 \colon G \twplus 1 H \to G + H$ and $\sigma_2 \colon G \twplus 2 H \to G + H$  be the partial flipping morphisms from above.
By \cref{lemma d-one} we have 
\[
  d_1\sigma_1^*(x \otimes y) = d_1 \sigma_2^*(x\otimes y) \in X_{d_1(G \twplus 1 H)} = X_{d_1(G\twplus2 H)}.
\]
We claim that this element represents all three maps; we use freely the identities from \cref{twplus sigma d interchange}. 
First, the equalities
\[
  d_1 \sigma_1^*(x\ten y) = \sigma_1^*d_1 (x\ten y) = \sigma_1^* ((d_1 x) \ten (d_1 y))
\]
show this element represents $(f_2f_1) \twten (g_2g_1)$.
The equations (the first of which uses $\sigma_0 = \sigma$ and \cref{lem symmetry})
\[
\begin{gathered}
d_0 \sigma_1^*(x\ten y) = \sigma_0^* d_0(x\ten y) = \sigma_0^*(d_0x \ten d_0y) = \sigma^*(d_0x \ten d_0y) = d_0y \ten d_0 x \\
d_2 \sigma_1^*(x\ten y) = \sigma_1^* d_2(x\ten y) = \sigma_1^*(d_2x \ten d_2y) = d_2x \twten d_2 y
\end{gathered}
\]
tell us our element represents $(g_2\ten f_2) \circ (f_1 \twten g_1)$.
Finally, we have 
\[
\begin{gathered}
d_0 \sigma_2^*(x\ten y) = \sigma_1^* d_0(x\ten y) = \sigma_1^*(d_0x \ten d_0y) = d_0x \twten d_0 y \\
d_2 \sigma_2^*(x\ten y) = \sigma_2^* d_2(x\ten y) = \sigma_2^*(d_2x \ten d_2y) = d_2x \ten d_2 y
\end{gathered}
\]
so the element represents $(f_2\twten g_2) \circ (f_1 \ten g_1)$.
\end{proof}

\begin{lemma}\label{lemma twice twisted}
Given morphisms
\[ \begin{tikzcd}[row sep=tiny, column sep=small]
x_0 \rar{f_1} & x_1 \rar{f_2} & x_2 \\
y_0 \rar{g_1} & y_1 \rar{g_2} & y_2
\end{tikzcd} \]
in $CX$, we have
\[
(g_2\twten f_2) \circ (f_1 \twten g_1) = (f_2f_1) \ten (g_2g_1).
\]
\end{lemma}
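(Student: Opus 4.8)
The plan is to follow the template of \cref{lemma once twisted}: represent the two composable pairs as the faces of a single element over a height $2$ level graph, and then read off the desired identity from its three faces. The essential new feature is that neither $G \twplus1 H$ nor $G\twplus2 H$ has the right shape; I would instead use a level graph that is flipped \emph{only} at its middle level, as this is exactly the mechanism that turns two twists into no twist.

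Concretely, I would first pick height $2$ graphs $G,H\in\LL_2$ and elements $x\in X_G$, $y\in X_H$ with $d_2x, d_0x$ representing $f_1,f_2$ and $d_2y, d_0y$ representing $g_1,g_2$, exactly as in \cref{lemma once twisted}. Next I would construct $K\in\LL_2$ with $K_{00}=G_{00}+H_{00}$, $K_{11}=H_{11}+G_{11}$, and $K_{22}=G_{22}+H_{22}$, whose structure maps are those of $G+H$ but with the two summands interchanged at level $1$; since a map of level graphs is determined by its values on the generating objects, this comes equipped with an isomorphism $\psi\colon K\to G+H$ that is the identity at levels $0$ and $2$ and the flip at level $1$. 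Checking that $\psi$ is a genuine morphism of level graphs is a short naturality verification on the four structure maps meeting level $1$.

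Setting $z\coloneqq\psi^*(x\ten y)\in X_K$, the lemma then drops out of computing $d_0z$, $d_1z$, $d_2z$, using $d_k\psi^*=(d_k\psi)^*d_k$ together with \cref{lem symmetry,twplus sigma d interchange} and the fact that the face maps commute with $+$. The restrictions of $\psi$ are as follows. First, $d_2\psi=\sigma_1$, giving $d_2z=d_2x\twten d_2y=f_1\twten g_1$. Second, $d_0\psi=\sigma\circ\sigma_1$ for the full flip $\sigma$, so after invoking \cref{lem symmetry} (which gives $\sigma^*(d_0x\ten d_0y)=d_0y\ten d_0x$) one finds $d_0z=\sigma_1^*(d_0y\ten d_0x)=g_2\twten f_2$. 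Third, and crucially, $d_1\psi$ is the identity on both $(d_1K)_{00}$ and $(d_1K)_{11}$, hence a \emph{congruence}; therefore $d_1z$ is congruent to $d_1x\ten d_1y$ and so represents the \emph{untwisted} tensor $(f_2f_1)\ten(g_2g_1)$ in $SX_1$. Since $z$ encodes the composable pair $(f_1\twten g_1,\ g_2\twten f_2)$ with composite $d_1z$, this is precisely the asserted equality.

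The main obstacle is the middle step: isolating the correct graph $K$ together with its flip-at-level-one isomorphism $\psi$, and recognizing that the middle restriction $d_1\psi$ is a congruence. That observation is the conceptual heart of the statement—it is exactly why a twist followed by a twist collapses to the ordinary tensor—whereas the remaining face computations are routine variants of those already carried out in \cref{lemma once twisted}. (One should also note that the braiding-based shortcut, writing $f_1\twten g_1$ and $g_2\twten f_2$ in terms of symmetry isomorphisms via \cref{lemma once twisted}, would only reduce the claim to the involutivity of the symmetry, which is itself the all-identities case of this very lemma, and hence circular; the direct computation above is what avoids this.)
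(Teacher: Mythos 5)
Your proposal is correct and follows essentially the same route as the paper: your graph $K$ with its flip-at-the-middle-level isomorphism $\psi$ is exactly the paper's auxiliary graph $G\tilde{+}H$ with the map $\tilde\sigma$ (flipping the $11$ and $12$ components), and the three face computations — $d_2\psi=\sigma_1$, $d_0\psi=\sigma\circ\sigma_1$, and $d_1\psi$ trivial so that $d_1z=d_1x\ten d_1y$ — match the paper's verbatim. The only cosmetic difference is that the paper observes $d_1\tilde\sigma$ is literally the identity rather than merely a congruence, but your weaker observation suffices since morphisms of $CX$ are congruence classes.
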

\begin{proof}
Use the same notation from the beginning of the previous proof.
We introduce a new graph $G\tilde{+}H$ as displayed:
\[ \begin{tikzcd}[row sep=small, column sep=tiny]
G_{00} + H_{00} \ar[dr] & & H_{11} + G_{11} \ar[dl] \ar[dr] & &  G_{22} + H_{22} \ar[dl] \\
& G_{01} + H_{01} \ar[dr] & & H_{12} + G_{12} \ar[dl]\\
& & G_{02} + H_{02}
\end{tikzcd} \]
Define $\tilde \sigma \colon G \tilde{+} H \to G + H$ which is the flip map in components $11$ and $12$, and the identity elsewhere.
Our strategy is to show that
\[
  d_1 \tilde\sigma^* (x\ten y) \in X_{d_1(G \tilde{+} H)}
\]
represents both maps in the statement of the lemma.

Notice that we have $d_0(G\tilde{+}H) = (d_0H)\twplus1 (d_0G)$, $d_1(G\tilde{+}H) = d_1G + d_1H$, and $d_2(G\tilde{+}H) = (d_2G)\twplus1 (d_2H) $.
Furthermore, the following diagrams commute:
\[ \begin{tikzcd}
(d_0H) \twplus1 d_0G \rar{\sigma_1} \dar{d^0} & d_0H + d_0G \rar{\sigma} & d_0G + d_0H\dar{d^0}  \\ 
G\tilde{+}H \ar[rr,"\tilde \sigma"] & & G+ H
\end{tikzcd} 
\]
\[ \begin{tikzcd}[column sep=small]
& d_1G + d_1 H \ar[dr,"d^1"] \ar[dl,"d^1"'] \\
G \tilde{+} H \ar[rr,"\tilde\sigma"] & & G+H
\end{tikzcd} \qquad 
\begin{tikzcd}
d_2G\twplus1 d_2H \rar{\sigma_1} \dar{d^2} & d_2G + d_2H \dar{d^2} \\
G\tilde{+} H \rar{\tilde\sigma} & G+H
\end{tikzcd}
\]
Now this comes down to computation:
from the above diagrams and \cref{lem symmetry},
\[
\begin{gathered}
d_0 \tilde\sigma^*(x\ten y) = \sigma_1^*\sigma^* d_0(x\ten y) = \sigma_1^*\sigma^*(d_0x \ten d_0y) = \sigma_1^* (d_0y \ten d_0 x) = d_0y \twten d_0x \\
d_2 \tilde\sigma^*(x\ten y) = \sigma_1^* d_2(x\ten y) = \sigma_1^*(d_2x \ten d_2y) = d_2x \twten d_2 y
\end{gathered}
\]
so the element indeed represents $(g_2\twten f_2) \circ (f_1 \twten g_1)$.
On the other hand, 
\[
  d_1\tilde\sigma^* (x\otimes y) = d_1(x\otimes y) = d_1x \otimes d_1 y
\]
so the element represents $(f_2f_1) \ten (g_2g_1)$.
\end{proof}

\begin{definition}\label{def symmetry iso}
If $x,y\in \ob CX$, define $\tau_{x,y} \colon x\ten y \to y \ten x$ to be $\id_x \twten \id_y$.
\end{definition}

\begin{proposition}\label{prop symmetry naturality}
For any $x,y \in \ob CX$, we have $\tau_{y,x} \circ \tau_{x,y} = \id_{x\ten y}$.
If $f \colon x \to x'$ and $g \colon y \to y'$ are two morphisms in $CX$, then the diagram
\[ \begin{tikzcd}
x \ten y \rar{\tau_{x,y}} \dar[swap]{f\ten g} & y \ten x \dar{g \ten f} \\
x' \ten y' \rar[swap]{\tau_{x',y'}} & y' \ten x'
\end{tikzcd} \]
commutes.
\end{proposition}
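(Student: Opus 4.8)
The plan is to derive both assertions directly from the interchange lemmas \cref{lemma once twisted,lemma twice twisted}, which already package the compatibility of the twisted tensor $\twten$ with composition and with the ordinary tensor. The guiding observation is that $\tau_{x,y} = \id_x \twten \id_y$ is itself an instance of $\twten$, so each claim should emerge by specializing one of these lemmas to suitable identity morphisms.

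For the involution identity, I would apply \cref{lemma twice twisted} with $f_1 = f_2 = \id_x$ and $g_1 = g_2 = \id_y$. Its conclusion $(g_2 \twten f_2) \circ (f_1 \twten g_1) = (f_2 f_1) \ten (g_2 g_1)$ then reads $(\id_y \twten \id_x) \circ (\id_x \twten \id_y) = \id_x \ten \id_y$. The left-hand side is exactly $\tau_{y,x} \circ \tau_{x,y}$, while the right-hand side equals $\id_{x \ten y}$ because $\ten$ is a functor by \cref{prop strict monoidal cat}. This settles the first statement with no further argument.

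For the naturality square, the strategy is to show that both composites around the square coincide with the single morphism $f \twten g$, which by \cref{lemma on dom and codom} runs from $x \ten y$ to $y' \ten x'$ and is therefore an admissible diagonal of the square. For the top-then-right composite $(g \ten f) \circ \tau_{x,y}$, I would instantiate \cref{lemma once twisted} with $f_1 = \id_x$, $f_2 = f$, $g_1 = \id_y$, $g_2 = g$; the equality of its middle and last terms yields $f \twten g = (g \ten f) \circ (\id_x \twten \id_y) = (g \ten f) \circ \tau_{x,y}$. For the left-then-bottom composite $\tau_{x',y'} \circ (f \ten g)$, I would instantiate \cref{lemma once twisted} again, now with $f_1 = f$, $f_2 = \id_{x'}$, $g_1 = g$, $g_2 = \id_{y'}$; the equality of its first and middle terms yields $(\id_{x'} \twten \id_{y'}) \circ (f \ten g) = f \twten g$, that is, $\tau_{x',y'} \circ (f \ten g) = f \twten g$. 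Since both composites equal $f \twten g$, the square commutes.

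The only real work here is bookkeeping: fixing the correct source--target assignments when specializing \cref{lemma once twisted,lemma twice twisted}, and invoking \cref{lemma on dom and codom} to confirm that the various domains and codomains agree (in particular that $f \twten g \colon x \ten y \to y' \ten x'$ is genuinely the common diagonal of the naturality square). Once these substitutions are pinned down, no genuine computation remains, since the twisted-tensor interchange lemmas have already absorbed it.
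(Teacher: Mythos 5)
Your proposal is correct and follows essentially the same route as the paper's own proof: the involution identity is exactly the specialization of \cref{lemma twice twisted} to identities, and the naturality square is shown to commute by identifying both composites with $f \twten g$ via the two equalities of \cref{lemma once twisted}. The only cosmetic difference is your explicit appeal to \cref{lemma on dom and codom} for the source--target bookkeeping, which the paper leaves implicit.
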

\begin{proof}
The first statement follows from \cref{lemma twice twisted}, since  
\[
  \tau_{y,x} \circ \tau_{x,y} = (\id_y \twten \id_x) \circ (\id_x \twten \id_y) = (\id_x \id_x) \ten (\id_y \id_y) = \id_{x\ten y}.
\]
The second statement similarly follows from \cref{lemma once twisted} since both maps are equal to $f \twten g$.
Indeed, we first have \[ \tau_{x',y'} (f\ten g) = (\id_{x'} \twten \id_{y'}) \circ (f\ten g) = (\id_{x'} f) \twten (\id_{y'} g)\]
utilizing the first equality, and then we have 
\[
  (g \ten f) \tau_{x,y} = (g\ten f) \circ (\id_x \twten \id_y) = (f\id_x) \twten (g \id_y)
\]
using the second equality.
\end{proof}

\begin{theorem}\label{thm permutative}
The functor $C \colon \seg(\LL) \to \cat$ factors through the category $\perm$ of small permutative categories and strict symmetric monoidal functors.
More precisely, this uses the monoidal structure on $CX$ from \cref{prop strict monoidal cat} and the symmetry isomorphism from \cref{def symmetry iso}.
\end{theorem}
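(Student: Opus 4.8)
The plan is to verify that the strict monoidal category $CX$ of \cref{prop strict monoidal cat}, equipped with the isomorphism $\tau$ of \cref{def symmetry iso}, satisfies the axioms of a permutative category in the sense of \cite[Definition 4.1]{May74}. Three of these are already in hand: strict associativity and unitality of $\ten$ come from \cref{prop strict monoidal cat}, while \cref{prop symmetry naturality} supplies both the naturality of $\tau$ and the relation $\tau_{y,x}\circ\tau_{x,y}=\id_{x\ten y}$. What remains is the unit axiom $\tau_{x,\mathbf 1}=\id_x=\tau_{\mathbf 1,x}$ and the hexagon identity
\[
  \tau_{x\ten y,\, z} = (\tau_{x,z}\ten\id_y)\circ(\id_x\ten\tau_{y,z}),
\]
together with the assertion that a map $X\to Y$ in $\seg(\LL)$ induces a strict \emph{symmetric} monoidal functor $CX\to CY$.

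For the unit axiom I would argue at the level of representatives. The identity $\id_{\mathbf 1}$ is represented by the unique element of $X_{\varnothing_1}$, and for any $G\in\LL_1$ one has $G\twplus{1}\varnothing_1 = G$ with $\sigma_1=\id$; combining this with $x\ten\ast = x$ from \cref{lem assoc unit before S}, \cref{def rep twten} yields $\id_x\twten\id_{\mathbf 1}=\id_x$, and symmetrically on the other side. Hence $\tau_{x,\mathbf 1}=\id_x=\tau_{\mathbf 1,x}$.

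The hexagon is the main obstacle, since it is the one axiom that genuinely involves three objects and cannot be reduced to the two-variable identities already established (a natural, self-inverse symmetry need not satisfy the hexagon). The morphisms $\tau_{x,z}\ten\id_y$ and $\id_x\ten\tau_{y,z}$ are not degenerate---applying a flip $\sigma_1^*$ to a degenerate $1$-simplex destroys degeneracy---so their composite can only be accessed through a genuine $2$-simplex. I would therefore mimic the strategy of \cref{lemma once twisted,lemma twice twisted}: choose height $2$ level graphs $G,H,K$ whose faces represent $\id_x,\id_y,\id_z$, form an iterated twisted sum $G\twplus{s}(H\twplus{t}K)$ realizing the permutation that moves the $z$-block past the $x$- and $y$-blocks, and compute its $d_0$, $d_1$, $d_2$ faces. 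Tracking the flip isomorphisms through the interchange relations of \cref{twplus sigma d interchange} and the elementary symmetry of \cref{lem symmetry} (exactly as \cref{lemma on dom and codom} tracks domains and codomains), one shows that $d_1$ of the resulting element represents $\tau_{x\ten y,\,z}$ while $d_0$ and $d_2$ represent the two factors on the right-hand side. The bookkeeping of which components get flipped at each level is where the real care is required; conceptually it is an incarnation of the hexagon already valid for the flip maps in the permutative categories $\LL_n$, transported through $X$ and the quotient to $SX$.

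Finally, for functoriality, a map $\phi\colon X\to Y$ induces maps $X_G\to Y_G$ natural in $G$, hence commuting with the structure maps $\sigma_1^*$ and with the Segal bijections that define $\ten$. Consequently $C\phi$ preserves $\twten$ and sends identities to identities, so $C\phi(\tau_{x,y})=\tau_{C\phi x,\,C\phi y}$; together with the strict monoidality noted after \cref{prop strict monoidal cat}, this makes $C\phi$ a strict symmetric monoidal functor, and the claimed factorization of $C$ through $\perm$ follows.
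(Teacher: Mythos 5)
Your proposal is correct in substance and is in fact more thorough than the paper's own proof, which disposes of the theorem in two sentences: it cites \cref{prop strict monoidal cat} and \cref{prop symmetry naturality}, asserts that $CX$ is permutative, and observes that the monoidal structure and symmetry are natural in $X$ by construction. The unit axiom $\tau_{x,\mathbf 1}=\id_x$ and the hexagon are not checked explicitly there, so you are right that they constitute the remaining content (naturality plus $\tau^2=\id$ alone do not imply the hexagon), and your treatments use exactly the machinery the paper would use: the unit axiom does reduce to $G\twplus{1}\varnothing_1=G$ with $\sigma_1=\id$ together with \cref{lem assoc unit before S}, and the hexagon does require a genuine height-$2$ computation in the style of \cref{lemma once twisted,lemma twice twisted}. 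One concrete correction to your sketch of the latter: no iterated twisted sum $G\twplus{s}(H\twplus{t}K)$, nor any regrouping of the three blocks using only $+$ and $\twplus{}$, produces the required level pattern $G+H+K$, $G+K+H$, $K+G+H$ --- for instance $G\twplus{2}(H\twplus{1}K)$ has bottom level $K+H+G$ rather than $K+G+H$, and a block that sits in the middle position at some level can never be the outer factor of a twisted sum. You must instead write down a bespoke partially flipped height-$2$ graph, exactly as the paper does with $G\tilde{+}H$ in the proof of \cref{lemma twice twisted}; with that graph the faces $d_2$, $d_0$, $d_1$ of the evident element represent $\id_x\ten\tau_{y,z}$, $\tau_{x,z}\ten\id_y$ and $\tau_{x\ten y,z}$ as you intend, and the computation closes via \cref{twplus sigma d interchange} and \cref{lem symmetry}. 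Your argument for naturality in $X$ agrees with the paper's.
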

\begin{proof}
By \cref{prop strict monoidal cat} and \cref{prop symmetry naturality}, $CX$ is a permutative category for every $X\in \seg(\LL)$.
But the definitions imply that both the monoidal structure and the symmetry are natural in $X$.
\end{proof}

\begin{example}\label{example C star monoidal}
The equivalence $C(\ast) \to \csp$ from \cref{example C star Csp} is a symmetric monoidal functor.
To see this, recall that objects of $C(\ast)$ have been identified with objects of $\finsetskel$ and morphisms are equivalence classes of cospans in $\finsetskel$.
Direct inspection shows that the symmetry isomorphisms in $C(\ast)$ and $\finsetskel$ coincide.
The monoidal constraint for $C(\ast) \to \csp$ is then inherited from that for $\finsetskel \to \finset$, and we conclude that $C(\ast) \to \csp$ is a symmetric monoidal functor.
That $\csp$ is equivalent to a skeletal permutative category is well known; see \cite[5.4]{Lack:CP}.
Lack shows that $C(\ast)$ is the prop for special Frobenius monoids \cite[Proposition 6.1]{CoyaFong:CPECFM}, also known as commutative separable algebras \cite{Carboni:MRGR}. 
The introduction of \cite{RSW:GCSACG} contains a nice overview of these structures.
\end{example}

It seems to be a folklore result that special Frobenius monoids are also the algebras for the terminal properad. 
This is consistent with the fact that $C$ gives the free prop generated by a properad.

\section{Properads give labelled cospan categories}\label{sec from properad to lcc}

In this section, we show that the envelope of a properad is a  labelled cospan category.
More precisely, if $P$ is a properad then the composite
\[
  C(N_1(P)) \to C(\ast) \xrightarrow{\simeq} \csp
\]
is a labelled cospan category.
Since $N_1$ is an equivalence of categories between $\properad$ and $\seg(\LL)$, it is enough to prove the following.
Below, we will verify the axioms from \cref{def lcc} in a series of lemmas.
\begin{proposition}\label{prop CX is lcc}
If $X \in \seg(\LL)$, then $C(X) \to \csp$ is a labelled cospan category.
\end{proposition}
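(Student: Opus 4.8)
The plan is to verify the four conditions of \cref{def lcc} for the symmetric monoidal functor $\pi \colon CX \to C(\ast) \xrightarrow{\simeq} \csp$; this composite is symmetric monoidal by \cref{thm permutative} and \cref{example C star monoidal}, applied to the unique map $X \to \ast$. Throughout I would work with the explicit description \eqref{EQ SX levels}: an object of $CX$ is a pair $(\un, x)$ with $x \in X_{\un}$ (there are no nonidentity self-congruences at height $0$), and a morphism $c \to d$ is a class $[x] \in \overline{X}_G$ of some $x \in X_G$ for a height $1$ graph $G_{00} \to G_{01} \leftarrow G_{11}$, whose two ends recover $c$ and $d$. Under $\pi$ such data records $\un$, of cardinality $n$, together with the cospan underlying $G$; hence $c$ is connected iff $n = 1$, the morphism is connected iff $|G_{01}| = 1$, and it is reduced iff $G_{00} + G_{11} \to G_{01}$ is surjective, i.e.\ iff every vertex of $G$ has an input or an output.

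For \eqref{lcc: obj decomp}, the fibrewise Segal isomorphisms of \cref{examples segality} give $X_{\un} \cong (X_{\mathfrak e})^n$; writing $x = (x_1,\dots,x_n)$ and $c_i = (\underline 1, x_i)$, the definition of the tensor (\cref{def tensor before S,prop strict monoidal cat}) yields $c = c_1 \otimes \dots \otimes c_n$ with each $c_i$ connected. For \eqref{lcc: free gen}, the unit is $\mathbf 1 = (\underline 0, \ast)$, and an endomorphism of $\mathbf 1$ is represented by a graph $\underline 0 \to \underline r \leftarrow \underline 0$. The corolla decomposition in \cref{examples segality} gives $X_G \cong (X_{\mathfrak c_{0,0}})^r$, and by \cref{rmk congruence height 1} the self-congruences of $G$ are exactly the $r!$ permutations of $\underline r$, so $\overline X_G \cong \mathrm{Sym}^r(X_{\mathfrak c_{0,0}})$ and $\hom(\mathbf 1,\mathbf 1) \cong \coprod_{r \geq 0} \mathrm{Sym}^r(X_{\mathfrak c_{0,0}})$. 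This is the free commutative monoid on $X_{\mathfrak c_{0,0}} = \homcon(\mathbf 1, \mathbf 1)$; to conclude I would recall that composition of endomorphisms of the unit agrees with $\otimes$ (Eckmann--Hilton), and that $\otimes$ corresponds to the sum of graphs, hence to concatenation in this free monoid.

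The heart of the argument is \eqref{lcc: reduced and free}, from which \eqref{lcc: pullback} will follow by the same mechanism. The key observation is that a height $1$ graph $G$ splits canonically as $G^{\mathrm r} + G^\circ$, where $G^{\mathrm r}$ is the full subgraph on the non-isolated vertices (automatically reduced) and $G^\circ = (\underline 0 \to I \leftarrow \underline 0)$ collects the isolated vertices $I$. Because a self-congruence fixes $G_{00}$ and $G_{11}$, it preserves the set $I$, so this splitting is congruence-invariant and the self-congruence group factors as a product over the two parts. Segality gives $X_G \cong X_{G^{\mathrm r}} \times X_{G^\circ}$; by \cref{reduced graphs remark} the first factor carries no congruence, so $X_{G^{\mathrm r}} = \overline X_{G^{\mathrm r}}$ assembles into $\homred(c,d)$, while the second factor contributes $\hom(\mathbf 1, \mathbf 1)$ exactly as in \eqref{lcc: free gen}. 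Summing over congruence classes gives the bijection $\homred(c,d) \times \hom(\mathbf 1, \mathbf 1) \cong \hom(c,d)$, with inverse the canonical splitting, and one checks that the forward map is $\otimes$. For \eqref{lcc: pullback}, fix $w \in \homred_\csp(\pi c, \pi d)$ and $w' \in \homred_\csp(\pi c',\pi d')$ with reduced graphs $G, G'$; the fibre of the left vertical map over $(w,w')$ is $X_G \times X_{G'}$, while the fibre of the right vertical map over $w \otimes w'$ is $\overline X_{G + G'} = X_{G+G'}$ (reduced, so no quotient), and Segality identifies these via $\otimes$, which is precisely the cartesian condition.

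The main obstacle I expect is the bookkeeping underlying \eqref{lcc: reduced and free}: one must confirm that the isolated/non-isolated splitting is genuinely congruence-invariant and that passage to the self-congruence quotient $\overline X_G$ commutes with the product decomposition of $X_G$, so that the quotient of $X_{G^{\mathrm r}} \times X_{G^\circ}$ is the product of the separate quotients. Once this is in place, every remaining point is a direct application of the fibrewise Segal conditions of \cref{examples segality} together with \cref{rmk congruence height 1,reduced graphs remark}.
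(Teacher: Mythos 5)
Your proposal is correct and follows essentially the same route as the paper: conditions (1) and (2) via the fibrewise Segal isomorphisms and the $\Sigma_r$-action on $X_{r\mathfrak{c}_{00}}$, and conditions (3) and (4) via the congruence-invariant splitting of a height-1 graph into its reduced part and its isolated vertices (which the paper packages as \cref{lem tensor lemma} together with the decomposition $\WW_1 \cong R \times T$ in \cref{lem lcc reduced and free}). The only caveats are cosmetic: \cref{rmk congruence height 1} is not the right citation for the self-congruences of $r\mathfrak{c}_{00}$ (that remark concerns the opposite case, though the fact itself is immediate), and your passage from the $\morred$-level bijection to the fixed-source-and-target statement in (4) silently uses the uniqueness of connected decompositions (\cref{rmk uniqueness decomp}), which the paper flags explicitly.
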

\begin{proof}
Combine \cref{lem lcc obj decomp}, \cref{lem lcc free gen}, \cref{lem lcc reduced and free}, and \cref{lem lcc pullback}.
\end{proof}

Suppose $X\in \seg(\LL)$, and consider functor $CX \to C(\ast)$.
The set \[ X_\mathfrak{e} = X_{\underline{1}} \subseteq \sum_{k \in \mathbb{N}} X_{\uk} = \sum_{w\in \WW_0} \overline{X}_w = \ob CX\] is the set of \emph{connected objects} of $CX$ in the sense of \cref{def connected reduced}.
Likewise, the set
\[
\sum_{n,m \geq 0} X_{\mathfrak{c}_{n,m}} \subseteq \sum_{w \in \WW_1} \overline{X}_w = \mor CX
\]
is the set of \emph{connected morphisms}.
These displays use that the component of $\uk \in \kong_0$ and the component of $\mathfrak{c}_{n,m} \in \kong_1$ are discrete.
The \emph{reduced morphisms} are those on (congruence classes) of height 1 level graphs so that each vertex has either an input or an output.
Alternatively, these graphs are those without a $\mathfrak{c}_{00}$-summand.

\begin{lemma}\label{lem lcc obj decomp}
Let $c$ be an object of $CX$ so that $\pi(c) = \underline{n}$.
Then $c = c_1 \otimes \cdots \otimes c_n$ for some connected objects $c_1, \dots, c_n$.
\end{lemma}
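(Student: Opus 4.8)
The plan is to unwind the definition of the objects of $CX$ and then apply the Segal condition in the fiber over $[0]$. Recall from \eqref{EQ SX levels} that $\ob CX = SX_0 = \sum_{w\in \WW_0} \overline{X}_w$, that $\WW_0$ is identified with $\{\uk\}_{k\geq 0}$, and that the only congruence on a height $0$ level graph is the identity, so that $\overline{X}_{\uk} = X_{\uk}$ and hence $\ob CX = \sum_k X_{\uk}$. The functor $\pi\colon CX \to C(\ast) \simeq \csp$ sends an element of the summand $X_{\uk}$ to $\uk$, so the hypothesis $\pi(c) = \un$ says precisely that $c$ lies in the summand $X_{\un}$.

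First I would decompose the height $0$ level graph $\un$ inside the monoidal category $\LL_0 = \finsetskel$. Using the ordinal sum, $\un = \underline{1} + \underline{1} + \cdots + \underline{1}$ ($n$ copies), where $\underline{1} = \mathfrak{e}$ is the edge. The third bullet of \cref{examples segality} shows that $X$ carries finite sums of height $0$ level graphs to products, so iterating yields a bijection
\[
  X_{\un} \xrightarrow{\ \cong\ } X_{\mathfrak{e}} \times \cdots \times X_{\mathfrak{e}}
\]
with $n$ factors. Writing the image of $c$ as $(c_1, \ldots, c_n)$, each $c_i \in X_{\mathfrak{e}} = X_{\underline{1}}$ is by definition a connected object of $CX$.

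It then remains to verify that $c = c_1 \otimes \cdots \otimes c_n$ in $CX$. This is where I would invoke the definition of the monoidal structure: by \cref{def tensor before S} and \cref{lem ten descends}, the tensor of objects $X_G \times X_H \to X_{G+H} \hookrightarrow SX_0$ is exactly the inverse of the Segal bijection $i^* \times j^*$, the congruence quotient being trivial in height $0$. Hence the inverse of the iterated bijection above carries $(c_1,\ldots,c_n)$ to the $n$-fold tensor, and strict associativity (\cref{lem assoc unit before S}) renders the $n$-fold tensor unambiguous. Therefore $c = c_1 \otimes \cdots \otimes c_n$.

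I expect no genuine obstacle here: the content is entirely the bookkeeping that matches $\pi$-cardinality to the defining summand and identifies the tensor product of objects with the inverse Segal map. The only mild care needed is the $n=0$ case, where the empty sum $\un = \underline{0} = \varnothing_0$ forces $c$ to be the monoidal unit, realized as the empty tensor of connected objects.
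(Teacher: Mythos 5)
Your proof is correct and follows essentially the same route as the paper's: both identify the $n$-fold tensor product of objects with the inverse of the Segal bijection $X_{\un} \xrightarrow{\cong} X_{\underline{1}} \times \dots \times X_{\underline{1}}$ and read off the decomposition from there. You simply spell out more of the bookkeeping (the identification $\ob CX = \sum_k X_{\uk}$, the triviality of congruences in height $0$, and the $n=0$ case), all of which is accurate.
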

\begin{proof}
Recall that $CX$ is a strict monoidal category, and on objects the $n$-fold tensor product is given as the inverse of the Segal map
\[
  X_{\un} \xrightarrow{\cong} X_{\underline{1}} \times \dots \times X_{\underline{1}}.
\]
Thus given an object $c$ with $\pi(c) = \underline{n}$, we see that $c$ is the tensor product of $n$ connected objects $c_1, \dots, c_n$.
\end{proof}

\begin{remark}[Uniqueness of decomposition]\label{rmk uniqueness decomp}
We emphasize that as the $n$-fold tensor product is given by the inverse of the Segal map $X_{\un} \xrightarrow{\cong} X_{\underline{1}} \times \dots \times X_{\underline{1}}$
there is actually a \emph{unique} list of connected objects $c_1, \dots, c_n$ with $c = c_1 \ten \dots \ten c_n$ in the previous lemma.
\end{remark}

See also \cref{def slcc} below.

\begin{lemma}\label{lem lcc free gen}
The abelian monoid $\hom(\mathbf{1},\mathbf{1})$ is freely generated by the set $\homcon(\mathbf{1},\mathbf{1})$ of connected morphisms.
\end{lemma}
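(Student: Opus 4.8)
The plan is to identify both $\hom(\mathbf{1},\mathbf{1})$ and $\homcon(\mathbf{1},\mathbf{1})$ explicitly and then recognize the former as the free abelian monoid on the latter. The tensor unit $\mathbf{1}$ is the image of the unique element of $X_{\varnothing_0}$, so it corresponds to $\varnothing_0 = \underline{0}$ and $\pi(\mathbf{1}) = \underline 0$. Using the description of $SX_1$ from \eqref{EQ SX levels}, a morphism $\mathbf{1}\to\mathbf{1}$ is a congruence class of a height $1$ level graph $G$ whose domain $G_{00}$ and codomain $G_{11}$ are both $\underline 0$; such a $G$ amounts to a choice of middle set $G_{01}=\uk$ together with (the class of) an element of $X_G$, since the two leg maps out of $\underline 0$ are forced. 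Each vertex of such a $G$ is a copy of the elementary corolla $\mathfrak{c}_{0,0}$, and $G$ is the $k$-fold sum $\mathfrak{c}_{0,0}+\dots+\mathfrak{c}_{0,0}$ in the fiber $\LL_1$. The connected morphisms are exactly the classes with $k=1$, so, since $\mathfrak{c}_{0,0}$ has only the identity congruence, $\homcon(\mathbf{1},\mathbf{1}) = X_{\mathfrak{c}_{0,0}}$.

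Next I would compute $\overline{X}_G$ for $G=\mathfrak{c}_{0,0}+\dots+\mathfrak{c}_{0,0}$. By the fiberwise Segal property recorded in \cref{examples segality}, the inclusions of the $k$ connected components induce a bijection $X_G \xrightarrow{\cong} (X_{\mathfrak{c}_{0,0}})^k$. The self-congruences of $G$ are precisely the bijections of $G_{01}=\uk$, the conditions on the leg maps being vacuous because $G_{00}=G_{11}=\underline 0$; hence the congruence groupoid has automorphism group $\Sigma_k$. A self-congruence permutes the connected components, each of which it restricts to by the identity congruence of $\mathfrak{c}_{0,0}$, so under the Segal bijection the $\Sigma_k$-action is the plain permutation action on the $k$ factors. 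Therefore $\overline{X}_G = (X_{\mathfrak{c}_{0,0}})^k/\Sigma_k$, and summing over $k$ gives
\[
  \hom(\mathbf{1},\mathbf{1}) = \coprod_{k\geq 0} (X_{\mathfrak{c}_{0,0}})^k/\Sigma_k,
\]
which is exactly the set of finite multisets on $X_{\mathfrak{c}_{0,0}} = \homcon(\mathbf{1},\mathbf{1})$, i.e.\ the underlying set of the free abelian monoid on that set.

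Finally I would check that this set-level identification respects the monoid structures. Since $CX$ is permutative (\cref{thm permutative}) and strict, composition and tensor agree on $\hom(\mathbf{1},\mathbf{1})$ and make it a \emph{commutative} monoid, so no separate Eckmann--Hilton argument is needed. By the definition of the tensor on $SX_1$ in \cref{lem ten descends}, the product of two classes represented by $\mathfrak{c}_{0,0}$-graphs is represented by their disjoint union, so tensoring corresponds to concatenating the associated multisets, with the empty height $1$ graph ($k=0$) representing $\id_{\mathbf{1}}$ and serving as the monoid unit. Thus the map sending a multiset $\{f_1,\dots,f_k\}$ of connected endomorphisms to $f_1\ten\dots\ten f_k$ is a monoid isomorphism from the free abelian monoid on $\homcon(\mathbf{1},\mathbf{1})$ onto $\hom(\mathbf{1},\mathbf{1})$, which is the desired freeness.

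The main obstacle is the bookkeeping in the second paragraph: verifying that the $\Sigma_k$-action coming from self-congruences of $\mathfrak{c}_{0,0}+\dots+\mathfrak{c}_{0,0}$ coincides with the permutation action on the product decomposition $(X_{\mathfrak{c}_{0,0}})^k$, and that the multiplication is multiset union rather than a version twisted by the symmetry. Both points reduce to tracking how a congruence permutes connected components (using that each component is the rigid object $\mathfrak{c}_{0,0}$) and to the compatibility of $\ten$ with the connected decomposition already built into \cref{lem ten descends}; everything else is formal.
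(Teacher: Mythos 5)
Your proposal is correct and follows essentially the same route as the paper's proof: identify the relevant height-$1$ graphs as the $k$-fold sums $\mathfrak{c}_{00}+\dots+\mathfrak{c}_{00}$, use the Segal decomposition $X_{k\mathfrak{c}_{00}}\cong (X_{\mathfrak{c}_{00}})^k$, observe that the self-congruences form $\Sigma_k$ acting by permuting the factors, and check that the tensor product corresponds to concatenation, yielding the free abelian monoid on $X_{\mathfrak{c}_{00}}=\homcon(\mathbf{1},\mathbf{1})$. The only difference is presentational: the paper verifies the permutation action and the compatibility of the monoidal structure via explicit commutative diagrams, where you argue the same points verbally.
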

\begin{proof}
The set of objects of $\LL_1$ with empty input and empty output is in bijection with $\mathbb{N}$, and we write temporarily abbreviate the object corresponding with $n$ by
\[
  n\mathfrak{c}_{00} \coloneqq \underbrace{\mathfrak{c}_{00} + \dots + \mathfrak{c}_{00}}_n.
\]
This object is not isomorphic, hence not congruent, to any other object in $\LL_1$.
Every automorphism of $n\mathfrak{c}_{00}$ is a congruence, and the group of automorphisms is the symmetric group $\Sigma_n$.
For $\gamma \in \Sigma_n$, the left diagram below commutes
\[ \begin{tikzcd}[row sep=small]
& n\mathfrak{c}_{00} \ar[dd,"\gamma"]  & X_{n\mathfrak{c}_{00}} \rar{\cong} & \prod\limits_{i=1}^n X_{\mathfrak{c}_{00}} \ar[dr,bend left,"\pi_k"] \\
\mathfrak{c}_{00} \ar[ur,"k"] \ar[dr,"\gamma(k)"'] & & & & X_{\mathfrak{c}_{00}} 
\\
& n\mathfrak{c}_{00} & X_{n\mathfrak{c}_{00}} \rar{\cong} \ar[uu,"\gamma^*"]  & \prod\limits_{i=1}^n X_{\mathfrak{c}_{00}}  \ar[ur,bend right,"\pi_{\gamma(k)}"']
\end{tikzcd} \]
hence so too does the right diagram, where $\pi_t$ is the $t$th projection.
This implies that the following commutes
\[ \begin{tikzcd}
X_{n\mathfrak{c}_{00}} \rar{\cong} & \prod\limits_{i=1}^n X_{\mathfrak{c}_{00}} \\
X_{n\mathfrak{c}_{00}} \rar{\cong} \ar[u,"\gamma^*"]  & \prod\limits_{i=1}^n X_{\mathfrak{c}_{00}} \uar["{-}\cdot \gamma"']
\end{tikzcd} \]
where the right-hand map is the standard right action on lists, that is $(x_1, \dots, x_n) \mapsto (x_1, \dots, x_n)\cdot \gamma = (x_{\gamma(1)}, \dots, x_{\gamma(n)})$.
Further, the monoidal structure comes from the bottom right isomorphism in the following diagram.
\[ \begin{tikzcd}
X_{(n+m)\mathfrak{c}_{00}} \dar{=}\ar[rr,"\cong"] &   & \prod\limits_{i=1}^{n+m} X_{\mathfrak{c}_{00}} \dar \\
X_{n\mathfrak{c}_{00} + m\mathfrak{c}_{00} } \rar{\cong} & X_{n\mathfrak{c}_{00}}  \times X_{m\mathfrak{c}_{00} } \rar{\cong} & \prod\limits_{i=1}^{n} X_{\mathfrak{c}_{00}} \times \prod\limits_{i=1}^{m} X_{\mathfrak{c}_{00}}
\end{tikzcd} \]
Thus we see that $\hom(\mathbf{1},\mathbf{1})$ as a monoid is isomorphic to
\[
\sum_{n\geq 0} \left(\prod_{i=1}^n X_{\mathfrak{c}_{00}}\right) / 
\Sigma_n^{\oprm}
\]
which is the free abelian monoid on $X_{\mathfrak{c}_{00}} = \homcon(\mathbf{1},\mathbf{1})$.
\end{proof}

Let $R = \morred C(\ast) \subseteq \WW_1$ be the congruence classes of graphs where each vertex has an input or output.
The following lemma, concerning the connected components of the $q$-fibers in the congruence category $\kong$ appearing in \cref{lem pushforwards}, will be used in the proofs of \cref{lem lcc reduced and free} and \cref{auxillary to lcc pullback}.
If $r\in R$, then by \cref{rmk congruence height 1} the category $\kong_r$ is a preorder, and $\overline{X}_r \cong X_G$ if $[G] = r$ (see \cref{reduced graphs remark}).

\begin{lemma}\label{lem tensor lemma}
If $r\in R \subseteq \WW_1$ is reduced and $w\in \WW_1$ is arbitrary, then
\begin{equation}\label{proposed lemma eq}
  \kong_r \times \kong_w \to \kong_{r\ten w}
\end{equation}
sending $(G,H)$ to $G+H$ is an equivalence of categories.
It follows that if $X\in \seg(\LL)$, then $\overline{X}_r \times \overline{X}_w \to \overline{X}_{r\ten w}$ is a bijection.
\end{lemma}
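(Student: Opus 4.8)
The plan is to show the functor in \eqref{proposed lemma eq} is fully faithful and essentially surjective, and then to deduce the second assertion by passing to colimits. All of $\kong_r$, $\kong_w$, and $\kong_{r\ten w}$ are connected groupoids, being connected components of the groupoid $\kong_1$. Essential surjectivity is immediate: fixing representatives $G$ of $r$ and $H$ of $w$, the object $G+H$ lies in $\kong_{r\ten w}$ (since $r\ten w=[G+H]$) and is visibly in the image, and every object of $\kong_{r\ten w}$ is congruent to it. For faithfulness, I would note that restricting a congruence $G+H\to G'+H'$ to the ordered summands recovers its two components, so $(\phi,\psi)\mapsto\phi+\psi$ is injective. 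The real content is fullness.

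The hard part will be showing that an arbitrary congruence $\chi\colon G+H\to G'+H'$ in $\kong_{r\ten w}$ respects the summand decomposition. Recall that a height $1$ graph is a cospan $G_{00}\to G_{01}\leftarrow G_{11}$, that a congruence is the identity on the $00$- and $11$-parts, and that congruent graphs share their $ii$-values; since $[G]=[G']=r$ and $[H]=[H']=w$ and $+$ is ordered, the tops $G_{00}+H_{00}$ and bottoms $G_{11}+H_{11}$ of source and target agree summand by summand. Thus $\chi$ is determined by a permutation $\chi_{01}\colon G_{01}+H_{01}\to G'_{01}+H'_{01}$ compatible with the (fixed) leg maps. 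Because $r$ is reduced, every vertex $v\in G_{01}$ has an input in $G_{00}$ or an output in $G_{11}$; as $\chi$ fixes these sets pointwise and preserves the legs, that same element of $G'_{00}$ (resp.\ $G'_{11}$) is attached to $\chi_{01}(v)$. Since a vertex of $H'_{01}$ only meets $H'_{00}$ and $H'_{11}$, which are disjoint from $G'_{00},G'_{11}$, I conclude $\chi_{01}(v)\in G'_{01}$. Applying the same reasoning to $\chi^{-1}$ (legitimate since $G'$ is reduced as well) shows $\chi_{01}$ restricts to a bijection $G_{01}\cong G'_{01}$, hence also $H_{01}\cong H'_{01}$. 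Therefore $\chi=\phi+\psi$ for restrictions $\phi\colon G\to G'$ and $\psi\colon H\to H'$, which are congruences because $\chi$ is the identity on all $ii$-parts and the splitting is respected. This gives fullness, completing the proof that \eqref{proposed lemma eq} is an equivalence.

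For the final statement, I would pass to colimits over the opposite categories. Since $X$ is Segal, the isomorphisms $X_{G+H}\cong X_G\times X_H$ of \cref{def tensor before S} identify the composite $X\circ F^\oprm$ on $(\kong_r\times\kong_w)^\oprm$ with $(G,H)\mapsto X_G\times X_H$. As binary products in $\set$ preserve colimits in each variable, I obtain
\[
\overline X_r\times\overline X_w=\Bigl(\colim_{\kong_r^\oprm}X\Bigr)\times\Bigl(\colim_{\kong_w^\oprm}X\Bigr)\cong\colim_{(\kong_r\times\kong_w)^\oprm}X\circ F^\oprm,
\]
and because $F$ is an equivalence, precomposition with $F^\oprm$ leaves the colimit unchanged, so the right-hand side is $\colim_{\kong_{r\ten w}^\oprm}X=\overline X_{r\ten w}$. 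Tracing a pair of representatives through these identifications shows the resulting bijection is precisely the map $(\bar x,\bar y)\mapsto\overline{x\ten y}$ induced by the tensor of \cref{lem ten descends}. The main obstacle is the summand-splitting claim of the second paragraph; once the reduced hypothesis is used to pin down where vertices go, everything else is formal.
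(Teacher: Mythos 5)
Your proposal is correct and follows essentially the same route as the paper: prove the functor is essentially surjective (via the definition of $r\ten w$ as $[G+H]$), clearly faithful, and full by using the reduced hypothesis to show that any congruence $G+H\equiv G'+H'$ must send $G_{01}$ into $G'_{01}$ (your element-chase through the legs is the same argument the paper phrases via surjectivity of $G_{00}+G_{11}\twoheadrightarrow G_{01}$, including the step of applying the argument to the inverse congruence), and then deduce the bijection on $\overline{X}$'s by commuting the colimit over $\kong_{r\ten w}^\oprm$ past the equivalence and past finite products in $\set$. No gaps.
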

\begin{proof}
By the defintion in \cref{lem ten descends}, the class $r\ten w \in \WW_1$ contains some graph $G+H$ where $[G] = r$ and $[H] = w$.
Thus \eqref{proposed lemma eq} is essentially surjective.
It is also clearly faithful.
We now prove that it is full.
We must show that if $G, G'\in r$ and $H,H'\in w$ then any congruence $f \colon G+H \equiv G' + H'$ restricts to congruences $f^L \colon G\equiv G'$ and $f^R \colon H \equiv H'$ with $f = f^L + f^R$.
For $i=0,1$ we know that $\id_{G_{ii}} \colon G_{ii} \to G'_{ii}$ and $\id_{H_{ii}} \colon H_{ii} \to H'_{ii}$ sum to $f_{ii} = \id_{G_{ii} + H_{ii}} \colon (G+H)_{ii}  \to (G+H')_{ii}$, so this part restricts.
Since every vertex of $G$ or $G'$ has either an input or an output, the first horizontal maps in the following commutative rectangle are surjective.
\[ \begin{tikzcd}
G_{00} + G_{11} \rar[two heads] \dar["=", "f^L_{00} + f^L_{11}"swap] & G_{01} \rar[hook] & G_{01} + H_{01} \dar{f_{01}} \\
G'_{00} + G'_{11} \rar[two heads] & G'_{01} \rar[hook] & G'_{01} + H'_{01}.
\end{tikzcd} \]
It follows that $f_{01}$ restricts to a function $f^L_{01} \colon G_{01} \to G'_{01}$.
We must still check that $f^L$ is a congruence, and for this simply note that $(f^{-1})_{01}$ restricts to a function $G'_{01} \to G_{01}$ by the same argument, providing an inverse to $f^L_{01}$.
Since $f^L$ is an isomorphism, it is a congruence.

Now the composite $H \to G+H \to G'+H'$ must land in $H'$, since $f$ is a monomorphism and $f^L \colon G \to G'$ is an epimorphism.
Thus $f$ restricts to a map $f^R \colon H \to H'$, which must be an epimorphism since $f$ is.
We conclude that \eqref{proposed lemma eq} is full, hence an equivalence.

For the conclusion, suppose $X$ is Segal.
In the following diagram, we use the result to deduce the isomorphism at the bottom of the triangle; the others come from Segality.
\[ \begin{tikzcd}
X_G \times X_H \dar & & X_{G+H} \ar[ll,"\cong"'] \dar \ar[dl] \\
\colim\limits_{G \in \kong_r^\oprm} X_G \times \colim\limits_{H\in \kong_w^\oprm} X_H \dar{=} & \colim\limits_{(G,H)\in \kong_r^\oprm\times \kong_w^\oprm} X_{G+H} \lar["\cong"']  & \colim\limits_{K\in \kong_{r\ten w}^\oprm} X_K \dar{=} \lar["\cong"'] \\
\overline{X}_r \times \overline{X}_w \ar[rr,dashed] 
\dar[hook] & &  \overline{X}_{r\ten w} \dar[hook]\\
SX_1 \times SX_1 \ar[rr] & & SX_1
\end{tikzcd} \]
We conclude that the dotted arrow is an isomorphism.
\end{proof}

\begin{remark}
The proof of \cref{lem tensor lemma} truly depended on one of the elements being reduced. 
The simplest issue was apparent in \cref{lem lcc free gen} and its proof, since \[ 
X_{\mathfrak{c}_{00}} \times X_{\mathfrak{c}_{00}} =
\overline{X}_{\mathfrak{c}_{00}} \times \overline{X}_{\mathfrak{c}_{00}} \to \overline{X}_{\mathfrak{c}_{00} + \mathfrak{c}_{00}} = X_{\mathfrak{c}_{00} + \mathfrak{c}_{00}} / \Sigma_2^\oprm \cong (X_{\mathfrak{c}_{00}} \times X_{\mathfrak{c}_{00}})/ \Sigma_2^\oprm
\]
is not typically a bijection.
But this also occurs for other congruence classes. 
For example, if $G = \mathfrak{c}_{10} + \mathfrak{c}_{00}$ and $H = \mathfrak{c}_{01} + \mathfrak{c}_{00}$, then the non-trivial endo-congruence on $G+H = \mathfrak{c}_{10} + \mathfrak{c}_{00} + \mathfrak{c}_{01} + \mathfrak{c}_{00}$ does not restrict in the way of the proof.
Further, the automorphism group of $G+H$ in $\kong_1$ is $\Sigma_2$, whereas $G$ and $H$ have trivial automorphism groups. 
\end{remark}

\begin{lemma}\label{lem lcc reduced and free}
The map
\[ \begin{tikzcd}
  \homred(c,d) \times \hom(\mathbf{1},\mathbf{1}) \rar{\otimes} & \hom(c,d)
\end{tikzcd} \]
is a bijection.
\end{lemma}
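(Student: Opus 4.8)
The plan is to reduce the statement to \cref{lem tensor lemma} by sorting both sides according to the underlying congruence class in $\WW_1$. Every morphism of $CX$ is represented by a pair $(G,x)$ with $G\in\LL_1$ a height $1$ level graph and $x\in X_G$; passing to congruence classes, I would write $\hom(c,d)$ as the disjoint union, over those $w\in\WW_1$ whose underlying cospan has legs $\pi c$ and $\pi d$, of the fibers $\{x\in\overline{X}_w : d_1 x = c,\ d_0 x = d\}$ (with $d_1$ the domain and $d_0$ the codomain). In the same way $\homred(c,d)$ is the sub-union running only over the reduced classes $r$, those with no $\mathfrak{c}_{00}$-summand, and, since the only height $1$ graphs with empty input and empty output are the $q\mathfrak{c}_{00}$, one has $\hom(\mathbf 1,\mathbf 1)=\coprod_{q\ge 0}\overline{X}_{q\mathfrak{c}_{00}}$ with no domain or codomain constraint, as these are automatically $\mathbf 1$.

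Next I would record the elementary fact that a cospan $\pi c\to S\leftarrow\pi d$ decomposes canonically into its jointly surjective corestriction onto the joint image of the two legs together with the isolated points of $S$ that neither leg hits. At the level of $\WW_1$ this says every $w$ factors uniquely as $w=r\ten q\mathfrak{c}_{00}$, where $r$ is reduced with the same legs as $w$ and $q\ge 0$ counts the isolated points; thus $(r,q)\mapsto r\ten q\mathfrak{c}_{00}$ is a bijection from pairs (reduced class with legs $\pi c,\pi d$) $\times\ \mathbb N$ onto the classes $w$ indexing $\hom(c,d)$.

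With this bookkeeping in place the result follows from \cref{lem tensor lemma}: for each reduced $r$ and each $q$, that lemma provides a bijection $\overline{X}_r\times\overline{X}_{q\mathfrak{c}_{00}}\xrightarrow{\cong}\overline{X}_{r\ten q\mathfrak{c}_{00}}=\overline{X}_w$, which is precisely the tensor $\ten$ of $CX$ restricted to these components. It then remains to check that this bijection carries the domain/codomain conditions across, and here I would use functoriality rather than a hands-on computation with the simplicial operators: since $\ten\colon CX\times CX\to CX$ is a functor (\cref{prop strict monoidal cat}) and $\mathbf 1$ is the strict unit, $\dom(f^r\ten f^0)=\dom f^r\ten\mathbf 1=\dom f^r$ and likewise for codomains, so $f^r\ten f^0$ has domain $c$ and codomain $d$ exactly when $f^r$ does, with no condition on $f^0$. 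Restricting the bijection $\overline{X}_r\times\overline{X}_{q\mathfrak{c}_{00}}\cong\overline{X}_w$ to the subsets cut out by these conditions therefore yields a bijection between $\{x^r\in\overline{X}_r : d_1x^r=c,\ d_0x^r=d\}\times\overline{X}_{q\mathfrak{c}_{00}}$ and $\{x\in\overline{X}_w : d_1x=c,\ d_0x=d\}$; summing over all $(r,q)$ and invoking the decomposition of the previous paragraph assembles these into the desired bijection $\homred(c,d)\times\hom(\mathbf 1,\mathbf 1)\to\hom(c,d)$. The only genuine content is the appeal to \cref{lem tensor lemma}, so the main thing to get right is the matching of domains and codomains across the decomposition, which is exactly why I would extract it cleanly from functoriality and unitality of $\ten$.
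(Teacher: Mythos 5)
Your proposal is correct and follows essentially the same route as the paper: both establish the unique decomposition $\WW_1 \cong R \times T$ (reduced part tensor isolated-$\mathfrak{c}_{00}$ part), invoke \cref{lem tensor lemma} componentwise, and then transfer the domain/codomain constraints using unitality of $\ten$ (the paper packages this last step as a pasting of pullback squares over $\ob C(X)\times\ob C(X)$, which is the same observation as your functoriality argument). The only difference in emphasis is that the paper verifies the uniqueness of the decomposition $w = r\ten t$ in $\WW_1$ by an explicit argument with congruences, where you cite it as an elementary fact about cospans; that is a fair gloss since $\WW_1$ is identified with $\mor\csp$.
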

\begin{proof}
Let $R = \morred C(\ast) \subseteq \WW_1$ be the congruence classes of graphs where each vertex has an input or output, and $T  = \{ n \mathfrak{c}_{00}  \mid n \in \mathbb{N} \} \subseteq \WW_1$ be the set of graphs with empty inputs and outputs (see proof of \cref{lem lcc free gen}).
Every element $w\in \WW_1$ decomposes uniquely as $w = r \ten t$ where $r\in R$ and $t\in T$.
Indeed, to get this decomposition, if $G$ is an arbitrary height 1 level graph, then we can form new graphs $G^r$ and $G^t$ and morphisms in $\LL_1$ 
\[ \begin{tikzcd}
G^r \rar[hook,"f"] & G  & G^t \lar[hook', "g"']
\end{tikzcd} \]
so that every vertex of $G^r$ has an input or output, $f_{ii} \colon G^r_{ii} \to G_{ii}$ is an identity, $g_{ii} \colon \underline{0} \to G_{ii}$ is the unique map, and $f_{01}$, $g_{01}$ are order-preserving.
See \cref{fig: graph decomp} for an illustration.
There is a unique congruence $G^r + G^t \equiv G$ taking $f$ and $g$ to the usual inclusions into the sum.
If $H^r + H^t \equiv G$ is some other congruence with $[H^r] \in R$ and $[H^t] \in T$, then the composite $H^r + H^t \to G \to G^r + G^t$ takes every vertex in $H^r$ to a vertex in $G^r$ and every vertex in $H^t$ to a vertex in $G^t$, hence gives congruences $H^r \equiv G^r$ and $H^t \equiv G^t$.
This establishes uniqueness of the decomposition, and we conclude that $\WW_1 \cong R \times T$.
Coupling this with \cref{lem tensor lemma}, we conclude that the following 
\[ 
\morred C(X) \times \hom(\mathbf{1},\mathbf{1}) = \sum_R \overline{X}_r \times \sum_T \overline{X}_t = \sum_{R\times T} \overline{X}_r  \times \overline{X}_t \xrightarrow{\cong} \sum_{\WW_1} \overline{X}_w = \mor C(X)
\]
is a bijection.

\begin{figure}
\labellist
\small\hair 2pt
  \pinlabel {$G^r$} [t] at 169 0
  \pinlabel {$G$} [t] at 550 0
  \pinlabel {$G^t$} [t] at 890 0
  \pinlabel {$\hookrightarrow$} [t] at 359 0
  \pinlabel {$\hookleftarrow$} [t] at 760 0
\endlabellist
\centering
\includegraphics[width=0.7\textwidth]{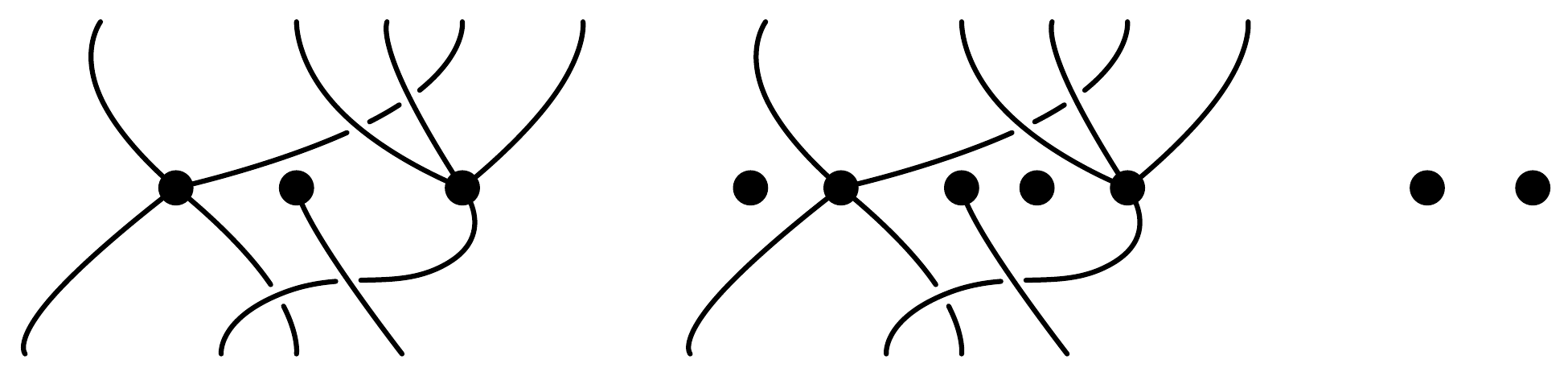}
\caption{Graph decomposition from \cref{lem lcc reduced and free}}
\label{fig: graph decomp}
\end{figure}

The large rectangle and bottom square in the following are pullbacks,
\[ \begin{tikzcd}
\homred(c,d) \times \hom(\mathbf{1},\mathbf{1}) \rar \dar & \morred C(X) \times \hom(\mathbf{1},\mathbf{1}) \dar{\cong} \\
\hom(c,d) \rar \dar & \mor C(X) \dar \\
\{ (c,d) \} \rar & \ob C(X) \times \ob C(X)
\end{tikzcd} \]
hence so is the upper square. The result follows.
\end{proof}

\begin{lemma}\label{lem lcc pullback}
For each four objects $c,d,c',d'$ in $C$, the following square is cartesian.
\[ \begin{tikzcd}
\homred(c,d) \times \homred(c',d') \rar{\otimes} \dar{\pi} \arrow[dr, phantom, "\lrcorner" very near start] & \homred(c\otimes c', d\otimes d') \dar{\pi} \\
\homred_\csp(\pi c,\pi d) \times \homred_\csp(\pi c',\pi d') \rar{\ten} & \homred_\csp(\pi c \ten \pi c',\pi d \ten \pi d') 
\end{tikzcd} \]
\end{lemma}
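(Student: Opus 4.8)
The plan is to verify that the square is cartesian by checking it fiberwise over its bottom-left corner. Recall that a commutative square of sets is cartesian precisely when, for each element of the bottom-left set, the top horizontal map restricts to a bijection between the fiber of the left vertical map over that element and the fiber of the right vertical map over its image. Applying this, it suffices to fix a pair of reduced cospans $\phi \in \homred_\csp(\pi c, \pi d)$ and $\phi' \in \homred_\csp(\pi c', \pi d')$ and to show that
\[
  {-}\ten{-} \colon \pi^{-1}(\phi) \times \pi^{-1}(\phi') \to \pi^{-1}(\phi \ten \phi')
\]
is a bijection, where the fibers are taken along $\pi$ on the relevant reduced hom-sets. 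Since the symmetric monoidal functor $C(\ast) \to \csp$ of \cref{example C star monoidal} is injective on objects and fully faithful, it identifies $\homred_\csp$ with $\morred C(\ast)$ compatibly with $\ten$; so I may replace $\csp$ by $C(\ast)$ and regard $\phi, \phi'$ as reduced classes in $\WW_1$.

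Next I would identify the fibers explicitly. A reduced cospan $\phi$, viewed as a class $r \in R \subseteq \WW_1$, is a congruence class of reduced height $1$ level graphs; choose a representative $G$ with $G_{00} = \pi c$ and $G_{11} = \pi d$. Because $G$ is reduced, \cref{reduced graphs remark} gives $\overline{X}_r = X_G$, so $\pi^{-1}(\phi)$ is exactly the set of $x \in X_G$ whose source and target satisfy $d_1 x = c$ and $d_0 x = d$. Likewise pick $G'$ representing $\phi'$ with the prescribed boundary, so $\pi^{-1}(\phi')$ is the set of $x' \in X_{G'}$ with $d_1 x' = c'$ and $d_0 x' = d'$. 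The tensor $\phi \ten \phi'$ is represented by $G + G'$, which is again reduced (each of its vertices comes from $G$ or $G'$ and so has an input or output), whence $\pi^{-1}(\phi \ten \phi')$ is the set of $z \in X_{G+G'}$ with $d_1 z = c \ten c'$ and $d_0 z = d \ten d'$.

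Finally I would invoke \cref{lem tensor lemma}, in the case where both classes are reduced, to conclude. It provides a bijection $\overline{X}_r \times \overline{X}_{r'} \to \overline{X}_{r\ten r'}$ which, under the identifications above, is the Segal tensor bijection $X_G \times X_{G'} \xrightarrow{\cong} X_{G+G'}$, $(x,x') \mapsto x \ten x'$. It remains only to see that this bijection respects boundaries. This is immediate from the fact that $\ten$ commutes with the simplicial structure maps, which is exactly what makes $SX$ a simplicial monoid (see \cref{prop strict monoidal cat}); thus $d_i(x \ten x') = d_i x \ten d_i x'$. Consequently $z = x\ten x'$ lies in $\pi^{-1}(\phi\ten\phi')$ if and only if $(x,x') \in \pi^{-1}(\phi) \times \pi^{-1}(\phi')$, so the tensor bijection restricts to the asserted bijection on fibers, completing the fiberwise check and hence the proof.

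The only genuine subtlety—and the place where reducedness is indispensable—is the appeal to \cref{lem tensor lemma}: without the hypothesis that $\phi$ and $\phi'$ are reduced, the comparison $\overline{X}_r \times \overline{X}_w \to \overline{X}_{r\ten w}$ is not a bijection (self-congruences of the sum need not split, as in the remark following \cref{lem tensor lemma}), and the fiberwise argument would collapse. The remaining steps are bookkeeping: translating between fibers of $\pi$ and sets of elements of $X_G$ with prescribed boundary, and noting that the abstract tensor on morphisms of $CX$ is computed on representatives by $(x,x') \mapsto x \ten x'$, precisely as it was defined in \cref{lem ten descends}.
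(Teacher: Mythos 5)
Your proof is correct and is at bottom the same argument as the paper's: both reduce the statement to \cref{lem tensor lemma}. The paper packages it slightly differently, first proving \cref{auxillary to lcc pullback} (the cartesian square of $\morred C(X)$ over $\morred C(Y)$ for any map $X\to Y$ in $\seg(\LL)$, obtained by rewriting $\morred C(X)\times\morred C(X)$ as $\sum_{r,r'}\overline{X}_{r\ten r'}$ via that lemma), specializing to $Y=\ast$, and then matching up the four objects $c,d,c',d'$; your fiberwise check over the bottom-left corner is a direct, equivalent reformulation, and your identification of the fibers with $\{x\in X_G: d_1x=c,\ d_0x=d\}$ for a reduced representative $G$ is right (using \cref{reduced graphs remark} and the fact that $G+G'$ is again reduced).

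The one place you are too quick is the final ``if and only if.'' Compatibility of $\ten$ with the face maps gives $d_1(x\ten x')=d_1x\ten d_1x'$, hence the forward implication; but for surjectivity onto the fiber over $\phi\ten\phi'$ you must deduce from $d_1x\ten d_1x'=c\ten c'$ that $d_1x=c$ and $d_1x'=c'$, and this does not follow from compatibility with $d_1$ alone. It requires the injectivity of $\ten$ on objects of fixed underlying cardinality, i.e.\ the uniqueness of the decomposition of an object of $CX$ into connected objects (\cref{rmk uniqueness decomp}, equivalently that $X_{G_{00}+G'_{00}}\to X_{G_{00}}\times X_{G'_{00}}$ is a bijection). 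This is exactly the point the paper's proof singles out as ``the only thing to verify,'' so it deserves an explicit sentence rather than being folded into the bookkeeping.
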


This follows from a different result.

\begin{proposition}\label{auxillary to lcc pullback}
If $X\to Y$ is a morphism in $\seg(\LL)$, then 
\[ \begin{tikzcd}
\morred C(X) \times \morred C(X) \rar \dar \arrow[dr, phantom, "\lrcorner" very near start] & \morred C(X) \dar \\
\morred C(Y) \times \morred C(Y) \rar & \morred C(Y)
\end{tikzcd} \]
is cartesian.
\end{proposition}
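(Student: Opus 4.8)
The plan is to decompose both the source and the claimed pullback over the set $R = \morred C(\ast) \subseteq \WW_1$ of reduced congruence classes and then reduce to the elementary observation that a commutative square of sets whose two horizontal maps are bijections is automatically cartesian. Recall from the discussion around \cref{lem lcc reduced and free} that $\morred C(X) = \sum_{r\in R} \overline{X}_r$, and likewise for $Y$. The map induced by $X\to Y$ preserves the $\WW$-grading and the congruence quotients, hence restricts to maps $\overline{X}_r \to \overline{Y}_r$; and the tensor carries the $(r,r')$-summand of $\morred C(X)\times\morred C(X)$ into the $(r\ten r')$-summand of $\morred C(X)$, where $r\ten r'$ is again reduced because every vertex of $G+H$ is a vertex of one of the summands.

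First I would record, using \cref{lem tensor lemma}, that for reduced $r,r'$ the tensor restricts to a bijection $\overline{X}_r \times \overline{X}_{r'} \to \overline{X}_{r\ten r'}$, and similarly for $Y$. Because $X\to Y$ induces a strict monoidal functor $CX\to CY$ (see the remark after \cref{prop strict monoidal cat}), these bijections are natural, so for each pair $(r,r')$ the square
\[ \begin{tikzcd}
\overline{X}_r \times \overline{X}_{r'} \rar{\cong} \dar & \overline{X}_{r\ten r'} \dar \\
\overline{Y}_r \times \overline{Y}_{r'} \rar{\cong} & \overline{Y}_{r\ten r'}
\end{tikzcd} \]
commutes and has bijective horizontal maps; it is therefore cartesian.

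Next I would decompose the pullback $P = \morred C(Y)\times\morred C(Y) \times_{\morred C(Y)} \morred C(X)$ over $R\times R$. A point $(\bar y_1, \bar y_2, \bar x)$ of $P$ has $\bar y_1 \in \overline{Y}_{r_1}$ and $\bar y_2 \in \overline{Y}_{r_2}$ for unique $r_1,r_2 \in R$, whence $\bar y_1 \ten \bar y_2 \in \overline{Y}_{r_1\ten r_2}$; the matching condition $\bar y_1 \ten \bar y_2 = f(\bar x)$ then forces $\bar x \in \overline{X}_{r_1\ten r_2}$. Thus $P = \sum_{(r_1,r_2)} \overline{Y}_{r_1}\times\overline{Y}_{r_2}\times_{\overline{Y}_{r_1\ten r_2}}\overline{X}_{r_1\ten r_2}$, and the comparison map $\morred C(X)\times\morred C(X) \to P$ respects this decomposition, sending the $(r_1,r_2)$-summand $\overline{X}_{r_1}\times\overline{X}_{r_2}$ into the corresponding factor of $P$.

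Finally, on each $(r_1,r_2)$-summand the comparison map is precisely the canonical map from $\overline{X}_{r_1}\times\overline{X}_{r_2}$ into the pullback of the square above, which I will have shown to be a bijection; summing over $R\times R$ then gives that the total comparison is a bijection, so the original square is cartesian. The real content is entirely \cref{lem tensor lemma}, and the one point to handle carefully is that the fiber of a product $\bar y_1 \ten \bar y_2$ is completely determined by the fibers of its two factors — this is what legitimizes the decomposition of $P$ over $R\times R$ and the consequent reduction to the fiberwise squares.
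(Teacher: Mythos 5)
Your proof is correct and follows essentially the same route as the paper's: both decompose $\morred C(X)\times\morred C(X)$ over $R\times R$ via \cref{lem tensor lemma} and reduce the cartesianness to the summand-wise squares with bijective horizontal maps. You spell out the decomposition of the pullback and the fiberwise comparison a bit more explicitly than the paper does, but the key lemma and the structure of the argument are the same.
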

\begin{proof}
Let $R = \morred C(\ast) \subseteq \WW_1$ be the congruence classes of height 1 level graphs where each vertex has an input or output, so that
\[
  \morred C(X) = \sum_{r\in R} \overline{X}_r.
\]
By \cref{lem tensor lemma}, there is a canonical bijection
\[
  \morred C(X) \times \morred C(X) = \sum_{r\in R} \overline{X}_r \times \sum_{r'\in R} \overline{X}_{r'} \cong \sum_{(r, r') \in R\times R} \overline{X}_{r} \times \overline{X}_{r'} \cong \sum_{(r, r') \in R \times R} \overline{X}_{r \ten r'}.
\]
Thus it suffices to observe that the following square, 
\[ \begin{tikzcd}
\sum\limits_{r, r' \in R} \overline{X}_{r \ten r'} \rar \dar \arrow[dr, phantom, "\lrcorner" very near start] & \sum\limits_{s\in R} \overline{X}_s \dar \\
\sum\limits_{r, r' \in R} \overline{Y}_{r \ten r'} \rar & \sum\limits_{s\in R} \overline{Y}_s
\end{tikzcd} \]
whose horizontal legs are induced by $\ten \colon R \times R \to R$, is cartesian.
\end{proof}

\begin{proof}[Proof of \cref{lem lcc pullback}]
Apply the previous lemma in the case when $Y = \ast$, using that $C(\ast) \to \csp$ is fully faithful.
Suppose $f\colon \pi c \to \pi d$, $g\colon \pi c' \to \pi d'$, and $h\colon c\otimes c' \to d\otimes d'$ are reduced morphisms with $\pi(h) = f \otimes g$.
By the previous lemma, we know there are unique maps $\tilde f\colon a \to b$ and $\tilde g \colon a' \to b'$ with $\tilde f \otimes \tilde g = h$, $\pi(\tilde f) = f$, and $\pi(\tilde g) = g$.
The only thing to verify is that $a = c$, $b = d$, $a' = c'$ and $b' = d'$, but this follows from the uniqueness of decompositions of objects of $CX$ into connected objects (as was observed in \cref{rmk uniqueness decomp}).
\end{proof}

\begin{proposition}\label{prop C is faithful}
The functor $C \colon \seg(\LL) \to \cat$ is faithful.
\end{proposition}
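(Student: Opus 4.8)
The plan is to prove directly that $C$ is injective on parallel arrows. Fix $X,Y\in\seg(\LL)$ and two maps of presheaves $\phi,\psi\colon X\to Y$ with $C\phi=C\psi$; the goal is to conclude $\phi=\psi$. The key reduction is that, because $X$ and $Y$ are Segal, $\phi$ is determined by its values on the elementary objects of \cref{def elementary}. Indeed, for any $G\in\LL$ the naturality square
\[ \begin{tikzcd}
X_G \rar{\cong} \dar{\phi_G} & \lim_{E\in(\LL^\elrm_{/G})^\oprm} X_E \dar{\lim_E \phi_E} \\
Y_G \rar{\cong} & \lim_{E\in(\LL^\elrm_{/G})^\oprm} Y_E
\end{tikzcd} \]
has invertible horizontal maps (\cref{def Segal}), so $\phi_G$ is conjugate to $\lim_E\phi_E$. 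Hence it suffices to show that $\phi_E=\psi_E$ for every elementary $E$, that is, for $E=\mathfrak{e}$ and for $E=\mathfrak{c}_{n,m}$ with $n,m\geq 0$.

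First I would unwind $C\phi=C\psi$ at the level of objects and morphisms. Since $NCX=SX$, the object set of $CX$ is $SX_0$ and its morphism set is $SX_1$, so the equality $C\phi=C\psi$ says precisely that $(S\phi)_0=(S\psi)_0$ and $(S\phi)_1=(S\psi)_1$. Using the description \eqref{EQ SX levels}, the object map is $\sum_k\phi_{\uk}$ on $SX_0=\sum_k\overline{X}_{\uk}=\sum_k X_{\uk}$ (the height $0$ congruence components being trivial), and the morphism map is, on $SX_1=\sum_{w\in\WW_1}\overline{X}_w$, the sum over $w$ of the maps $\overline{X}_w\to\overline{Y}_w$ induced by $\phi$.

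Next I would restrict to the connected summands. As recorded in the discussion following \cref{prop CX is lcc}, the edge value $X_{\mathfrak{e}}=X_{\underline{1}}$ is the summand of connected objects inside $\ob CX$, and each corolla value $X_{\mathfrak{c}_{n,m}}$ is a summand of connected morphisms inside $\mor CX$; here the relevant congruence components are discrete (the only congruence involving $\mathfrak{e}$ or a corolla is the identity, cf.\ \cref{rmk congruence height 1}), so $\overline{X}_{\mathfrak{e}}=X_{\mathfrak{e}}$ and $\overline{X}_{\mathfrak{c}_{n,m}}=X_{\mathfrak{c}_{n,m}}$. Consequently the object map $(S\phi)_0$ restricts on the $k=1$ summand to $\phi_{\mathfrak{e}}$, and the morphism map $(S\phi)_1$ restricts on the corolla summand to $\phi_{\mathfrak{c}_{n,m}}$. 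From $(S\phi)_0=(S\psi)_0$ and $(S\phi)_1=(S\psi)_1$ I can therefore read off $\phi_{\mathfrak{e}}=\psi_{\mathfrak{e}}$ and $\phi_{\mathfrak{c}_{n,m}}=\psi_{\mathfrak{c}_{n,m}}$ for all $n,m$, which exhaust the elementary objects; the Segal reduction of the first paragraph then yields $\phi=\psi$.

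The only genuine content is the bookkeeping in the middle step: identifying the level $0$ and level $1$ summands of $SX$ with the presheaf values at the elementary objects and checking that $S\phi$ restricts there to the corresponding components of $\phi$. This is exactly the identification of connected objects and connected morphisms already made after \cref{prop CX is lcc}, so I expect no real obstacle. In particular, nothing about the symmetric monoidal structure enters, since faithfulness of $C$ is a statement about the underlying functors alone.
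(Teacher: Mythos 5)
Your proof is correct and follows essentially the same route as the paper: reduce to the elementary objects via the Segal condition, then observe that $X_{\mathfrak{e}}$ and $X_{\mathfrak{c}_{n,m}}$ sit as summands of $\ob CX$ and $\mor CX$ (since the relevant congruence components are discrete) compatibly with $C\phi$. The paper's proof is just a terser version of the same bookkeeping.
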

\begin{proof}
If $X, Y \in \seg(\LL)$, then a map $X \to Y$ is uniquely determined by its action on elementary objects $X_{\mathfrak{e}} \to Y_{\mathfrak{e}}$ and $X_{\mathfrak{c}_{p,q}} \to Y_{\mathfrak{c}_{p,q}}$ (as $p$ and $q$ vary).
If $f_0, f_1 \colon X \to Y$ are two maps in $\seg(\LL)$, then the following diagrams commute for $i=0,1$.
\[ \begin{tikzcd}
X_{\mathfrak{c}_{p,q}} \dar[hook] \rar{f_i} & Y_{\mathfrak{c}_{p,q}}  \dar[hook] & X_{\mathfrak{e}} \dar[hook] \rar{f_i} & Y_{\mathfrak{e}} \dar[hook] \\
\morcon C(X) \rar{C(f_i)} & \morcon C(Y) & \ob C(X) \rar{C(f_i)} & \ob C(Y)
\end{tikzcd} \]
Thus if $C(f_0) = C(f_1)$, then $f_0 = f_1$.
\end{proof}
This proposition implies that the functors from $\seg(\LL)$ to permutative categories and labelled cospan categories are also faithful.

\section{Strict labelled cospan categories}
In this section we identify a special kind of labelled cospan category, and show that to each we may associate a Segal $\LL$-presheaf (and hence a properad).
This will provide an inverse to our previous construction: the colors of this properad will be the connected objects, and the operations in the associated properad will be the connected morphisms.
Unfortunately, actually proving this gives a properad comes with substantial challenges.

We begin with an explanatory example.

\begin{example}
From the category $C(\ast)$, create a new category $C$ with a single additional object $\underline{1}'$ which is isomorphic to $\underline{1} \in C(\ast)$.
As a permutative category, it will satisfy the equation $\underline{1}' \ten \un = \underline{1} \ten \un$ for $n\geq 1$ and the equation $\underline{1}' \ten \underline{1}' = \underline{2}$.
There is an evident functor $C \to C(\ast) \to \csp$ sending the new object to $\underline{1}$.
By \cref{rmk uniqueness decomp} this labelled cospan category is not $C(X)$ for any $X\in \seg(\LL)$, since $\underline{2} = \underline{1} \ten \underline{1} = \underline{1}'\ten \underline{1} = \underline{1}\ten \underline{1}' = \underline{1}' \ten \underline{1}'$ admits several distinct decompositions into connected objects.
\end{example}

Thus not all labelled cospan categories arise from properads (though they do up to equivalence).
We now isolate the image of the functor $C$.
Recall that a strict map of permutative categories is a strict monoidal functor which is compatible with the symmetries.

\begin{definition}\label{def slcc}
A \emph{strict labelled cospan category} is a strict map of permutative categories $\pi \colon C \to C(\ast)$ satisfying the conditions of \cref{def lcc} along with 
\begin{enumerate}[label={(\arabic*')}, ref={\arabic*'}]
\item If $\pi(c) = \un$, then there are unique connected objects $c_1, \dots, c_n$ with $c = c_1 \otimes \dots \otimes c_n$.
\label{slcc: obj decomp}
\end{enumerate}
Maps are strict maps of permutative categories so that the triangle
\[ \begin{tikzcd}[row sep=small, column sep=tiny]
C \ar[rr] \ar[dr] & & C' \ar[dl] \\
& C(\ast)
\end{tikzcd} \]
commutes.
We write $\slcc$ for the associated category.
\end{definition}

Alternatively, \eqref{slcc: obj decomp} can be rephrased as follows, which means that each strict labelled cospan category is a (colored) prop of a special type \cite{HackneyRobertson:OCP}.
\begin{enumerate}[label={(\arabic*'')}, ref={\arabic*''}]
\item The monoid $\ob(C)$ is the free monoid on the set of connected objects $\ob^{\mathrm{c}}(C)$. \label{slcc: obj decomp redo}
\end{enumerate}

One can show (see \cref{prop uniqueness of lcc structure}) that being a strict labelled cospan category is a \emph{property} of a permutative category, rather than a \emph{structure}.
Indeed, if $C$ is a permutative category, there is at most one functor $\pi \colon C \to C(\ast)$ making $C$ into a strict labelled cospan category. 
Nevertheless, we will often require the map $\pi$, so we do not attempt a definition free from it.

\cref{cor main theorem} states that the category of properads is equivalent to $\slcc$.
We know that $\properad \simeq \seg(\LL)$ by \cref{prop properads as presheaves}, and we have already exhibited a functor $\seg(\LL) \to \slcc$ by \cref{prop CX is lcc} and \cref{rmk uniqueness decomp}.
\cref{prop C is faithful} implies that this functor is faithful.
It remains to show this functor is full and essentially surjective.

Suppose $C \xrightarrow{\pi} C(\ast)$ is in $\slcc$.
We will define a presheaf $P = P(\pi) \in \pre(\LL)$ from this data, and later show that $CP \to C(\ast)$ is isomorphic to $C \to C(\ast)$, proving that $\seg(\LL) \to \slcc$ is essentially surjective.

\subsection{The inert part}\label{subsec inert}
We first define the restriction of the presheaf to $\LL_\elrm$; it is slightly more convenient to define this also on arbitrary height zero level graphs.
We define $P_{\un}$ and $P_{\mathfrak{c}_{m,n}}$ as subsets of the objects of $C$ and the \emph{connected} morphisms of $C$ so that the following diagrams are pullbacks.
\begin{equation}\label{diagram defining Pn and Pcmn}
\begin{tikzcd}
P_{\un} \rar \arrow[dr, phantom, "\lrcorner" very near start] \dar & \ob C \dar 
& &
P_{\mathfrak{c}_{m,n}}  \rar \arrow[dr, phantom, "\lrcorner" very near start] \dar & \morcon C \dar["s \times t"]
\\
\{ \un \} \rar & \ob C(\ast)  
& & 
P_{\um} \times P_{\un} \rar \arrow[dr, phantom, "\lrcorner" very near start]\dar  & \ob C \times \ob C \dar
\\
& & & \{ (\um,\un) \} \rar & \ob C(\ast) \times \ob C(\ast)  
\end{tikzcd} \end{equation}
Notice that $P_{\mathfrak{e}} = P_{\underline{1}}$ is precisely the set of connected objects in $C$, and there is a bijection
\begin{equation}\label{P objects projections}
  P_{\un} \xrightarrow{\cong} P_{\underline{1}} \times \dots \times P_{\underline{1}}
\end{equation}
that takes an object to the list of connected objects from \eqref{slcc: obj decomp} in \cref{def slcc}.

We must still define the action of our restriction on inert morphisms.
Between the elementary objects of $\LL$, there are the following inert morphisms:
\begin{itemize}
\item For $1 \leq i \leq m$ the following composite
\[
\begin{tikzcd}
\underline{1} \rar{\name{i}} & \um = d_1(\mathfrak{c}_{m,n}) \rar{d^1} & \mathfrak{c}_{m,n}
\end{tikzcd}
\]
which we call $\ell_i$.
\item For $1 \leq j \leq n$ the following composite
\[
\begin{tikzcd}
\underline{1} \rar{\name{j}} & \un = d_0(\mathfrak{c}_{m,n}) \rar{d^0} & \mathfrak{c}_{m,n}
\end{tikzcd}
\]
which we call $r_j$.
\item Automorphisms $f = (\gamma,\chi) \colon \mathfrak{c}_{m,n} \to \mathfrak{c}_{m,n}$, where $\gamma = d_1(f) \colon \um \to \um$ and $\chi = d_0(f) \colon \un \to \un$ are bijections.
\end{itemize}

The only relations among the above these morphisms are the following:
\begin{itemize}
\item $(\gamma, \chi) \circ \ell_i = \ell_{\gamma(i)}$
\item $(\gamma,\chi) \circ r_j = r_{\chi(j)}$
\item 
$(\gamma, \chi) \circ (\gamma',\chi') = (\gamma\gamma', \chi\chi')$.
\end{itemize}

\begin{notation}
If $\sigma \colon \underline{k} \to \underline{k}$ is a bijection, and $c_1, \dots, c_k$ are connected objects of $C$ then we write $\hat \sigma \colon c_1 \ten \cdots \ten c_k \to c_{\sigma^{-1}(1)} \ten \cdots \ten c_{\sigma^{-1}(k)}$ for the map determined by the symmetry isomorphism.
Then $\hat \gamma \circ \hat \sigma = \widehat{\gamma \circ \sigma}$.
\end{notation}

\begin{definition}\label{def corolla iso}
Suppose that $p \colon c_1 \ten \dots \ten c_m \to b_1 \ten \dots \ten b_n$ is in $P_{\mathfrak{c}_{m,n}} \subseteq \morcon C$.
We define $\ell_i^*(p) = c_i \in P_{\mathfrak{e}}$ and $r_j^*(p) = b_j \in P_{\mathfrak{e}}$.
If $f = (\gamma, \chi) \colon \mathfrak{c}_{m,n} \to  \mathfrak{c}_{m,n}$ is an isomorphism,
we define $f^*p = (\gamma,\chi)^*(p)$ to be the following composite of $p$ with symmetry isomorphisms.
\[ \begin{tikzcd}
c_{\gamma(1)} \ten \dots \ten c_{\gamma(m)} \dar{\hat \gamma} & b_{\chi(1)} \ten \dots \ten b_{\chi(n)}
\\
c_1 \ten \dots \ten c_m \rar{p} & b_1 \ten \dots \ten b_n \uar{\hat \chi^{-1}}
\end{tikzcd} \]
that is, \[ f^*p = (\gamma, \chi)^*(p) = \hat\chi^{-1} \circ p \circ \hat\gamma = \widehat{d_0(f)}\vphantom{d_0(f)}^{-1} \circ p \circ \widehat{d_1(f)}. \]
\end{definition}

We then have
\[
  (\gamma',\chi')^* (\gamma,\chi)^*(p) =(\gamma',\chi')^* (\hat \chi^{-1} p \hat \gamma) = (\hat \chi')^{-1} \hat \chi^{-1} p\hat\gamma\hat \gamma' = \widehat{\chi \chi'}\vphantom{\chi\chi'}^{-1}  p \widehat{\gamma \gamma'}
\]
which is $ (\gamma\gamma',\chi\chi')^*(p)$, as desired.
Likewise, 
\[
\begin{gathered}
\ell_i^* (\gamma, \chi)^*(p) = c_{\gamma(i)}  = \ell_{\gamma(i)}^*(p) \\
r_j^* (\gamma, \chi)^*(p) = b_{\chi(j)}  = r_{\chi(j)}^*(p)
\end{gathered}
\]
so we conclude that $P|_{\LL_\elrm} \colon \LL^\oprm_\elrm \to \set$ is indeed a functor.

Notice that our definition of $\ell_i^*$  actually factors through a map $d_1 \colon P_{\mathfrak{c}_{m,n}} \to P_{\um}$ taking $p$ to $c_1 \ten \cdots \ten c_m$ which appeared in \eqref{diagram defining Pn and Pcmn}, and then following by the $i$th projection from \eqref{P objects projections}.
A similar situation occurs for $r_j^*$.
We cannot do the same for $(\gamma,\chi)^*$.

\begin{definition}\label{def P restriction}
We define $P|_{\LL_\intrm}$ to be the right Kan extension of $P|_{\LL_\elrm}$ along the fully faithful inclusion $\LL_\elrm^\oprm \hookrightarrow \LL_\intrm^\oprm$.
\end{definition}
Since the functor is fully faithful, we can take the restriction of $P|_{\LL_\intrm}$ to be exactly equal to $P|_{\LL_\elrm}$. 
Further, by \eqref{P objects projections} we can arrange things so our previously-defined $P_{\un}$ agrees with the new one.
By \cite[Lemma 2.9]{ChuHaugseng}, if we can show that $P|_{\LL_\intrm}$ is the restriction of some $\LL$-presheaf $P$, then $P$ will automatically be Segal.

\subsection{The active part}
We have defined $P|_{\LL_\intrm}$ in \cref{def P restriction}.
In particular, this values of this presheaf are defined on every level graph $G$ and on every isomorphism.
Since $(\LL_\actrm, \LL_\intrm)$ is an orthogonal factorization system on $\LL$, it remains to define $P$ on every active map.
But each active map is isomorphic to $\alpha^*G \to G$ for $\alpha\colon [m] \to [n]$ an active map of $\simpcat$, so we will only extend to maps of this form.
Further if $G$ is a height $n$ level graph, we can focus on the maps $d^i \colon d_iG \to G$ for $0< i < n$ and $s^i \colon s_i G \to G$ for $0 \leq i \leq n$, since $\simpcat_\actrm$ is generated by such maps.

In this section it will be helpful to have the following explicit version of splitting a graph $G$ into its connected components.
A looser version of this was used in \cref{subsec properads as L presheaves} in proving \cref{lem N1 segal}.

\begin{definition}[Canonical splitting of $G$]\label{def canonical splitting}
If $G$ is a height $m$ level graph with $G_{0m} = \uk$, then for $x\in G_{0m}$ we define $G_x$ to be the connected height $m$ level graph where for each $i,j$ the following is a pullback whose top map $\iota_x$ is order-preserving. 
\[ \begin{tikzcd}
(G_x)_{ij} \arrow[dr, phantom, "\lrcorner" very near start] \rar[hook,"\iota_x"] \dar & G_{ij} \dar \\
\underline{1} \rar[hook,"x"] & G_{0m}
\end{tikzcd} \]
Taking the sum of the $\iota_x$, we have an isomorphism $\displaystyle \iota \colon \sum_{x=1}^k G_x \to G$ in $\LL_m$.
\end{definition}

\begin{construction}\label{constr muG}
For each height 1 level graph $G\in \LL_1$, we define a function $\mu = \mu_G \colon P_G \to \mor C$ so that the following diagram commutes.
\[ \begin{tikzcd}
P_{d_1G} \dar{\subseteq} & P_G \lar["d_1"'] \rar["d_0"] \dar["\mu_G"] & P_{d_0G} \dar{\subseteq} \\
\ob C & \mor C \lar["s"'] \rar["t"] & \ob C.
\end{tikzcd} \]
When $G = \mathfrak{c}$ is a corolla, $\mu$ is just the standard inclusion $P_{\mathfrak{c}} \subseteq \morcon C \subseteq \mor C$ from the beginning of \cref{subsec inert}.
More generally, suppose that $G$ has $n$ input-edges ($d_1G = G_{00} = \un$), $m$ output-edges ($d_0G = G_{11} = \um$), and $k$ vertices ($G_{01} = \uk$).
If $p\in P_G$, then we must define $\mu_G(p)$ to be a map
\[
  d_1(p) = c_1 \ten \cdots \ten c_n \to e_1 \ten \cdots \ten e_m = d_0(p)
\]
where $c_i$ and $e_i$ are connected objects.
Recall the canonical splitting
\[
  \iota \colon \sum_{x=1}^k G_x \xrightarrow{\cong} G
\]
from \cref{def canonical splitting}. 
We write $\iota^0 = d_0(\iota) = \iota_{11}$ and $\iota^1 = d_1(\iota) = \iota_{00}$.
If $p \in P_G$, write $p_x \coloneqq \iota_x^*(p) \in P_{G_x} \subseteq \morcon C$, which is a map
\[
  p_x \colon c_{i_1} \ten \cdots \ten c_{i_r} \to e_{j_1} \ten \cdots \ten e_{j_{r'}}
\]
where $\{ i_1, \dots, i_r \} = \iota_x((G_x)_{00})$  and $\{ j_1, \dots, j_{r'} \} = \iota_x((G_x)_{11})$.
Tensoring these maps together, we have
\[
  p_1 \ten \cdots \ten p_k \colon c_{\iota^1(1)} \ten \cdots \ten c_{\iota^1(n)} \to e_{\iota^0(1)} \ten \cdots \ten e_{\iota^0(m)}.
\]
Then $\mu_G(p)$ is defined as the following composite:
\[ \begin{tikzcd}[column sep=2cm]
c_{\iota^1(1)} \ten \cdots \ten c_{\iota^1(n)} \rar{p_1 \ten \cdots \ten p_k} & e_{\iota^0(1)} \ten \cdots \ten e_{\iota^0(m)} \dar["\widehat{\iota^0}"] \\
c_1 \ten \cdots \ten c_n \uar["(\widehat{\iota^1})^{-1}"'] \rar[dashed,"\mu_G(p)"] &  e_1 \ten \cdots \ten e_m
\end{tikzcd} \]
\end{construction}

As a special case, we have the following.

\begin{lemma}\label{lem isomorphism}
Let $G \in \LL_1$ be a graph with both legs isomorphisms, as follows.
\[
\begin{tikzcd}[sep=tiny]
\un \ar[dr,"\alpha"',"\cong"] & & \un \ar[dl,"\beta","\cong"'] \\
& \un 
\end{tikzcd}
\]
Suppose $c_1, \dots, c_n$ are connected objects of $C$ and $p\in P_G$ is the element which goes to $(\id_{c_1}, \id_{c_2}, \dots, \id_{c_n})$ under the Segal map $ P_G \to \prod_{i=1}^n P_{\mathfrak{c}_{11}}$.
Then $\mu_G(p)$ is the following composite
\[ 
\pushQED{\qed}
\begin{tikzcd}
c_{\alpha^{-1}(1)} \ten \cdots \ten c_{\alpha^{-1}(n)} \rar{\widehat{\alpha}} &  c_{1} \ten \cdots \ten c_{n} \rar{\widehat{\beta}\vphantom{\beta}^{-1}} & c_{\beta^{-1}(1)} \ten \cdots \ten c_{\beta^{-1}(n)}.
\end{tikzcd} 
\qedhere
\popQED
\] 
\end{lemma}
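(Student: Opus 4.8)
The plan is to unwind \cref{constr muG} in this degenerate situation, where the hypothesis forces the middle tensor of morphisms to collapse to an identity so that only the two framing symmetry isomorphisms survive.

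First I would describe the connected components of $G$. Since both legs $\alpha$ and $\beta$ are bijections, the canonical splitting of \cref{def canonical splitting} expresses $G$ as a sum of $n$ components $G_1, \dots, G_n$, one for each $x\in G_{01} = \un$, and each $G_x$ is a copy of the corolla $\mathfrak{c}_{1,1}$ whose unique input edge is $\alpha^{-1}(x)\in G_{00}$ and whose unique output edge is $\beta^{-1}(x)\in G_{11}$. In particular there are exactly $n$ vertices. The hypothesis that $p$ maps to $(\id_{c_1}, \dots, \id_{c_n})$ under the Segal map $P_G \to \prod_{i=1}^n P_{\mathfrak{c}_{11}}$ is then precisely the statement that $p_x = \iota_x^*(p) = \id_{c_x}$ for each $x$.

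Next I would substitute into the defining composite from \cref{constr muG}. Because every $p_x$ is an identity morphism, the tensor $p_1 \ten \cdots \ten p_n$ is itself an identity morphism (of the appropriately ordered product of the $c_x$), and so the defining formula $\mu_G(p) = \widehat{\iota^0} \circ (p_1 \ten \cdots \ten p_n) \circ (\widehat{\iota^1})^{-1}$ reduces to the composite of the two structural symmetry isomorphisms $\mu_G(p) = \widehat{\iota^0} \circ (\widehat{\iota^1})^{-1}$. Here $\iota^1 = d_1(\iota)$ and $\iota^0 = d_0(\iota)$ are the bijections of $\un$ that the canonical splitting induces on the input edges and the output edges respectively; one reads off directly from the pullback defining $G_x$ that the reindexing of the input edges by the component they belong to is governed by $\alpha$, and that of the output edges by $\beta$.

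It then remains to identify these two symmetries with the maps $\hat\alpha$ and $\hat\beta^{-1}$ of the statement and to check that the composite factors through $c_1 \ten \cdots \ten c_n$ as displayed. Using the compatibility $\hat\gamma \circ \hat\sigma = \widehat{\gamma\sigma}$ recorded in the $\widehat{(-)}$ notation, together with the identification of $\iota^1,\iota^0$ with $\alpha,\beta$, the factor $(\widehat{\iota^1})^{-1}$ becomes the first leg $\hat\alpha$ (with codomain $c_1 \ten \cdots \ten c_n$) and $\widehat{\iota^0}$ becomes the second leg $\hat\beta^{-1}$. I expect this final bookkeeping to be the only genuine subtlety: one must keep straight the direction of each permutation and each inverse while matching the symmetries arising from $\alpha^{-1},\beta^{-1}$ in the splitting against the $\hat\alpha,\hat\beta$ appearing in the claim. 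Everything else is a direct substitution into \cref{constr muG}.
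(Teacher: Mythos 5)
Your proposal is correct and follows the paper's own argument essentially verbatim: both identify the canonical splitting $\iota\colon \sum_{x=1}^n \mathfrak{c}_{11}\to G$ with $d_1(\iota)=\alpha^{-1}$ and $d_0(\iota)=\beta^{-1}$, observe that the middle tensor $p_1\ten\cdots\ten p_n$ collapses to an identity, and read off $\mu_G(p)=\widehat{\iota^0}\circ(\widehat{\iota^1})^{-1}=\widehat{\beta}\vphantom{\beta}^{-1}\widehat{\alpha}$. The only difference is that the paper states the identifications $\iota^1=\alpha^{-1}$, $\iota^0=\beta^{-1}$ outright where you describe them slightly more cautiously, but the bookkeeping you carry out is exactly the paper's.
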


\begin{lemma}\label{lemma mu splitting}
Suppose that $G$ and $G'$ are in $\LL_1$ and $i\colon G \to G+G'$, $j\colon G' \to G+G'$ are the inclusions.
If $p \in P_{G+G'}$, then $\mu_{G+G'}(p) = \mu_G(i^*p) \ten \mu_{G'}(j^*p)$. 
\end{lemma}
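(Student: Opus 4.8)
The plan is to unwind the definition of $\mu_{G+G'}$ from \cref{constr muG} and to match the data of the canonical splitting of $G+G'$ with that of $G$ and of $G'$ separately. First I would observe that, because the ordinal sum places every element of $G$ before every element of $G'$ at each level, the connected components of $G+G'$ are exactly $G_1, \dots, G_k, G'_1, \dots, G'_{k'}$ listed in this order, where $G_x$ and $G'_y$ are the components of \cref{def canonical splitting}. Concretely, the canonical inclusions satisfy $\iota_x^{G+G'} = i \circ \iota_x^{G}$ for $1 \le x \le k$ and $\iota_{k+y}^{G+G'} = j \circ \iota_y^{G'}$ for $1 \le y \le k'$: the preimage of a vertex $x \le k$ under $(G+G')_{ij} \to (G+G')_{01}$ lies entirely in the $G$-summand $G_{ij}$, and the inclusion of that summand is order-preserving (symmetrically for the $G'$-components). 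Consequently the restricted elements appearing in the two instances of \cref{constr muG} satisfy $p_x = (i^*p)_x$ for $x\le k$ and $p_{k+y} = (j^*p)_y$.

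Next I would analyze the two bijections $\iota^1 = \iota_{00}$ and $\iota^0 = \iota_{11}$ attached to $G+G'$. Since the inputs of $G$ precede those of $G'$ inside $(G+G')_{00} = G_{00} + G'_{00}$ and the components are listed with all $G$-components first, the bijection $\iota^1$ is a \emph{block sum}: it restricts to $\iota^1_G$ on the first $n$ inputs and to a shift of $\iota^1_{G'}$ on the remaining $n'$ inputs, never mixing the two blocks; the same holds for $\iota^0$ on outputs. The crucial point is then that the symmetry isomorphism of a block-sum permutation factors as a tensor of the symmetry isomorphisms of its blocks, so that $\widehat{\iota^1} = \widehat{\iota^1_G} \ten \widehat{\iota^1_{G'}}$ and $\widehat{\iota^0} = \widehat{\iota^0_G} \ten \widehat{\iota^0_{G'}}$. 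This is a coherence statement in the permutative category $C$ and can be deduced from the relation $\hat\gamma \circ \hat\sigma = \widehat{\gamma\sigma}$.

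With these identities in hand, the result is a short computation. Using strict associativity of $\ten$ in $C$, the middle arrow $p_1 \ten \cdots \ten p_{k+k'}$ in the definition of $\mu_{G+G'}(p)$ factors as $(p_1 \ten \cdots \ten p_k) \ten (p_{k+1} \ten \cdots \ten p_{k+k'})$, whose first factor is the middle arrow defining $\mu_G(i^*p)$ and whose second is that defining $\mu_{G'}(j^*p)$. Substituting the tensor decompositions of $\widehat{\iota^0}$ and $\widehat{\iota^1}$ (and of $(\widehat{\iota^1})^{-1}$) and applying the interchange law $(a \ten b) \circ (c \ten d) = (a \circ c) \ten (b \circ d)$ twice, the composite defining $\mu_{G+G'}(p)$ separates into the tensor of the composite defining $\mu_G(i^*p)$ with that defining $\mu_{G'}(j^*p)$, which is exactly the claim.

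The main obstacle is the middle step: establishing cleanly that $\widehat{(-)}$ sends a block-sum permutation to a tensor of symmetry isomorphisms. Everything else is bookkeeping about order-preserving inclusions and the canonical splitting, but this factorization is where the coherence of the symmetry in $C$ is genuinely used. I would isolate it as a small auxiliary observation about $\hat\sigma$ for bijections of $\underline{k}$ preserving a chosen partition into an initial and a final block, proving it by writing such a permutation as a product of two block-wise permutations—for which the tensor factorization of $\hat{(-)}$ is immediate—and then invoking $\hat\gamma \circ \hat\sigma = \widehat{\gamma\sigma}$.
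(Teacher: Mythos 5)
Your proposal is correct and follows essentially the same route as the paper: identify the canonical splitting of $G+G'$ as the concatenation of those of $G$ and $G'$, factor $\widehat{\iota^0}$ and $\widehat{\iota^1}$ as tensor products of the corresponding block symmetries, and conclude by the interchange law. The block-sum coherence fact you isolate as the key step is exactly the (implicitly used) second line of the paper's computation, so your extra care there is a refinement rather than a departure.
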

\begin{proof}
The canonical decomposition for $H= G+G'$
\[
  \sum_{z=1}^{k+\ell} H_z = \sum_{x=1}^k G_x + \sum_{y=1}^\ell G_y' \xrightarrow{\iota = \alpha + \beta} G+G' = H\]
  is the sum of the canonical decompositions (denoted by $\alpha$ and $\beta$) for $G$ and $G'$.
Write $\iota^1 \coloneqq d_1(\iota)$ and $\iota^0\coloneqq d_0(\iota)$ and likewise for $\alpha$ and $\beta$.
Then if $p_z = \iota_z^*p$, we have
\begin{align*}
  \mu_H(p) &= (\widehat{\iota^1})^{-1} \circ (p_1 \ten \cdots \ten p_k \ten p_{k+1} \ten \cdots \ten p_{k+\ell}) \circ \widehat{\iota^0} \\
  &= (\widehat{\alpha^1} \ten \widehat{\beta^1})^{-1} \circ (p_1 \ten \cdots \ten p_k \ten p_{k+1} \ten \cdots \ten p_{k+\ell}) \circ (\widehat{\alpha^0} \ten \widehat{\beta^0}) \\
  &= ((\widehat{\alpha^1})^{-1} \ten (\widehat{\beta^1})^{-1}) \circ (\alpha_1^*p \ten \cdots \ten \alpha_k^*p \ten \beta_1^*p \ten \cdots \ten \beta_\ell^*p) \circ (\widehat{\alpha^0} \ten \widehat{\beta^0}) \\
  &= ((\widehat{\alpha^1})^{-1} \circ (\alpha_1^*p \ten \cdots \ten \alpha_k^*p) \circ \widehat{\alpha^0}) \ten ( (\widehat{\beta^1})^{-1} \circ  ( \beta_1^*p \ten \cdots \ten \beta_\ell^*p) \circ \widehat{\beta^0}) \\
  &= \mu_G(i^*p) \ten \mu_{G'}(j^*p).  \qedhere
\end{align*}
\end{proof}

\begin{lemma}\label{lem height one iso mu}
Suppose $G$ and $G'$ are isomorphic height 1 level graphs, and let $\mu_G$ and $\mu_{G'}$ be as in \cref{constr muG}.
Let $f\colon G' \to G$ be an isomorphism. 
If $p\in P_G$, then 
\[
  \mu_{G'} f^*(p) = \widehat{d_0(f)}\vphantom{d_0(f)}^{-1} \circ \mu_G(p) \circ \widehat{d_1(f)}.
\]
\end{lemma}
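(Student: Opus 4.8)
The plan is to compute both sides directly from \cref{constr muG}, the crucial input being how the isomorphism $f$ interacts with the canonical splittings of $G$ and $G'$. First I would record the convenient closed form
\[
  \mu_G(p) = \widehat{\iota^0}\circ(p_1\ten\cdots\ten p_k)\circ(\widehat{\iota^1})^{-1},
\]
where $\iota^0 = d_0(\iota)$ and $\iota^1 = d_1(\iota)$ for the canonical splitting $\iota\colon\sum_x G_x\to G$, and $p_x = \iota_x^*(p)$; similarly for $G'$. Since $f$ is an isomorphism it carries the component $G'_{x'}$ isomorphically onto $G_{\phi(x')}$, where $\phi = f_{01}$ is the induced bijection on vertices. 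I would make this precise as a commuting square $\iota\circ g = f\circ\iota'$ in $\LL_1$, in which $g\colon\sum_{x'} G'_{x'}\to\sum_x G_x$ is a block permutation (by $\phi$) of the corolla isomorphisms $f_{x'}\colon G'_{x'}\to G_{\phi(x')}$. From this square one reads off $(f^*p)_{x'} = (\iota'_{x'})^*f^*p = f_{x'}^*(p_{\phi(x')})$, which by \cref{def corolla iso} equals $\widehat{\chi_{x'}}^{-1}\circ p_{\phi(x')}\circ\widehat{\gamma_{x'}}$ with $\gamma_{x'} = d_1(f_{x'})$ and $\chi_{x'} = d_0(f_{x'})$.

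Applying $d_0$ and $d_1$ to the square gives the permutation identities $\iota'^0 = (d_0 f)^{-1}\iota^0\Phi^0$ and $\iota'^1 = (d_1 f)^{-1}\iota^1\Phi^1$, where $\Phi^0 = d_0(g)$ and $\Phi^1 = d_1(g)$. Substituting these into the closed form for $\mu_{G'}(f^*p)$ and using the compatibility $\widehat{\sigma\tau} = \widehat{\sigma}\circ\widehat{\tau}$ (hence $\widehat{\sigma^{-1}} = \widehat{\sigma}^{-1}$) lets me pull $\widehat{d_0 f}^{-1}$ out on the left and $\widehat{d_1 f}$ out on the right. The claim then reduces to the purely internal identity
\[
  \widehat{\Phi^0}\circ\Bigl(\bigotimes_{x'}(f^*p)_{x'}\Bigr)\circ(\widehat{\Phi^1})^{-1} = p_1\ten\cdots\ten p_k.
\]

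To finish I would expand $(f^*p)_{x'}$ and use the interchange law to write $\bigotimes_{x'}(f^*p)_{x'}$ as $(\bigotimes_{x'}\widehat{\chi_{x'}}^{-1})\circ(\bigotimes_{x'} p_{\phi(x')})\circ(\bigotimes_{x'}\widehat{\gamma_{x'}})$. Writing each face of $g$ as a pure block permutation followed by a block-diagonal relabelling, coherence of the permutative structure gives $\widehat{\Phi^0} = \widehat{B^0}\circ\bigotimes_{x'}\widehat{\chi_{x'}}$ and $\widehat{\Phi^1} = \widehat{B^1}\circ\bigotimes_{x'}\widehat{\gamma_{x'}}$. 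The within-component factors then cancel, leaving the single block-permutation identity $\widehat{B^0}\circ(\bigotimes_{x'} p_{\phi(x')})\circ(\widehat{B^1})^{-1} = \bigotimes_x p_x$, which is exactly naturality of the symmetry in the permutative category $C$ for the permutation $\phi$ (compare \cref{lem isomorphism}).

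I expect the main obstacle to be bookkeeping rather than mathematics: one must keep the block permutation $\phi$, the within-component relabellings $\gamma_{x'},\chi_{x'}$, and the canonical input/output orderings $\iota^0,\iota^1$ strictly separate, and invoke the two coherence facts (block-diagonal symmetry equals a tensor of symmetries, and naturality of the symmetry against morphisms) at exactly the right moments; writing $g$ and its faces $\Phi^0,\Phi^1$ explicitly is what makes the cancellation transparent. As a sanity check and mild shortcut, I would note that the asserted identity is multiplicative in $f$ — using $\widehat{\sigma\tau}=\widehat{\sigma}\widehat{\tau}$ and functoriality of $d_0,d_1$ one checks it composes — so it suffices to verify it on the factors of $f = \iota\circ g\circ(\iota')^{-1}$, which isolates the essentially tautological splitting isomorphisms from the single genuinely nontrivial case of an isomorphism between sums of corollas.
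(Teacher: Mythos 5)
Your proof is correct and follows essentially the same route as the paper: both factor $f$ through the canonical splittings as $\iota\circ g\circ(\iota')^{-1}$ with $g$ a block permutation of corolla isomorphisms, and both reduce the claim to the corolla case of \cref{def corolla iso} together with coherence and naturality of the symmetry in the permutative category. The paper organizes this as a multiplicative reduction to composable special cases (corolla isomorphisms, their sums, block permutations, and the canonical splitting itself) rather than one explicit substitution, which is exactly the shortcut you describe in your final paragraph.
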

In particular, if $f$ is a congruence then $\mu_{G'} f^*(p) = \mu_G(p)$, since $d_0(f)$ and $d_1(f)$ are identities.
\begin{proof}
If an isomorphism $f$ satisfies the lemma, then so does $f^{-1}$. Further, if two composable isomorphisms $f,f'$ satisfy the lemma, then so does their composition.
We use these facts to reduce the proof to several special cases.

If $G$ and $G'$ are connected, that is, if they are both a corolla $\mathfrak{c}$, then $\mu$ is the canonical inclusion into $\morcon C$ and the equation already appeared in \cref{def corolla iso}.
We can boost this up to the case where we have isomorphisms of connected graphs $f^i \colon H^i \to K^i$ for $1 \leq i \leq m$, and we consider their sum
\[
  f\colon G' = \sum_{i=1}^m H^i \to \sum_{i=1}^m K^i = G.
\]
(Note that $H^i = K^i$ since these are isomorphic corollas.)
Letting $\iota_i' \colon H^i \to G'$ and $\iota_i \colon K^i \to G$ be the inclusions 
and using \cref{lemma mu splitting} and the result for corollas, we have
\begin{align*}
\mu_{G'}(f^*(p)) &= (\iota_1')^*(f^*(p)) \ten \cdots \ten (\iota_m')^*(f^*(p)) \\
&= (f^1)^*(\iota_1^*(p)) \ten \cdots \ten (f^m)^*(\iota_m^*(p)) \\
&= [(\widehat{d_0f^1})^{-1} \circ \iota_1^*(p) \circ \widehat{d_1f^1}] \ten \cdots \ten [(\widehat{d_0f^m})^{-1} \circ \iota_m^*(p) \circ \widehat{d_1f^m}] \\
&= \widehat{d_0f}\vphantom{d_0f}^{-1} \circ \big( \iota_1^*(p) \ten \cdots \ten \iota_m^*(p)  \big) \circ \widehat{d_1f} \\
&= \widehat{d_0f}\vphantom{d_0f}^{-1} \circ \mu_G(p) \circ \widehat{d_1f}.
\end{align*}

On the other hand, suppose we have 
\[
  f\colon G' = \sum_{i=1}^m G'_i \to \sum_{k=1}^m G_i = G.
\]
with $f$ acting as an automorphism on $\um$ but as the identity on each component, that is, $f  \circ \iota'_i = \iota_{fi} \colon G_{fi} =  G'_i  \to G' \to G$.
Then 
\begin{align*}
  \mu_{G'} f^*(p) &= (\iota'_1)^* f^*(p) \ten \cdots \ten (\iota'_m)^* f^*(p) \\
  &= \iota_{f1}^*(p) \ten \cdots \ten \iota_{fm}^*(p) \\
  &= \widehat{d_0f}\vphantom{d_0f}^{-1} \circ (\iota_{1}^*(p) \ten \cdots \ten \iota_{m}^*(p)) \circ \widehat{d_1f}
\end{align*}
where these are now block permuations.

In the special case when $f$ is the canonical decomposition  from \cref{def canonical splitting}
\[
  \iota \colon G' = \sum G_x \to G,
\]
the required formula follows from the definition of $\mu_G$ in \cref{constr muG}.
That is, if $p_k = \iota_k^*p$ then we have 
\[
  \mu_G(p) = \widehat{d_0\iota} \circ (p_1 \ten \cdots \ten p_k) \circ \widehat{d_1\iota}\vphantom{d_1\iota}^{-1} =  \widehat{d_0\iota} \circ  (\mu_{G'} \iota^*(p)) \circ \widehat{d_1\iota}\vphantom{d_1\iota}^{-1}.
\]

We can combine all of these cases as follows
\[ \begin{tikzcd}
\sum\limits_{x=1}^m G_x' \rar \dar &  \sum\limits_{y=1}^m G_{fy}' \rar & \sum\limits_{y=1}^m G_y \dar \\
G' \ar[rr, "f"] & &  G. 
\end{tikzcd} \]
The vertical maps are canonical decompositions, the first map on the top rearranges factors, and the second map on top is a sum of isomorphisms between corollas.
The result now follows for $f$ by the facts in the first paragraph of the proof.
\end{proof}

\begin{proposition}\label{prop image in csp}
Let $G\in \LL_1$ and $\pi \colon C \to C(\ast)$. Then $\pi(\mu_G(p)) \in \WW_1 = \mor C(\ast)$ is the congruence class $[G]$ of $G$.
\end{proposition}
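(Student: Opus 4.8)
The plan is to apply $\pi$ to the defining composite for $\mu_G(p)$ from \cref{constr muG} and reduce everything to a computation in $C(\ast) = \csp$. Since $\pi \colon C \to C(\ast)$ is a strict map of permutative categories, it preserves tensor products on the nose and carries the symmetry isomorphisms $\widehat{\iota^1}$ and $\widehat{\iota^0}$ appearing in that composite to the corresponding symmetries of $C(\ast)$, which by \cref{example C star monoidal} are the block-permutation cospans. Thus $\pi(\mu_G(p)) = \widehat{\iota^0} \circ (\pi(p_1) \ten \cdots \ten \pi(p_k)) \circ (\widehat{\iota^1})^{-1}$, and the whole problem comes down to identifying the $\pi(p_x)$ and then recognizing the resulting composite as $[G]$.

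First I would settle the base case of a single corolla $G = \mathfrak{c}_{m,n}$, where $\mu_G$ is the standard inclusion $P_{\mathfrak{c}_{m,n}} \subseteq \morcon C$. By the pullback defining $P_{\mathfrak{c}_{m,n}}$ in \eqref{diagram defining Pn and Pcmn}, such a $p$ is a connected morphism whose source lies over $\um$ and whose target lies over $\un$. Unwinding \cref{def connected reduced}, $\pi(p)$ is therefore a cospan $\um \to \underline{1} \leftarrow \un$ with one-element apex, i.e.\ exactly the congruence class $[\mathfrak{c}_{m,n}]$. Applying this to each connected component gives $\pi(p_x) = [G_x]$ for $p_x = \iota_x^* p$.

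Next I would assemble the general case using the two preceding lemmas. Iterating \cref{lemma mu splitting} and using strict monoidality of $\pi$, together with the identity $[G_1] \ten \cdots \ten [G_k] = [\textstyle\sum_x G_x]$ in $C(\ast)$ (the tensor there being $+$ of graphs), yields $\pi(\mu_{\sum_x G_x}(\iota^* p)) = [\textstyle\sum_x G_x]$. Then I would feed the canonical splitting isomorphism $\iota \colon \sum_x G_x \to G$ into \cref{lem height one iso mu} and solve for $\pi(\mu_G(p))$, obtaining
\[
  \pi(\mu_G(p)) = \widehat{d_0\iota} \circ [\textstyle\sum_x G_x] \circ \widehat{d_1\iota}\vphantom{d_1\iota}^{-1}.
\]
It remains to check that this conjugate equals $[G]$.

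I expect that last identification to be the main obstacle: it is the purely $\csp$-level statement that an isomorphism $\iota$ of height $1$ level graphs induces on congruence classes exactly pre- and post-composition with the boundary bijections $d_1\iota$ and $d_0\iota$. This is a bookkeeping verification with pushouts of cospans in $\finsetskel$, confirming that composing $[\sum_x G_x]$ with the two boundary symmetries reglues the apex $\sum_x (G_x)_{01} \cong \uk$ back to $G_{01}$ along $\iota_{01}$ and recovers the cospan $G_{00} \to G_{01} \leftarrow G_{11}$; equivalently, it is the instance of \cref{lem height one iso mu} for the universal strict labelled cospan category $C(\ast)$ itself. Everything else in the argument is formal.
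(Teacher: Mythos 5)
Your argument is correct, and it shares the paper's overall skeleton (settle the corolla case from the definition of $P_{\mathfrak{c}_{m,n}}$, tensor the components using \cref{lemma mu splitting} and strict monoidality of $\pi$, then transport along the canonical splitting), but it finishes the decisive step differently. The paper also arrives at the need to evaluate $\pi(\widehat{\iota^0}) \circ [\textstyle\sum_x G_x] \circ \pi(\widehat{\iota^1})^{-1}$ in $\mor C(\ast)$; it does so by constructing an auxiliary height~$3$ level graph $H$ whose three spine edges are the two boundary-permutation graphs and $G'$, and whose long edge $d_1d_2H$ equals $G$, so that the Segal property of $\WW_\bullet \cong NC(\ast)$ forces the threefold composite to be $[d_1d_2H]=[G]$; this also lets the paper state the transport step for an \emph{arbitrary} isomorphism $f\colon G'\to G$. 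You instead observe that only the canonical splitting $\iota$ is ever needed and conclude with a direct pushout computation in $\finsetskel$: composing a cospan with iso-cospans on either side merely reindexes its legs, and the naturality squares for $\iota$ identify the reindexed legs of $\sum_x G_x$ with the structure maps of $G$. Your route is more elementary and avoids the height-$3$ gadget, at the cost of stepping outside the level-graph formalism for one explicit cospan calculation. One small caveat: your closing parenthetical that the remaining identity ``is the instance of \cref{lem height one iso mu} for $C(\ast)$ itself'' is circular as stated, since for the terminal presheaf that instance only reproduces the identity to be checked (it presupposes $\mu_G(\ast)=[G]$, which is the proposition for $C=C(\ast)$); but your primary justification --- the explicit pushout bookkeeping --- is the right one and does close the argument.
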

\begin{proof}
In the connected case, that is, when $G = \mathfrak{c}_{n,m}$ is a corolla, this is true by definition of $P_{\mathfrak{c}_{n,m}}$.
If $G$ is empty, then $\mu_G(p) = \mathbf{1}$, so the result follows.
If the result is true for $G$ and $G'$, then applying \cref{lemma mu splitting} we have 
\[ \pi \mu_{G+G'}(p) = \pi(\mu_G(i^*p) \ten \mu_{G'}(j^*p)) = \pi\mu_G(i^*p) \ten \pi\mu_{G'}(j^*p) = [G] \ten [G'] = [G + G']. \]

Finally, suppose the result is true for $G'$ and that $f\colon G' \to G$ is an isomorphism. 
Let $H \in \LL_3$ denote following the height 3 level graph
\[ \begin{tikzcd}[row sep=small, column sep=tiny]
G_{00} \ar[dr,"="] & & G'_{00} \ar[dl,"\cong"'] \ar[dr] & &  G'_{11} \ar[dl] \ar[dr,"\cong"]  & &  G_{11} \ar[dl,"="']\\
& G_{00} \ar[dr] & & G'_{01} \ar[dl,"\cong"']  \ar[dr,"\cong"] & & G_{11} \ar[dl] \\
& & G_{01} \ar[dr,"="] & & G_{01} \ar[dl,"="'] \\
& & & G_{01}
\end{tikzcd} \]
where the unlabelled maps are the structure maps.
This graph has the property that $d_1 d_2 H = G$ and $d_0 d_3 H = G'$,
while $d_2d_3G$ and $d_0d_0G$ are of the form in \cref{lem isomorphism}.
Let $p\in P_G$ be an element with $d_1(p) = c_1 \ten \cdots \ten c_n$ and $d_0(p) = e_1 \ten \cdots \ten e_m$.
Using the Segal condition, define an element $q$ of $P_H$ so that
\[ \begin{tikzcd}
P_H \rar{d_2d_3} & P_{d_2d_3H} \rar{\cong} & \prod\limits_{G_{00}} P_{\mathfrak{c}_{11}}
\end{tikzcd} \]
sends $q$ to $(\id_{c_1}, \dots, \id_{c_n})$, 
\[ \begin{tikzcd}
P_H \rar{d_0d_0} & P_{d_0d_0H} \rar{\cong} & \prod\limits_{G_{11}} P_{\mathfrak{c}_{11}}
\end{tikzcd} \]
sends $q$ to $(\id_{e_1}, \dots, \id_{e_m})$,
and $d_0d_3 \colon P_H \to P_{G'}$ sends $q$ to $f^*(p)$.
By \cref{lem isomorphism}, 
\[
  \mu_{d_2d_3H}(d_2d_3q) = \widehat{d_1(f)}\vphantom{\widehat{d_1(f)}}^{-1} \qquad \& \qquad \mu_{d_0d_0H}(d_0d_0q) = \widehat{d_0(f)},
\]
and notice that $\pi$ sends these to $[d_2d_3 H]$ and $[d_0d_0H]$, respectively.
By \cref{lem height one iso mu} we also have
\[
  \mu_G(p) = \widehat{d_0(f)} \circ \mu_{G'} f^*(p) \circ \widehat{d_1(f)}\vphantom{\widehat{d_1(f)}}^{-1},
\]
which is sent to \[
  [d_0d_0H] \circ [d_0d_3H] \circ [d_2d_3 H]
\]
by $\pi$, since our assumption is $\pi(\mu_{G'} f^*(p)) = [G']$.
But this element is equal to $[d_1d_2 H] = [G]$.
Thus $\pi(\mu_G(p)) = [G]$.

Since every graph is isomorphic to a finite sum of connected graphs, the general result follows.
\end{proof}

\begin{definition}[Inner face in lowest dimension]\label{def d one}
Suppose $G$ is a height 2 level graph.
If $G$ is connected, then $d_1G$ is a corolla. 
In this case we get a map $P_G \to P_{d_1G}$ by using composition in $C$.
\[ \begin{tikzcd}
P_G \rar{\cong} \dar[dashed] & P_{d_2G} \times_{P_{d_0d_2G}} P_{d_0G} \rar & \mor C \times_{\ob C} \mor C \dar{\circ} \\
P_{d_1G} \rar[hook] \dar & \morcon C \rar[hook] \dar & \mor C \dar \\
\{ d_1G \} \rar[hook] & \{ \mathfrak{c}_{p,q} \}_{p,q} \rar[hook] & \mor C(\ast)
\end{tikzcd} \]
For an arbitrary height 2 level graph, define $d_1 \colon P_G \to P_{d_1G}$ by examining the following square.
\[ \begin{tikzcd}
\sum\limits_{x\in G_{02}} d_1 G_x \dar{d^1} \rar & d_1 G \dar{d^1} \\
\sum\limits_{x\in G_{02}} G_x \rar & G \\
\end{tikzcd} \]
Here, each $G_x$ is connected and the bottom map is an isomorphism.
Then applying $P$ we see $d_1$ is the dashed map in the following.
\[ \begin{tikzcd}
 P_G \rar{\cong} \dar[dashed] & \prod\limits_x P_{G_x} \dar{\prod d_1} \\
 P_{d_1G} \rar{\cong} & \prod\limits_x P_{d_1G_x}
 \end{tikzcd} \] 
\end{definition}

\begin{lemma}\label{fiber maps lem}
Suppose $f \colon G' \to G$ is an isomorphism between connected graphs in $\LL_2$ and $d_1(f) = g \colon d_1 G' \to d_1 G$.
Then the diagram
\[ \begin{tikzcd}
P_G \rar{f^*} \dar{d_1} & P_{G'} \dar{d_1}  \\
P_{d_1 G} \rar{g^*} & P_{d_1G'}
\end{tikzcd} \]
commutes.
\end{lemma}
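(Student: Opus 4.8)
The plan is to evaluate both composites in $\morcon C$ and compare them, reducing everything to three defining formulas: the inner face $d_1$ on connected graphs is composition in $C$ (\cref{def d one}), the action $g^*$ on corollas is conjugation by symmetry isomorphisms (\cref{def corolla iso}), and the interaction of $\mu$ with isomorphisms of height-1 graphs is controlled by \cref{lem height one iso mu}. Since $G$ and $G'$ are connected, $d_1 G$ and $d_1 G'$ are corollas, so $\mu_{d_1 G}$ and $\mu_{d_1 G'}$ are the inclusions into $\morcon C$, and it suffices to check that $d_1(f^*p)$ and $g^*(d_1 p)$ agree as connected morphisms for each $p \in P_G$.

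First I would invoke the Segal isomorphism $P_G \cong P_{d_2 G} \times_{P_{G_{11}}} P_{d_0 G}$ (the $n=2$ case of the iterated decomposition in \cref{examples segality}, with $\rho_1^*G = d_2 G$ and $\rho_2^*G = d_0 G$), writing $p \leftrightarrow (p_{\mathrm{bot}}, p_{\mathrm{top}})$. Because this decomposition is restriction along inert maps and hence natural, $f^*(p)$ corresponds to $((d_2 f)^* p_{\mathrm{bot}}, (d_0 f)^* p_{\mathrm{top}})$. By \cref{def d one}, $d_1(f^*p)$ is then the composite $\mu_{d_0 G'}((d_0 f)^* p_{\mathrm{top}}) \circ \mu_{d_2 G'}((d_2 f)^* p_{\mathrm{bot}})$ in $\morcon C$.

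Next I would apply \cref{lem height one iso mu} to each of the height-1 isomorphisms $d_2 f$ and $d_0 f$, rewriting each factor as $\mu_{d_2 G}(p_{\mathrm{bot}})$ (resp.\ $\mu_{d_0 G}(p_{\mathrm{top}})$) conjugated by symmetry isomorphisms. The composite thus becomes $\widehat{d_0 d_0 f}\vphantom{f}^{-1} \circ \mu_{d_0 G}(p_{\mathrm{top}}) \circ \widehat{d_1 d_0 f} \circ \widehat{d_0 d_2 f}\vphantom{f}^{-1} \circ \mu_{d_2 G}(p_{\mathrm{bot}}) \circ \widehat{d_1 d_2 f}$. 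The crucial simplification is that the two symmetry isomorphisms meeting in the middle, $\widehat{d_1 d_0 f}$ and $\widehat{d_0 d_2 f}\vphantom{f}^{-1}$, are mutually inverse: by the simplicial identity $d_1 d_0 = d_0 d_2$ they are both $\widehat{\delta}$ for the same bijection $\delta$, and $\widehat{\delta}\circ\widehat{\delta}\vphantom{\delta}^{-1} = \widehat{\id} = \id$. Using that $\mu_{d_0 G}(p_{\mathrm{top}}) \circ \mu_{d_2 G}(p_{\mathrm{bot}}) = d_1(p)$, the composite collapses to $\widehat{d_0 d_0 f}\vphantom{f}^{-1} \circ d_1(p) \circ \widehat{d_1 d_2 f}$.

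Finally I would compute the other route directly: by \cref{def corolla iso}, with $g = d_1 f$, we have $g^*(d_1 p) = \widehat{d_0 g}\vphantom{g}^{-1} \circ d_1(p) \circ \widehat{d_1 g}$, and the simplicial identities $d_0 d_1 = d_0 d_0$ and $d_1 d_1 = d_1 d_2$ give $\widehat{d_0 g} = \widehat{d_0 d_0 f}$ and $\widehat{d_1 g} = \widehat{d_1 d_2 f}$, matching the expression for $d_1(f^*p)$ exactly. I expect the main obstacle to be purely bookkeeping: correctly tracking the four iterated face maps $d_a d_b f$ and their symmetry isomorphisms, and confirming that the middle pair genuinely cancels via $d_1 d_0 = d_0 d_2$. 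This cancellation is the heart of why the inner face map is natural with respect to isomorphisms; everything else is formal.
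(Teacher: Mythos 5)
Your proposal is correct and follows essentially the same route as the paper's proof: decompose $p$ via the Segal isomorphism $P_G \cong P_{d_2G} \times_{P_{G_{11}}} P_{d_0G}$, apply \cref{lem height one iso mu} to the height-1 isomorphisms $d_2f$ and $d_0f$, cancel the middle symmetry isomorphisms using $d_0d_2 = d_1d_0$, and identify the outer conjugation with $g^*$ via $d_0d_0 = d_0d_1$ and $d_1d_2 = d_1d_1$ and \cref{def corolla iso}. The only difference is cosmetic notation (the paper names the restrictions $\alpha = d_2(f)$, $\beta = d_0(f)$).
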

\begin{proof}
Write 
\begin{align*}
\alpha \coloneqq d_2(f) \colon H' = d_2 G' &\to d_2 G = H \\
\beta \coloneqq d_0(f) \colon K' = d_0 G' &\to d_0 G = K
\end{align*}
for the induced isomorphisms between height 1 level graphs.
For $i=0,1$, we also write $\alpha_i \coloneqq d_i(\alpha)$ and $\beta_i \coloneqq d_i(\beta)$.
If $p\in P_G$, then by definition 
\[
  d_1(p) = \mu_K(d_0p) \circ \mu_H(d_2p)
\]
and
\[
  d_1(f^*p) = \mu_{K'}(d_0f^*p) \circ \mu_{H'}(d_2f^*p) = \mu_{K'} (\beta^* d_0p) \circ \mu_{H'} (\alpha^* d_2p).
\]
By \cref{lem height one iso mu} this means 
\[
  d_1(f^*p) = \hat\beta_0^{-1} \circ \mu_{K} (d_0p) \circ \hat\beta_1 \circ \hat\alpha_0^{-1} \circ \mu_{H} (d_2p) \circ \hat\alpha_1.
\]
Now $\alpha_0 = d_0 d_2 (f) = d_1 d_0 (f) = \beta_1$, so 
\[
  d_1(f^*p) = \hat\beta_0^{-1} \circ \mu_{K} (d_0p) \circ \mu_{H} (d_2p) \circ \hat\alpha_1  = \hat\beta_0^{-1} \circ d_1(p) \circ \hat\alpha_1.
\]
The result now follows from \cref{def corolla iso} since $\alpha_1 = d_1 d_2(f) = d_1 d_1(f) = d_1(g)$ and $\beta_0 = d_0 d_0(f) = d_0 d_1(f) = d_0(g)$, implying $\hat\beta_0^{-1} \circ d_1(p) \circ \hat\alpha_1 = g^* d_1(p)$.
\end{proof}

\begin{proposition}\label{prop appropriate end behavior done}
If $G\in \LL_2$, then 
\begin{enumerate}
\item $d_0 d_1 \colon P_G \to P_{d_1G} \to P_{d_0d_1G}$ is equal to $d_0 d_0 \colon P_G \to P_{d_0G} \to P_{d_0d_0G}$.
\item $d_1 d_1 \colon P_G \to P_{d_1G} \to P_{d_1d_1G}$ is equal to $d_1 d_2 \colon P_G \to P_{d_2G} \to P_{d_2d_1G}$.
\end{enumerate}
\end{proposition}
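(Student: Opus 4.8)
The plan is to recognize that both displayed identities are assertions about the \emph{source} and \emph{target} of the morphism $\mu_{d_1 G}(d_1 p)$, and to deduce them from two facts already in hand: the way \cref{constr muG} expresses the source and target maps of $\mu_H$ (for a height one graph $H$) as the inert faces $d_1,d_0 \colon P_H \to \ob C$, and the definition of the inner face $d_1$ as a composition in \cref{def d one}. Before doing anything else I would record the relevant simplicial identities at the level of graphs, which hold automatically since the face operators on $\LL$ are pulled back from $\simpcat$: writing $G\in\LL_2$ with levels $G_{00},G_{11},G_{22}$, one has $d_0d_1G = d_0d_0G = G_{22}$ and $d_1d_1G = d_1d_2G = G_{00}$, so in each case the two composites in the statement share a target set. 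Along the way I would also note $d_0d_2G = d_1d_0G = G_{11}$, which is exactly what makes the composition in \cref{def d one} well-posed.

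Next I would reduce to the case that $G$ is connected. By \cref{def d one}, the inner face $d_1\colon P_G \to P_{d_1G}$ is computed componentwise along the canonical splitting $G \cong \sum_{x\in G_{02}} G_x$ of \cref{def canonical splitting}; the inert faces $d_0,d_2$ respect this splitting because restriction along maps of $\simpcat$ preserves $+$ and $P$ is Segal (\cref{examples segality}), and the target sets $P_{G_{22}}$ and $P_{G_{00}}$ likewise split as products over $G_{02}$. Hence it suffices to treat each connected $G_x$ separately.

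So assume $G$ is connected, so that $d_1G$ is a corolla. For statement (1), both composites land in $P_{d_0d_0G}=P_{G_{22}}\subseteq \ob C$. Applying \cref{constr muG} to the height one graph $d_0G$ gives $d_0d_0(p) = t\bigl(\mu_{d_0G}(d_0p)\bigr)$, and applying it to the corolla $d_1G$ gives $d_0d_1(p) = t\bigl(\mu_{d_1G}(d_1p)\bigr)$. By \cref{def d one} we have $\mu_{d_1G}(d_1p) = \mu_{d_0G}(d_0p)\circ \mu_{d_2G}(d_2p)$, whose target is $t\bigl(\mu_{d_0G}(d_0p)\bigr)$; this proves (1). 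Statement (2) is the mirror image on the source side: both composites land in $P_{G_{00}}$, \cref{constr muG} applied to $d_2G$ and to the corolla $d_1G$ gives $d_1d_2(p) = s\bigl(\mu_{d_2G}(d_2p)\bigr)$ and $d_1d_1(p) = s\bigl(\mu_{d_1G}(d_1p)\bigr)$, and the source of $\mu_{d_0G}(d_0p)\circ \mu_{d_2G}(d_2p)$ is $s\bigl(\mu_{d_2G}(d_2p)\bigr)$.

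I expect the main obstacle to be purely organizational rather than mathematical: the computation in the connected case is a single line once \cref{constr muG} and \cref{def d one} are in place, so the care is all in the reduction to connected components — verifying that all four face maps are compatible with the canonical splitting — and in correctly matching the height zero sets $P_{d_id_jG}$ with their images in $\ob C$ so that the source/target bookkeeping is unambiguous.
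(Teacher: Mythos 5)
Your proposal is correct and follows essentially the same route as the paper's proof: reduce to the connected case via the canonical splitting (using Segality to identify $P_G$, $P_{d_iG}$, and $P_{d_id_jG}$ with products over $G_{02}$), then in the connected case read off the two identities as the statements that the target (resp.\ source) of the composite $\mu_{d_0G}(d_0p)\circ\mu_{d_2G}(d_2p)$ defining $d_1(p)$ is the target of $\mu_{d_0G}(d_0p)$ (resp.\ the source of $\mu_{d_2G}(d_2p)$). The only cosmetic difference is that the paper proves (1) and declares (2) dual, and presents the argument as two commuting diagrams rather than equations.
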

\begin{proof}
We prove only the first equality, as the second is dual.
Suppose $G$ is connected.
\[ \begin{tikzcd}
& P_{d_0G} \rar[hook] \ar[rr,"d_0", bend left=15] & \mor C \ar[dr,"\text{target}" description] & P_{d_0d_0G} \dar[hook']\\
P_G \rar["\cong","d_2 \times d_0"'] \dar[dashed,"d_1"] \ar[ur,"d_0"] & P_{d_2G} \times_{P_{d_0d_2G}} P_{d_0G} \rar \uar & \mor C \times_{\ob C} \mor C \dar{\circ} \uar & \ob C \ar[ddl, bend left,"="] \\
P_{d_1G} \rar[hook] \dar["d_0"] & \morcon C \rar[hook] & \mor C \dar{\text{target}} \ar[ur,"\text{target}" description] \\
P_{d_0d_1G} \ar[rr,hook] & & \ob C
\end{tikzcd} \]
The above diagram commutes, where the two upward maps are projections onto the second factor, so $d_0d_1 = d_0d_0$ in this case.
The general case follows from the connected case:
\[ \begin{tikzcd}
 P_G \rar{\cong} \ar[rr,bend left,"\id"] \dar[dashed,"d_1"] & \prod\limits_x P_{G_x} \dar{\prod d_1} \ar[dd, bend left=60, "\prod d_0d_0"] & P_G \lar["\cong" swap] \ar[dd, bend left=30, "d_0d_0"] \\
 P_{d_1G} \rar{\cong} \dar["d_0"] & \prod\limits_x P_{d_1G_x} \dar["\prod d_0"] \\
 P_{d_0d_1G} \rar{\cong} \ar[rr,bend right,"\id"]& \prod\limits_x P_{d_0d_1G_x} & P_{d_0d_0G} \lar["\cong" swap]
 \end{tikzcd} \] 
\end{proof}

Now that we have defined $d^1 \colon P_G \to P_{d_1G}$ for each $G\in \LL_2$ and shown it is compatible at the ends, we can use the Segal condition to define inner faces at arbitrary height graphs.
It is convenient to use the following uniform notation for the generating outer face maps.

\begin{convention}
If $X$ is a simplicial set, we write $d_\bot \colon X_n \to X_{n-1}$ for the bottom face map $d_0$ and $d_\top \colon X_n \to X_{n-1}$ for the top face map $d_n$.
We use the same symbols when $G$ is a height $n$ level graph to write $d_\bot = d_0 \colon P_G \to P_{d_\bot G} = P_{d_0 G}$ and $d_\top = d_n \colon P_G \to P_{d_\top G} = P_{d_n G}$.
\end{convention}

If $G$ is a height $n$ level graph, then for $0 < i < n$ we can define $d_i$ by means of the following diagram, which is well-formed by \cref{prop appropriate end behavior done}.
\begin{equation}\label{diagram face} \begin{tikzcd}
P_G \rar{\cong} \dar[dashed, "d_i"] & 
P_{d_\top^{n-i+1} G} \underset{P_{d_\bot^{i-1}d_\top^{n-i+1}G}}{\times}
P_{d_\bot^{i-1}d_\top^{n-i-1}G} \underset{P_{d_\bot^{i+1}d_\top^{n-i-1}G}}{\times}
P_{d_\bot^{i+1} G} \dar{\id \times d_1 \times \id} \\
P_{d_iG} \rar{\cong} & 
P_{d_\top^{n-i}d_iG} \underset{P_{d_\bot^{i-1}d_\top^{n-i} d_iG}}{\times}
P_{d_\bot^{i-1} d_\top^{n-i-1} d_iG} \underset{P_{d_\bot^{i}d_\top^{n-i-1}d_iG}}{\times}
P_{d_\bot^id_iG} 
\end{tikzcd} \end{equation}
In other words, inner face maps are defined as follows.

\begin{definition}[Inner face operators]\label{def inner face}
If $G \in \LL_n$, $0 < i < n$, and $x\in P_G$, then $d_i(x) \in P_{d_iG}$ is the unique element so that 
\begin{itemize}
\item $d_\top^{n-i}(d_i x) = d_\top^{n-i+1}(x)$
\item $d_\bot^{i-1}d_\top^{n-i-1}(d_i x) = d_1 (d_\bot^{i-1} d_\top^{n-i-1} x)$, and
\item $d_\bot^{i} (d_i x) = d_\bot^{i+1}(x)$.
\end{itemize}
\end{definition}

In order to utilize this definition effectively, we now examine how an inner face $d_i$ interacts with arbitrary inert simplicial operators.
The identity in the following lemma also holds in an arbitrary simplicial object.

\begin{lemma}\label{lem inner face v inert}
Suppose $G$ is a height $n$ level graph and $x\in P_G$.
If $k+\ell \leq n-1$, then
\[
  d_\bot^k d_\top^\ell d_i(x) = \begin{cases}
  d_\bot^k d_\top^{\ell+1}(x) & n-\ell \leq i \\
  d_{i-k} d_\bot^k d_\top^\ell(x) & k+1 \leq i \leq n- \ell -1 \\
  d_\bot^{k+1} d_\top^\ell(x) & i \leq k.
  \end{cases}
\]
\end{lemma}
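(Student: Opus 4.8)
The plan is to deduce the whole statement from the three defining identities of \cref{def inner face}, written as identities of operators,
\[
  d_\top^{n-i}d_i = d_\top^{n-i+1}, \qquad
  d_\bot^{i-1}d_\top^{n-i-1}d_i = d_1\, d_\bot^{i-1}d_\top^{n-i-1}, \qquad
  d_\bot^{i}d_i = d_\bot^{i+1},
\]
together with the fact that the outer face operators $d_\bot$ and $d_\top$ are \emph{genuine} functorial operators: they are induced by inert maps and so belong to $P|_{\LL_\intrm}$ (\cref{def P restriction}), whence they satisfy the usual simplicial identities among themselves, in particular $d_\bot^a d_\top^b = d_\top^b d_\bot^a$, together with $d_\bot^a d_\bot^b = d_\bot^{a+b}$ and $d_\top^a d_\top^b = d_\top^{a+b}$. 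This fits the remark that the asserted identity is a simplicial identity; the only subtlety is that $d_i$ is not a priori itself a simplicial operator, so one must feed it the three boundary conditions above.

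The two extreme cases require only these manipulations. When $n-\ell \le i$ I would write $d_\top^\ell = d_\top^{\ell-(n-i)}d_\top^{n-i}$ (legitimate since $\ell\ge n-i$), apply the first defining identity to the inner block, and reassemble:
\[
  d_\bot^k d_\top^\ell d_i(x)
  = d_\bot^k d_\top^{\ell-(n-i)}\bigl(d_\top^{n-i}d_i(x)\bigr)
  = d_\bot^k d_\top^{\ell-(n-i)}d_\top^{n-i+1}(x)
  = d_\bot^k d_\top^{\ell+1}(x).
\]
The case $i\le k$ is dual: commuting the outer operators and using $d_\bot^k = d_\bot^{k-i}d_\bot^i$ together with the third defining identity gives $d_\bot^k d_\top^\ell d_i(x) = d_\top^\ell d_\bot^{k-i}d_\bot^{i+1}(x) = d_\bot^{k+1}d_\top^\ell(x)$. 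Neither case uses the Segal condition.

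The middle case $k+1\le i\le n-\ell-1$ is the main obstacle, and it is where the Segal condition is essential. Here I would set $G' = d_\bot^k d_\top^\ell G$, of height $m = n-k-\ell$, and $j = i-k$, noting $0<j<m$, so that both sides of the claimed equation lie in $P_{d_j G'}$. The right-hand side $d_j(d_\bot^k d_\top^\ell x)$ is, by \cref{def inner face} applied to $G'$ (the requisite uniqueness being exactly the Segal decomposition of \cref{examples segality}), the unique element satisfying the three defining identities for $G'$. It therefore suffices to verify that $z = d_\bot^k d_\top^\ell d_i(x)$ satisfies those same three identities for $G'$. After commuting the outer operators so that the right number of $d_\bot$'s and $d_\top$'s act directly on $d_i(x)$, each of the three conditions for $G'$ collapses to one of the defining identities for $G$: the top condition to the first identity, the bottom condition to the third, and the middle condition — using $m-j = n-\ell-i$ and $d_\bot^{j-1}d_\top^{m-j-1}d_\bot^k d_\top^\ell = d_\bot^{i-1}d_\top^{n-i-1}$ — to the second identity, which is precisely the one producing the single application of $d_1$. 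The only real labor is this exponent bookkeeping; once it checks out, uniqueness yields $z = d_j(d_\bot^k d_\top^\ell x)$, finishing the case and the lemma.
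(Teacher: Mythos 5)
Your proposal is correct and follows essentially the same route as the paper's proof: the two extreme cases by peeling off $d_\top^{n-i}$ (resp.\ $d_\bot^{i}$) and applying the corresponding defining identity of \cref{def inner face}, and the middle case by checking that $d_\bot^k d_\top^\ell d_i(x)$ satisfies the three defining conditions characterizing $d_{i-k}(d_\bot^k d_\top^\ell x)$ and invoking the Segal uniqueness. The exponent identifications you cite (e.g.\ $d_\bot^{j-1}d_\top^{m-j-1}d_\bot^k d_\top^\ell = d_\bot^{i-1}d_\top^{n-i-1}$) are exactly the computations the paper writes out explicitly.
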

\begin{proof}
If $i \geq n- \ell$, then by \cref{def inner face}
\[
  d_\top^\ell d_i(x) = d_\top^{\ell-n+i} d_\top^{n-i} d_i(x) = d_\top^{\ell-n+i} d_\top^{n-i+1}(x) = d_\top^{\ell+1}(x),
\]
while if $i \leq k$ then 
\[
  d_\bot^k d_i(x) = d_\bot^{k-i} d_\bot^i d_i(x) = d_\bot^{k-i} d_\bot^{i+1}(x) = d_\bot^{k+1} (x),
\]
so the first and last options hold.
The interesting case is the second option.
Notice that $0 < i-k < s$, and we use the definition of the inner face map $d_{i-k}$ from \cref{def inner face}.
We make the following three computations 
\begin{align*}
d_\top^{(n-k-\ell)-(i-k)} ({ \color{purple} d_\bot^k d_\top^\ell d_i x}) 
&= d_\bot^k d_\top^{n-i} d_i(x) \\ &= d_\bot^k d_\top^{n-i+1} x = d_\top^{(n-k-\ell)-(i-k)+1} ({\color{blue} d_\bot^k d_\top^\ell(x)})
\end{align*}
\begin{align*}
  d_\bot^{(i-k)-1} d_\top^{(n-k-\ell)-(i-k)-1} ({ \color{purple} d_\bot^k d_\top^\ell d_i x})
&=
d_\bot^{i-1} d_\top^{n-i-1} d_i(x) 
\\
&= d_1(d_\bot^{i-1} d_\top^{n-i-1}x) \\
&= d_1(d_\bot^{(i-k)-1} d_\top^{ (n-k-\ell) - (i-k) - 1}({\color{blue} d_\bot^k d_\top^\ell(x)})
\end{align*}
\begin{align*}
d_\bot^{i-k} ({ \color{purple} d_\bot^k d_\top^\ell d_i x})  
&=
d_\top^\ell d_\bot^i d_i x \\
&= d_\top^\ell d_\bot^{i+1} x 
= d_\bot^{(i-k)+1} ({\color{blue} d_\bot^k d_\top^\ell(x)})
\end{align*}
and conclude that the element $d_\bot^k d_\top^\ell d_i x$ in $P_{d_\bot^k d_\top^\ell d_i G} = P_{d_{i-k} d_\bot^k d_\top^\ell G}$ is equal to $d_{i-k} (d_\bot^k d_\top^\ell x)$.
\end{proof}

\begin{proposition}\label{fiber maps prop}
Suppose $f \colon G' \to G$ is a map in $\LL_n$, and $0 < i < n$.
Let $d_i(f) = g \colon d_i G' \to d_i G$.
Then the diagram
\[ \begin{tikzcd}
P_G \rar{f^*} \dar{d_i} & P_{G'} \dar{d_i}  \\
P_{d_i G} \rar{g^*} & P_{d_iG'}
\end{tikzcd} \]
commutes.
\end{proposition}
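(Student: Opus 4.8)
The plan is to verify the square after restricting to layers, exploiting that the inert restriction $P|_{\LL_\intrm}$, being a right Kan extension from the elementary objects (\cref{def P restriction}), satisfies the Segal condition; hence the element $P_{d_iG'}$ is determined by its images under the layer projections $\rho_j^* = d_\bot^{j-1}d_\top^{(n-1)-j}$ for $1 \leq j \leq n-1$ (cf.\ \cref{examples segality}). It therefore suffices to show that $\rho_j^*(d_i f^*(x))$ and $\rho_j^*(g^* d_i(x))$ agree for each $j$. Since $f$ and $g = d_i(f)$ are morphisms in the fibers $\LL_n$ and $\LL_{n-1}$, they are inert in $\LL$ and so commute with all outer face operators, which are themselves inert; concretely $\rho_j^* g^* = g_j^* \rho_j^*$ with $g_j = \rho_j^*(g)$, and analogously $f$ commutes with the iterated outer faces appearing in $\rho_j^*$. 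Applying \cref{lem inner face v inert} to both $d_i f^*(x)$ and $d_i(x)$ then rewrites each $\rho_j^*(d_i(-))$ purely in terms of iterated outer faces and, in the single case $j = i$, the lowest inner face $d_1$ at height two.

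For $j \neq i$ the resulting expressions involve only outer faces of $f$ together with the identification $g_j = \rho_j^*(d_i f)$, so both sides coincide by functoriality of $P|_{\LL_\intrm}$ and the simplicial identities governing how $d_i$ restricts away from the contracted layer (these are precisely the first and third cases of \cref{lem inner face v inert}). The only substantive case is $j = i$: writing $H = d_\bot^{i-1}d_\top^{(n-1)-i}G \in \LL_2$ and $F = (d_\bot^{i-1}d_\top^{(n-1)-i})^*(f) \colon H' \to H$, and using the operator identity $\rho_i^* d_i = d_1 (d_\bot^{i-1}d_\top^{(n-1)-i})^*$, the required equation reduces to $d_1(F^*y) = (d_1 F)^*(d_1 y)$ for $y \in P_H$, namely the height-two instance of the proposition for the inert map $F$.

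The main obstacle is thus this height-two statement for an \emph{arbitrary} inert map $F$, since \cref{fiber maps lem} supplies it only when $F$ is an isomorphism of connected graphs. To bridge the gap I would decompose $F$ through the canonical splittings of \cref{def canonical splitting}: up to these splitting isomorphisms, any inert map in $\LL_2$ factors as the inclusion of a sub-collection of connected components of $H$ followed by componentwise isomorphisms onto the components of $H'$, where one uses that an inert self-map of a connected graph is forced to be an isomorphism by the cartesian condition over $H_{02} = \underline 1$. Since $d_1$ at height two is defined componentwise through the canonical splitting (\cref{def d one}) and is compatible with tensor decompositions (\cref{lemma mu splitting}), restriction to a sub-collection of components is a projection and so commutes with $\prod_x d_1$, while each connected isomorphism is handled by \cref{fiber maps lem}; the bookkeeping for reorderings and relabelings mirrors the reduction already carried out in \cref{lem height one iso mu}. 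Assembling these cases yields the $j = i$ identity and completes the verification.
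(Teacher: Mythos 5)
Your proposal is correct and follows essentially the same route as the paper: reduce to the case $n=2$, $i=1$ via the Segal/outer-face characterization of the inner face operator, then split the height-two case over connected components (where $d_1$ is defined componentwise and maps between connected graphs are forced to be isomorphisms by the cartesian condition), and finish with \cref{fiber maps lem}. The paper packages the component reduction as a commuting cube rather than an explicit factorization through projections, but the content is identical.
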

\begin{proof}
By \cref{def inner face}, since $f$ and $g$ are inert, it suffices to prove the result when $n=2$ and $i=1$.
Further, the diagram commutes if and only if the outer rectangle in
\[ \begin{tikzcd}
P_G \rar{f^*} \dar{d_1} & P_{G'} \dar{d_1} \rar & P_{G'_x} \dar{d_1} \\
P_{d_1 G} \rar{g^*} & P_{d_1G'} \rar & P_{d_1G'_x}
\end{tikzcd} \]
commutes for every $x\in G'_{02} = (d_1G')_{01}$, since $d_1$ is defined as a product over the connected pieces.
Since the cube
\[ \begin{tikzcd}[sep=small]
d_1 G_x' \ar[rr] \ar[dd] \ar[dr] & & d_1 G_{fx} \ar[dd] \ar[dr] \\
& d_1 G' \ar[rr, crossing over] \ar[dd]& & d_1G \ar[dd]\\
G_x' \ar[rr]  \ar[dr] & & G_{fx} \ar[dr]  \\
& G' \ar[from=uu, crossing over] \ar[rr] &  & G
\end{tikzcd} \]
commutes, it is enough to show that 
\[ \begin{tikzcd}
P_G \rar \dar{d_1} & P_{G_{fx}} \dar{d_1} \rar & P_{G'_x} \dar{d_1} \\
P_{d_1 G} \rar & P_{d_1G_{fx}} \rar & P_{d_1G'_x}
\end{tikzcd} \]
commutes.
The left square commutes by definition of $d_1$ on disconnected graphs, and $G_x' \to G_{fx}$ is an isomorphism since any map in $\LL_n$ between connected graphs is an isomorphism. 
Thus it suffices to establish the result only in the case when $f \colon G' \to G$ is an isomorphism between height 2 connected graphs, which we already did in \cref{fiber maps lem}.
\end{proof}

\begin{lemma}\label{lem d1 mu thing}
If $G \in \LL_2$, then
\[ \begin{tikzcd}
P_G \rar["d_\top \times d_\bot"', "\cong"] \dar["d_1"] & P_{d_\top G} \times_{P_{d_\top d_\bot G}} P_{d_\bot G} \rar{\mu_{d_\top G} \times \mu_{d_\bot G}} &[+1cm] \mor C \times_{\ob C} \mor C = NC_2 \dar{\circ} \\
P_{d_1 G} \ar[rr,"\mu_{d_1G}"] & & \mor C  = NC_1
\end{tikzcd} \]
commutes.
\end{lemma}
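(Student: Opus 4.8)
The plan is to reduce first to the case where $G$ is connected — where the statement is essentially the definition of the inner face — and then to bootstrap to arbitrary $G$ by splitting into connected components and invoking the interchange law in the permutative category $C$.

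First I would dispose of the connected case. If $G \in \LL_2$ is connected then $d_1 G$ is a corolla, so $\mu_{d_1 G}$ is simply the inclusion $P_{d_1 G} \subseteq \morcon C$. But \cref{def d one} \emph{defines} $d_1 p \in P_{d_1 G}$ to be precisely the element whose image in $\mor C$ is $\mu_{d_\bot G}(d_\bot p) \circ \mu_{d_\top G}(d_\top p)$, the top-right map in that diagram being $\mu_{d_\top G} \times \mu_{d_\bot G}$ followed by composition (here $d_\top = d_2$, $d_\bot = d_0$, and the middle fibre product matches the composability constraint because $\mu$ respects source and target). Hence the square commutes on the nose in this case.

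For general $G$ I would use the canonical splitting $\iota \colon \sum_x G_x \xrightarrow{\cong} G$ of \cref{def canonical splitting}, writing $p_x = \iota_x^* p$, $f_x = \mu_{d_\top G_x}(d_\top p_x)$, $g_x = \mu_{d_\bot G_x}(d_\bot p_x)$, and $m_x = \mu_{d_1 G_x}(d_1 p_x)$; by the connected case $m_x = g_x \circ f_x$. Applying $d_1$, $d_\top$, $d_\bot$ to $\iota$ produces isomorphisms of height $1$ level graphs whose source is the sum of the corresponding faces of the $G_x$. Using \cref{fiber maps prop} to commute $d_1$ past $\iota^*$, the disconnected clause of \cref{def d one}, the fact that the outer faces preserve $+$, and \cref{lemma mu splitting}, each of the three morphisms $\mu_{d_1 G}(d_1 p)$, $\mu_{d_\top G}(d_\top p)$, and $\mu_{d_\bot G}(d_\bot p)$ can be rewritten in the form $\widehat{\sigma'} \circ \left( \bigotimes_x (-)_x \right) \circ \widehat{\sigma}^{-1}$, where the symmetry isomorphisms $\widehat{\sigma}, \widehat{\sigma'}$ are extracted from the faces of $\iota$ via \cref{lem height one iso mu} and $(-)_x$ is $m_x$, $f_x$, or $g_x$ respectively.

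The crux is then to match up the six resulting symmetry isomorphisms. Using the cosimplicial identities $d_0 d_2 = d_1 d_0$, $d_1 d_2 = d_1 d_1$, and $d_0 d_1 = d_0 d_0$ applied to $\iota$, the ``input'' symmetry of $\mu_{d_\top G}(d_\top p)$ agrees with that of $\mu_{d_1 G}(d_1 p)$, the ``output'' symmetry of $\mu_{d_\bot G}(d_\bot p)$ agrees with that of $\mu_{d_1 G}(d_1 p)$, and, most importantly, the ``middle'' symmetry on the codomain of $\mu_{d_\top G}(d_\top p)$ is literally inverse to the one on the domain of $\mu_{d_\bot G}(d_\bot p)$, so they cancel when the two are composed. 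What survives is $(\bigotimes_x g_x) \circ (\bigotimes_x f_x) = \bigotimes_x (g_x \circ f_x) = \bigotimes_x m_x$ by the interchange law (bifunctoriality of $\ten$ in the strict monoidal category $C$), conjugated by the input and output symmetries — and this is exactly the expression obtained for $\mu_{d_1 G}(d_1 p)$. I expect the bookkeeping of these symmetry isomorphisms, and in particular verifying through the simplicial identities that the middle ones genuinely cancel, to be the main obstacle; everything else is a direct application of the cited lemmas.
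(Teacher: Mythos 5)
Your proposal is correct and follows essentially the same route as the paper's proof: the connected case is exactly the definition of $d_1$, and the general case reduces to it via \cref{lem height one iso mu}, \cref{lemma mu splitting}, the simplicial identities $d_1 d_\bot = d_0 d_\top$, $d_0 d_\bot = d_0 d_1$, $d_1 d_\top = d_1 d_1$ that cancel and match the symmetry isomorphisms, and the interchange law in $C$. The only difference is organizational: the paper first proves invariance of the statement under isomorphisms $G' \to G$ and then inducts on the number of connected summands, whereas you conjugate by the canonical splitting in a single step — the underlying computation is the same.
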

\begin{proof}
The result holds when $G$ is connected; in fact, it is the definition of $d_1$.
In a moment we will induct on the number of connected components of $G$.
First, suppose the result is true for $G'$ and that $f\colon G' \to G$ is an isomorphism.
Then
\[
  \mu_{d_1G'} (d_1f^*p) = \mu_{d_\bot G'}(d_\bot f^*p) \circ \mu_{d_\top G'} (d_\top f^*p).
\]
By \cref{fiber maps prop} on the left and the fact that $P$ is a presheaf on $\LL_\intrm$, the preceding equation can be written as
\[
  \mu_{d_1G'} ((d_1f)^*d_1p) = \mu_{d_\bot G'}((d_\bot f)^* d_\bot p) \circ \mu_{d_\top G'} ((d_\top f)^* d_\top p).
\]
We now apply \cref{lem height one iso mu} to see that
\[
  \mu_{d_1G'} ((d_1f)^*d_1p) = \widehat{d_0d_1f}\vphantom{d_0d_1f}^{-1} \circ \mu_{d_1G} (d_1 p) \circ \widehat{d_1 d_1f}
\]
and 
\begin{align*}
& \mu_{d_\bot G'}((d_\bot f)^* d_\bot p) \circ \mu_{d_\top G'} ((d_\top f)^* d_\top p)  \\
=& 
  \widehat{d_0 d_\bot f}\vphantom{d_0 d_\bot f}^{-1} \circ \mu_{d_\bot G}(d_\bot p) \circ \widehat{d_1 d_\bot f} \circ \widehat{d_0d_\top f}\vphantom{d_0d_\top f}^{-1} \circ \mu_{d_\top G} (d_\top p) \circ \widehat{d_1d_\top f} \\
=& 
  \widehat{d_0 d_\bot f}\vphantom{d_0 d_\bot f}^{-1} \circ \mu_{d_\bot G}(d_\bot p) \circ  \mu_{d_\top G} (d_\top p) \circ \widehat{d_1d_\top f},
\end{align*}
using that $d_0d_\top = d_0d_2 = d_1d_0 = d_1 d_\bot$.
But also $d_0 d_\top = d_0 d_1$ and $d_1 d_\top = d_1 d_2 = d_1 d_1$, so we see that $\mu_{d_1G} (d_1 p) = \mu_{d_\bot G}(d_\bot p) \circ  \mu_{d_\top G} (d_\top p)$.

Now suppose the result is known for graphs with strictly fewer than $n$ connected components, and let $H+K \to G$ be an isomorphism where $H$ and $K$ are non-empty graphs.
By the preceding paragraph, it is enough to prove the result for the graph $H+K$. 
Let $h \colon H \to H+K$ and $k\colon K \to H+K$ be the inclusions.
Write $h_i = d_i(h)$ and $k_i = d_i(k)$.
Then using the induction hypothesis and \cref{lemma mu splitting}, we have
\begin{align*}
 &  \mu_{d_1(H+K)} (d_1p) \\
&= \mu_{d_1H} (h_1^*d_1p) \ten \mu_{d_1K} (k_1^*d_1p) \\
&= \mu_{d_1H} (d_1h^*p) \ten \mu_{d_1K} (d_1k^*p) \\
&= [(\mu_{d_0H} (d_0h^*p)) \circ (\mu_{d_2H} (d_2h^*p))]
\ten [(\mu_{d_0K} (d_0k^*p)) \circ (\mu_{d_2K} (d_2k^*p))] \\
&= [(\mu_{d_0H} (d_0h^*p)) \ten (\mu_{d_0K} (d_0k^*p))] \circ [
(\mu_{d_2H} (d_2h^*p)) \ten  (\mu_{d_2K} (d_2k^*p))] \\
&= [(\mu_{d_0H} (h_0^*d_0p)) \ten (\mu_{d_0K} (k_0^*d_0p))] \circ [
(\mu_{d_2H} (h_2^*d_2p)) \ten  (\mu_{d_2K} (k_2^*d_2p))] \\
&= (\mu_{d_0(H+K)} (d_0p)) \circ 
(\mu_{d_2(H+K)} (d_2p)). \qedhere
\end{align*}
\end{proof}

\begin{lemma}\label{close face maps lem}
If $G$ is a height 3 level graph and $x\in P_G$, then $d_1 d_2(x) = d_1 d_1 (x)$.
\end{lemma}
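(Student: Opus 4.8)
The plan is to reduce to the case that $G$ is connected and then translate both iterated faces, after applying $\mu$, into the two bracketings of a single triple composite in $C$, where they agree by associativity.

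First I would reduce to connected $G$. By \cref{def inner face} together with \cref{def d one}, the inner faces $d_1$ and $d_2$ are defined componentwise over the connected components of a level graph, and the component inclusions $\iota_x \colon G_x \rat G$ of \cref{def canonical splitting} are inert. Hence \cref{fiber maps prop}, applied twice, gives $\iota_x^* d_1 d_2(x) = d_1 d_2(\iota_x^* x)$ and $\iota_x^* d_1 d_1(x) = d_1 d_1(\iota_x^* x)$. Since $d_1 d_1 G = d_1 d_2 G = \sum_x d_1 d_1 G_x$ and the Segal map $P_{d_1 d_1 G} \xrightarrow{\cong} \prod_x P_{d_1 d_1 G_x}$ is a bijection, it suffices to prove the identity after restricting along each $\iota_x$, i.e.\ when $G$ is connected.

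So assume $G\in\LL_3$ is connected, whence $d_1 d_1 G = d_1 d_2 G$ is a corolla $\mathfrak{c}$. Then $\mu_{\mathfrak{c}}$ is the standard inclusion $P_{\mathfrak{c}} \subseteq \morcon C$ of \cref{constr muG}, which is injective, so it is enough to check $\mu_{\mathfrak{c}}(d_1 d_2 x) = \mu_{\mathfrak{c}}(d_1 d_1 x)$. Applying \cref{lem d1 mu thing} to the height-$2$ graph $d_2 G$, and then again to the height-$2$ graph $d_0 G$ (using the identity $d_0 d_2 = d_1 d_0$ among the constructed faces supplied by \cref{lem inner face v inert}), expresses the left-hand side as
\[
  \mu_{\mathfrak{c}}(d_1 d_2 x) = \bigl(\mu_{d_0 d_0 G}(d_0 d_0 x) \circ \mu_{d_2 d_0 G}(d_2 d_0 x)\bigr) \circ \mu_{d_2 d_2 G}(d_2 d_2 x).
\]
Symmetrically, applying \cref{lem d1 mu thing} to $d_1 G$ and then to $d_3 G$ (now using \cref{lem inner face v inert} in the form $d_2 d_1 = d_1 d_3$) expresses the right-hand side as
\[
  \mu_{\mathfrak{c}}(d_1 d_1 x) = \mu_{d_0 d_1 G}(d_0 d_1 x) \circ \bigl(\mu_{d_0 d_3 G}(d_0 d_3 x) \circ \mu_{d_2 d_3 G}(d_2 d_3 x)\bigr).
\]

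Finally I would match the three atomic factors, which are the images under $\mu$ of the three layers of $x$. One checks $d_0 d_0 x = d_0 d_1 x$ (layer $3$), $d_2 d_0 x = d_0 d_3 x$ (layer $2$), and $d_2 d_2 x = d_2 d_3 x$ (layer $1$): the first and third are instances of \cref{lem inner face v inert}, while the middle one is the outer-face simplicial identity $d_0 d_3 = d_2 d_0$ for the functor $P|_{\LL_\intrm}$. Thus both displays become the same triple composite $\mu(\lambda_3)\circ\mu(\lambda_2)\circ\mu(\lambda_1)$ on the layers $\lambda_i = \rho_i^* x$, differing only in bracketing; they coincide by associativity of composition in $C$, and injectivity of $\mu_{\mathfrak{c}}$ then yields $d_1 d_2 x = d_1 d_1 x$. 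I expect the main obstacle to be the careful bookkeeping of the simplicial identities relating the hand-built inner faces to the outer faces $d_\bot, d_\top$ — precisely the content that \cref{lem inner face v inert} is designed to supply.
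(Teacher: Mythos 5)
Your proof is correct and takes essentially the same route as the paper's: both reduce to the connected case via \cref{fiber maps prop}, expand $d_1d_2(x)$ and $d_1d_1(x)$ into triple composites of the layers using \cref{lem d1 mu thing} together with the bookkeeping of \cref{lem inner face v inert}, and conclude by associativity of composition in $C$ plus injectivity of $\mu$ on corollas. The only difference is presentational --- the paper packages the factor-matching and the associativity step as two commutative diagrams landing in the nerve $NC_\bullet$ and the simplicial identity $d_1d_2 = d_1d_1$ there, whereas you unwind the same computation explicitly.
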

\begin{proof}
For space reasons, in the following two diagrams we do not notate the fiber products, nor do we label the $\mu$.
We use the notation $NC_3 = \mor C \times_{\ob C} \mor C \times_{\ob C} \mor C$ so that $d_2 = (\id \times \circ)$ and $d_1 = (\circ \times \id)$.
In each diagram the two rectangles commute by \cref{lem d1 mu thing}.
\[ \begin{tikzcd}
P_G \rar["d_\top^2 \times d_\bot"] \dar["d_2"] & P_{d_\top^2 G} \times P_{d_\bot G} \dar["\id \times d_1"] \rar["\id \times (d_\top \times d_\bot)"] &[+0.5cm] P_{d_\top^2 G} \times P_{d_\top d_\bot G} \times P_{d_\bot^2 G} \rar["\mu^{\times 3}"] & NC_3 \dar["d_2"] \\
P_{d_2G} \rar["d_\top \times d_\bot"] \dar["d_1"] & P_{d_\top^2 G} \times P_{d_1 d_\bot G} \ar[rr,"\mu^{\times 2}"] & & NC_2 \dar["d_1"] \\
P_{d_1d_2 G} \ar[rrr,"\mu"] & & & NC_1
\end{tikzcd} \]

\[ \begin{tikzcd}
P_G \rar["d_\top \times d_\bot^2"] \dar["d_1"] & P_{d_\top G} \times P_{d_\bot^2 G} \dar["d_1 \times \id"] \rar["(d_\top \times d_\bot) \times \id"] &[+0.5cm] P_{d_\top^2 G} \times P_{d_\top d_\bot G} \times P_{d_\bot^2 G} \rar["\mu^{\times 3}"] & NC_3 \dar["d_1"] \\
P_{d_1G} \rar["d_\top \times d_\bot"] \dar["d_1"] & P_{d_1d_\top G} \times P_{d_\bot^2 G} \ar[rr,"\mu^{\times 2}"] & & NC_2 \dar["d_1"] \\
P_{d_1d_1 G} \ar[rrr,"\mu"] & & & NC_1
\end{tikzcd} \]
If $G$ is connected, then so are $d_1d_2G$ and $d_1d_1G$, and the bottom map of both diagrams is an inclusion, so the result holds in the connected case.
The general case follows from the connected case and \cref{fiber maps prop}.
\end{proof}

\begin{proposition}\label{prop face relation}
If $G$ is a height $n$ level graph, $x\in P_G$, and $0 \leq i < j \leq n$, then $d_i d_j(x) = d_{j-1} d_i(x)$.
\end{proposition}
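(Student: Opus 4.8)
The plan is to reduce the general face relation to the identities already established in low height, using the Segal condition to move between simplicial heights. I would organize the argument according to how many of the two faces $d_i,d_j$ are \emph{outer} (i.e.\ equal to $d_\bot$ or $d_\top$), since the inner faces are the only ones defined by hand in \cref{def inner face} and hence the only ones requiring real work.

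When $i=0$ and $j=n$ both faces are outer: they are restrictions along the inert cofaces $d^0=d_\bot$ and $d^n=d_\top$, so $d_0d_n=d_{n-1}d_0$ is a cosimplicial identity among inert cofaces of $\mathscr{L}^\bullet$ and holds by functoriality of the inert presheaf $P|_{\LL_\intrm}$. When exactly one face is outer---either $i=0<j<n$ or $0<i<j=n$---the other face is inner, and the relation is a direct instance of \cref{lem inner face v inert}: taking $(k,\ell)=(1,0)$ computes $d_0 d_j=d_\bot d_j$ and its three cases reproduce exactly $d_{j-1}d_0$ (or $d_0d_0$ when $j=1$), while taking $(k,\ell)=(0,1)$ computes $d_\top d_i=d_{n-1}d_i$ and its three cases reproduce $d_i d_n$ (or $d_{n-1}d_{n-1}$ when $i=n-1$). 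Indeed, \cref{lem inner face v inert} is essentially the statement of the simplicial identities between a single inner face and the generating outer faces, already dressed up with extra inert operators.

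The remaining case is $0<i<j<n$, where both faces are inner and $n\geq 3$. Here I would invoke the iterated Segal condition of \cref{examples segality}: two elements of $P_{d_id_jG}$ (a graph of height $n-2$) agree as soon as all their single-level inert projections $\rho_s=d_\bot^{\,s-1}d_\top^{\,(n-2)-s}$ agree for $1\leq s\leq n-2$ (compatibly over the height-$0$ pieces, which is automatic). Fixing $s$ and applying \cref{lem inner face v inert} twice---once to push $\rho_s$ through the inner face $d_i$ acting on $d_jG$, and once to push the residual inert operator through $d_j$ acting on $G$, and symmetrically on $d_{j-1}d_i(x)$---rewrites both sides as the same inert projection of $x$ post-composed with inner faces on a graph of height at most $3$. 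When $s$ lies away from the merged levels each inner face is absorbed into the inert string and the two orders agree verbatim; when the faces are non-adjacent ($j>i+1$) they act on disjoint levels and one always gets absorbed, so again the two sides coincide; and when $j=i+1$ and $s=i$ the reduction produces exactly $d_1d_2(y)=d_1d_1(y)$ for $y=d_\bot^{\,i-1}d_\top^{\,(n-2)-i}x$ in a height-$3$ graph, which is \cref{close face maps lem} (its height-$2$ ingredients being \cref{prop appropriate end behavior done}).

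I expect the main obstacle to be the bookkeeping in this doubly-nested application of \cref{lem inner face v inert}. For each level index $s$ one must correctly determine which of the three regimes each inner face falls into, track the reindexing $i\mapsto i-k$ across the two successive reductions, and verify that the resulting inert-operator strings on the two sides literally agree before the height-$3$ lemma is applied. The delicacy is entirely combinatorial: no geometric input beyond the cited lemmas is needed, but the index arithmetic relating $(i,j,s,n)$ to the surviving inner-face indices must be carried out carefully enough to confirm that the critical reduction always lands precisely on \cref{close face maps lem}.
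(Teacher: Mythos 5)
Your proposal is correct and follows essentially the same route as the paper's proof: reduce via the iterated Segal condition to the inert projections $d_\bot^m d_\top^{n-3-m}$, push these through the two faces using \cref{lem inner face v inert}, and observe that the only nontrivial comparison occurs for adjacent inner faces at the single critical level, where it becomes the height-$3$ identity $d_1d_2=d_1d_1$ of \cref{close face maps lem}. The paper carries out exactly the index bookkeeping you describe, with the outer-face cases absorbed as degenerate instances of the same case formula rather than treated separately.
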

\begin{proof}
If $i \leq j-2$, then using \cref{lem inner face v inert} one can compute that $d_\bot^m d_\top^{n-3-m} d_id_j$ and $d_\bot^m d_\top^{n-3-m} d_{j-1}d_i$ are both equal to
\[
  \begin{cases}
    d_\bot^m d_\top^{n-1-m} & m \leq i-2 \\
    d_1 d_\bot^m d_\top^{n-2-m} & m = i-1 \\
    d_\bot^{m+1}d_\top^{n-2-m} & i \leq m \leq j-3 \\
    d_1 d_\bot^{m+1} d_\top^{n-3-m} & m = j-2 \\
    d_\bot^{m+2} d_\top^{n-3-m} & m\geq j-1.
  \end{cases}
\]
By the Segal condition, this shows that $d_id_j(x) = d_{j-1}d_i(x)$ in this case.

It remains to consider the case when $i = j-1$.
In this case we still have for $m \leq i-2$ that \[ d_\bot^m d_\top^{n-3-m} d_id_{i+1} = d_\bot^m d_\top^{n-1-m} = d_\bot^m d_\top^{n-3-m} d_id_i\] and for $m \geq i$ that 
\[ d_\bot^m d_\top^{n-3-m} d_id_{i+1} = d_\bot^{m+2} d_\top^{n-3-m} = d_\bot^m d_\top^{n-3-m} d_id_i.\]
For the remaining value $m=i-1 = j-2$, we must verify that $d_\bot^{i-1} d_\top^{n-2-i} d_id_{i+1} = d_\bot^{i-1} d_\top^{n-2-i} d_id_i$.
By \cref{lem inner face v inert} we have $d_\bot^{i-1} d_\top^{n-2-i} d_id_{i+1} = d_1 d_2 d_\bot^{i-1} d_\top^{n-2-i}$ and $d_\bot^{i-1} d_\top^{n-2-i} d_id_i = d_1 d_1 d_\bot^{i-1} d_\top^{n-2-i}$.
These are equal by \cref{close face maps lem}.
\end{proof}

\begin{definition}[Degeneracy in lowest dimension]\label{def s zero}
If $\un$ is height $0$ level graph, define
$s_0 \colon P_{\un} \to P_{s_0(\un)}$ by
declaring that 
\[
  P_{\un} \to P_{s_0(\un)} \xrightarrow{\cong} \prod_{i=1}^n P_{\mathfrak{c}_{11}}
\]
takes $c_1 \ten \dots \ten c_n$ to $(\id_{c_1}, \dots, \id_{c_n})$.
\end{definition}

\begin{lemma}\label{lem degen low end}
If $x\in P_{\un}$, then $d_0s_0(x) = x = d_1 s_0(x)$.
\end{lemma}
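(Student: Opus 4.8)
The plan is to unwind the definitions of $s_0$ and of the two outer faces $d_0,d_1$ on the height $1$ graph $s_0(\un)$, and then reduce the whole computation to the single-corolla case, where the identity is immediate. First I would identify $s_0(\un)$ explicitly. Since the codegeneracy $s^0 \colon [1]\to[0]$ is constant, $(s^0)^*\un$ has all of its slots equal to $\un$ with identity structure maps, so as a level graph
\[
  s_0(\un) = \underbrace{\mathfrak{c}_{11} + \cdots + \mathfrak{c}_{11}}_{n},
\]
and the Segal isomorphism $P_{s_0(\un)} \xrightarrow{\cong} \prod_{i=1}^n P_{\mathfrak{c}_{11}}$ is exactly the one appearing in \cref{def s zero}. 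By that definition, if $x = c_1 \ten \cdots \ten c_n$ is the canonical decomposition of \eqref{P objects projections}, then $s_0(x)$ is the element corresponding to $(\id_{c_1}, \dots, \id_{c_n})$.

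Next I would note that $d_0,d_1 \colon P_{s_0(\un)} \to P_{\un}$ are \emph{inert} operators, hence are already available from the presheaf $P|_{\LL_\intrm}$ of \cref{def P restriction}; indeed the simplicial identities $s^0 d^0 = s^0 d^1 = \id_{[0]}$ give $d_0(s_0(\un)) = \un = d_1(s_0(\un))$. The key step is to verify that these faces are computed componentwise, i.e.\ that for $j = 0,1$ the square
\[ \begin{tikzcd}
P_{s_0(\un)} \rar{d_j} \dar{\cong} & P_{\un} \dar{\cong} \\
\prod_{i=1}^n P_{\mathfrak{c}_{11}} \rar{\prod d_j} & \prod_{i=1}^n P_{\underline{1}}
\end{tikzcd} \]
commutes, the vertical maps being the Segal isomorphisms. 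This holds because $P|_{\LL_\intrm}$ is an honest presheaf: for each $k$, the inert map $\mathfrak{e} \rat s_0(\un)$ obtained by precomposing the outer face $\un \to s_0(\un)$ with the $k$th edge $\mathfrak{e} \rat \un$ coincides with the one obtained by precomposing the $k$th corolla inclusion $\mathfrak{c}_{11} \rat s_0(\un)$ with the source (resp.\ target) edge $\mathfrak{e} \to \mathfrak{c}_{11}$, since an inert map out of $\mathfrak{e}$ is determined by the edge it selects.

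Finally I would compute on each factor: by \cref{def corolla iso} and the description of the faces on a corolla, $d_1$ returns the source and $d_0$ the target, so $d_1(\id_{c_i}) = c_i = d_0(\id_{c_i})$. Thus $\prod d_j$ sends $(\id_{c_1}, \dots, \id_{c_n})$ to $(c_1, \dots, c_n)$, which is precisely $x$ under $P_{\un} \cong \prod_i P_{\underline{1}}$, yielding $d_0 s_0(x) = x = d_1 s_0(x)$. The only point requiring genuine care is the componentwise compatibility in the middle paragraph; once that naturality square is in hand, the rest is a direct reading of \cref{def s zero} and \cref{def corolla iso}.
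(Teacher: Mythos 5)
Your proof is correct and follows essentially the same route as the paper's: both reduce to the commutative square comparing $d_i$ on $P_{s_0(\un)}$ with $\prod d_i$ on $\prod_{i=1}^n P_{\mathfrak{c}_{11}}$ via the Segal isomorphisms, then read off that $(\id_{c_1},\dots,\id_{c_n})$ maps to $(c_1,\dots,c_n)$. The only difference is that you spell out why that square commutes (by comparing inert maps out of $\mathfrak{e}$), which the paper leaves implicit.
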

\begin{proof}
For $i=0,1$ the full composite of the diagram
\[
\begin{tikzcd}
  P_{\un} \rar{s_0} & P_{s_0(\un)} \rar{\cong} \dar{d_i} & \prod\limits_{i=1}^n P_{\mathfrak{c}_{11}} \dar{\prod d_i} \\
  & P_{\un} \rar{\cong} & \prod\limits_{i=1}^n P_{\underline{1}}
\end{tikzcd}
\]
takes $c_1 \ten \cdots \ten c_n$ to $(c_1, \dots, c_n)$.
Hence $d_is_0 = \id$.
\end{proof}

\begin{lemma}\label{lem s0 with fiber maps}
If $f \colon \un \to \um$ is a map in $\LL_0$ and $g = s_0(f)$, then \[s_0 f^* = g^* s_0 \colon P_{\um} \to P_{s_0(\un)}.\]
\end{lemma}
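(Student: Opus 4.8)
The plan is to prove the identity by testing it against the projections that exhibit $P_{s_0(\un)}$ as a limit, which reduces everything to a bookkeeping computation on elementary objects.

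\emph{Setting up the decomposition.} First I would observe that $s_0(\un) = (\sigma^0)^*\un$ is the height $1$ level graph all of whose layers equal $\un$ with identity structure maps, so it is congruent to $\sum_{i=1}^n \mathfrak{c}_{11}$. The Segal/limit condition for $P|_{\LL_\intrm}$ (available since $P|_{\LL_\intrm}$ is the right Kan extension of \cref{def P restriction}, and already invoked in \cref{def s zero}) thus identifies $P_{s_0(\un)}$ with $\prod_{i=1}^n P_{\mathfrak{c}_{11}}$, the projections being restriction $\iota_i^*$ along the inert vertex-inclusions $\iota_i \colon \mathfrak{c}_{11} \rat s_0(\un)$. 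As these projections are jointly monic, it is enough to check that $\iota_i^* \circ s_0 \circ f^* = \iota_i^* \circ g^* \circ s_0$ for each $1 \leq i \leq n$.

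\emph{Computing the two sides.} Let $e \colon P_{\mathfrak{e}} \to P_{\mathfrak{c}_{11}}$ denote $c \mapsto \id_c$, that is, the $n = 1$ case of \cref{def s zero}. Unwinding \cref{def s zero} gives $\iota_i^* \circ s_0 = e \circ \name{i}^*$, where $\name{i}^* \colon P_{\un} \to P_{\mathfrak{e}}$ is restriction along the inert edge-inclusion $\name{i} \colon \underline{1} \rat \un$. Since $P|_{\LL_\intrm}$ is a presheaf and $f$ is inert, $\name{i}^* \circ f^* = (f \circ \name{i})^* = \name{f(i)}^*$, so the left-hand composite equals $e \circ \name{f(i)}^*$. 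For the right-hand side, the key point is that $g = (\sigma^0)^*(f)$ acts as $f$ on every layer of $s_0(\un)$, whence $g \circ \iota_i = \iota_{f(i)}$; therefore $\iota_i^* \circ g^* = \iota_{f(i)}^*$, and applying \cref{def s zero} at $\um$ gives $\iota_{f(i)}^* \circ s_0 = e \circ \name{f(i)}^*$. The two composites coincide, so the lemma follows.

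\emph{Main obstacle.} I do not anticipate any real difficulty: the statement is essentially the naturality of the factorwise degeneracy with respect to reindexing along $f$, and the product decompositions keep all the data organized. The one spot deserving care is the identity $g \circ \iota_i = \iota_{f(i)}$, which I would justify directly from the description of $(\sigma^0)^* \colon \LL_0 \to \LL_1$ as precomposition with the collapse $\mathscr{L}^1 \to \mathscr{L}^0$: this forces every layer of $g$ to be $f$, so $g$ sends the $i$-th vertex of $s_0(\un)$ to the $f(i)$-th vertex of $s_0(\um)$ and hence carries $\iota_i$ to $\iota_{f(i)}$.
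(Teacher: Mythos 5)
Your proposal is correct and follows essentially the same route as the paper: both use the Segal/limit description of $P_{s_0(\un)}$ to reduce the check to the $n$ projections onto $P_{\mathfrak{c}_{11}}$, and both then verify that each composite sends $c_1 \ten \cdots \ten c_m$ to $\id_{c_{f(i)}}$. The paper phrases the reduction as passing to the case $\un = \underline{1}$ via a commuting diagram rather than writing $\iota_i^* s_0 = e \circ \name{i}^*$ explicitly, but the content is identical.
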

\begin{proof}
By Segality, it is enough to show that the the equality holds after postcomposing with $P_{s_0(\un)} \to P_{\mathfrak{c}_{11}}$ for each of the $n$ inert maps $\mathfrak{c}_{11} \rat s_0(\un)$.
Since the two squares on the right in the following diagram commute,
\[ \begin{tikzcd}
P_{\um} \rar{f^*} \dar{s_0} & P_{\un} \dar{s_0} \rar & \prod\limits_{i=1}^n P_{\mathfrak{e}} \dar \rar & P_{\mathfrak{e}} \dar   \\
P_{s_0 (\um)} \rar{g^*} & P_{s_0(\un)} \rar & \prod\limits_{i=1}^n P_{\mathfrak{c}_{11}} \rar & P_{\mathfrak{c}_{11}}
\end{tikzcd} \]
it thus enough to prove the result for $\un = \underline{1}$.
But if $f \colon \underline{1} \to \um$ hits an element $k$, then $s_0 f^*(c_1 \ten \cdots \ten c_m) = s_0(c_k) = \id_{c_k}$, and by \cref{def s zero} we have $g^* s_0 (c_1 \ten \cdots \ten c_m) = \id_{c_k}$.
\end{proof}

\begin{definition}[Degeneracy operators]\label{def degeneracies}
If $G$ is a height $n$ level graph, then $s_j \colon P_G \to P_{s_jG}$ is defined so that $s_j(x)$ is the unique element satisfying
\begin{itemize}
\item $d_\top^{n+1-j}(s_j x) = d_\top^{n-j}(x)$
\item $d_\bot^j d_\top^{n-j}(s_j x) = s_0 (d_\bot^j d_\top^{n-j} x)$, and
\item $d_\bot^{j+1} (s_j x) = d_\bot^j(x)$.
\end{itemize}
\end{definition}

In other words, $s_j$ is defined so the following diagram commutes.
\begin{equation}\label{diagram degeneracy} \begin{tikzcd}
P_G \rar{\cong} \dar[dashed, "s_j"] & 
P_{d_\top^{n-j} G} \underset{P_{d_\bot^j d_\top^{n-j}G}}{\times}
P_{d_\bot^j d_\top^{n-j}G} \underset{P_{d_\bot^j d_\top^{n-j}G}}{\times}
P_{d_\bot^j G} \dar{\id \times s_0 \times \id} \\
P_{s_jG} \rar{\cong} & 
P_{d_\top^{n+1-j}s_jG} \underset{P_{d_\bot^j d_\top^{n+1-j} s_jG}}{\times}
P_{d_\bot^j d_\top^{n-j} s_jG} \underset{P_{d_\bot^{j+1}d_\top^{n-j}s_jG}}{\times}
P_{d_\bot^{j+1}s_jG} 
\end{tikzcd} \end{equation}

In order to utilize this definition effectively, we now examine how a degeneracy $s_j$ interacts with arbitrary inert simplicial operators.
The identity in the following lemma also holds in an arbitrary simplicial object.

\begin{lemma}\label{lem degen v inert}
Suppose $G$ is a height $n$ level graph and $x\in P_G$.
If $k+\ell \leq n+1$, then
\[
  d_\bot^k d_\top^\ell s_j(x) = \begin{cases}
  d_\bot^k d_\top^{\ell-1}(x) & n-\ell + 1  \leq j \\
  s_{j-k} d_\bot^k d_\top^\ell(x) & k \leq j \leq n- \ell \\
  d_\bot^{k-1} d_\top^\ell(x) & j \leq k-1.
  \end{cases}
\]
\end{lemma}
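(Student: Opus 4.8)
The plan is to follow the template of \cref{lem inner face v inert} as closely as possible, since this is the exact analogue for degeneracies of the inner-face/inert-operator interaction proved there, and---as the statement notes---it is a purely formal simplicial identity, so the only content is to check that it holds for the operators we have built. Both sides of the claimed equation are elements of $P_{d_\bot^k d_\top^\ell s_j G}$, a level graph of height $m = n+1-k-\ell \geq 0$. Because $P$ restricted to $\LL_\intrm$ is Segal (by construction via \cref{def P restriction}, cf.\ \cref{examples segality}), two elements of $P_H$ agree as soon as they agree after restriction to the height-$\leq 1$ elementary slices of $H$; and since iterated outer faces compose as the inert maps they represent (so the order in which $d_\bot$ and $d_\top$ are applied is immaterial whenever they select the same slice), each such restriction of the left-hand side has the form $d_\bot^a d_\top^b s_j(x)$ with $a+b=n$, landing in a corolla or an edge. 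Thus everything reduces to a single computation of $d_\bot^a d_\top^b s_j(x)$ for $a+b=n$.

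First I would dispose of the top case $j \geq n-\ell+1$ directly, without the Segal reduction: here $n+1-j \leq \ell$, so the first defining equation of \cref{def degeneracies}, namely $d_\top^{\,n+1-j} s_j = d_\top^{\,n-j}$, together with associativity of $d_\top$ gives $d_\top^\ell s_j = d_\top^{\,\ell-1}$, and applying $d_\bot^k$ yields the stated value $d_\bot^k d_\top^{\,\ell-1}(x)$. For the remaining two cases I carry out the slice computation: evaluating $d_\bot^a d_\top^b s_j(x)$ with $a+b=n$ against the three defining equations of \cref{def degeneracies} shows it equals $d_\bot^a d_\top^{\,b-1}(x)$ when $a+1 \leq j$ (top absorption), equals $d_\bot^{\,a-1} d_\top^{b}(x)$ when $a \geq j+1$ (bottom absorption, using $d_\bot^{\,j+1}s_j = d_\bot^{\,j}$ and the commutativity of outer faces), and equals $s_0 d_\bot^{\,j} d_\top^{\,n-j}(x)$ when $a=j$ (the middle equation, together with \cref{def s zero}).

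It then remains to match these three regimes against the slice restrictions of the candidate right-hand sides. Writing the $i$-th height-$1$ slice of $H$ as $d_\bot^{\,i-1}d_\top^{\,m-i}$ for $1\leq i \leq m$, the corresponding restriction of the left-hand side is $d_\bot^{a}d_\top^{b}s_j(x)$ with $a = k+i-1$ and $b=\ell+m-i$; as $i$ varies, $a$ ranges over $[k,\,n-\ell]$. In the bottom case $j\leq k-1$ every such $a$ satisfies $a\geq j+1$, so each slice gives $d_\bot^{\,a-1}d_\top^{b}(x)$, which is exactly the slice restriction of $d_\bot^{\,k-1}d_\top^{\ell}(x)$. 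In the middle case $k\leq j\leq n-\ell$ the value $a=j$ occurs for exactly one slice and produces the inserted identity layer $s_0 d_\bot^{\,j}d_\top^{\,n-j}(x)$, while the slices with $a<j$ and $a>j$ produce the top- and bottom-absorbed values; comparing these with the slice restrictions of $s_{j-k}d_\bot^k d_\top^\ell(x)$---computed by the same three defining equations applied to $s_{j-k}$ on the height-$(n-k-\ell)$ graph $d_\bot^k d_\top^\ell G$---one checks they agree index-for-index. The height-$0$ pieces, relevant only in the extreme case $m=0$, are handled identically but more simply.

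The main obstacle is entirely bookkeeping: correctly tracking how the composite $d_\bot^k d_\top^\ell$ restricts to each slice, lining up the three regimes of \cref{def degeneracies} with the shifted degeneracy $s_{j-k}$ appearing on the right, and confirming at the boundary index $a=j$ that the two possible orderings of outer faces select the same inert slice (so that the steps commuting $d_\bot$ past $d_\top$ are legitimate). There is no conceptual difficulty beyond this; once the slice computation of $d_\bot^a d_\top^b s_j(x)$ is in hand, all three cases follow from the Segal condition exactly as in the proof of \cref{lem inner face v inert}.
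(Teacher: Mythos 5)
Your proposal is correct and follows essentially the same route as the paper: the top and bottom cases come directly from the first and third defining equations of \cref{def degeneracies} (plus commutation of the outer faces), and the middle case is settled by the uniqueness furnished by the Segal condition. The only, harmless, difference is that where the paper verifies exactly the three conditions characterizing $s_{j-k}(d_\bot^k d_\top^\ell x)$, you refine further to all height-$1$ elementary slices, which amounts to the same computations carried out index-for-index.
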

\begin{proof}
If $j \geq n- \ell+1$, then by \cref{def degeneracies}
\[
  d_\top^\ell s_j(x) = d_\top^{\ell-n+j-1} d_\top^{n+1-j} s_j(x) = d_\top^{\ell-n+j-1}  d_\top^{n-j}(x) = d_\top^{\ell-1}(x),
\]
while if $j \leq k-1$ then 
\[
  d_\bot^k s_j(x) = d_\bot^{k-j-1} d_\bot^{j+1} s_j(x) = d_\bot^{k-j-1} d_\bot^j(x) = d_\bot^{k-1} (x),
\]
so the first and last options hold.
We now use the definition of degeneracy operators from \cref{def degeneracies}.
We make the following three computations 
\begin{align*}
d_\top^{(n-k-\ell)+1-(j-k)} ({ \color{purple} d_\bot^k d_\top^\ell s_j x}) 
&= d_\bot^k d_\top^{n-j+1} s_j(x) \\ &= 
d_\bot^k d_\top^{n-j} x = d_\top^{(n-k-\ell)-(j-k)} ({\color{blue} d_\bot^k d_\top^\ell(x)})
\end{align*}

\begin{align*}
  d_\bot^{j-k} d_\top^{(n-k-\ell)-(j-k)} ({ \color{purple} d_\bot^k d_\top^\ell s_j x})
&=
d_\bot^j d_\top^{n-j} s_j(x) 
\\
&= s_0(d_\bot^j d_\top^{n-j}x) \\
&= s_0(d_\bot^{j-k} d_\top^{ (n-k-\ell) - (j-k)}({\color{blue} d_\bot^k d_\top^\ell(x)})
\end{align*}

\begin{align*}
d_\bot^{j-k+1} ({ \color{purple} d_\bot^k d_\top^\ell s_j x})  
&=
d_\top^\ell d_\bot^{j+1} s_j x \\
&= d_\top^\ell d_\bot^j x 
= d_\bot^{j-k} ({\color{blue} d_\bot^k d_\top^\ell(x)})
\end{align*}
and conclude that the element $d_\bot^k d_\top^\ell s_j x$ in $P_{d_\bot^k d_\top^\ell s_j G} = P_{s_{j-k} d_\bot^k d_\top^\ell G}$ is equal to $s_{j-k} (d_\bot^k d_\top^\ell x)$.
\end{proof}

\begin{proposition}\label{fiber maps degen prop}
Suppose $f \colon G' \to G$ is a map in $\LL_n$, and $0 \leq j \leq n$.
Let $s_j(f) = g \colon s_j G' \to s_j G$.
Then the diagram
\[ \begin{tikzcd}
P_G \rar{f^*} \dar{s_j} & P_{G'} \dar{s_j}  \\
P_{s_j G} \rar{g^*} & P_{s_jG'}
\end{tikzcd} \]
commutes.
\end{proposition}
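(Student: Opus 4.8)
The plan is to verify the identity $s_j(f^*p) = g^*(s_jp)$ for each $p\in P_G$ by comparing the two sides after applying the three inert projections that already appear in \cref{def degeneracies}. Since $s_jG'$ has height $n+1$, that definition exhibits $P_{s_jG'}$ as the iterated fiber product of $P_{d_\top^{n+1-j}s_jG'}$, $P_{d_\bot^jd_\top^{n-j}s_jG'}$, and $P_{d_\bot^{j+1}s_jG'}$ via a Segal map which is a bijection (\cref{examples segality}). So it suffices to show that $s_j(f^*p)$ and $g^*(s_jp)$ have the same image under $d_\top^{n+1-j}$, under $d_\bot^jd_\top^{n-j}$, and under $d_\bot^{j+1}$. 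Note at the outset that $f$, lying in the fiber $\LL_n$, is inert, and hence so is $g=s_j(f)$ in $\LL_{n+1}$.

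First I would record two facts that drive the computation. On one hand, the outer faces $d_\bot$ and $d_\top$ come from inert maps, so they commute with the inert map $g$ in the usual way: for any composite $\theta$ of outer faces one has $\theta(g^*q) = (\theta g)^*\theta(q)$, where $\theta g$ denotes the corresponding restriction of $g$; this is just functoriality of the $\LL_\intrm$-presheaf $P$, and likewise with $f$ in place of $g$. On the other hand, because $\LL_\bullet$ is a genuine simplicial category, the simplicial identities hold at the level of graphs and graph maps; applying $d_\top^{n+1-j}s_j = d_\top^{n-j}$, $d_\bot^{j+1}s_j = d_\bot^j$, and $d_\bot^jd_\top^{n-j}s_j = s_0\,d_\bot^jd_\top^{n-j}$ to $f$ gives $d_\top^{n+1-j}g = d_\top^{n-j}f$, $d_\bot^{j+1}g = d_\bot^jf$, and $d_\bot^jd_\top^{n-j}g = s_0(d_\bot^jd_\top^{n-j}f)$.

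For the two outer projections the comparison is then immediate. Combining the defining properties of \cref{def degeneracies} (applied once to $G$ and once to $G'$) with naturality of inert operators with respect to $f$, I would compute
\[ d_\top^{n+1-j}(g^*s_jp) = (d_\top^{n-j}f)^*\,d_\top^{n-j}(p) = d_\top^{n-j}(f^*p) = d_\top^{n+1-j}(s_jf^*p), \]
and similarly $d_\bot^{j+1}(g^*s_jp) = d_\bot^j(f^*p) = d_\bot^{j+1}(s_jf^*p)$. The only substantive point is the middle projection, where the degenerate layer forces us down to dimension zero. Writing $h = d_\bot^jd_\top^{n-j}f$, a map in $\LL_0$, the identities above yield
\[ d_\bot^jd_\top^{n-j}(g^*s_jp) = (s_0h)^*\,s_0\!\left(d_\bot^jd_\top^{n-j}p\right), \]
which is exactly the situation of \cref{lem s0 with fiber maps}; that lemma gives $(s_0h)^*s_0 = s_0h^*$, and naturality of $h$ with respect to $f$ then identifies this with $s_0(d_\bot^jd_\top^{n-j}(f^*p)) = d_\bot^jd_\top^{n-j}(s_jf^*p)$.

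The main obstacle is not any single calculation but rather bookkeeping around the fact that $s_j$ is defined on $P$ only through \cref{def degeneracies}: the three displayed properties of $s_j$ are available by definition, whereas every identity mixing $s_j$ with other operators must be routed through those defining properties, through the honest simplicial identities at the graph level, or through the base case \cref{lem s0 with fiber maps}. Once the three projections are matched as above, bijectivity of the Segal map forces $s_j(f^*p) = g^*(s_jp)$, completing the proof. The edge cases $j=0$ and $j=n$ (where some of the $d_\bot^{j}$ or $d_\top^{0}$ are empty composites) are handled by the same argument.
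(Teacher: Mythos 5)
Your proof is correct, and it follows the same basic strategy as the paper's -- reduce to checking components via the Segal condition and handle the degenerate layer with \cref{lem s0 with fiber maps} -- but with a different choice of decomposition. The paper tests the two sides against all $n+1$ height-one projections $d_\bot^m d_\top^{n-m}$ for $0 \leq m \leq n$, which forces a three-way case split on the position of $m$ relative to $j$ and requires \cref{lem degen v inert} to push $s_j$ past arbitrary composites of outer faces. You instead use only the three projections $d_\top^{n+1-j}$, $d_\bot^j d_\top^{n-j}$, $d_\bot^{j+1}$ that appear in \cref{def degeneracies} itself (the coarser fiber-product decomposition of $P_{s_jG'}$ from the first bullet of \cref{examples segality}, applied twice), so the defining properties of $s_j$ apply verbatim and \cref{lem degen v inert} is never needed. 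This is a genuine economy for this particular proposition; the trade-off is that the paper's \cref{lem degen v inert} is needed anyway for \cref{prop degeneracies} and \cref{simpop mixed}, so the authors get the present result nearly for free once that lemma is in hand. Your observation that the fiber maps $f$ and $g$ are inert (hence covered by the $\LL_\intrm$-presheaf structure) and that the simplicial identities for $g = s_j(f)$ hold at the level of $\LL_\bullet$ are exactly the points that make the outer two projections immediate, and your treatment of the middle projection via \cref{lem s0 with fiber maps} matches the paper's handling of the $m=j$ case.
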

\begin{proof}
By Segality it is enough to show that $d_\bot^m d_\top^{n-m} s_j f^*(x) = d_\bot^m d_\top^{n-m} g^* s_j(x)$ for all $x\in P_G$ and each $0 \leq m \leq n$.
Set \[ h = d_\bot^m d_\top^{n-m} (g) = d_\bot^m d_\top^{n-m} s_j (f)  = \begin{cases}
  d_\bot^m d_\top^{n-m-1} (f) & m+1 \leq j \\
  s_0 d_\bot^m d_\top^{n-m} (f) & j = m \\
  d_\top^{m-1} d_\top^{n-m} (f) & j \leq m-1.
\end{cases} \]
By \cref{lem degen v inert}, for $x\in P_G$ we have
\[
d_\bot^m d_\top^{n-m} s_j f^*x = \begin{cases}
  d_\bot^m d_\top^{n-m-1} f^* x & m+1 \leq j \\
  s_0 d_\bot^m d_\top^{n-m} f^* x & j = m \\
  d_\top^{m-1} d_\top^{n-m} f^* x & j \leq m-1.
\end{cases}
\]
Since $P|_{\LL_\intrm}$ is a presheaf we have the first equality below, while the second follows from \cref{lem degen v inert}.
\[
  d_\bot^m d_\top^{n-m} g^* s_j x = h^* d_\bot^m d_\top^{n-m} s_j x =  \begin{cases}
  h^* d_\bot^m d_\top^{n-m-1} x & m+1 \leq j \\
  h^* s_0 d_\bot^m d_\top^{n-m} x & j = m \\
  h^* d_\top^{m-1} d_\top^{n-m} x & j \leq m-1
\end{cases}
\]
Again using that $P|_{\LL_\intrm}$ is a presheaf, we have the  desired equality for $m \neq j$.
When $m = j$, we have 
\[
   s_0 d_\bot^m d_\top^{n-m} f^* x = s_0 (d_\bot^m d_\top^{n-m}(f))^* d_\bot^m d_\top^{n-m} (x).
\] 
But 
\[
   s_0 (d_\bot^m d_\top^{n-m}(f))^* = (s_0 d_\bot^m d_\top^{n-m}(f))^* s_0 = h^* s_0
\]
by \cref{lem s0 with fiber maps}, so 
\[
   d_\bot^j d_\top^{n-j} s_j f^*(x) = d_\bot^j d_\top^{n-j} g^* s_j(x)
\]
holds as well.
\end{proof}

\begin{proposition}\label{prop degeneracies}
Suppose that $G$ is a height $n$ level graph and $x\in P_G$. 
If $i \leq j$, then $s_i s_j(x) = s_{j+1} s_i(x)$. 
\end{proposition}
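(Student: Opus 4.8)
The plan is to follow the template of \cref{prop face relation}, replacing its use of the face--relation \cref{close face maps lem} with the degeneracy--inert interchange \cref{lem degen v inert}. First I would record that, since $\LL_\bullet \colon \simpcat^\oprm \to \cat$ is a simplicial category, its degeneracy functors already obey the simplicial identities; in particular $s_i s_j G = s_{j+1} s_i G$ as a single height $n+2$ level graph. Thus $s_i s_j(x)$ and $s_{j+1} s_i(x)$ are two elements of the \emph{same} set $P_{s_i s_j G}$, and by the iterated Segal condition of the second bullet of \cref{examples segality} it suffices to check that they have equal image under every height-one spine restriction $d_\bot^m d_\top^{(n+1)-m}$ for $0 \le m \le n+1$ (these are the maps $\rho_{m+1}^*$, and they jointly detect equality).

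For each such $m$ I would apply \cref{lem degen v inert} twice—once for the outer and once for the inner degeneracy—to rewrite both $d_\bot^m d_\top^{(n+1)-m} s_i s_j(x)$ and $d_\bot^m d_\top^{(n+1)-m} s_{j+1} s_i(x)$ purely in terms of the spine of $x$, with at most a single outermost $s_0$. Splitting into the ranges $m<i$, $m=i$, $i<m\le j$, $m=j+1$, and $m>j+1$, and using the hypothesis $i\le j$ throughout, each side collapses to the identical expression. For instance, when $m<i$ both reduce to $d_\bot^m d_\top^{n-1-m}(x)$; when $m=i$ both reduce to $s_0\bigl(d_\bot^i d_\top^{n-i}(x)\bigr)$; when $i<m\le j$ both reduce to $d_\bot^{m-1} d_\top^{n-m}(x)$; and when $m=j+1$ both reduce to $s_0\bigl(d_\bot^{j} d_\top^{n-j}(x)\bigr)$, with the surviving $s_0$ sitting on the same vertex of $x$ on each side. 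The point I would emphasize is that—in contrast to the face relation, whose collision case forced the genuinely nontrivial \cref{close face maps lem}—here the double interchange never produces an $s_0 s_0$ collision: the $n+1$ outer face maps always collapse the height down to one before the second degeneracy can interact with the first, so only formal rebracketing of simplicial operators is needed and no auxiliary low-dimensional lemma is required.

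Since every height-one spine restriction of $s_i s_j(x)$ agrees with that of $s_{j+1} s_i(x)$, the Segal bijection of \cref{examples segality} yields $s_i s_j(x) = s_{j+1} s_i(x)$. The only real effort is the bookkeeping in the case analysis above, which I expect to be the main (though routine) obstacle; as an alternative one could instead first reduce to connected and then low-dimensional graphs using \cref{fiber maps degen prop} together with \cref{def s zero}, but the direct spine computation is more efficient.
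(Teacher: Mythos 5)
Your proposal is correct and follows essentially the same route as the paper: note that $s_is_jG = s_{j+1}s_iG$, use the Segal (spine) injectivity at height $n+2$ to reduce to the operators $d_\bot^m d_\top^{n+1-m}$, and apply \cref{lem degen v inert} twice with the same five-way case split on $m$, yielding the same common values in each range. The only quibble is that the paper also quietly invokes \cref{lem degen low end} (the trivial identity $d_0s_0 = \id = d_1s_0$ at height zero), but your main observation stands: unlike the face-map relation, no analogue of the nontrivial collision lemma \cref{close face maps lem} is needed here.
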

\begin{proof}
We have $s_is_j G = s_{j+1}s_i G$ in $\LL_{n+2}$.
By Segality, it is enough to verify, for $0\leq m \leq n+1$ and $x\in P_G$, that the operator $d_\bot^m d_\top^{n+1-m}$ takes the two elements $s_is_j(x)$ and $s_{j+1}s_i(x)$ to the same element.
Applying \cref{lem degen v inert} several times, and also using \cref{lem degen low end}, one computes that $d_\bot^m d_\top^{n+1-m} s_i s_j (x)$ and $d_\bot^m d_\top^{n+1-m} s_{j+1} s_i (x)$ are both equal to 
\[
\begin{cases}
d_\bot^m d_\top^{n-m-1} x & m \leq i-1 \\
s_0 d_\bot^m d_\top^{n-m} x & m = i \\
d_\bot^{m-1} d_\top^{n-m} x & i+1 \leq m \leq j \\
s_0 d_\bot^{m-1} d_\top^{n+1-m}x & m = j+1 \\
d_\bot^{m-2} d_\top^{n+1-m} x & m \geq j+2.
\end{cases}
\]
Thus $s_is_j(x) = s_{j+1}s_i(x)$.
\end{proof}

\begin{proposition}\label{prop low level interchange}
If $G$ is a height 1 level graph and $x\in P_G$, then \[ d_1s_0 x = x = d_1s_1 x.\]
\end{proposition}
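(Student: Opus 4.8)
The plan is to reduce to the connected case and then compute directly from the definitions. I first record the graph-level identity: since $s^0 d^1 = s^1 d^1 = \id_{[1]}$ in $\simpcat$, we have $d_1 s_0 = d_1 s_1 = \id$ as endofunctors of $\LL_1$, so $s_0 G$ and $s_1 G$ are height $2$ level graphs with $d_1 s_0 G = G = d_1 s_1 G$; thus $d_1 s_0 x$ and $d_1 s_1 x$ do land in $P_G$ as claimed.

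\emph{Reduction to corollas.} Since $P|_{\LL_\intrm}$ is a right Kan extension and hence Segal (see the discussion after \cref{def P restriction}), for a height $1$ graph $G$ the canonical-splitting inclusions $\iota_v\colon G_v \rat G$ (one connected corolla per vertex $v\in G_{01}$) induce a bijection $P_G \xrightarrow{\cong} \prod_v P_{G_v}$; in particular the $\iota_v^*$ are jointly injective, so it suffices to check $d_1 s_j x = x$ after applying each $\iota_v^*$. Put $g_v \coloneqq s_j(\iota_v)\colon s_j G_v \to s_j G$; by $d_1 s_j = \id$ we have $d_1(g_v) = \iota_v$. Then \cref{fiber maps prop} (with $f = g_v$, $i = 1$) gives $\iota_v^* \circ d_1 = d_1 \circ g_v^*$, and \cref{fiber maps degen prop} (with $f = \iota_v$) gives $g_v^* \circ s_j = s_j \circ \iota_v^*$; composing,
\[
  \iota_v^*(d_1 s_j x) = d_1\, g_v^*(s_j x) = d_1 s_j(\iota_v^* x).
\]
As $\iota_v^* x$ lies in $P_{G_v}$ with $G_v$ a corolla, the claim is reduced to the connected case.

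\emph{Connected case.} Let $G = \mathfrak{c}_{m,n}$ and let $p\in P_{\mathfrak{c}_{m,n}}$ be a connected morphism $c_1 \ten \cdots \ten c_m \to b_1 \ten \cdots \ten b_n$. Inspecting the cosimplicial maps, $H_0 \coloneqq s_0 \mathfrak{c}_{m,n}$ and $H_1 \coloneqq s_1 \mathfrak{c}_{m,n}$ are connected height $2$ graphs with $d_1 H_0 = \mathfrak{c}_{m,n} = d_1 H_1$, while $d_\top H_0$ and $d_\bot H_1$ are each sums of copies of $\mathfrak{c}_{11}$. Applying \cref{lem d1 mu thing} to $H_j$ and using that $\mu_{d_1 H_j} = \mu_{\mathfrak{c}_{m,n}}$ is the inclusion into $\morcon C$, we obtain
\[
  d_1 s_j p = \mu_{d_\bot H_j}(d_\bot s_j p) \circ \mu_{d_\top H_j}(d_\top s_j p).
\]
For $j=0$ the defining bullets of \cref{def degeneracies} give $d_\bot s_0 p = p$ and $d_\top s_0 p = s_0(d_1 p)$ with $d_1 p = c_1 \ten \cdots \ten c_m$; by \cref{def s zero}, \cref{lemma mu splitting}, and the corolla case the latter has $\mu$-image $\id_{c_1 \ten \cdots \ten c_m}$, so $d_1 s_0 p = p \circ \id = p$. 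For $j=1$ the bullets give $d_\top s_1 p = p$ and $d_\bot s_1 p = s_0(d_0 p)$ with $d_0 p = b_1 \ten \cdots \ten b_n$, whose $\mu$-image is $\id_{b_1 \ten \cdots \ten b_n}$, so $d_1 s_1 p = \id \circ p = p$.

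The main obstacle is the bookkeeping in the reduction: one must pair \cref{fiber maps prop,fiber maps degen prop} with the correct graph-level identity $d_1 s_j = \id$ and confirm that $s_j$ carries the connected-component inclusions of $G$ to those of $s_j G$, so that the componentwise recipe for $d_1$ in \cref{def d one} is compatible with the splitting. Once this is arranged, the connected computation is a short unwinding of \cref{def degeneracies} together with the composition formula \cref{lem d1 mu thing}.
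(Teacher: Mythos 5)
Your proof is correct and follows essentially the same route as the paper's: reduce to the corolla case via the component inclusions together with \cref{fiber maps prop}, \cref{fiber maps degen prop}, and Segality, then compute $d_1 s_j$ on a corolla as the composite of $\mu$-images, using the defining bullets of \cref{def degeneracies} and the fact that $\mu$ of a degenerate element is an identity. (Your appeal to \cref{lem d1 mu thing} in the connected case is just \cref{def d one} restated, as that lemma itself notes.)
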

\begin{proof}
If we can prove the relation for a corolla $\mathfrak{c}$, then it will follow for an arbitrary $G \in \LL_1$ using \cref{fiber maps prop}, \cref{fiber maps degen prop}, and Segality.
Indeed, if $\iota \colon \mathfrak{c} \to G$ is a component of $G$, then for $x\in P_G$ and $k=0,1$, we have \[ \iota^* d_1 s_k x = d_1 (s_k \iota)^* s_k x = d_1 s_k \iota^* x = \iota^*x \]
using the corolla case, so by Segality $d_1 s_k x = x$.

Let $x \in P_{\mathfrak{c}}$ where $\mathfrak{c}$ is a corolla.
By \cref{def d one}, $d_1$ applied to the element $s_0x \in P_{s_0\mathfrak{c}}$ is
\[
  \mu_{d_0 s_0 \mathfrak{c}}(d_0 s_0 x) \circ \mu_{d_2 s_0 \mathfrak{c}} (d_2s_0 x)
\]
which is equal to 
\[
  \mu_{d_0 s_0 \mathfrak{c}}(x) \circ \mu_{d_2 s_0 \mathfrak{c}} (s_0d_1x) = x \circ \id = x
\]
by \cref{def degeneracies}.
Likewise, $d_1 s_1 x$ is 
\[
  \mu_{d_0 s_1 \mathfrak{c}}(d_0 s_1 x) \circ \mu_{d_2 s_1 \mathfrak{c}} (d_2s_1 x) = 
  \mu_{d_0 s_1 \mathfrak{c}}(s_0d_0 x) \circ \mu_{d_2 s_1 \mathfrak{c}} (x) = \id \circ x = x. \qedhere
\]
\end{proof}

The proof of the following lemma uses the same idea as the proofs of \cref{prop face relation} and \cref{prop degeneracies}. 

\begin{proposition}\label{simpop mixed}
If $G$ is a height $n$ level graph and $x\in P_G$, 
then
\[
  d_i s_j x = \begin{cases}
    s_{j-1} d_i x & i < j \\
    x & i = j, j+1 \\
    s_j d_{i-1} x & i  > j+1.
  \end{cases}
\]
\end{proposition}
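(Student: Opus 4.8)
The plan is to follow the template set by the proofs of \cref{prop face relation} and \cref{prop degeneracies}: every operator involved is pinned down, via \cref{def inner face} and \cref{def degeneracies}, by how it interacts with the inert projections $d_\bot^{\ast}d_\top^{\ast}$, so the entire statement can be verified one ``edge-slice'' at a time. Concretely, $G$ has height $n$, the graph $s_jG$ has height $n+1$, and each of $d_is_jG$, $s_{j-1}d_iG$, $G$, and $s_jd_{i-1}G$ has height $n$. By the iterated Segal condition (\cref{examples segality}), for any height $n$ graph $H$ the canonical map
\[
  P_H \xrightarrow{\;\cong\;} P_{d_\top^{n-1}H} \times_{\,\cdots\,} \cdots \times_{\,\cdots\,} P_{d_\bot^{n-1}H}
\]
is a bijection, so it suffices to prove that $d_\bot^m d_\top^{n-1-m}$ sends $d_is_jx$ and the claimed right-hand side to the same element for every $0\leq m\leq n-1$.

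First I would compute the image of the left-hand side. Viewing $s_jx\in P_{s_jG}$ as an element on a height $(n+1)$ graph and applying \cref{lem inner face v inert} (with $n$ replaced by $n+1$, $k=m$, and $\ell=n-1-m$, so that $k+\ell=n-1\leq n$), the operator $d_\bot^m d_\top^{n-1-m}d_i$ collapses to one of $d_\bot^m d_\top^{n-m}s_jx$, $d_1 d_\bot^m d_\top^{n-1-m}s_jx$, or $d_\bot^{m+1}d_\top^{n-1-m}s_jx$, according to whether $i\geq m+2$, $i=m+1$, or $i\leq m$. In each case I would then push the surviving boundary operator past $s_j$ using \cref{lem degen v inert}. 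The three right-hand sides are handled symmetrically: apply \cref{lem degen v inert} to move $d_\bot^m d_\top^{n-1-m}$ inside $s_{j-1}$ (resp.\ $s_j$), and then \cref{lem inner face v inert} to dispose of the remaining inner face.

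Carrying out these reductions, the great majority of index configurations yield literally the same word in the generating operators on both sides, purely by the interchange rules of \cref{lem inner face v inert,lem degen v inert}, with no further input — exactly as in \cref{prop degeneracies}. The only subcase requiring genuine input in each of the five orderings of $i,j$ is the ``collision'' slice where the face and the degeneracy land on the same level. For the commuting identities $i<j$ and $i>j+1$, this collision slice reduces formally to a single $s_0$ applied to a height $0$ element, matching on both sides via \cref{def s zero}. For the two collapse identities $i=j$ and $i=j+1$, the collision occurs at $m=j-1$ and $m=j$ respectively, where the reduction bottoms out in a height $1$ element and the relation $d_1s_1x=x$ (resp.\ $d_1s_0x=x$) of \cref{prop low level interchange}; this is precisely what forces $d_is_jx=x$.

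I expect the main obstacle to be bookkeeping rather than any conceptual difficulty: one must track the three-way split of \cref{lem inner face v inert} (the position of $i$ relative to $m$ and $n-\ell$) simultaneously with the three-way split of \cref{lem degen v inert} (the position of $j$), and confirm that the boundary values of $m$ — near $i$, near $j$, and near $j\pm1$ — line up without off-by-one errors. Organizing the argument around the single collision slice in each relative ordering of $i$ and $j$ should keep this manageable and cleanly isolate the two slices where \cref{prop low level interchange} is actually invoked.
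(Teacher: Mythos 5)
Your proposal is correct and follows essentially the same route as the paper's proof: apply $d_\bot^m d_\top^{n-m-1}$ for $0\leq m\leq n-1$, reduce both sides via \cref{lem inner face v inert} and \cref{lem degen v inert}, and resolve the collision slices in the $i=j$ and $i=j+1$ cases with \cref{prop low level interchange}, concluding by Segality. The only difference is bookkeeping detail, which you correctly identify as the main burden.
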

\begin{proof}
We will apply the operator $d_\bot^m d_\top^{n-m-1}$ for $0\leq m \leq n-1$ to both sides of the equation, and show that our answers coincide. 
Since $P$ is Segal, that implies that equation holds.

Using \cref{lem inner face v inert}, \cref{lem degen v inert}, and \cref{prop low level interchange}, one computes that 
\[
d_\bot^m d_\top^{n-m-1} d_i s_j x = d_\bot^m d_\top^{n-m-1} x
\]
whenever $m \leq \min(i-2,j-1)$, $m\geq \max(i,j)$, $m=j=i-1$, or $m=i-1=j-1$.
If $i=j$ or $i=j+1$, this covers all possible values of $m$, so the formula holds in these cases.
Otherwise, one computes using \cref{lem inner face v inert} and \cref{lem degen v inert} that
\[
  d_\bot^m d_\top^{n-m-1} d_i s_j x = 
  \begin{cases}
    d_\bot^{m+1} d_\top^{n-m-2} x & i \leq m \leq j-2 \\
    d_1 d_\bot^m d_\top^{n-m-2} x & i-1 = m \leq j-2 \\
    s_0 d_\bot^{m+1} d_\top^{n-m-1} x & i \leq m = j-1 \\
    d_\bot^{m-1} d_\top^{n-m} x & j+1 \leq m \leq i-2 \\
    d_1 d_\bot^{m-1} d_\top^{n-1-m} x & j+1 \leq m = i-1 \\
    s_0 d_\bot^m d_\top^{n-m} x & m = j \leq i-2.
  \end{cases}
\]

When $i<j$, one computes
\[
  d_\bot^m d_\top^{n-m-1} s_{j-1} d_i x = 
  \begin{cases}
  d_\bot^m d_\top^{n-m-1}x & m\leq i-2 \text{ or } j \leq m \\
  d_1 d_\bot^m d_\top^{n-m-2}x & m = i-1 \\
  d_\bot^{m+1} d_\top^{n-m-2}x & i \leq m \leq j-2 \\
  s_0 d_\bot^{m+1} d_\top^{n-m-1} x & m = j-1
  \end{cases}
\]
which implies $d_i s_j x = s_{j-1} d_i x$ by the previous calculation.
When $i > j + 1$, one computes
\[
  d_\bot^m d_\top^{n-m-1} s_j d_{i-1}x =
  \begin{cases}
    d_\bot^m d_\top^{n-m-1}x & m \leq j-1 \text{ or } i \leq m \\
    s_0 d_\bot^m d_\top^{n-m}x & m=j \\
    d_\bot^{m-1} d_\top^{n-m}x & j+1 \leq m \leq i-2 \\
    d_1 d_\bot^{m-1} d_\top^{n-m-1}x & m = i-1,
  \end{cases}
\]
implying $d_i s_j x =  s_j d_{i-1}x$.
\end{proof}

\begin{theorem}
The $\LL_\intrm$-presheaf $P|_{\LL_\intrm} \in \seg(\LL_\intrm)$ extends to an $\LL$-presheaf $P\in \seg(\LL)$ with inner face maps and degeneracies defined as in \cref{def inner face} and \cref{def degeneracies}.
Further, the assignment $C \rightsquigarrow P$ is a functor $\slcc \to \seg(\LL)$.
\end{theorem}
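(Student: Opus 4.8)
The statement has two parts: that $P$ is a well-defined (and automatically Segal) presheaf on $\LL$, and that $C \rightsquigarrow P$ is functorial. I would treat them in turn, exploiting throughout the active--inert factorization system on $\LL$ and the fibration $\LL \to \simpcat$.

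For the first part, recall that every active map of $\LL$ is, up to inert isomorphism, of the form $\alpha^* G \to G$ for an active $\alpha$ in $\simpcat$, and that $\simpcat_\actrm$ is generated by the inner cofaces and the codegeneracies; hence every active map of $\LL$ is a composite of the lifted inner faces $d^i$ and degeneracies $s^j$, whose $P$-values are prescribed by \cref{def inner face} and \cref{def degeneracies}. Thus $\LL$ is generated by $\LL_\intrm$ together with these active generators, and I would define $P$ on a general morphism by factoring it as $\phi = \iota \circ a$ (active $a$ followed by inert $\iota$) and setting $\phi^* = a^* \circ \iota^*$. That $a^*$ is independent of how $a$ is written as a composite of generators is exactly the list of simplicial identities among inner faces and degeneracies, namely \cref{prop face relation}, \cref{prop degeneracies}, and \cref{simpop mixed}; independence of the factorization and of the representative $\alpha^*G \to G$ follows since isomorphisms are inert. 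Functoriality $(\psi\phi)^* = \phi^*\psi^*$ then reduces, after factoring both maps and re-factoring the middle composite $a_\psi \circ \iota_\phi$ as inert-then-active, to the single compatibility $\iota^* a^* = a'^* \iota'^*$ for an active generator $a$, an inert $\iota$, and the re-factorization $a\iota = \iota' a'$. For the fiber (hence arbitrary inner) inert maps this is precisely \cref{fiber maps prop} for inner faces and \cref{fiber maps degen prop} for degeneracies, while commutation with the outer faces $d_\bot, d_\top$ is again a simplicial identity recorded in \cref{prop face relation} and \cref{simpop mixed}. Assembling these yields a genuine $\LL$-presheaf restricting to $P|_{\LL_\intrm}$; since the latter was built in \cref{def P restriction} as a right Kan extension from the elementary objects, \cite[Lemma 2.9]{ChuHaugseng} makes $P$ Segal, as anticipated there.

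For functoriality of the assignment, let $F \colon (C,\pi) \to (C',\pi')$ be a morphism in $\slcc$, i.e.\ a strict map of permutative categories with $\pi' F = \pi$. Because $F$ respects the projection to $C(\ast)$, it carries connected objects and connected morphisms of $C$ to those of $C'$, and so restricts to maps $P_{\mathfrak e} \to P'_{\mathfrak e}$ and $P_{\mathfrak c_{m,n}} \to P'_{\mathfrak c_{m,n}}$ compatible with the defining pullbacks \eqref{diagram defining Pn and Pcmn}; being strict symmetric monoidal, $F$ preserves $\ten$ and the symmetry isomorphisms, so these components are natural for the elementary operators $\ell_i^*, r_j^*, (\gamma,\chi)^*$ of \cref{def corolla iso}. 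They therefore assemble into a natural transformation on $\LL_\elrm$, which extends uniquely over $\LL_\intrm$ by the universal property of the Kan extension. To check compatibility with the active operators it suffices, by \cref{def inner face} and \cref{def degeneracies}, to treat $d_1$ (\cref{def d one}) and $s_0$ (\cref{def s zero}); these are assembled from composition, identities, tensor, and symmetry isomorphisms via the maps $\mu_G$ of \cref{constr muG}, all preserved by $F$, so $F$ intertwines $\mu_G$ with $\mu'_G$ and commutes with $d_1$ and $s_0$, hence with every $d_i$ and $s_j$. Thus $P(F)$ is a morphism in $\seg(\LL)$, and the equalities $P(\id) = \id$ and $P(G \circ F) = P(G) \circ P(F)$ hold on elementary data and propagate to all of $\LL$ since maps of Segal presheaves are determined by their elementary components.

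The main obstacle is the first part, and specifically the bookkeeping that the relations verified above constitute a \emph{complete} presentation of $\LL$ relative to its inert subcategory and active generators, so that no coherence beyond the simplicial identities and the inert-compatibility (Beck--Chevalley) squares is required. All the genuinely substantive work is already done earlier: the deepest input is \cref{close face maps lem}, feeding \cref{prop face relation}, where the inner-face identity $d_1 d_2 = d_1 d_1$ encodes the associativity of composition in $C$. With that in hand, the present statement is an assembly of the preceding lemmas.
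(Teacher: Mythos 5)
Your proposal is correct and follows essentially the same route as the paper: both reduce the extension problem to the simplicial identities among the newly defined operators (\cref{prop face relation}, \cref{prop degeneracies}, \cref{simpop mixed}) together with their compatibility with fiber/inert maps (\cref{fiber maps prop}, \cref{fiber maps degen prop}), the only cosmetic difference being that the paper factors a morphism as a vertical map followed by a cartesian lift over $\simpcat$ rather than via the active--inert factorization you use. Your functoriality argument is a more detailed version of the paper's one-line observation that every construction in the section is natural in the strict labelled cospan category.
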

\begin{proof}
Every map $G\to H$ in $\LL$ factors uniquely as follows:
\[
G \xrightarrow{f} \alpha^*H \xrightarrow{\alpha} H
\]
where $\alpha$ is a map in $\simpcat$ and $f \in \LL_n$ where $n$ is the height of $G$.
We have an existing map $f^* \colon P_{\alpha^*H} \to P_G$ using the $\LL_\intrm$-presheaf structure, and we define $\alpha^*\colon P_H \to P_{\alpha^*H}$ by decomposing into face and degeneracy operators; by \cref{prop face relation}, \cref{prop degeneracies}, and \cref{simpop mixed} the result does not depend on any choices of how to do this decomposition.
On such a composite, we define $(\alpha \circ f)^* \colon P_H \to P_G$ to be the composite $P_H \to P_{\alpha^*H} \to P_G$.
To see that composition is preserved, consider a pair of composable morphisms in $\LL$ presented in this way:
\[ \begin{tikzcd}
G \rar{f} \ar[drr,"hf"', bend right=15, dashed, color=blue]  & \alpha^* H \rar{\alpha} \ar[dr,"h"',color=blue,dashed] & H \rar{g} & \beta^*K \rar{\beta} & K 
\\
 &  & 
 {\color{blue}\alpha^*\beta^*K} \ar[ur,"\alpha"',dashed, color=blue] \ar[urr,"\beta \alpha"', bend right=15, dashed, color=blue] 
\end{tikzcd} \]
Then setting $h \coloneqq \alpha^*(g) \colon \alpha^*H \to \alpha^*\beta^*K$, the composite is presented as in the dashed maps.
Then \[ (\alpha \circ f)^* (\beta \circ g)^* = f^* \alpha^* g^* \beta^* = f^* h^* \alpha^* \beta^*
\]
where the last equality uses \cref{fiber maps prop}, \cref{fiber maps degen prop}, and that $P$ is an $\LL_\intrm$-presheaf.
But then $f^*h^* = (hf)^*$ since $P$ is an $\LL_\intrm$-presheaf and $\alpha^* \beta^* = (\beta \alpha)^*$ by \cref{prop face relation,prop degeneracies,simpop mixed}. 
Thus $(\alpha \circ f)^* (\beta \circ g)^* = ((\beta \circ g) \circ (\alpha \circ f))^*$, so $P$ is an $\LL$-presheaf.

All constructions from this section were functorial on maps in $\slcc$: the construction of the $\LL_\elrm$-presheaf, the right Kan extension $\pre(\LL_\elrm) \to \pre(\LL_\intrm)$, and finally the definition of the inner face operators (\cref{def d one,def inner face}) and degeneracy operators (\cref{def s zero,def degeneracies}).
Thus we have constructed a functor $\slcc \to \seg(\LL)$.
\end{proof}

\section{Comparison of constructions}
We now have two constructions: the first is the functor $\seg(\LL) \to \slcc$ that takes a presheaf $X$ to the image of the unique map $X \to \ast$ under the envelope functor $C$, and the second is the functor $\slcc \to \seg(\LL)$ that takes a strict labelled cospan category $\pi \colon C \to C(\ast)$ to the presheaf $P = P(\pi)$.
In this section, we will show that these are part of an equivalence of categories $\seg(\LL) \simeq \slcc$ (\cref{equivalence of presheaves and slcc}).
We first show that the composite $\slcc \to \seg(\LL) \to \slcc$ is isomorphic to the identity.

Let $\pi\colon C \to C(\ast)$ be a strict labelled cospan category, and $P = P(\pi)$ be the associated Segal $\LL$-presheaf.
Our goal is to compare $C$ with the envelope $CP$ of $P$.

The set of connected objects of $CP$ is $P_{\mathfrak{e}}$, which is exactly the set of connected objects of $C$. (See the beginning of \cref{sec from properad to lcc} and the beginning of \cref{subsec inert}).
Since these are both strict labelled cospan categories, the set of objects of each is the free monoid on this set, hence the sets of objects can be identified. Under this identification the tensor units coincide.
Moreover, the set of connected morphisms of $CP$ is $\sum P_{\mathfrak{c}_{n,m}}$, which is naturally in bijection with the set of connected morphisms of $C$ (each $P_{\mathfrak{c}_{n,m}}$ is a subset of $\morcon C$, see \eqref{diagram defining Pn and Pcmn} on page \pageref{diagram defining Pn and Pcmn}).
The abelian monoid $\hom(\mathbf{1},\mathbf{1})$ is the same in both categories, that is, it is freely generated by the identical sets $\homcon_C(\mathbf{1},\mathbf{1}) = P_{\mathfrak{c}_{00}} = \homcon_{CP}(\mathbf{1},\mathbf{1})$ of connected endomorphisms (see \cref{def lcc}\eqref{lcc: free gen}).

In \cref{constr muG}, we defined, for each $G\in \LL_1$, a function $\mu_G \colon P_G \to \mor C$.
We now identify the image of this function.

\begin{lemma}\label{injectivity after quotient}
If $G \in \LL_1$, then $\mu_G \colon P_G \to \mor C$ 
factors through $\overline{P}_G$ and the map $\overline{P}_G \to \mor C$ is injective.
\end{lemma}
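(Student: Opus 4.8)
The plan is to prove the two assertions in turn: that $\mu_G$ is constant on congruence orbits (so it factors through $\overline{P}_G$), and that the induced map is injective. The factoring is immediate. Since $\overline{P}_G = \colim_{G\in\kong_w^\oprm} P_G$ is the quotient of $P_G$ by the action of the self-congruences of $G$, and \cref{lem height one iso mu} shows that $\mu_G(f^*p) = \mu_G(p)$ whenever $f\colon G\to G$ is a congruence (as then $d_0(f)$ and $d_1(f)$ are identities), the function $\mu_G$ is constant on orbits and descends to $\overline{\mu}_G\colon \overline{P}_G \to \mor C$.

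For injectivity I would first reduce to a convenient representative of the congruence class. A congruence $\phi\colon G\to H$ induces a bijection $\phi^*\colon P_H\to P_G$ with $\mu_H = \mu_G\circ\phi^*$ (again \cref{lem height one iso mu}), so $\overline{\mu}_G$ is injective if and only if $\overline{\mu}_H$ is. Using the decomposition $\WW_1\cong R\times T$ from the proof of \cref{lem lcc reduced and free}, I may therefore assume $G = G^r + G^t$, where $G^r$ is reduced (every vertex has an input or output) and $G^t = n\mathfrak{c}_{00}$ for some $n$. By \cref{lem tensor lemma} (applicable because $G^r$ is reduced) we get $\overline{P}_G \cong \overline{P}_{G^r}\times\overline{P}_{G^t} = P_{G^r}\times\overline{P}_{G^t}$, while \cref{lemma mu splitting} gives $\overline{\mu}_G(\bar p) = \mu_{G^r}(p_r)\ten\overline{\mu}_{G^t}(\bar p_t)$ on the corresponding factors.

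Next I would separate these two factors using the labelled cospan axioms. By \cref{prop image in csp}, $\mu_{G^r}(p_r)$ has $\pi$-image $[G^r]\in R$, hence is reduced, while $\overline{\mu}_{G^t}(\bar p_t)$ lies in $\hom(\mathbf{1},\mathbf{1})$ (since $G^t$ has no input or output edges); moreover, as $\mu_G(p)=\mu_G(p')$ forces equal sources and targets, the two reduced parts lie in a common hom-set. Thus \cref{def lcc}\eqref{lcc: reduced and free}, the bijection $\homred(c,d)\times\hom(\mathbf{1},\mathbf{1})\to\hom(c,d)$, reduces injectivity of $\overline{\mu}_G$ to injectivity of $\mu_{G^r}$ on $P_{G^r}$ and of $\overline{\mu}_{G^t}$ on $\overline{P}_{G^t}$. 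The nullary factor is handled by \cref{def lcc}\eqref{lcc: free gen}: under the Segal identification $\overline{P}_{G^t} = P_{\mathfrak{c}_{00}}^{n}/\Sigma_n$ maps to the degree-$n$ piece of the free abelian monoid on $P_{\mathfrak{c}_{00}} = \homcon(\mathbf{1},\mathbf{1})$, and uniqueness of multisets in a free abelian monoid gives injectivity (this is essentially the computation in \cref{lem lcc free gen}).

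The main work, and the step I expect to be the obstacle, is the reduced factor. Writing the canonical splitting $\iota\colon \sum_x G^r_x \to G^r$ into corollas $G^r_x = \mathfrak{c}_{a_x,b_x}$ with $(a_x,b_x)\neq(0,0)$ and $p_x = \iota_x^* p_r$, \cref{constr muG} gives $\mu_{G^r}(p_r) = \widehat{\iota^0}\circ(p_1\ten\cdots\ten p_k)\circ(\widehat{\iota^1})^{-1}$; since the symmetry isomorphisms depend only on $G^r$, the equality $\mu_{G^r}(p_r)=\mu_{G^r}(p_r')$ is equivalent to $p_1\ten\cdots\ten p_k = p_1'\ten\cdots\ten p_k'$. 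I would then peel off one connected factor at a time using \cref{def lcc}\eqref{lcc: pullback}: each $p_x$ is reduced with connected $\csp$-image $\pi(p_x)=[\mathfrak{c}_{a_x,b_x}]$ fixed by $G^r$ (via \eqref{diagram defining Pn and Pcmn} and \cref{prop image in csp}), so the cartesian square determines the pair $(p_1,\ p_2\ten\cdots\ten p_k)$ uniquely from the product together with its prescribed $\csp$-factorization; inducting on $k$ yields $p_x = p_x'$ for all $x$, hence $p_r = p_r'$. The delicate points are verifying that both factorizations of the reduced morphism carry the same $\csp$-image (so the same instance of the cartesian square applies) and correctly iterating the two-factor statement of \eqref{lcc: pullback} into a $k$-fold unique decomposition of a reduced morphism into connected morphisms.
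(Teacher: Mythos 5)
Your proposal is correct and follows essentially the same route as the paper: reduce to a representative of the form (reduced part)\,$+$\,(nullary part), use \cref{lem height one iso mu} for the descent, then apply \cref{def lcc}\eqref{lcc: reduced and free} to separate the two factors, \eqref{lcc: pullback} iterated over the canonical corolla splitting for the reduced factor, and \eqref{lcc: free gen} for the nullary factor. The only cosmetic difference is that you package the splitting as a product $\overline{P}_{G^r}\times\overline{P}_{G^t}\cong\overline{P}_G$ via \cref{lem tensor lemma}, whereas the paper works directly with representatives $p,p'$ and exhibits an explicit self-congruence of $G$ identifying them.
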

\begin{proof}
If $f\colon H \xrightarrow{\equiv} G$ is a congruence between height 1 graphs, then by \cref{lem height one iso mu} we have $\mu_H f^*(p) = \mu_G(p)$ for all $p\in P_G$.
This implies that $\mu_G$ factors through $\overline{P}_G = \colim_{H \in \kong_{[G]}^{\oprm}} P_H$.
For injectivity, we may assume that $G$ has the property that all vertices without inputs or outputs come at the end; that is, we utilize a congruence
\[
H = \left( G' + \sum_{i=1}^{k-j} \mathfrak{c}_{00} \right) \xrightarrow{\equiv} G   
\] 
where $G'$ has $j$ vertices and is reduced, and $G' \to G$ is order-preserving at each level.
Now $\overline{P}_G \to \mor C$ is equal to $\overline{P}_H \to \mor C$, so we replace $G$ with $H$.

Let $p,p' \in P_G$, where $G \in \LL_1$ has $j$ vertices which have an input or output and $k-j$ vertices which have no input or output, and the former come before the latter in the order on $G_{01}$.
Assume $\mu_G(p) = \mu_G(p')$; we wish to show that $p$ and $p'$ represent the same element of $\overline{P}_G$.
We use similar notation as in \cref{constr muG}; specifically, we let
\[
  \iota = \sum \iota_x \colon \sum_{x=1}^k G_x \xrightarrow{\cong} G
\]
be the canonical splitting from \cref{def canonical splitting}, $p_x = \iota_x^*p$, $p_x' = \iota_x^*p'$, $\iota^0 = d_0(\iota)$, and $\iota^1 = d_1(\iota)$.
Then 
\begin{align*}
\mu (p) &= \widehat{\iota^0} \circ (p_1 \ten \cdots \ten p_k) \circ (\widehat{\iota^1})^{-1} \\
\mu (p') &= \widehat{\iota^0} \circ (p_1' \ten \cdots \ten p_k') \circ (\widehat{\iota^1})^{-1}.
\end{align*}
Our assumption was $\mu(p) = \mu(p')$, hence \[ p_1 \ten \cdots \ten p_k = p_1' \ten \cdots \ten p_k'\]
with all $p_x$, $p_x'$ connected morphisms of $C$.
By \cref{def lcc}\eqref{lcc: reduced and free} we have $p_1 \ten \cdots \ten p_j = p_1' \ten \cdots \ten p_j'$ and $p_{j+1} \ten \cdots \ten p_k = p_{j+1}' \ten \cdots \ten p_k'$.
By \eqref{lcc: pullback} of \cref{def lcc}, $p_x = p_x'$ for $0\leq x \leq j$.
In light of \cref{def lcc}\eqref{lcc: free gen}, there is an automorphism $\gamma'$ of the set $\{j+1, j+2, \dots, k\}$ so that $p_x = p'_{\gamma'(x)}$ for $j+1 \leq x \leq k$.
We extend $\gamma'$ to an automorphism $\gamma$ of $\uk$ by letting $\gamma(x) = x$ for $1 \leq x \leq j$, and this comprises a congruence $f \colon G \xrightarrow{\equiv} G$ with $f_{01} = \gamma$.
We next observe that $f^*(p') = p$.
By Segality, it is enough to show $\iota_x^* f^*(p') = \iota_x^*(p)$ for $1 \leq x \leq k$.
Notice that $\iota_x^* f^*(p') = \iota_{\gamma(x)}^*(p')$.
For $1 \leq x \leq j$, we have $\iota_{\gamma(x)}^*(p') = \iota_x^*(p') = p'_x = p_x = \iota_x^*(p)$.
For $j+1 \leq x \leq k$, we have $\iota_{\gamma(x)}^*(p') = \iota_{\gamma'(x)}^*(p') = p'_{\gamma(x)} = p_x = \iota_x^*(p)$.

We have shown that if $p,p'\in P_G$ are two elements with $\mu(p) = \mu(p')$, then they are identified in $\overline{P}_G$. Since $P_G \to \overline{P}_G$ is surjective, this proves that $\overline{P}_G \to \mor C$ is injective.
\end{proof}

As we saw at the beginning of the preceding proof, the functions $\mu_G \colon P_G \to \mor C$ give well defined functions $\overline{P}_w \to \mor C$ where $w\in \WW_1 = \mor C(\ast)$. 
Thus we can make the following definition.
\begin{definition}[Comparison of morphisms]
Let \[ \bar \mu \colon \mor CP = SP_1 = \sum_{w\in \WW_1} \overline{P}_w \to \mor C\]
be the function induced from the collection of functions $\mu_G$ (for $G\in \LL_1$).
\end{definition}

\begin{lemma}\label{lem source target}
The function $\bar \mu \colon \mor CP \to \mor C$ is compatible with sources and targets of morphisms.
\end{lemma}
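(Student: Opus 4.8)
The plan is to unwind the definitions of source and target on each side and to check that they match term-by-term, with the matching supplied directly by \cref{constr muG}. For this I first record the comparison on objects. Since each component of $\kong_0$ is discrete, we have $\ob CP = SP_0 = \sum_{w\in \WW_0} \overline{P}_w = \sum_{k} P_{\uk}$, and since $\ob C = \coprod_k \{ c \mid \pi(c) = \uk \}$ is exactly this same disjoint union (by the pullback defining $P_{\uk}$ in \eqref{diagram defining Pn and Pcmn}), I take the resulting canonical bijection $\overline{\mu}_0 \colon \ob CP \xrightarrow{\cong} \ob C$ as the comparison on objects.

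Next I would recall that source and target in the envelope $CP$ are computed by the simplicial face operators $d_1, d_0 \colon SP_1 \to SP_0$ (with $d_1$ the source and $d_0$ the target, as in the nerve of a category). Concretely, for a class $[p] \in \overline{P}_w \subseteq \mor CP$ with $w = [G]$ and a representative $p \in P_G$, $G \in \LL_1$, the source of $[p]$ is the class of $d_1(p) \in P_{d_1 G} = P_{G_{00}}$ and the target is the class of $d_0(p) \in P_{d_0 G} = P_{G_{11}}$; these are height $0$ graphs, so the relevant $\WW_0$-components are discrete and no further quotienting occurs.

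Then I would invoke the defining commutative square of $\mu_G$ from \cref{constr muG}, which states precisely that $s(\mu_G(p))$ is the image of $d_1(p)$ under $P_{G_{00}} \subseteq \ob C$ and that $t(\mu_G(p))$ is the image of $d_0(p)$ under $P_{G_{11}} \subseteq \ob C$. Comparing the two previous paragraphs, the source of $\overline{\mu}([p]) = \mu_G(p)$ in $C$ equals $\overline{\mu}_0$ applied to the source of $[p]$ in $CP$, and likewise for targets; that is, the squares
\[
\begin{tikzcd}[sep=small]
\mor CP \rar{\overline{\mu}} \dar{s} & \mor C \dar{s} \\
\ob CP \rar{\overline{\mu}_0} & \ob C
\end{tikzcd}
\qquad
\begin{tikzcd}[sep=small]
\mor CP \rar{\overline{\mu}} \dar{t} & \mor C \dar{t} \\
\ob CP \rar{\overline{\mu}_0} & \ob C
\end{tikzcd}
\]
commute. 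Because $\mu_G$ already factors through $\overline{P}_w$ by \cref{injectivity after quotient}, the computation is independent of the chosen representative $p$, so it is legitimate to carry it out at the level of $P_G$. The verification is essentially bookkeeping; the only genuine care needed is the identification of the source and target maps of $CP$ with the simplicial faces $d_1, d_0$, together with the canonical identification $\overline{\mu}_0$ on objects, after which both squares fall out of \cref{constr muG}, so I do not anticipate a real obstacle.
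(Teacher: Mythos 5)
Your proof is correct and follows essentially the same route as the paper's: both reduce the claim to the defining commutative square of $\mu_G$ in \cref{constr muG}, use that sources and targets in $CP$ are the simplicial faces $d_1,d_0$ computed on representatives, and use surjectivity of $P_G \to \overline{P}_G$ (equivalently, \cref{injectivity after quotient}) to descend to the quotient. The paper packages this as a single large commuting diagram, but the content is identical.
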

\begin{proof}
For each graph $G\in \LL_1$, the outer square and all inner chambers except the inner rectangle of the following diagram are known to commute (including using \cref{constr muG}).
\[ \begin{tikzcd}
P_G \ar[rrrr,"d_1"] \ar[ddd] \ar[dr,two heads] \ar[drr,"\mu"] & & & &  P_{d_1G} \ar[dl,hook'] \ar[ddd,hook] \\
& \overline{P}_G \rar[hook] \ar[d,hook] & \mor C \rar{s} & \ob C \dar["="] \\ 
& \mor CP \ar[rr,"s"]\ar[dl, "="'] & & \ob CP \ar[dr,"="']
\\
SP_1 \ar[rrrr,"d_1"]  & & & &  SP_0 
\end{tikzcd} \]
It follows from surjectivity of $P_G \to \overline{P}_G$ that the inner rectangle commutes as well, so the function $\mor CP \to \mor C$ preserves sources of maps. Compatibility with targets is the evident modification of this argument.
\end{proof}

The next proof uses that $\bar \mu$ preserves identities between connected objects, which is immediate from \cref{def s zero}, since $s_0 \colon P_{\underline{1}} \to P_{s_0(\underline{1})} = P_{\mathfrak{c}_{11}}$ takes $c$ to $\id_c \in P_{\mathfrak{c}_{11}} \subseteq \mor C$.

\begin{lemma}\label{lem monoidal preservation}
The function $\bar \mu \colon \mor CP \to \mor C$ strictly preserves the monoidal product of morphisms, and also preserves the symmetry isomorphisms.
\end{lemma}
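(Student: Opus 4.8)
The plan is to treat the two assertions separately, deriving each from the splitting formula \cref{lemma mu splitting} and the isomorphism formula \cref{lem height one iso mu} for the maps $\mu_G$.

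I would first dispatch strict preservation of the monoidal product. Recall from \cref{lem ten descends} that for classes $\bar x\in\overline{P}_w$ and $\bar x'\in\overline{P}_{w'}$ with representatives $x\in P_G$ and $x'\in P_{G'}$, the tensor $\bar x\ten\bar x'$ is the class of the element $x\ten x'\in P_{G+G'}$ determined by $i^*(x\ten x')=x$ and $j^*(x\ten x')=x'$ (\cref{def tensor before S}). Hence $\bar\mu(\bar x\ten\bar x')=\mu_{G+G'}(x\ten x')$, and \cref{lemma mu splitting} immediately gives $\mu_{G+G'}(x\ten x')=\mu_G(x)\ten\mu_{G'}(x')=\bar\mu(\bar x)\ten\bar\mu(\bar x')$. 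So the first claim is essentially a one-line consequence of the splitting lemma.

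For the symmetry, I would use the description $\tau_{c,d}=\id_c\twten\id_d$ from \cref{def symmetry iso}. Writing $c=c_1\ten\cdots\ten c_m$ and $d=d_1\ten\cdots\ten d_n$ as tensors of connected objects, the identities $\id_c$ and $\id_d$ are represented by the images of $c$ and $d$ under $s_0$, so by elements $x\in P_{m\mathfrak{c}_{11}}$ and $y\in P_{n\mathfrak{c}_{11}}$ (\cref{def s zero}). By \cref{def rep twten} we have $x\twten y=\sigma_1^*(x\ten y)$, and therefore
\[
\bar\mu(\tau_{c,d})=\mu_{(m\mathfrak{c}_{11})\twplus{1}(n\mathfrak{c}_{11})}\bigl(\sigma_1^*(x\ten y)\bigr).
\]
Applying \cref{lem height one iso mu} to the isomorphism $\sigma_1$ and then \cref{lemma mu splitting}, this equals $\widehat{d_0\sigma_1}^{-1}\circ(\mu(x)\ten\mu(y))\circ\widehat{d_1\sigma_1}$. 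Since $x$ and $y$ represent identities and $d_1\sigma_1$ is the identity of the source, the middle and right factors collapse, leaving $\widehat{d_0\sigma_1}^{-1}$. Finally I would identify $d_0\sigma_1$ with the block flip $(n\mathfrak{c}_{11})_{11}+(m\mathfrak{c}_{11})_{11}\to(m\mathfrak{c}_{11})_{11}+(n\mathfrak{c}_{11})_{11}$, whose hat is the symmetry $d\ten c\to c\ten d$ of $C$; its inverse is exactly the symmetry $c\ten d\to d\ten c$, as desired.

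The first part is routine, so the real work is the bookkeeping in the symmetry computation. The main obstacle is matching directions: the codomain of $G\twplus{1}H$ reorders the two blocks as $H_{11}+G_{11}$, so one must carefully track which objects index the source and target and reconcile the direction of $\widehat{d_0\sigma_1}^{-1}$ with the convention $\hat\sigma\colon e_1\ten\cdots\ten e_k\to e_{\sigma^{-1}(1)}\ten\cdots\ten e_{\sigma^{-1}(k)}$. One also uses the observation recorded just before the lemma that $\bar\mu$ preserves identities of connected objects (\cref{def s zero}), guaranteeing that $\mu(x)\ten\mu(y)$ is genuinely $\id_c\ten\id_d$ in $C$.
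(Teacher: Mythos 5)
Your proof is correct, and the first half (preservation of the monoidal product) is exactly the paper's argument: unwind \cref{lem ten descends} and \cref{def tensor before S} and quote \cref{lemma mu splitting}. For the symmetry, you take a mildly different route. The paper first asserts that it suffices to treat a pair of \emph{connected} objects $c,d$, represents $\tau_{c,d}$ by the element of $P_{\mathfrak{c}_{11}\twplus{1}\mathfrak{c}_{11}}$ lying over $(\id_c,\id_d)$, and then invokes \cref{lem isomorphism} directly (the graph $\mathfrak{c}_{11}\twplus{1}\mathfrak{c}_{11}$ has both legs bijections, with $\alpha=\id$ and $\beta$ the flip, so $\mu(p)=\widehat{\beta}\vphantom{\beta}^{-1}$ is the symmetry). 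You instead handle arbitrary objects $c=c_1\ten\cdots\ten c_m$, $d=d_1\ten\cdots\ten d_n$ at once, representing $\tau_{c,d}$ over $(m\mathfrak{c}_{11})\twplus{1}(n\mathfrak{c}_{11})$ and computing via \cref{lem height one iso mu} applied to $\sigma_1$ together with \cref{lemma mu splitting}. What your version buys is that you never need the paper's (unjustified, though true by compatibility of $\bar\mu$ and of permutative symmetries with the tensor decomposition) reduction to connected objects; what it costs is the block-permutation bookkeeping you flag, which you resolve correctly. One small imprecision: the remark before the lemma only gives $\bar\mu(\id_c)=\id_c$ for \emph{connected} $c$, whereas you need $\mu_{m\mathfrak{c}_{11}}(s_0 c)=\id_c$ for a general object; this follows immediately from \cref{lem isomorphism} (or from \cref{lemma mu splitting} plus the connected case), so it is a one-line patch rather than a gap.
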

\begin{proof}
Recall from \cref{lem ten descends} and \cref{def tensor before S} that the tensor product of morphisms in $CP$ is induced as in the left rectangle in the following diagram, while the right square commutes by \cref{lemma mu splitting}.
\[ \begin{tikzcd}
SP_1 \times SP_1 \dar{\ten} \ar[rrr,"\bar \mu \times \bar \mu"', bend left=15] & \overline{P}_G \times \overline{P}_H \lar[hook'] & P_G \times P_H \lar[two heads] \rar{\mu_G \times \mu_H} &[+0.4cm] \mor C \times \mor C \dar{\ten} \\
SP_1 \ar[rrr,"\bar \mu", bend right=15] & \overline{P}_{G+H} \lar[hook'] & P_{G+H} \lar[two heads] \uar["\cong"'] \rar["\mu_{G+H}"] &  \mor C 
\end{tikzcd} \]
Thus $\bar \mu$ preserves monoidal product.

To see that $\bar\mu$ preserves the symmetry, it is enough to check that it is true for a pair of connected objects $c,d$.
Let $G$ be the following graph
\[ \begin{tikzcd}[sep=tiny]
\underline{2} \ar[dr,"="'] & & \underline{2} \ar[dl,"\tau"',"\cong"] \\
& \underline{2}
\end{tikzcd} \]
(using the notation of \cref{sec symmetry}, this means $G = \mathfrak{c}_{11} \twplus1 \mathfrak{c}_{11}$).
Let $p \in P_G$ be the unique element which is sent to $(\id_c, \id_d)$ under the isomorphism $P_G \to P_{\mathfrak{c}_{11}} \times P_{\mathfrak{c}_{11}}$.
By \cref{def rep twten} and \cref{def symmetry iso},
the symmetry $\tau_{c,d} \colon c \ten d \to d \ten c$ of $CP$ is the image of $p$ under $P_G \to \overline{P}_G \subset \mor CP$.
The definition of $\bar \mu$ implies that $\bar \mu(\tau_{c,d}) = \mu_G(p)$.
By \cref{lem isomorphism}, $\mu_G(p) \colon c \ten d \to d \ten c$ is the symmetry isomorphism of $C$. 
Hence $\bar\mu$ preserves the symmetry isomorphism. 
\end{proof}

\begin{lemma}\label{lem mu bar bijective}
The function $\bar \mu \colon \mor CP \to \mor C$ is bijective.
\end{lemma}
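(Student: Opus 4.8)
The plan is to prove injectivity and surjectivity separately, organizing everything around the cospan projection and the decomposition $\mor CP = SP_1 = \sum_{w \in \WW_1} \overline{P}_w$ from \eqref{EQ SX levels}.

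For injectivity, I would first invoke \cref{prop image in csp}, which shows $\pi(\bar\mu(p)) = w$ whenever $p \in \overline{P}_w$. Thus $\bar\mu$ sends the distinct summands $\overline{P}_w$ into the disjoint fibers of $\pi \colon \mor C \to \mor C(\ast) = \WW_1$, so two elements of $\mor CP$ with the same image necessarily lie in a common summand $\overline{P}_w$. On each such summand $\bar\mu$ is precisely the map $\overline{P}_w \to \mor C$ of \cref{injectivity after quotient}, which is injective; hence $\bar\mu$ is injective.

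For surjectivity, fix a morphism $h \colon c \to d$ of $C$ and set $w = \pi(h) \in \WW_1$. Using the decomposition recalled in the proof of \cref{lem lcc reduced and free}, I would choose a representative $G$ of $w$ of the form $G = G^r + t\mathfrak{c}_{00}$, where $G^r$ is reduced and its vertices precede the $t$ nullary vertices in the ordering of $G_{01}$; congruent choices of $G$ yield the same $\overline{P}_w$ and the same function $\mu_G$, so this normalization is harmless. Writing $\iota \colon \sum_x G_x \to G$ for the canonical splitting (\cref{def canonical splitting}) and $\iota^0 = d_0(\iota)$, $\iota^1 = d_1(\iota)$, it suffices by the Segal isomorphism $P_G \cong \prod_x P_{G_x}$ and the formula of \cref{constr muG} to produce connected morphisms $p_x \in P_{G_x}$ with $\widehat{\iota^0} \circ (p_1 \ten \cdots \ten p_k) \circ (\widehat{\iota^1})^{-1} = h$; equivalently, to show that $h' \coloneqq (\widehat{\iota^0})^{-1} \circ h \circ \widehat{\iota^1}$ is a tensor $p_1 \ten \cdots \ten p_k$ of connected morphisms with $\pi(p_x) = [G_x]$. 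By construction of the canonical splitting, $\pi(h')$ is the ordered tensor $[G_1] \ten \cdots \ten [G_k]$ of connected cospans, with the reduced blocks appearing first.

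The factorization of $h'$ is where the labelled cospan category axioms do the work, and this is the step I expect to be the main obstacle. First I would apply \cref{def lcc}\eqref{lcc: reduced and free} to write $h' = h'_{\mathrm{r}} \ten u$ with $h'_{\mathrm{r}}$ reduced and $u \in \hom(\mathbf{1},\mathbf{1})$; the block ordering of $G$ guarantees that $h'_{\mathrm{r}}$ accounts for the reduced components and $u$ for the nullary ones. The free generation of $\hom(\mathbf{1},\mathbf{1})$ by $P_{\mathfrak{c}_{00}}$ (\cref{def lcc}\eqref{lcc: free gen}, exactly as exploited in \cref{lem lcc free gen}) factors $u$ as a tensor of connected nullary morphisms, and iterating the cartesian square of \cref{def lcc}\eqref{lcc: pullback} along the ordered block decomposition of $\pi(h'_{\mathrm{r}})$ factors $h'_{\mathrm{r}}$ as a tensor of reduced connected morphisms. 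At each stage the relevant object decompositions match the cospan factorization, thanks to the block ordering together with the uniqueness of decompositions into connected objects (\cref{rmk uniqueness decomp}), so that each factor $p_x$ lands in the correct $P_{G_x}$. Assembling these connected morphisms into a tuple $(p_x)_x \in \prod_x P_{G_x} \cong P_G$ produces an element $p$ with $\mu_G(p) = h$, establishing surjectivity and hence that $\bar\mu$ is bijective.
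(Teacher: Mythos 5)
Your proof is correct and follows essentially the same route as the paper's: injectivity via \cref{prop image in csp} (to separate the summands $\overline{P}_w$ into distinct fibers over $\WW_1$) combined with \cref{injectivity after quotient} on each summand, and surjectivity via the canonical splitting of a representative graph together with axioms \eqref{lcc: free gen}, \eqref{lcc: reduced and free}, and \eqref{lcc: pullback} of \cref{def lcc}. The only cosmetic differences are that the paper first reduces surjectivity to reduced morphisms using \cref{lem monoidal preservation} and \cref{def lcc}\eqref{lcc: reduced and free} before invoking the cartesian square, whereas you factor the conjugated morphism $h'$ all at once, and that the uniqueness of object decompositions you need is \cref{def slcc}\eqref{slcc: obj decomp} for the strict labelled cospan category $C$ rather than \cref{rmk uniqueness decomp} (which concerns $CX$) — the content is the same.
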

\begin{proof}
Suppose $G \not\equiv G'$, $p\in P_G$, and $p' \in P_{G'}$.
Then using \cref{prop image in csp} we have $\pi \mu_G(p) = [G] \neq [G'] = \pi \mu_{G'}(p')$ in $\mor C(\ast) = \WW_1$, hence $\mu_G(p) \neq \mu_{G'}(p')$ in $\mor C$.
Combining this with \cref{injectivity after quotient} we conclude that $\mor CP \to \mor C$ is injective.
We now turn to surjectivity.

We already know that $\bar \mu$ restricts to a bijection of connected morphisms $\morcon CP = \sum P_{\mathfrak{c}_{nm}} \to \morcon C$ and that $\hom_{CP}(\mathbf{1},\mathbf{1}) \to \hom_C(\mathbf{1}, \mathbf{1})$ is an isomorphism. 
From \cref{lem monoidal preservation} we know that $\bar\mu$ preserves the monoidal product,
so by \cref{def lcc}\eqref{lcc: reduced and free}, if $\morred CP \to \morred C$ is a surjection then the same is true of $\mor CP \to \mor C$.
Let $f \colon c_1 \ten \cdots \ten c_n \to e_1 \ten \cdots \ten e_m$ be a reduced morphism of $C$ other than the identity of the tensor unit, and let $G \in \LL_1$ be a graph which represents $\pi(f) \in \mor C(\ast) = \WW_1$.
For specificity, we take $G_{01} = \uk$ and let $\iota \colon G' = \sum_{x\in \uk} G_x \to G$ be the canonical splitting from \cref{def canonical splitting}. 
Define a new morphism of $C$ by 
\[
  f' = (\widehat{\iota^0})^{-1} \circ f \circ  \widehat{\iota^1} \colon c_{\iota^1(1)} \ten \cdots \ten c_{\iota^1(n)} \to e_{\iota^0(1)} \ten \cdots \ten e_{\iota^0(m)} 
\]
where $\iota^1 = d_1(\iota) \colon \un \to \un$ and $\iota^0 = d_0(\iota) \colon \um \to \um$.
For each $x \in \uk$ we also have $\iota^1_x = d_1(\iota_x) \colon \underline{n_x} \to \underline{n_x}$ and $\iota^0_x = d_0(\iota_x) \colon \underline{m_x} \to \underline{m_x}$ where $n_x$ is the number of inputs and $m_x$ is the number of outputs of $G_x$.
By \eqref{lcc: pullback} of \cref{def lcc} the following diagram is cartesian.
\[ \begin{tikzcd}
\prod\limits_{x=1}^k \homred ( \bigotimes\limits_{i=1}^{n_x} c_{\iota_x^1(i)}, \bigotimes\limits_{j=1}^{m_x} e_{\iota^0_x(j)}) \arrow[dr, phantom, "\lrcorner" very near start] \rar["\ten"] \dar & \homred( \bigotimes\limits_{i=1}^n c_{\iota^1(i)}, \bigotimes\limits_{j=1}^m e_{\iota^0(j)}) \dar \\
\prod\limits_{x=1}^k  \homred_{C(\ast)} (\underline{n_x}, \underline{m_x}) \rar["\ten"] & \homred_{C(\ast)} (\un, \um)
\end{tikzcd} \]
Thus there is an element $(f_1', \dots, f_k')$ in the upper left corner with $f'_x$ living over $[G_x]$ and $f_1' \ten \cdots \ten f_k' = f'$. 
The functor $\pi \colon C \to C(\ast)$ is strict monoidal, hence $f'$ lives over $[\sum G_x] = [G']$.
Since each $G_x$ is a corolla, by definition of $P_{G_x}$ we have $f_x' \in P_{G_x} \subseteq \morcon C$.
These assemble to an element $p' \in P_{G'} = P_{\sum G_x}$ so that $\mu_{G'} (p') = f'$.
Then the element $p = (\iota^{-1})^*(p')$ will be sent by $\mu_G$ to $f$, since \cref{lem height one iso mu} gives the last equality in the following 
\[
  (\widehat{\iota^0})^{-1} \circ f \circ  \widehat{\iota^1} = f' = \mu_{G'} (p') = \mu_{G'} \iota^*(p) = (\widehat{\iota^0})^{-1} \circ \mu_G(p) \circ \widehat{\iota^1}.
\]
Thus $\bar \mu$ is surjective.
\end{proof}

\begin{theorem}\label{essentially surjective}
The function $\bar\mu \colon \mor CP \to \mor C$ gives an isomorphism of strict labelled cospan categories $CP \cong C$.
\end{theorem}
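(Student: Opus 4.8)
The plan is to assemble the pieces established in the preceding lemmas into a verification that $\bar\mu$, together with the canonical identification on objects, is a morphism in $\slcc$ which happens to be bijective. First I would observe that on objects there is nothing to construct: the connected objects of $CP$ are $P_{\mathfrak{e}}$, which is literally the set of connected objects of $C$, and since both $\ob CP$ and $\ob C$ are the free monoid on this set (as both are strict labelled cospan categories), the two object sets are canonically identified in a way respecting the tensor product and the tensor unit. Thus $\bar\mu$ is already identity-on-objects, and it remains to check functoriality, the strict symmetric monoidal structure, and compatibility with the projection to $C(\ast)$.

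The crucial functoriality point is preservation of composition. A composable pair of morphisms in $CP$ is an element of $SP_2$, the set of $2$-simplices of $N(CP) = SP$, and such an element is represented by some $p \in P_G$ with $G \in \LL_2$; the two morphisms being composed are then represented by $d_2 p$ and $d_0 p$, and their composite in $CP$ is represented by $d_1 p$. Now \cref{lem d1 mu thing} gives precisely $\mu_{d_1 G}(d_1 p) = \mu_{d_0 G}(d_0 p) \circ \mu_{d_2 G}(d_2 p)$, which unwinds to the statement that $\bar\mu$ of the composite equals the composite of the two values of $\bar\mu$; that this descends well to the congruence quotients $\overline{P}$, and is independent of the chosen lift $p$, follows from \cref{injectivity after quotient} and \cref{fiber maps prop}. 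Compatibility with sources and targets is \cref{lem source target}, and preservation of identities follows from the connected case (noted before \cref{lem monoidal preservation}, where $s_0(c) = \id_c$) together with strict monoidality, since in both $CP$ and $C$ one has $\id_{c_1 \ten \dots \ten c_n} = \id_{c_1} \ten \dots \ten \id_{c_n}$. Hence $\bar\mu$ is a functor.

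Finally I would promote $\bar\mu$ to an isomorphism in $\slcc$. By \cref{lem monoidal preservation} it strictly preserves the monoidal product of morphisms and the symmetry, and, with the unit and object-level tensor handled by the identification above, it is therefore a strict map of permutative categories. The triangle over $C(\ast)$ commutes by \cref{prop image in csp}: the structure map $CP \to C(\ast)$ sends the class of $p \in P_G$ to $[G] \in \WW_1$, while $\pi(\mu_G(p)) = [G]$ as well. Thus $\bar\mu$ is a morphism in $\slcc$, and since it is bijective on objects (by the identification) and on morphisms (by \cref{lem mu bar bijective}), it is an isomorphism in $\slcc$, that is, $CP \cong C$.

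Since the substantive work has already been carried out in \cref{lem d1 mu thing} and \cref{lem mu bar bijective}, the remaining proof is largely bookkeeping, and I do not expect a serious obstacle. The one point deserving genuine care is the translation of simplicial composition in $CP = \tau_1(SP)$ into the $d_1$-face identity of \cref{lem d1 mu thing}, together with the check that the map so described is well defined on the quotients $\overline{P}_w$ rather than merely on the sets $P_G$; everything else is an application of the structural lemmas.
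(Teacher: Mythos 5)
Your proposal is correct and follows essentially the same route as the paper: composition via \cref{lem d1 mu thing}, sources/targets via \cref{lem source target}, bijectivity via \cref{lem mu bar bijective}, the permutative structure via \cref{lem monoidal preservation}, and the triangle over $C(\ast)$ via \cref{prop image in csp}. The only (immaterial) difference is that you derive preservation of identities directly from $s_0(c)=\id_c$ on connected objects plus strict monoidality, whereas the paper deduces it from surjectivity together with preservation of composition.
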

\begin{proof}
By \cref{lem d1 mu thing}, $\bar \mu$ preserves composition.
\cref{lem source target} and \cref{lem mu bar bijective} imply that $\hom_{CP}(x,y) \to \hom_C(x,y)$ is a bijection for all $x,y$. 
Preservation of identities follows from surjectivity of these functions and the fact that $\bar\mu$ preserves composition.
Hence $CP \to C$ is a functor.
Since it is bijective on objects and fully-faithful, it is an isomorphism of categories.
By \cref{lem monoidal preservation} this is a map of permutative categories, hence an isomorphism of such.
By \cref{prop image in csp} this is a map over $C(\ast)$, hence an isomorphism in $\slcc$.
\end{proof}

\begin{lemma}\label{lem C full}
The functor $\seg(\LL) \to \slcc$ is full.
\end{lemma}
\begin{proof}
Suppose we are given $X, Y\in \seg(\LL)$ and a map of strict labelled cospan categories $f \colon CX \to CY$.
From the description near the beginning of \S\ref{sec from properad to lcc}, since the permutative functor $f$ lives over $C(\ast)$, we have induced functions $f_{\uk} \colon X_{\uk} \to Y_{\uk}$ and $f_{\mathfrak{c}_{m,n}} \colon X_{\mathfrak{c}_{m,n}} \to Y_{\mathfrak{c}_{m,n}}$; we will first show these give a map $X|_{\LL_\elrm} \to Y|_{\LL_\elrm}$ of $\LL_\elrm$-presheaves.
Three types of maps are indicated in \S\ref{subsec inert} that we must contend with, namely $\ell_i, r_j \colon \underline{1} \to \mathfrak{c}_{m,n}$ and $(\gamma, \chi) \colon \mathfrak{c}_{m,n} \to \mathfrak{c}_{m,n}$.
The diagram for $\ell_i$ is listed just below, where the top composite is $\ell_i^* \colon X_{\mathfrak{c}_{m,n}} \to X_{\underline{1}}$ and similarly for the bottom composite.
The two left squares commute since $f$ preserves domains and the monoidal product.
\[ \begin{tikzcd}
X_{\mathfrak{c}_{m,n}} \rar["d_1"',"{\color{red}\dom}"]  \dar& X_{\um} \rar["\cong", bend right=15, start anchor = {[yshift=-1.5ex]east}, end anchor = {[yshift=-1.5ex]west}]  \dar& X_{\underline{1}}^{\times m} \lar[color=red, dashed,"\ten"', bend right=15, start anchor = {[yshift=1.5ex]west}, end anchor = {[yshift=1.5ex]east}] \rar  \dar & X_{\underline{1}} \dar \\
Y_{\mathfrak{c}_{m,n}} \rar["d_1"',"{\color{red}\dom}"] & Y_{\um} \rar["\cong", bend right=15, start anchor = {[yshift=-1.5ex]east}, end anchor = {[yshift=-1.5ex]west}] & Y_{\underline{1}}^{\times m} \lar[color=red, dashed,"\ten"', bend right=15, start anchor = {[yshift=1.5ex]west}, end anchor = {[yshift=1.5ex]east}] \rar & Y_{\underline{1}}
\end{tikzcd} \]
We similarly have $f_{\underline{1}} r_j^* = r_j^* f_{\mathfrak{c}_{m,n}}$.
For the bipermutations $(\gamma, \chi)$ acting on $\mathfrak{c}_{m,n}$ it is enough to consider the cases where one of $\gamma$ or $\chi$ is an adjacent transposition and the other is an identity.
Let $H$ be the height 2 level graph
\[ \begin{tikzcd}[sep=small]
\underline{m} \ar[dr,"\cong"',"\tau"] & & \underline{m} \ar[dl,"\id"] \ar[dr] & &  \underline{n} \ar[dl] \\
& \underline{m} & & \underline{1}
\end{tikzcd} \]
where
the leftmost bijection $\tau$ interchanges $i$ and $i+1$ and acts as the identity otherwise.
There is an isomorphism $H \cong s_0 \mathfrak{c}_{m,n}$ which is $\tau \colon H_{00} \to (s_0 \mathfrak{c}_{m,n})_{00}$, and the identity elsewhere.
Then the composite
\[ \begin{tikzcd}
\mathfrak{c}_{m,n} \rar{d^1} & H \rar{\cong} & s_0 \mathfrak{c}_{m,n} \rar{s^0} & \mathfrak{c}_{m,n}
\end{tikzcd} \]
is the automorphism of $\mathfrak{c}_{m,n}$ which interchanges the $i$th and $(i+1)$st inputs.
There is a congruence as to the left of the following commutative square (see \cref{sec symmetry}).
\[ \begin{tikzcd}[column sep=large]
d_2H \dar{\equiv} \rar{\cong} & d_2 s_0 \mathfrak{c}_{m,n} \dar{=} \\
s_0(\underline{i-1}) + (\mathfrak{c}_{11} \twplus1 \mathfrak{c}_{11} ) + s_0(\underline{n-i-1}) 
  \rar{\id + \sigma_1 + \id}
&
s_0 \underline{m}
\end{tikzcd} \]
so that $X_{s_0\un} \to X_{d_2H}$ sends the element representing $\id_{x_1 \ten \cdots \ten x_m}$ to the element representing $\id_{x_1\ten \cdots \ten x_{i-1}} \ten \tau_{x_i,x_{i+1}} \ten \id_{x_{i+2}\ten \cdots \ten x_m}$ in $CX$.
Commutativity of the following diagram then gives that a connected morphism $\phi \colon x_1 \ten \cdots \ten x_m \to x_1' \ten \cdots \ten x_n'$ is sent to $\phi \circ (\id_{x_1\ten \cdots \ten x_{i-1}} \ten \tau_{x_i,x_{i+1}} \ten \id_{x_{i+2}\ten \cdots \ten x_m})$.
\[ \begin{tikzcd}
X_{\mathfrak{c}_{m,n}} & 
X_H 
  \lar[swap]{d_1} \dar["\cong"', "d_2 \times d_0"]& 
X_{s_0\mathfrak{c}_{m,n}} 
  \lar["\cong"']  \dar["\cong"',"d_2\times d_0"] & 
X_{\mathfrak{c}_{m,n}} 
  \lar["s_0"'] \ar[dl,"s_0d_1 \times \id", bend left=15] 
\\
& 
X_{d_2 H} \underset{X_{\underline{m}}}\times X_{d_0H} 
  \ar[ul, bend left=15, dashed]
& 
X_{s_0 \underline{m}} \underset{X_{\underline{m}}}\times
X_{\mathfrak{c}_{m,n}}
  \lar  
\end{tikzcd} \]
All of this structure is compatible with $f\colon CX \to CY$, so we conclude that 
\[ \begin{tikzcd}
X_{\mathfrak{c}_{m,n}} \dar  & X_{\mathfrak{c}_{m,n}} \lar \dar \\
Y_{\mathfrak{c}_{m,n}} & Y_{\mathfrak{c}_{m,n}} \lar 
\end{tikzcd} \]
commutes.
The case when an adjacent transposition acts on an output is similar.
It follows that $f$ defines a map $X|_{\LL_\elrm} \to Y|_{\LL_\elrm}$.

Since $Y$ is Segal, $Y|_{\LL_\intrm}$ is the right Kan extension of $Y|_{\LL_\elrm}$, hence we obtain a unique extension $X|_{\LL_\intrm} \to Y|_{\LL_\intrm}$.
In particular, we have functions $f_G \colon X_G \to Y_G$ for every level graph $G$.
It remains only to check that these maps are compatible with degeneracies and inner faces.
By \eqref{diagram face} above \cref{def inner face}, we only need to check compatibility with the inner faces $d_1 \colon X_G \to X_{d_1G}$ where $G \in \LL_2$, and by \eqref{diagram degeneracy} below \cref{def degeneracies} we only need to check compatibility with the degeneracies $s_0 \colon X_{\un} \to X_{s_0(\un)}$.
Again by Segality we may further reduce to the cases where $G$ is connected (for $d_1$) and $n=1$ (for $s_0$).
For $d_1$, all squares of the following cube are known to commute with the exception of the back, which then must commute since $Y_{d_1G} \to SY_1$ is injective (using that $d_1G$ is a corolla).
\[ \begin{tikzcd}[sep=small]
X_{d_1G} \ar[dr,hook]  \ar[dd]& & X_G \ar[dr] \ar[ll,"d_1"']  \ar[dd]\\
& SX_1   & & SX_2 \ar[ll,"d_1"' near end, crossing over] \ar[dd]
\\
Y_{d_1G} \ar[dr,hook] & & Y_G \ar[dr] \ar[ll,"d_1"' near start] \\
& SY_1 \ar[from=uu, crossing over] & & SY_2 \ar[ll,"d_1"']
\end{tikzcd} \]
The map $s_0 \colon X_{\mathfrak{e}} \to X_{\mathfrak{c_{1,1}}} \subset \morcon(CX)$ takes a connected object $x$ to $\id_x$, so $f(s_0 x) = f(\id_x) = \id_{f x} = s_0 (fx)$, as desired.
We conclude that $X|_{\LL_\intrm} \to Y|_{\LL_\intrm}$ is, in fact, a map $X\to Y$.
By construction, this map is sent to $f$ by $C$.
\end{proof}

\begin{theorem}\label{equivalence of presheaves and slcc}
The functor $\seg(\LL) \to \slcc$ is an equivalence of categories.
\end{theorem}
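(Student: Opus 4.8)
The plan is to show the functor is an equivalence in the standard way, by verifying it is fully faithful and essentially surjective, using the inverse construction $P$ from the previous section as a lever. Write $F\colon\seg(\LL)\to\slcc$ for the functor $X\mapsto(CX\to C(\ast))$ and $G=P\colon\slcc\to\seg(\LL)$ for the assignment $\pi\mapsto P(\pi)$. By \cref{prop C is faithful} and the remark following it, $F$ is faithful. The preceding theorem produces, for each strict labelled cospan category $C$, an isomorphism $\bar\mu_C\colon CP(C)\xrightarrow{\cong}C$ in $\slcc$; since every ingredient feeding into $\bar\mu$ was constructed functorially in $C$, these assemble into a natural isomorphism $\eta\colon FG\cong\id_{\slcc}$. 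In particular $F$ is essentially surjective. Thus the entire statement reduces to proving that $F$ is full, and I would flag this as the one genuinely nonformal point: faithfulness together with $FG\cong\id$ does not by itself force fullness, so the explicit description of the envelope must enter here.

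For fullness, let $X,Y\in\seg(\LL)$ and let $\phi\colon CX\to CY$ be a morphism of strict labelled cospan categories, that is, a strict symmetric monoidal functor commuting over $C(\ast)$. Because $\phi$ lies over $C(\ast)$ it preserves cardinality of the underlying cospans, hence preserves connectedness, so it restricts to functions $X_{\mathfrak e}\to Y_{\mathfrak e}$ on connected objects and $X_{\mathfrak c_{m,n}}\to Y_{\mathfrak c_{m,n}}$ on connected morphisms (the subsets of $\ob C$ and $\morcon C$ cut out in \eqref{diagram defining Pn and Pcmn}). That $\phi$ is strict symmetric monoidal makes these compatible with the inert generators $\ell_i$ and $r_j$ (which read off source and target objects, preserved since $\phi$ respects tensor decompositions) and with the corolla automorphisms $(\gamma,\chi)^*$ (preserved since $\phi$ respects the symmetry isomorphisms, as in \cref{def corolla iso}). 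Hence $\phi$ determines a map of presheaves on $\LL_{\elrm}$.

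It remains to extend this to a map $\psi\colon X\to Y$ of Segal $\LL$-presheaves with $C\psi=\phi$. Uniqueness is automatic: a map of Segal $\LL$-presheaves is determined by its restriction to $\LL_{\elrm}$, since each $X_G$ is the limit $\lim_{E\in(\LL^{\elrm}_{/G})^{\oprm}}X_E$ and naturality forces $\psi_G$ to be the induced map of limits — this is exactly the principle already invoked in the proof of \cref{prop C is faithful}. For existence, the $\LL_{\elrm}$-map extends over all inert maps by functoriality of the limit description, while compatibility with the active maps of $\LL$ is supplied precisely by the fact that $\phi$ preserves composition (the active structure of $CX$ encodes composition, which $\phi$ respects); tracing through the construction of $C$ then gives $C\psi=\phi$. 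This is the step I expect to carry the real weight.

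Combining the three points, $F$ is faithful, full, and essentially surjective, hence an equivalence of categories, which is the assertion of the theorem. The soft categorical facts reduce everything to fullness; the substance lies in recognizing that a strict symmetric monoidal functor over $C(\ast)$ carries exactly the data of a morphism of Segal $\LL$-presheaves, via its restriction to connected objects and connected morphisms together with its compatibility with the inert and active structure.
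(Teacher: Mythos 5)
Your proof reaches the right conclusion and its skeleton---faithfulness from \cref{prop C is faithful}, essential surjectivity from the natural isomorphism $CP(C)\cong C$ of the preceding theorem---is the same as the paper's. Where you genuinely diverge is at fullness, and your caution there is well founded: the paper disposes of fullness in one line, asserting that it follows because the composite $\slcc\to\seg(\LL)\to\slcc$ is isomorphic to the identity, whereas you correctly observe that a faithful $F$ with $FG\cong\id$ need not be full (the inclusion of the two-object discrete category into the contractible groupoid on two objects, with $G$ constant, already shows this). What the formal deduction is missing is precisely that $G=P$ be essentially surjective, i.e.\ that $P(CX)\cong X$; your direct fullness argument is the honest way to supply the missing content, and in that respect your route is more careful than the one the paper records.

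That said, your execution of the fullness step is only an outline. The reduction of a strict symmetric monoidal functor $\phi$ over $C(\ast)$ to its elementary data $X_{\mathfrak e}\to Y_{\mathfrak e}$ and $X_{\mathfrak{c}_{m,n}}\to Y_{\mathfrak{c}_{m,n}}$, compatibly with $\ell_i$, $r_j$ and the corolla automorphisms, is sound, as is the extension over $\LL_\intrm$ by Segality. But the two claims you flag as ``carrying the real weight''---that the resulting $\psi$ commutes with the generating active maps of $X$ and $Y$ because $\phi$ preserves composition and identities, and that $C\psi=\phi$---are asserted rather than proved. They are true, and the verifications are of exactly the kind the paper carries out when constructing $P$ (compare \cref{lem d1 mu thing}, \cref{def d one}, and \cref{def s zero}, which identify the inner face and degeneracy operators with composition and identities in the envelope), so the gap is fillable with tools already on the table; but as written your argument is a correct blueprint for the fullness proof rather than a complete one.
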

\begin{proof}
This functor is fully faithful by \cref{prop C is faithful} and \cref{lem C full}, and is essentially surjective by \cref{essentially surjective}.
\end{proof}

Combining the previous theorem with \cref{prop properads as presheaves}, we have:

\begin{corollary}\label{cor main theorem}
The category of properads is equivalent to the category of strict labelled cospan categories. \qed
\end{corollary}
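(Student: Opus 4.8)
The plan is to obtain the desired equivalence as the composite of two equivalences already established above, so that \cref{cor main theorem} requires no genuinely new argument. First I would invoke \cref{prop properads as presheaves}, which supplies an equivalence of categories $N_1 \colon \properad \xrightarrow{\simeq} \seg(\LL)$ between properads and Segal $\LL$-presheaves. Next I would invoke \cref{equivalence of presheaves and slcc}, which shows that the envelope functor $\seg(\LL) \to \slcc$ is an equivalence. Since a composite of equivalences of categories is again an equivalence, composing these two functors produces an equivalence $\properad \xrightarrow{\simeq} \slcc$, which is exactly the claim. Concretely, this composite sends a properad $P$ to the strict labelled cospan category $C(N_1 P) \to C(\ast)$.

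The content of the result lives entirely in the two factors. The factor $\properad \simeq \seg(\LL)$ repackages the description of properads as Segal presheaves on the graphical category $\pgcat$ (equivalently on $\LLc$) and transports it across $\iota^*$; as noted in the paper, the hard work for this was carried out in \cite{HRYbook} and \cite{ChuHackney} and recalled via \cref{prop equiv of segal} and \cref{lem N1 segal}. The factor $\seg(\LL) \simeq \slcc$ is the heart of the paper: one direction is the envelope construction $X \mapsto (CX \to C(\ast))$, shown to be faithful in \cref{prop C is faithful}, and fullness together with essential surjectivity are obtained by constructing the inverse assignment $(\pi \colon C \to C(\ast)) \mapsto P(\pi)$ and verifying the isomorphism $CP(\pi) \cong C$ of strict labelled cospan categories.

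Because both factors are already in hand, I expect the \emph{proof} of this corollary to be a single sentence, and indeed the main obstacle is not at this stage at all but upstream, in \cref{equivalence of presheaves and slcc}: the genuinely difficult step was extending $P|_{\LL_\intrm}$ to a full $\LL$-presheaf using the simplicial (coskeletal) structure on $\LL$, and then comparing the two constructions via the map $\bar\mu$. Once those are granted, the present statement follows formally by transitivity of equivalence.
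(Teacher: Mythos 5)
Your proposal matches the paper exactly: the corollary is stated immediately after \cref{equivalence of presheaves and slcc} with the remark that it follows by combining that theorem with \cref{prop properads as presheaves}, i.e., by composing the two equivalences $\properad \simeq \seg(\LL) \simeq \slcc$. Your additional comments correctly locate where the substantive work lies, but the corollary itself requires nothing beyond transitivity of equivalence, just as you say.
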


\section{2-Categorical structures}\label{section 2-cat structures}
In this section we turn our attention to the biequivalence between properads and labelled cospan categories, \cref{main theorem as corollary}, which is the main result of this paper.
The functor from strict labelled cospan categories into labelled cospan categories is not an equivalence of 1-categories, as it is only surjective up to equivalence, not up to isomorphism.
Thus to connect properads to labelled cospan categories, we will need to view $\properad$ as a 2-category, rather than as a 1-category.
The following definition is introduced in \cite{Hackney:Properad2Cat}, where it is related to the Boardman--Vogt-style tensor product of properads.

\begin{definition}[Natural transformation of properad maps]\label{properad natural trans}
If $F, G \colon P \to Q$ are two maps of properads with the same source and target, then a \emph{natural transformation} from $F$ to $G$ is nothing but a polynatural transformation between the underlying map of polycategories/dioperads, in the sense of \cite[Definition 2.5.16]{JohnsonYau:2DC}.
This means that a natural transformation is given by a collection of unary morphisms $\gamma_c \colon F(c) \to G(c)$, indexed by the colors of $P$, so that, for each $p \in P(c_1, \dots, c_n; d_1, \dots, d_m)$, we have
\begin{equation}\label{eq properad nat trans}
  (Gp)(\gamma_{c_1}, \dots, \gamma_{c_n}) = (\gamma_{d_1}, \dots, \gamma_{d_m})(Fp)
\end{equation}
in $Q(Fc_1, \dots, Fc_n; Gd_1, \dots, Gd_m)$.
\end{definition}
As in \cite{JohnsonYau:2DC}, the notation in \eqref{eq properad nat trans} is meant to represent iterated dioperadic composition, see \cref{fig: nat trans rel}.

\begin{figure}
\labellist
\small\hair 2pt
 \pinlabel {$\gamma_{c_1}$} [B] at 33 184
 \pinlabel {$\gamma_{c_2}$} [B] at 125 184
 \pinlabel {$\gamma_{c_n}$} [B] at 279 184
 \pinlabel {$Fp$} [B] at 516 184
 \pinlabel {$=$} [B] at 336 126
 \pinlabel {$Gp$} [B] at 156 67
 \pinlabel {$\gamma_{d_1}$} [B] at 397 67
 \pinlabel {$\gamma_{d_2}$} [B] at 485 67
 \pinlabel {$\gamma_{d_m}$} [B] at 639 67
\endlabellist
\centering
\includegraphics[width=0.9\textwidth]{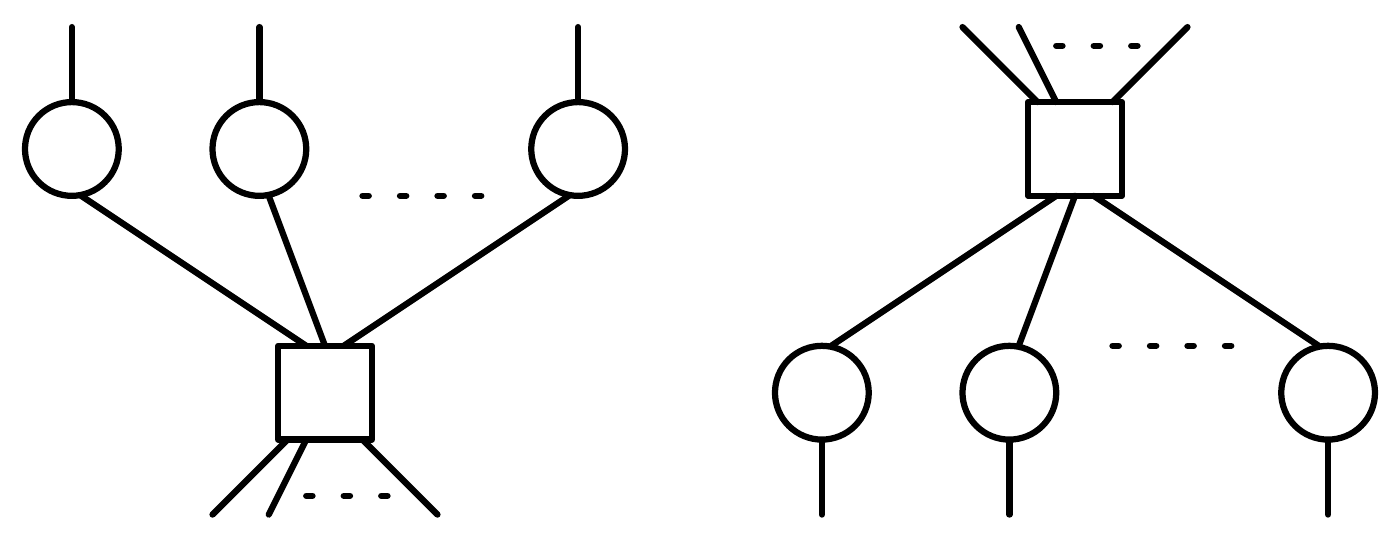}
\caption{Relation satisfied by a properadic natural transformation}
\label{fig: nat trans rel}
\end{figure}

\begin{remark}
In \cite[Section 4.4]{HRYbook}, a closed monoidal structure is given for those properads whose operations all have at least one input and at least one output, so in particular this subcategory becomes a 2-category. 
But this structure is not extended there to the category of all properads. 
\cref{properad natural trans} is the extension of this 2-category to $\properad$.
(See \cite[Definition 6.3]{Duncan:TQC} for a different notion.)
\end{remark}

\subsection{Equivalences of labelled cospan categories}
Let $\lcc$ denote the 2-category of labelled cospan categories, equipped with the 2-cells of \cref{rmk: 2-cat struct}.

\begin{proposition}\label{prop characterization of equivs}
Suppose $(f,\alpha) \colon C \to D$ is a 1-morphism of labelled cospan categories, as depicted below.
\[ \begin{tikzcd}[row sep=small, column sep=tiny]
C \ar[rr,"f"] \ar[ddr,"\pi"'] & & D \ar[ddl,"\mu"] 
\\
\ar[rr,"\text{\scriptsize $\alpha \cong$}", phantom] & & {} 
\\
& \csp 
\end{tikzcd} \]
The map $(f,\alpha)$ is an equivalence in the 2-category of labelled cospan categories if and only if $f\colon C \to D$ is an equivalence of categories.
\end{proposition}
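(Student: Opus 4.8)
The plan is to prove both implications, the forward one being immediate and the reverse one carrying the substance. Throughout write $\pi \colon C \to \csp$ and $\mu \colon D \to \csp$, and take $\alpha \colon \pi \Rightarrow \mu f$ to be the monoidal natural isomorphism that is part of the data of $(f,\alpha)$. For the easy direction, suppose $(f,\alpha)$ is an equivalence in $\lcc$. Then there is a $1$-morphism $(g,\beta)$ together with invertible $2$-cells $(g,\beta)\circ(f,\alpha)\cong \id_C$ and $(f,\alpha)\circ(g,\beta)\cong\id_D$; by the description of $2$-cells in \cref{rmk: 2-cat struct} these are monoidal natural isomorphisms whose underlying natural transformations are isomorphisms $gf\cong\id_C$ and $fg\cong\id_D$. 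Hence the underlying functor $f$ is an equivalence of categories.

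For the reverse direction, assume $f \colon C \to D$ is an equivalence of categories. First I would upgrade $f$ to a symmetric monoidal equivalence: since $f$ is a (strong) symmetric monoidal functor whose underlying functor is an equivalence, the classical theory of monoidal categories provides a symmetric monoidal functor $g \colon D \to C$ together with monoidal natural isomorphisms $\eta \colon \id_C \Rightarrow gf$ and $\epsilon \colon fg \Rightarrow \id_D$ forming an adjoint equivalence. Next I would produce the inverse $1$-morphism in $\lcc$. Define $\beta \colon \mu \Rightarrow \pi g$ as the vertical composite of $\mu\epsilon^{-1} \colon \mu \Rightarrow \mu fg$ with $\alpha^{-1} g \colon \mu fg \Rightarrow \pi g$; this is a monoidal natural isomorphism, and since a $1$-morphism of labelled cospan categories is nothing but a symmetric monoidal functor equipped with such a monoidal natural isomorphism over $\csp$, the pair $(g,\beta)$ is a $1$-morphism $D \to C$ in $\lcc$.

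It then remains to promote $\eta$ and $\epsilon$ to $2$-cells of $\lcc$ exhibiting $(g,\beta)$ as a pseudo-inverse. The composite $(g,\beta)\circ(f,\alpha)$ has underlying functor $gf$ and structure isomorphism $\delta = (\beta f)\circ\alpha \colon \pi \Rightarrow \pi gf$, while the identity $1$-morphism on $(C,\pi)$ has structure isomorphism $\id_\pi$. By \cref{rmk: 2-cat struct}, the isomorphism $\eta^{-1} \colon gf \Rightarrow \id_C$ underlies a $2$-cell $(gf,\delta)\Rightarrow(\id_C,\id_\pi)$ precisely when $\delta$ equals the whiskering $\pi\ast\eta \colon \pi \Rightarrow \pi gf$, i.e.\ when the two monoidal natural isomorphisms $(\beta f)\circ\alpha$ and $\pi\ast\eta$ from $\pi$ to $\pi gf$ agree. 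Here is where I expect the key leverage: both are monoidal natural isomorphisms between symmetric monoidal functors $C \to \csp$, and for connected $c$ their components are isomorphisms in $\csp$ between $\pi(c)$ and $\pi gf(c)$, objects of cardinality one; so the uniqueness argument of the Remark following \cref{def lcc} applies verbatim to force them equal. The same reasoning handles $\epsilon$ and the composite $(f,\alpha)\circ(g,\beta)$. As inverses of monoidal natural isomorphisms are again monoidal natural isomorphisms satisfying the (inverse) compatibility, these $2$-cells are invertible, and $(f,\alpha)$ is an equivalence in $\lcc$.

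The main obstacle is the verification that the coherence conditions defining these $2$-cells hold; the point of the argument is that they need not be checked by a direct triangle-identity computation with $\alpha,\beta,\eta,\epsilon$, but instead follow from the uniqueness of monoidal natural isomorphisms into $\csp$, which is available because every object of a labelled cospan category decomposes into connected objects and $\csp$ admits unique isomorphisms between objects of cardinality at most one. The classical upgrade of $f$ to a symmetric monoidal adjoint equivalence is standard, but it is the step requiring the most external input.
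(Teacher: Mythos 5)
Your proof is correct, and its overall skeleton matches the paper's: the forward direction is immediate, and for the converse you upgrade $f$ to a symmetric monoidal (adjoint) equivalence, combine $\varepsilon$ with $\alpha^{-1}$ to obtain a structure isomorphism $\beta$ for $g$, and then promote $\eta$ and $\varepsilon$ to invertible 2-cells of $\lcc$. Where you genuinely diverge is in verifying the compatibility condition of \cref{rmk: 2-cat struct}: the paper does this by an explicit pasting computation, using a triangle identity to identify the composite $(g,\beta)\circ(f,\alpha)$ with $(gf,\pi\cdot\eta^{-1})$ and then recognizing $\eta^{-1}$ as the required 2-cell; you instead observe that the condition is automatic, because for a fixed symmetric monoidal functor there is at most one monoidal natural isomorphism filling the triangle over $\csp$ (the unlabelled remark following \cref{def lcc}). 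That remark's argument does apply verbatim to your two candidates $\pi \Rightarrow \pi gf$ (and likewise $\mu \Rightarrow \mu fg$): their components at connected objects are isomorphisms between cardinality-one objects of $\csp$, hence unique, and \cref{def lcc}\eqref{lcc: obj decomp} together with monoidality then determines all components. Your route buys a small economy — the triangle identities of the adjoint equivalence are never actually used, so any monoidal quasi-inverse with monoidal natural isomorphisms $\eta,\varepsilon$ would suffice — at the cost of invoking the labelled-cospan axioms once more; the paper's computation is more hands-on but self-contained at that point. Both arguments are sound.
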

\begin{proof}
Suppose $f\colon C \to D$ is an equivalence of categories.
Then $f$ is an equivalence in the 2-category of symmetric monoidal categories, symmetric monoidal functors, and monoidal natural transformations.
Choose $g\colon D \to C$ along with monoidal natural isomorphisms $\eta \colon \id_C \cong gf$ and $\varepsilon \colon fg \cong \id_D$.

Define a (monoidal) natural transformation $\beta \colon \pi g \Rightarrow \mu$ as the following composite.
\[ \begin{tikzcd}
D \rar{g} \ar[rr, bend right=50, "\id_D"'] \ar[rr, bend right=25, phantom, "\text{\scriptsize $\varepsilon \Downarrow$}"] 
& \ar[rr, bend left=50, "\pi"] \ar[rr, bend left=25, phantom, "\text{\scriptsize $\alpha^{-1} \Downarrow$}"] C \rar{f} & D \rar{\mu} & \csp
\end{tikzcd} \]
Then $(g,\beta) \colon D \to C$ is a 1-morphism of labelled cospan categories.
Since the following pasting composites are isomorphisms,
\[ \begin{tikzcd}
C \ar[rr,"\id_C"', bend right=20] \ar[rr,"gf", bend left=20] \ar[rr,"\text{\scriptsize $\eta^{-1} \Downarrow$}",phantom] \ar[ddr] & & C \ar[ddl]
\\
\ar[rr,"\text{\scriptsize $\id_{\pi} \cong$}", phantom, bend right=10] & & {} 
\\
& \csp & 
\end{tikzcd} 
\qquad
\begin{tikzcd}
C' \ar[rr,"\id_{C'}"', bend right=20] \ar[rr,"fg", bend left=20] \ar[rr,"\text{\scriptsize $\varepsilon \Downarrow$}",phantom] \ar[ddr] & & C' \ar[ddl]
\\
\ar[rr,"\text{\scriptsize $\id_{\mu} \cong$}", phantom, bend right=10] & & {} 
\\
& \csp & 
\end{tikzcd}
\]
by \cref{rmk: 2-cat struct} we have $(g,\beta) \circ (f,\alpha) \cong \id_{\pi}$ and $(f,\alpha) \circ (g,\beta) \cong \id_{\mu}$ in $\lcc$.
\end{proof}

Note that if $f$ is an \emph{isomorphism} of categories, then $(f,\alpha)$ is an isomorphism of labelled cospan categories.

\subsection{Strict labelled cospan categories form a 2-category}
We now endow $\slcc$ with the structure of a 2-category, which will serve as an intermediary between $\properad$ (considered as a 2-category) and $\lcc$.

\begin{definition}\label{def slcc 2cat}
Given two maps  
\[ \begin{tikzcd}[column sep=tiny]
C \ar[rr,"f"] \ar[dr,"\pi"'] & & C' \ar[dl,"\pi'"] \\
& C(\ast)
\end{tikzcd} \qquad \qquad 
\begin{tikzcd}[column sep=tiny]
C \ar[rr,"g"] \ar[dr,"\pi"'] & & C' \ar[dl,"\pi'"] \\
& C(\ast)
\end{tikzcd}
\]
of strict labelled cospan categories (that is, commutative triangles with $f$ and $g$ strict maps of permutative categories), we define a 2-morphism from one to the other to be a monoidal natural transformation $\gamma \colon f \Rightarrow g$ so that the whiskering 
\begin{equation}\label{diag whisker}
\begin{tikzcd}
C \ar[rr,"g"', bend right=20] \ar[rr,"f", bend left=20] \ar[rr,"\text{\scriptsize $\gamma \Downarrow$}",phantom]  & & C' \ar[ddl,"\pi'"]
\\
& &
\\
& C(\ast) & 
\end{tikzcd}
\end{equation}
is the identity natural transformation on $\pi \colon C \to C(\ast)$.
\end{definition}

If the whiskering \eqref{diag whisker} is an isomorphism, then it is automatically an identity. 
This follows from \cref{rmk unique ni} and the fact that the canonical inclusion $C(\ast) \to \csp$ is injective-on-objects and faithful.

There is a functor from the 1-category of strict labelled cospan categories to the 2-category $\lcc$ which is given by composition with $C(\ast) \to \csp$.

\begin{lemma}\label{lem slcc lcc 2-functor}
The usual inclusion $\slcc \to \lcc$ extends to a 2-functor.
\end{lemma}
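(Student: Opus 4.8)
The plan is to make the $2$-functor act as the \emph{identity} on $2$-cells, so that essentially all of the work collapses to a single whiskering identity. Write $J\colon C(\ast)\to\csp$ for the symmetric monoidal equivalence of \cref{example C star monoidal}. The given $1$-functor sends a strict labelled cospan category $\pi\colon C\to C(\ast)$ to the labelled cospan category with structure functor $\bar\pi:=J\pi\colon C\to\csp$, and sends a strict map $f\colon C\to C'$ (which satisfies $\pi'f=\pi$ on the nose, where $\pi'\colon C'\to C(\ast)$ is the structure functor of $C'$) to the $1$-morphism $(f,\id_{\bar\pi})$ of $\lcc$; the structural isomorphism may be taken to be the identity since $\bar\pi' f=J\pi' f=J\pi=\bar\pi$ strictly. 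Because all structural isomorphisms in the image are identities, the composition of $\lcc$ produces $1$-morphisms again carrying identity structural data, so that $(g,\id)\circ(f,\id)=(gf,\id)$ and $(\id_C,\id)=\id_{(C,\bar\pi)}$; this confirms that we have a functor into the underlying $1$-category of $\lcc$.

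Next I would define the action on $2$-cells. A $2$-morphism $\gamma\colon f\Rightarrow g$ of $\slcc$ is, by \cref{def slcc 2cat}, a monoidal natural transformation with $\pi'\cdot\gamma=\id_{\pi}$, and I would send it to the same monoidal natural transformation, now regarded as a candidate $2$-cell $(f,\id_{\bar\pi})\Rightarrow(g,\id_{\bar\pi})$ of $\lcc$. The one thing requiring verification is the compatibility condition of \cref{rmk: 2-cat struct}: the whiskered composite $\id_{\bar\pi}\circ(\bar\pi'\cdot\gamma)$ must equal $\id_{\bar\pi}$, that is $\bar\pi'\cdot\gamma=\id_{\bar\pi}$. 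This follows by whiskering the $\slcc$-condition with $J$, via $\bar\pi'\cdot\gamma=(J\pi')\cdot\gamma=J\cdot(\pi'\cdot\gamma)=J\cdot\id_{\pi}=\id_{J\pi}=\id_{\bar\pi}$. Hence every $2$-cell of $\slcc$ is a legitimate $2$-cell of $\lcc$ between the images of its source and target.

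Finally I would dispatch the remaining $2$-functor axioms, which are now immediate. Since the assignment is the identity on $2$-cells, and identities and vertical composites of $2$-cells in both $\slcc$ and $\lcc$ are computed as the corresponding operations on the underlying monoidal natural transformations, these are preserved on the nose; one need only remark in passing that the class of $\gamma$ satisfying the whiskering condition is closed under vertical composition. For horizontal composition (equivalently, whiskering by $1$-morphisms), the key point is that each $1$-morphism in the image carries identity structural data, so the Godement product computed in $\lcc$ between such $1$-morphisms agrees with the one computed in $\slcc$, both being inherited from the ambient $2$-category of permutative categories; thus horizontal composition is preserved as well. The only genuinely nontrivial step is the compatibility check above, and even that is just the whiskering identity $\bar\pi'\cdot\gamma=J\cdot(\pi'\cdot\gamma)$, so I expect no real obstacle.
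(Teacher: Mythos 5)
Your proposal is correct and follows essentially the same route as the paper: the only substantive check is that a $2$-cell $\gamma$ of $\slcc$ satisfies the compatibility condition of \cref{rmk: 2-cat struct}, and both you and the paper reduce this to the whiskering identity $\tilde\pi'\cdot\gamma = J\cdot(\pi'\cdot\gamma) = J\cdot\id_\pi = \id_{\tilde\pi}$. Your write-up is somewhat more explicit about the remaining $2$-functor axioms, which the paper treats as immediate.
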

\begin{proof}
Suppose we are given a 2-morphism $\gamma$ between 1-morphisms $f$ and $g$ in $\slcc$ as in \cref{def slcc 2cat}.
We then obtain the middle equality in the following chain of equalities of symmetric monoidal natural transformations between monoidal functors.
\[ 
\begin{tikzcd}[row sep=small, column sep=tiny]
C \ar[rr, bend right=20] \ar[rr, bend left=20] \ar[rr,"\text{\scriptsize $\Downarrow$}",phantom]  \ar[dddr, bend right] & & C' \ar[dddl, bend left]
\\
\ar[rr,"=",phantom, bend right] & & {}
\\
& \phantom{C(\ast)} & \\
& \csp
\end{tikzcd}
= 
\begin{tikzcd}[row sep=small, column sep=tiny]
C \ar[rr, bend right=20] \ar[rr, bend left=20] \ar[rr,"\text{\scriptsize $\Downarrow$}",phantom] \ar[ddr]  & & C' \ar[ddl] 
\\
\ar[rr,"=",phantom, bend right=10] & & {}
\\
& C(\ast) \dar & \\
& \csp
\end{tikzcd}
=
\begin{tikzcd}[row sep=small, column sep=tiny]
C \ar[rr, bend left=20]  \ar[ddr]  & & C' \ar[ddl] 
\\
\ar[rr,"=",phantom, bend left=10] & & {}
\\
& C(\ast) \dar & \\
& \csp
\end{tikzcd}
=
\begin{tikzcd}[row sep=small, column sep=tiny]
C  \ar[rr, bend left=20]  \ar[dddr, bend right] & & C'  \ar[dddl, bend left]
\\
\ar[rr,"=",phantom, bend right] & & {}
\\
& \phantom{C(\ast)} & \\
& \csp
\end{tikzcd}
\]
Thus $\gamma$ is a 2-morphism between the 1-morphisms of labelled cospan categories $(f,\id)$ and $(g,\id)$ as in \cref{rmk: 2-cat struct}.
\end{proof}

We will return to the relationship between $\slcc$ and $\lcc$ in \cref{sec slcc lcc 2-cat}, but first we will give more detail to facilitate the comparison with natural transformations of properads.
Given a 2-morphism $\gamma$ of $\slcc$ as in \cref{def slcc 2cat}, the underlying data consists of \emph{connected} morphisms
\[
  \gamma_x \colon f(x) \to g(x)
\]
for each connected object $x$ of $C$.
We know these maps must all be connected, as $\pi'(\gamma_x) = (\id_{\pi})_x = \id_{\pi(x)} = \id_{\underline{1}}$ is a connected map in $C(\ast)$.
These maps are also the full extent of the data: if $c$ is an arbitrary object of $C$, it can be uniquely written as $c = x_1 \otimes \dots \otimes x_n$ where the $x_i$ are connected objects.
Since $\gamma$ is a monoidal natural transformation and $f$ and $g$ are strict monoidal functors, the diagram
\[ \begin{tikzcd}[column sep=huge]
f(x_1) \otimes \dots \otimes f(x_n) \dar["="] \rar["\gamma_{x_1} \otimes \dots \otimes \gamma_{x_n}"] & g(x_1) \otimes \dots \otimes g(x_n) \dar["="] \\
f(x_1 \otimes \dots \otimes x_n) \rar["\gamma_{c}"] & g(x_1 \otimes \dots \otimes x_n)
\end{tikzcd} \]
commutes, that is, $\gamma_c = \gamma_{x_1} \otimes \dots \otimes \gamma_{x_n}$.
We now observe that one only needs to check compatibility of $\gamma$ with \emph{connected morphisms}, rather than arbitrary morphisms.

\begin{proposition}\label{proposition reduction}
Suppose $f, g \colon C \to C'$ are two 1-morphisms of strict labelled cospan categories, and for each connected object $x$ of $C$ we have a connected morphism $\gamma_x \colon f(x) \to g(x)$.
If the diagram
\[ \begin{tikzcd}
f(x_1) \otimes \dots \otimes f(x_n) = f(x_1 \otimes \dots \otimes x_n) \rar{f(p)} \dar[shift right=1.5cm, "\gamma_{x_1} \otimes \dots \otimes \gamma_{x_n}"] & f(y_1 \otimes \dots \otimes y_m) \dar[ "\gamma_{y_1} \otimes \dots \otimes \gamma_{y_m}"] \\
g(x_1) \otimes \dots \otimes g(x_n) = g(x_1 \otimes \dots \otimes x_n) \rar{g(p)} & g(y_1 \otimes \dots \otimes y_m) 
\end{tikzcd} \]
commutes for every connected morphism $p\colon x_1 \otimes \dots \otimes x_n \to y_1 \otimes \dots \otimes y_m$ of $C$, then the collection $\{ \gamma_{x_1 \otimes \dots \otimes x_n} \coloneqq \gamma_{x_1} \otimes \dots \otimes \gamma_{x_n} \}$ constitutes a 2-morphism $f \Rightarrow g$.
\end{proposition}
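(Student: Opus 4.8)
The plan is to verify that the collection $\{\gamma_{x_1\otimes\dots\otimes x_n}\coloneqq \gamma_{x_1}\otimes\dots\otimes\gamma_{x_n}\}$ is a well-defined monoidal natural transformation $\gamma\colon f\Rightarrow g$ whose whiskering with $\pi'$ is the identity on $\pi$; the latter condition is exactly what makes it a 2-morphism of $\slcc$ in the sense of \cref{def slcc 2cat}. Well-definedness on objects is immediate from the uniqueness of the decomposition into connected objects (\cref{def slcc}\eqref{slcc: obj decomp}), and by construction $\gamma_{\mathbf 1}=\id_{\mathbf 1}$ and $\gamma_{c\otimes c'}=\gamma_c\otimes\gamma_{c'}$. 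Thus the only nontrivial monoidal-natural-transformation axiom left to check is naturality; once naturality is established, $\gamma$ is automatically compatible with the symmetries, since $f$ and $g$ are strict maps of permutative categories.

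The core of the argument is to establish the naturality square for an arbitrary morphism $h\colon c\to d$ of $C$, given that it holds for connected morphisms. First I would record closure properties: the class of morphisms for which the naturality square commutes contains all identities and is closed under composition (by pasting squares) and under $\otimes$ (using $\gamma_{c\otimes c'}=\gamma_c\otimes\gamma_{c'}$). It also contains every symmetry isomorphism $\tau_{c,c'}$, because $f,g$ are strict symmetric monoidal, so $f(\tau_{c,c'})=\tau_{fc,fc'}$ and $g(\tau_{c,c'})=\tau_{gc,gc'}$, and the relevant square is then precisely the naturality of the symmetry in $C'$ from \cref{prop symmetry naturality} applied to $\gamma_c$ and $\gamma_{c'}$. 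Finally, by hypothesis it contains every connected morphism.

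It then remains to observe that every morphism of $C$ is built from connected morphisms and symmetry isomorphisms via composition and $\otimes$. By \cref{def lcc}\eqref{lcc: reduced and free} I would factor $h=h^r\otimes u$ with $h^r$ reduced and $u\in\hom(\mathbf 1,\mathbf 1)$; by \eqref{lcc: free gen} the scalar $u$ is a tensor product of connected endomorphisms of $\mathbf 1$. For the reduced part, the canonical splitting of \cref{def canonical splitting} together with the pullback axiom \eqref{lcc: pullback} expresses $h^r$ as $\widehat{\iota^0}\circ(p_1\otimes\dots\otimes p_k)\circ(\widehat{\iota^1})^{-1}$ with each $p_x$ connected and $\widehat{\iota^0},\widehat{\iota^1}$ symmetry isomorphisms, exactly as in the surjectivity argument of \cref{lem mu bar bijective}. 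Combining these factorizations, $h$ lies in our class, so the naturality square commutes for all $h$.

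It remains to verify the whiskering condition. For each connected object $x$, the map $\gamma_x$ is connected, so $\pi'(\gamma_x)$ is a connected endomorphism of $\pi(x)=\underline 1$ in $C(\ast)$; the only such morphism is $\id_{\underline 1}$, whence $\pi'(\gamma_x)=\id_{\pi(x)}$. By monoidality $\pi'(\gamma_c)=\id_{\pi(c)}$ for every object $c$, so $\pi'\cdot\gamma=\id_{\pi}$ and $\gamma$ is a 2-morphism of $\slcc$. The main obstacle is the third step, the decomposition of a reduced morphism into connected pieces up to symmetry; but this is precisely the content already extracted from \eqref{lcc: pullback} via the canonical splitting in the proof of \cref{lem mu bar bijective}, so no genuinely new work is required beyond assembling these ingredients.
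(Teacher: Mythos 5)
Your proposal is correct and follows essentially the same route as the paper: reduce to connected morphisms by splitting an arbitrary morphism into a reduced part and a scalar via \cref{def lcc}\eqref{lcc: reduced and free} and \eqref{lcc: free gen}, decompose the reduced part as a tensor of connected morphisms conjugated by permutation maps using the canonical splitting and the pullback axiom \eqref{lcc: pullback}, and handle the permutation maps by naturality of the symmetry; your closure-class packaging and the explicit check of the whiskering condition $\pi'\cdot\gamma=\id_\pi$ are harmless refinements. One small citation nit: naturality of the symmetry in $C'$ is just part of $C'$ being permutative, not an appeal to \cref{prop symmetry naturality} (which concerns $CX$ specifically).
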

\begin{proof}
Any endmorphism $p$ of the tensor unit $\mathbf{1} \in C$ can be written as a composition of connected morphisms by \cref{def lcc}\eqref{lcc: free gen}.
Hence our assumption implies that $g(p) \gamma_{\mathbf{1}} = \gamma_{\mathbf{1}} f(p)$ for all such endomorphisms; since $\gamma_{\mathbf{1}}$ is the identity on the tensor unit of $C'$, this implies that $f(p) = g(p)$.
Now by \cref{def lcc}\eqref{lcc: reduced and free}, any morphism $p\colon c\to d$ can be written uniquely as $p' \otimes p'' \colon c \otimes \mathbf{1} \to d \otimes \mathbf{1}$ where $p'$ is a reduced morphism and $p''$ is an endomorphism of $\mathbf{1}$, so it remains to show that $\gamma_d f(p') = g(p') \gamma_c$ for reduced morphisms $p'$.

Let $\sigma$ be a bijection on $\un = \{ 1, \dots, n \}$ and $x_1, \dots, x_n$ be connected objects in $C$. 
Naturality of symmetry implies that the diagram 
\[ \begin{tikzcd}[column sep=3cm]
f(x_{\sigma(1)}) \otimes \dots \otimes f(x_{\sigma(n)}) 
\rar[ "\gamma_{x_{\sigma(1)}} \otimes \dots \otimes \gamma_{x_{\sigma(n)}}"]  \dar{\hat{\sigma}}
& g(x_{\sigma(1)}) \otimes \dots \otimes g(x_{\sigma(n)}) \dar{\hat{\sigma}} \\
f(x_1) \otimes \dots \otimes f(x_n) \rar[ "\gamma_{x_1} \otimes \dots \otimes \gamma_{x_n}"] & g(x_1) \otimes \dots \otimes g(x_n)
\end{tikzcd} \]
commutes in the permutative category $C'$.
Since $f$ and $g$ are maps of permutative categories, the preceding diagram is equal to the following,
\[ \begin{tikzcd}[column sep=3cm]
f(x_{\sigma(1)}  \otimes \dots \otimes x_{\sigma(n)}) 
\rar[ "\gamma_{x_{\sigma(1)} \otimes \dots \otimes x_{\sigma(n)}}"]  \dar{f(\hat{\sigma})}
& g(x_{\sigma(1)} \otimes \dots \otimes x_{\sigma(n)}) \dar{g(\hat{\sigma})} \\
f(x_1 \otimes \dots \otimes x_n) \rar[ "\gamma_{x_1 \otimes \dots \otimes x_n}"] & g(x_1 \otimes \dots \otimes x_n)
\end{tikzcd} \]
where $\hat{\sigma} \colon x_{\sigma(1)}  \otimes \dots \otimes x_{\sigma(n)} \to x_1 \otimes \dots \otimes x_n$ is the permuation map in $C$.
Thus the proposed natural transformation is compatible with permutation maps.

Suppose $p \colon x_1 \otimes \dots \otimes x_n \to y_1 \otimes \dots \otimes y_m$ is an arbitrary reduced morphism of $C$ lying over $\un \to \uk \leftarrow \um$ in $C(\ast)$.
By choosing appropriate bijections $\sigma$ of $\un$ and $\rho$ of $\um$, we can arrange things so that $\hat \rho^{-1} \circ p \circ \hat \sigma$ is sent by $\pi$ to a $k$-fold tensor product of connected morphisms in $C(\ast)$ (see \cref{injectivity after quotient,lem mu bar bijective}).
Once we've done this, by \cref{def lcc}\eqref{lcc: pullback} we can find unique connected morphisms $p_1, \dots, p_k$ of $C$ so that the diagram
\[ \begin{tikzcd}[column sep=large]
x_{\sigma(1)} \otimes \dots \otimes x_{\sigma(n)} \rar{p_1 \otimes \dots \otimes p_k} \dar{\hat \sigma} & y_{\rho(1)} \otimes \dots \otimes y_{\rho(m)} \dar{\hat \rho} \\
x_{1} \otimes \dots \otimes x_{n} \rar{p} & y_{1} \otimes \dots \otimes y_{m}
\end{tikzcd} \]
commutes.
By assumption each of the connected morphisms $p_i$ is compatible with $\gamma$, hence so is $p_1 \otimes \dots \otimes p_k$.
Combining this with the previous paragraph, we see that \[p = \hat \rho \circ (p_1 \otimes \dots \otimes p_k) \circ \hat \sigma^{-1} = \hat \rho \circ (p_1 \otimes \dots \otimes p_k) \circ \widehat{\sigma^{-1}}\]  is also compatible with $\gamma$.
\end{proof}

We have a composite of equivalences of 1-categories
\[
  \properad \xrightarrow{\simeq} \seg(\LL) \xrightarrow{\simeq} \slcc
\]
by \cref{prop properads as presheaves} and \cref{equivalence of presheaves and slcc}.
This extends to a strict 2-equivalence between 2-categories.

\begin{theorem}\label{thm properad vs slcc 2-cat}
The 1-functor $\properad \to \slcc$ extends to a 2-functor.
This 2-functor induces isomorphisms on hom-categories, hence is a strict 2-equivalence.
\end{theorem}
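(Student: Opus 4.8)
The plan is to let the 2-functor agree with the 1-equivalence $\properad \xrightarrow{\simeq} \seg(\LL) \xrightarrow{\simeq} \slcc$ of \cref{prop properads as presheaves,equivalence of presheaves and slcc} on objects and 1-cells, so that the only genuinely new content is the action on 2-cells and the verification that each induced functor on hom-categories is an isomorphism. Throughout I will use the dictionary supplied by the envelope: under $P \mapsto CN_1(P)$, the colors of a properad $P$ are exactly the connected objects of the associated strict labelled cospan category, and the operations of $P$ are exactly its connected morphisms, with a properad map $F \colon P \to Q$ inducing a strict symmetric monoidal functor $f$ that acts as $F$ on colors and on operations.

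Next I would define the 2-cell assignment. A natural transformation $\gamma \colon F \Rightarrow G$ of properad maps (\cref{properad natural trans}) is a family of unary operations $\gamma_c \colon F(c) \to G(c)$. Under the dictionary each $\gamma_c$ is a connected morphism $\gamma_{x_c} \colon f(x_c) \to g(x_c)$ between the connected objects $x_c$ corresponding to the colors $c$; being connected with connected source and target, its image in $C(\ast)$ is forced to be the identity cospan, so $\pi'(\gamma_{x_c}) = \id$. The crux is then to observe that the defining relation \eqref{eq properad nat trans} is, after transport through the envelope, literally the commutativity of the square appearing in \cref{proposition reduction}: since a strict labelled cospan category is a prop, grafting the unary $\gamma_{c_i}$ onto the inputs of $Gp$ is the composite $g(p) \circ (\gamma_{c_1} \otimes \dots \otimes \gamma_{c_n})$, while grafting the $\gamma_{d_j}$ onto the outputs of $Fp$ is $(\gamma_{d_1} \otimes \dots \otimes \gamma_{d_m}) \circ f(p)$, so \eqref{eq properad nat trans} says precisely that these two morphisms agree. \cref{proposition reduction} then upgrades the family $\{\gamma_{x_c}\}$ to a 2-morphism $f \Rightarrow g$ of $\slcc$.

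I would then check that this assignment is a bijection on 2-cells, compatibly with the category structure on hom-categories. As recorded just before \cref{proposition reduction}, a 2-morphism of $\slcc$ is determined by its components on connected objects, and those components are connected morphisms over the identity satisfying the commuting square for every connected morphism $p$; restricting an arbitrary $\slcc$ 2-morphism to connected objects and reading the square back as \eqref{eq properad nat trans} thus produces an inverse to the assignment above. Identity natural transformations correspond to identity 2-morphisms, vertical composition is componentwise composition of the $\gamma_{x_c}$ on both sides, and horizontal composition (equivalently, whiskering by $f$ and $g$) is handled componentwise using functoriality of the underlying 1-equivalence. Since that 1-equivalence is fully faithful it is already a bijection on 1-morphisms (the objects of the hom-categories), so combining this with the 2-cell bijection and the compatibilities above yields an isomorphism of categories $\properad(P,Q) \xrightarrow{\cong} \slcc(CN_1 P, CN_1 Q)$ for all $P,Q$. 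As the 1-functor is moreover essentially surjective, a 2-functor that is locally an isomorphism in this way is a strict 2-equivalence, which gives the theorem.

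The main obstacle will be the middle step: making the passage from the iterated dioperadic composites in \eqref{eq properad nat trans} to the ordinary composites of tensored connected morphisms in \cref{proposition reduction} fully precise with respect to the concrete envelope construction $C$ of \cref{properad to categories}. Once one knows that $C$ realizes $N_1(P)$ as the free prop on $P$ and identifies operations with connected morphisms and properadic composition with prop composition, the identification reduces to the interchange law in a symmetric monoidal category; the real work lies in pinning down that identification against the explicit description of $CX$ through $SX$ and the comparison functions $\mu_G$.
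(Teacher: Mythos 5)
Your proposal is correct and follows essentially the same route as the paper: the paper's proof likewise identifies colors and operations of $P$ with connected objects and connected morphisms of $C$, observes that the relation \eqref{eq properad nat trans} is exactly the commuting square of \cref{proposition reduction}, and concludes that the hom-functors are bijective on 2-cells and hence isomorphisms of categories. The "main obstacle" you flag (matching dioperadic composites against tensor-then-compose in the envelope) is precisely the point the paper dispatches with the phrase "under this correspondence," so your account is, if anything, more explicit about the same step.
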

The last conclusion uses the previously established fact that the functor is surjective on objects up to isomorphism.
\begin{proof}
Suppose $F,G \colon P \to P'$ are two maps of properads, and $f, g\colon C \to C'$ are the associated maps of strict labelled cospan categories.
The colors of $P$ are precisely the connected objects of $C$, and the operations of $P$ are precisely the connected morphisms of $C$.
Under this correspondence, the diagram from \cref{proposition reduction} is exactly the condition for a family of unary operations in $P'$ to constitute a natural transformation $F\Rightarrow G$ of properad maps (\cref{properad natural trans}).
It follows that 
\[
  \properad(P,P')(F,G) \to \slcc(C,C')(f,g)
\]
is a bijection, that is, $\properad \to \slcc$ is locally fully faithful.
But we already know that $\properad(P,P') \to \slcc(C,C')$ is bijective on objects, hence is an isomorphism of categories.
\end{proof}

\subsection{Comparison with labelled cospan categories}\label{sec slcc lcc 2-cat}

In this section we show that the inclusion of strict labelled cospan categories into all labelled cospan categories is a biequivalence of 2-categories.

\begin{notation}
If $\pi \colon C\to C(\ast)$ is a strict labelled cospan category, we write \[ \tilde \pi \colon C \to C(\ast) \to \csp\] for the composition of $\pi$ with the canonical inclusion $C(\ast) \to \csp$ from \cref{example C star Csp,example C star monoidal}. 
Given a 1-morphism of strict labelled cospan categories, that is a commutative triangle below left
\[
\begin{tikzcd}[row sep=small, column sep=tiny]
C \ar[rr,"f"] \ar[ddr,"\pi"'] & & C' \ar[ddl,"\pi'"] 
  &[+0.5cm]  C \ar[rr,"f"] \ar[ddr,"\tilde \pi"'] & & C' \ar[ddl,"\tilde \pi'"] \\
& &  & 
  \ar[rr,"\text{\scriptsize $\id_{\tilde \pi}$}", phantom] & & {} \\
& C(\ast) & &  & \csp & 
\end{tikzcd} 
\]
where $f$ is a strict permutative functor, we have the 1-morphism $\tilde f \coloneqq (f,\id_{\tilde \pi})$ of labelled cospan categories above right. 
Finally, if $\gamma \colon f \Rightarrow g$ is a 2-morphism as in \cref{def slcc 2cat}, we write $\tilde \gamma \colon (f,\id_{\tilde \pi}) \Rightarrow (g,\id_{\tilde \pi})$ for the same natural transformation, but now thought of as a 2-morphism in $\lcc$.
\end{notation}

We already observed in \cref{lem slcc lcc 2-functor} that $\slcc \to \lcc$ is a 2-functor under these assignments.

\begin{lemma}
The 2-functor $\slcc \to \lcc$ is locally fully faithful.
\end{lemma}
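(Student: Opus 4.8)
The plan is to exploit the fact that a $2$-cell on each side carries the same underlying datum, namely a monoidal natural transformation $\gamma \colon f \Rightarrow g$, and that the assignment $\gamma \mapsto \tilde\gamma$ is literally the identity on this datum. Hence proving that the functor of hom-categories is fully faithful reduces to checking that the \emph{condition} singling out a $2$-cell in $\slcc$ is equivalent to the one singling out a $2$-cell in $\lcc$. First I would fix strict labelled cospan categories $\pi \colon C \to C(\ast)$ and $\pi' \colon C' \to C(\ast)$ together with $1$-morphisms $f, g$ of $\slcc$, so that the $2$-functor sends them to $\tilde f = (f, \id_{\tilde\pi})$ and $\tilde g = (g, \id_{\tilde\pi})$ in $\lcc$. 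Local full faithfulness then means precisely that for every such $f, g$ the assignment $\gamma \mapsto \tilde\gamma$ is a bijection from the set of $2$-cells $f \Rightarrow g$ in $\slcc$ onto the set of $2$-cells $\tilde f \Rightarrow \tilde g$ in $\lcc$.

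Next I would spell out the two conditions. By \cref{def slcc 2cat}, a monoidal natural transformation $\gamma \colon f \Rightarrow g$ is a $2$-cell in $\slcc$ exactly when the whiskering $\pi' \cdot \gamma$ (a self-transformation of $\pi$, since $\pi' f = \pi = \pi' g$ as $f, g$ lie strictly over $C(\ast)$) equals $\id_\pi$. By \cref{rmk: 2-cat struct}, taking $\alpha = \beta = \id_{\tilde\pi}$, the same $\gamma$ is a $2$-cell $\tilde f \Rightarrow \tilde g$ in $\lcc$ exactly when the composite $\id_{\tilde\pi} \circ (\tilde\pi' \cdot \gamma)$ equals $\id_{\tilde\pi}$, that is, when $\tilde\pi' \cdot \gamma = \id_{\tilde\pi}$. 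Writing $\iota \colon C(\ast) \to \csp$ for the inclusion of \cref{example C star Csp}, we have $\tilde\pi' \cdot \gamma = \iota \cdot (\pi' \cdot \gamma)$, so the two conditions read $\pi' \cdot \gamma = \id_\pi$ and $\iota \cdot (\pi' \cdot \gamma) = \iota \cdot \id_\pi$ respectively.

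The crux is that these two conditions are equivalent, and this follows because $\iota$ is faithful, being an equivalence of categories by \cref{example C star Csp}. Concretely, for the self-transformation $\theta = \pi' \cdot \gamma$ of $\pi$ and any object $c \in C$, the component $(\iota \cdot \theta)_c = \iota(\theta_c)$ equals $\id_{\iota\pi(c)} = \iota(\id_{\pi(c)})$ if and only if $\theta_c = \id_{\pi(c)}$; ranging over all $c$ gives the desired equivalence of the two conditions. The forward implication is in any case already guaranteed by the $2$-functoriality established in \cref{lem slcc lcc 2-functor}; the genuine content is the reverse implication, which is exactly faithfulness of $\iota$. Consequently $\gamma \mapsto \tilde\gamma$ is a bijection between the two sets of $2$-cells, so the functor on hom-categories is fully faithful and $\slcc \to \lcc$ is locally fully faithful. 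I do not expect a real obstacle here: the argument is essentially bookkeeping between the two definitions of $2$-cell, and the only mathematical input is that the canonical inclusion $C(\ast) \to \csp$ is faithful.
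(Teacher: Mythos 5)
Your proof is correct and follows essentially the same route as the paper's: injectivity is automatic because 2-cells on both sides are the same underlying monoidal natural transformations, and fullness reduces to the faithfulness of the canonical inclusion $C(\ast) \to \csp$. The only cosmetic difference is that you spell out the componentwise argument, whereas the paper simply cites faithfulness of $C(\ast)\to\csp$ directly.
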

\begin{proof}
Suppose $f,g \colon C \to C'$ are two 1-morphisms of labelled cospan categories.
We wish to show that
\[
  \slcc(C,C')(f,g) \to \lcc(C,C')(\tilde f, \tilde g)
\]
is a bijection.
It is automatically injective since the elements on both sides are just certain natural transformations $f\Rightarrow g$.
Suppose we have a 2-morphism $\tilde f \Rightarrow \tilde g$ as in \cref{rmk: 2-cat struct}, that is a monoidal natural transformation $\gamma \colon f \Rightarrow g$ so that the composite natural transformation
\[ \begin{tikzcd}
C \ar[rr,"g"', bend right=20] \ar[rr,"f", bend left=20] \ar[rr,"\text{\scriptsize $\gamma \Downarrow$}",phantom] \ar[ddr,"\tilde \pi"'] & & C' \ar[ddl,"\tilde \pi'"]
\\
\ar[rr,"\text{\scriptsize $\id_{\tilde \pi} \cong$}", phantom, bend right=10] & & {} 
\\
& \csp & 
\end{tikzcd} \]
is $\id_{\tilde \pi}$.
This composite is just the whiskering of $\tilde \pi'$ with $\gamma$.
To show that $\gamma$ is a 2-morphism of $\slcc$, we need to show that its whiskering with $\pi'$ is the identity on $\pi$
\[ \begin{tikzcd}
C \ar[r,"g"', bend right] \ar[r,"f", bend left] \ar[r,"\text{\scriptsize $\gamma \Downarrow$}",phantom]  & C' \rar["\pi'"] & C(\ast) \rar & \csp
\end{tikzcd} \]
which follows since $C(\ast) \to \csp$ is faithful.
\end{proof}

\begin{lemma}\label{lem slcc lcc loc equiv cat}
The 2-functor $\slcc \to \lcc$ is locally an equivalence of categories.
\end{lemma}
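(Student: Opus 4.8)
The plan is to prove local essential surjectivity, which, together with the preceding lemma, shows that $\slcc(C,C') \to \lcc(\tilde C,\tilde{C'})$ is an equivalence of categories. So fix a $1$-morphism $(h,\beta)\colon \tilde C \to \tilde{C'}$ of labelled cospan categories, with $h\colon C \to C'$ a symmetric monoidal functor and $\beta\colon \tilde\pi' h \Rightarrow \tilde\pi$ a monoidal natural isomorphism. I must produce a strict map of permutative categories $f\colon C \to C'$ over $C(\ast)$ together with a $2$-isomorphism $\tilde f \Rightarrow (h,\beta)$ in $\lcc$; unwinding \cref{rmk: 2-cat struct}, the latter is a monoidal natural isomorphism $\gamma\colon f \Rightarrow h$ satisfying $\tilde\pi'\cdot\gamma = \beta^{-1}$.

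First I would record that $h$ carries connected objects to connected objects: if $x$ is connected then $\beta_x$ is an isomorphism $\tilde\pi' h(x) \to \tilde\pi(x) = \underline 1$, so $h(x)$ has cardinality one, and moreover $\beta_x = \id_{\underline 1}$ because there is a unique isomorphism between cardinality-one objects of $\csp$. Next I would strictify $h$. Using the unique decomposition of objects into connected ones (\cref{def slcc}\eqref{slcc: obj decomp}), define $f$ on objects by $f(x_1 \otimes \dots \otimes x_n) = h(x_1) \otimes \dots \otimes h(x_n)$ and $f(\mathbf 1) = \mathbf 1$; let $\gamma_c\colon f(c) \to h(c)$ be the iterated monoidal comparison isomorphism of $h$ (taking $\gamma_x = \id$ for connected $x$ and $\gamma_{\mathbf 1}$ to be the unit constraint), and set $f(p) = \gamma_d^{-1}\circ h(p)\circ \gamma_c$ on a morphism $p\colon c\to d$. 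A routine coherence argument then shows that $f$ is a strict symmetric monoidal (i.e.\ permutative) functor and that $\gamma\colon f \Rightarrow h$ is a monoidal natural isomorphism; here the unique decomposition is exactly what makes $f$ well defined on objects.

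It remains to check that $f$ lies strictly over $C(\ast)$, that is $\pi' f = \pi$. On objects this is immediate, since $\pi'$ is strict monoidal and $\pi' f(x) = \underline 1 = \pi(x)$ for connected $x$. For morphisms, the key observation is that $\tilde\pi'\cdot\gamma$ and $\beta^{-1}$ are both monoidal natural isomorphisms $\tilde\pi \Rightarrow \tilde\pi' h$ between symmetric monoidal functors $C \to \csp$; by the uniqueness argument of the remark following \cref{def lcc} (on a connected object both are forced to be the unique isomorphism between cardinality-one objects, and monoidality together with \cref{def slcc}\eqref{slcc: obj decomp} propagates this to all objects) they coincide, giving $\tilde\pi'\cdot\gamma = \beta^{-1}$. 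Applying $\tilde\pi'$ and using naturality of $\beta$ then yields $\tilde\pi'(f(p)) = \tilde\pi'(\gamma_d)^{-1}\,\tilde\pi' h(p)\,\tilde\pi'(\gamma_c) = \beta_d\,\tilde\pi' h(p)\,\beta_c^{-1} = \tilde\pi(p)$, and since $C(\ast)\to\csp$ is faithful we conclude $\pi' f(p) = \pi(p)$. Thus $f$ is a $1$-morphism of $\slcc$ with $\tilde f = (f,\id_{\tilde\pi})$, and the relation $\tilde\pi'\cdot\gamma = \beta^{-1}$ says precisely that $\gamma$ is a $2$-isomorphism $\tilde f \Rightarrow (h,\beta)$, establishing essential surjectivity. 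The main obstacle is this last step: verifying that transporting $h$ along its comparison isomorphisms produces an \emph{honestly strict} permutative functor over $C(\ast)$, and the identification $\tilde\pi'\cdot\gamma = \beta^{-1}$, which is where the rigidity of $\csp$ on low-cardinality objects does the essential work.
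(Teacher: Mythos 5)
Your proposal is correct and follows essentially the same route as the paper: strictify the given symmetric monoidal functor by transporting along its comparison isomorphisms using the unique decomposition into connected objects, and use the rigidity of $\csp$ on cardinality-one objects to see that the strictified functor lies over $C(\ast)$ and that the comparison isomorphism is a 2-isomorphism in $\lcc$. The only cosmetic difference is that the paper verifies $\pi' g = \pi$ by a direct componentwise computation with the lifted natural isomorphism over $C(\ast)$, whereas you invoke the uniqueness of monoidal natural isomorphisms from the remark following \cref{def lcc}; both rest on the same fact.
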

\begin{proof}
Let $\pi\colon C \to C(\ast)$ and $\pi' \colon C' \to C(\ast)$ be strict labelled cospan categories.
From the previous lemma we know that $\slcc(C,C') \to \lcc(C,C')$ is fully faithful, so it remains to prove that it is essentially surjective.
Suppose $(f,\alpha)$ as depicted below left is 1-morphism in $\lcc$.
\[ \begin{tikzcd}[row sep=small, column sep=tiny]
C \ar[rr,"f"] \ar[ddr,"\tilde \pi"'] & & C' \ar[ddl,"\tilde \pi'"] \\
\ar[rr,"\text{\scriptsize $\alpha\cong$}", phantom] & & {} \\
& \csp & 
\end{tikzcd} \qquad 
\begin{tikzcd}[row sep=small, column sep=tiny]
C \ar[rr,"f"] \ar[ddr,"\pi"'] & & C' \ar[ddl,"\pi'"] \\
\ar[rr,"\text{\scriptsize $\beta\cong$}", phantom] & & {} \\
& C(\ast) & 
\end{tikzcd}
\]
Here $f$ is a symmetric monoidal functor (which does not need to be a strict map of permutative categories) and $\alpha$ is a monoidal natural isomorphism.
Since $C(\ast) \to \csp$ is fully faithful, there is a (unique) monoidal natural isomorphism $\beta$ as displayed above right, whose whiskering with $C(\ast) \to \csp$ is $\alpha$. 
Generically write $\Phi \colon f(c_1) \otimes \dots \otimes f(c_n) \to f(c_1 \otimes \dots \otimes c_n)$ for the structural isomorphisms of $f$ (including the case $n=0$ for the tensor unit).

We define a new functor of permutative categories $g \colon C \to C'$.
On a general object $x_1 \otimes \dots \otimes x_n$ of $C$ (with each $x_i$ connected), $g$ is given by
\[
  g(x_1 \otimes \dots \otimes x_n) \coloneqq f(x_1) \otimes \dots \otimes f(x_n).
\]
If $p\colon x_1 \otimes \dots \otimes x_n \to y_1 \otimes \dots \otimes y_m$ is any morphism of $C$, define $g(p)$ to sit in the following commutative square.
\[ \begin{tikzcd}
f(x_1)\otimes \dots \otimes f(x_n) \rar{g(p)} \dar["\Phi"', "\cong"] & f(y_1) \otimes \dots \otimes f(y_m) \dar["\Phi", "\cong"'] \\
f(x_1 \otimes \dots \otimes x_n) \rar{f(p)} & f(y_1 \otimes \dots \otimes y_m)
\end{tikzcd} \]
This $g$ is automatically a functor, and the commutativity of the diagram 
\[ \begin{tikzcd}
\left(\bigotimes\limits_{i=1}^k f(x_i) \right) \otimes \left(\bigotimes\limits_{i=k+1}^n f(x_i) \right) \rar[dashed] \dar["\Phi \otimes \Phi"'] \ar[dd, "\Phi "', bend right=50, start anchor=west, end anchor = west] & \left(\bigotimes\limits_{i=1}^j f(y_i) \right) \otimes \left(\bigotimes\limits_{i=j+1}^m f(y_i) \right) \dar["\Phi \otimes \Phi"] \ar[dd, "\Phi", bend left=50, start anchor=east, end anchor = east] \\
f\left(\bigotimes\limits_{i=1}^k x_i \right) \otimes f\left(\bigotimes\limits_{i=k+1}^n x_i \right) \rar{f(p) \otimes f(p')} \dar["\Phi"'] & f\left(\bigotimes\limits_{i=1}^j y_i \right) \otimes f\left(\bigotimes\limits_{i=j+1}^m y_i \right) \dar["\Phi"] \\
f(x_1 \otimes \dots \otimes x_n) \rar{f(p \otimes p')} & f(y_1 \otimes \dots \otimes y_m)
\end{tikzcd} \]
implies that the unique dashed map must be both $g(p\otimes p')$ and $g(p) \otimes g(p')$, hence $g$ is a strict monoidal functor.
A similar diagram shows how to infer from $f$ being symmetric monoidal functor that the same is true for $g$.

We will return to checking that $g$ is a 1-morphism in $\slcc$ in a moment.
First, we define $\gamma \colon g \Rightarrow f$ to be the monoidal natural isomorphism given by $\Phi$.
That is, if $c = x_1 \otimes \dots \otimes x_n$, then $\gamma_c$ is the map
\[
  g(c) = f(x_1) \otimes \dots \otimes f(x_n) \to f(x_1 \otimes \dots \otimes x_n) = f(c).
\]
We now calculate the following composite.
\begin{equation}\label{2-cell to compute} \begin{tikzcd}
C \ar[r,"f"', bend right] \ar[r,"g", bend left] \ar[r,"\text{\scriptsize $\gamma \Downarrow$}",phantom] \ar[rr,bend right=55,"\pi"'] \ar[rr,bend right, phantom, "\text{\scriptsize $\beta \Downarrow$}" near end] & C' \rar["\pi'"] & C(\ast)
\end{tikzcd} \end{equation}
Since $\pi$ and $\pi'$ are strict monoidal functors and $\beta_{x_i}$ is the identity on $\underline{1}$ for a connected object $x_i$, commutativity of the following square
\[ \begin{tikzcd}[column sep=huge]
\pi'f(x_1) \otimes \dots \otimes \pi'f(x_n) \dar{=} \rar{\beta_{x_1} \otimes \dots \otimes \beta_{x_n}} &
  \pi(x_1) \otimes \dots \otimes \pi(x_n) \ar[dd,"="] \\
\pi'(f(x_1) \otimes \dots \otimes f(x_n)) \dar{\pi'(\Phi)}  \\
\pi'f(x_1 \otimes \dots \otimes x_n) \rar{\beta_{x_1 \otimes \dots \otimes x_n}} &
  \pi(x_1 \otimes \dots \otimes x_n)
\end{tikzcd} \]
tells us that $\beta_{x_1 \otimes \dots \otimes x_n}$ is the inverse of $\pi'(\Phi_{x_1, \dots, x_n})$.
Thus $\beta_c \circ \pi'(\gamma_c) = \id_{\pi c}$ for all objects $c\in C$, so it follows that the composite of \eqref{2-cell to compute} is the identity 2-morphism on the 1-morphism $\pi$.
We then have $\pi' g = \pi$ and so $g$ is a 1-morphism in $\slcc$, and $\gamma$ is an isomorphism in $\lcc(C,C')$ between $(g,\id_{\tilde \pi})$ and $(f,\alpha)$.
\end{proof}

It remains to prove that $\slcc \to \lcc$ is surjective up to equivalence.
We begin with a special case.

\begin{lemma}\label{lem strictify lcc}
Suppose that $C$ is a permutative category whose set of objects is a free monoid on the set $S$ under the monoidal product, and that $\pi \colon C \to \csp$ is a symmetric monoidal functor.
If $\pi$ is a labelled cospan category such that the set of connected objects is precisely $S\subset \ob C$, then there is a strict labelled cospan category $\pi' \colon C \to C(\ast)$ such that the labelled cospan category $\tilde \pi'$ is isomorphic to $\pi$.
\end{lemma}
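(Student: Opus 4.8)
The plan is to realize $\pi'$ by first \emph{lifting} the labelled cospan structure $\pi$ through the symmetric monoidal equivalence $C(\ast)\to\csp$, and then \emph{strictifying} the resulting functor exactly as in the proof of \cref{lem slcc lcc loc equiv cat}. All of the genuinely delicate coherence (associators and symmetries of $\csp$, which is not permutative) gets absorbed into the two natural isomorphisms produced along the way, so that the final functor lands strictly in the permutative category $C(\ast)$.

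First I would invoke that $C(\ast)\to\csp$ is an injective-on-objects symmetric monoidal equivalence (\cref{example C star Csp,example C star monoidal}) and choose a symmetric monoidal pseudo-inverse $J\colon\csp\to C(\ast)$ together with a monoidal natural isomorphism $\epsilon\colon (C(\ast)\to\csp)\circ J \Rightarrow \id_\csp$. Since the image of $C(\ast)\to\csp$ is the skeleton $\{\underline{n}\}$, such a $J$ necessarily sends a finite set of cardinality $n$ to $\underline{n}$. Setting $\bar\pi \coloneqq J\circ\pi\colon C\to C(\ast)$ then gives a symmetric monoidal functor with $\bar\pi(x)=\underline{1}$ for every connected object $x\in S$ (as $\pi(x)$ has cardinality one), and whiskering $\epsilon$ with $\pi$ yields a monoidal natural isomorphism $\alpha\colon (C(\ast)\to\csp)\circ\bar\pi \Rightarrow \pi$.

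Next I would strictify $\bar\pi$. Because $\ob(C)$ is the free monoid on $S$ and $C(\ast)$ is permutative, the construction in the proof of \cref{lem slcc lcc loc equiv cat} applies verbatim: define $\pi'(x_1\otimes\dots\otimes x_n)\coloneqq \bar\pi(x_1)\otimes\dots\otimes\bar\pi(x_n)=\underline{n}$ and define $\pi'$ on morphisms by conjugating $\bar\pi$ with its structural isomorphisms $\Phi$. That same argument shows $\pi'$ is a strict map of permutative categories, and the $\Phi$ assemble into a monoidal natural isomorphism $\gamma\colon \pi'\Rightarrow\bar\pi$. Composing, $\alpha\circ\big((C(\ast)\to\csp)\cdot\gamma\big)$ is a monoidal natural isomorphism $\tilde\pi'\Rightarrow\pi$.

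It then remains to check that $\pi'$ is genuinely a strict labelled cospan category and to package the final isomorphism. Since $\tilde\pi'\cong\pi$ through a monoidal natural isomorphism and the conditions of \cref{def lcc} are invariant under such isomorphisms, $\tilde\pi'$ is a labelled cospan category; because $C(\ast)\to\csp$ is injective on objects and fully faithful, this is precisely what it means for $\pi'$ to satisfy those conditions. An object of $C$ is connected for $\pi'$ exactly when its length in the free monoid is one, i.e.\ when it lies in $S$, so the free monoid structure supplies the unique decomposition required by \eqref{slcc: obj decomp} of \cref{def slcc}. Finally, $(\id_C,\;\alpha\circ((C(\ast)\to\csp)\cdot\gamma))$ is a $1$-morphism $\tilde\pi'\to\pi$ of labelled cospan categories whose underlying functor is an isomorphism of categories, hence an isomorphism in $\lcc$ by the observation following \cref{prop characterization of equivs}. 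I do not expect a serious obstacle here: the only subtle point is the coherence bookkeeping in the lift–strictify passage, which is exactly why I route through the pseudo-inverse $J$ rather than hand-building $\pi'$ and its comparison isomorphism directly.
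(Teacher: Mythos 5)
Your proposal is correct, and it reaches the same strictification as the paper by a slightly different route. The paper works entirely over $\csp$: it directly defines $\kappa\colon C\to\csp$ on a word $[x_1\cdots x_n]$ to be $i\underline{n}$, builds the comparison isomorphism $\beta\colon\pi\cong\kappa$ by induction on word length using the structure maps $\Phi$ of $\pi$ and $\Psi$ of the inclusion $i\colon C(\ast)\to\csp$, verifies well-definedness by hand via the associativity constraints, and only at the very end factors $\kappa$ through $i$ using full faithfulness to obtain $\pi'$. You instead first transport $\pi$ into $C(\ast)$ by a chosen symmetric monoidal pseudo-inverse $J$ and then invoke the strictification already carried out in the proof of \cref{lem slcc lcc loc equiv cat}, whose hypotheses (permutative source with free object monoid, permutative target, symmetric monoidal functor) are indeed all that construction uses, so there is no circularity and the coherence bookkeeping is genuinely outsourced rather than redone. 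What your route buys is economy — the inductive check that $\beta_{(c\otimes d)\otimes e}=\beta_{c\otimes(d\otimes e)}$ is replaced by a citation — at the cost of choosing the pseudo-inverse $J$ and its monoidal isomorphism $\epsilon$, which the paper's construction avoids by landing in the image of $i$ and appealing to full faithfulness. Both proofs lean on the same unproved-but-routine facts (that the axioms of \cref{def lcc} transfer along a monoidal natural isomorphism, and that axiom \eqref{slcc: obj decomp} of \cref{def slcc} follows from freeness of $\ob C$ on $S$), so your argument meets the paper's own standard of rigor.
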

\begin{proof}
In this proof we write $i\colon C(\ast) \to \csp$ for the canonical inclusion, and, whenever $k + j = n$, write 
\[ \Psi_{k,j} \colon i\uk \otimes i\uj \to i(\uk + \uj) = i \un \]
for the natural monoidal structure isomorphism of $i$.
We similarly write $\Phi_{c,d} \colon \pi c \otimes \pi d \to \pi(c\otimes d)$ for the structure isomorphism of the symmetric monoidal functor $\pi$.
We now aim to simultaneously define a more strict version of $\pi$ called $\kappa \colon C \to \csp$, along with a natural isomorphism $\beta \colon \pi \cong \kappa$.
On objects, $\kappa$ takes a word $c = [x_1 x_2 \dots x_n]$ of length $n$ (where each $x_i \in S$) to $i\un$.
We now inductively, based on word length, define isomorphisms $\beta_c \colon \pi c \to \kappa c$. 
For length zero and length one words, we declare that $\beta_{[\,]} \colon \pi[\,] \to \kappa [\, ] = \varnothing$ is the identity, and $\beta_{[x]} \colon \pi[x] \to \kappa[x] = i\underline{1}$ is the unique isomorphism.
Suppose $c$ has length $k > 0$ and $d$ has length $j>0$, and that $\beta_c$ and $\beta_d$ have been defined.
We then define $\beta_{c\otimes d}$ as the unique map fitting into the following square
\[ \begin{tikzcd}
\pi c \otimes \pi d \rar{\beta_c \otimes \beta_d}  \dar["\Phi_{c,d}","\cong"']& i\uk \otimes i\uj \dar["\Psi_{\uk,\uj}","\cong"'] \\
\pi(c \otimes d) \rar{\beta_{c\otimes d}} & i(\uk + \uj).
\end{tikzcd} \]
One must check that this is well-defined, that is, for any positive-length words $c,d,e \in \ob C$ that $\beta_{(c\otimes d) \otimes e} = \beta_{c\otimes (d\otimes e)}$.
This follows by using the hexagon constraints for $\Phi$ and $\Psi$ and the fact that $C$ is strict monoidal. 

We define $\kappa$ on morphisms $p\colon c\to d$ by declaring
\[
  \kappa(p) \coloneqq \beta_d \circ \pi(p) \circ \beta_c^{-1}.
\]
So defined, $\kappa$ is automatically a functor and $\beta$ is a natural transformation.
Since $i$ is fully faithful, there is a unique functor $\pi' \colon C \to C(\ast)$ so that $i\pi' = \kappa$.
Using the defining equations, one can show 
that $\pi'$ is a strict monoidal functor, which is also symmetric monoidal.
From the diagram above one concludes that $\beta$ is a monoidal natural transformation $\pi \Rightarrow i\pi'$.

The existence of the monoidal natural isomorphism $\beta$ allows one to check that $\kappa$ is a labelled cospan category.
Further, $\pi'$ is a strict labelled cospan category and $(\id_C, \beta)$ is an isomorphism $\tilde \pi' = \kappa \cong \pi$ of labelled cospan categories.
\end{proof}

\begin{lemma}\label{lem slcc lcc surj equiv}
The 2-functor $\slcc \to \lcc$ is surjective up to equivalence.
\end{lemma}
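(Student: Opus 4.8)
The plan is to reduce to \cref{lem strictify lcc} by first replacing an arbitrary labelled cospan category by a symmetric-monoidally equivalent one whose objects form a free monoid on the connected objects. So, given a labelled cospan category $\pi\colon C\to\csp$, let $S=\ob^{\mathrm c}(C)$ be its set of connected objects. I would build a permutative category $C''$ together with a symmetric monoidal equivalence $L\colon C''\to C$, set $\pi''\coloneqq\pi\circ L$, feed $\pi''$ into \cref{lem strictify lcc}, and then transport the resulting strict structure back to $C$.

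Concretely, $C''$ is the usual permutative strictification, but built from \emph{words of connected objects} rather than words of arbitrary objects: its objects are the elements of the free monoid on $S$ (finite words $(x_1,\dots,x_n)$ with each $x_i\in S$), and
\[ C''\bigl((x_1,\dots,x_n),(y_1,\dots,y_m)\bigr)\coloneqq C\bigl(x_1\otimes\dots\otimes x_n,\;y_1\otimes\dots\otimes y_m\bigr), \]
with tensor product given on objects by concatenation and on morphisms by conjugating $\otimes$ in $C$ with the structural isomorphisms of $C$, and with symmetry transported from $C$. The evaluation functor $L\colon C''\to C$ sending $(x_1,\dots,x_n)$ to $x_1\otimes\dots\otimes x_n$ (and the empty word to $\mathbf 1_C$) is fully faithful by construction and essentially surjective by \cref{def lcc}\eqref{lcc: obj decomp}, hence an equivalence of categories; coherence for symmetric monoidal categories makes $C''$ a permutative category and $L$ a symmetric monoidal functor, and its object monoid is free on $S$ by design. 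The verification that the transported associators, unitors, and symmetry satisfy the permutative axioms is precisely the classical permutative strictification argument; this bookkeeping, rather than any conceptual difficulty, is where the real work lies, so I expect it to be the main obstacle.

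With $C''$ in hand, $\pi''=\pi\circ L\colon C''\to\csp$ inherits all four conditions of \cref{def lcc} from $\pi$, since $L$ is a symmetric monoidal equivalence and $\pi''=\pi L$ holds on the nose; moreover the connected objects of $C''$ are exactly the length-one words, so $\ob^{\mathrm c}(C'')=S$ and the object monoid of $C''$ is free on this set. Thus $\pi''$ satisfies the hypotheses of \cref{lem strictify lcc}, which yields a strict labelled cospan category $\pi'\colon C''\to C(\ast)$ together with an isomorphism $\tilde\pi'\cong\pi''$ in $\lcc$.

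Finally I would assemble the comparison. The pair $(L,\id_{\pi''})$ is a $1$-morphism $(C'',\pi'')\to(C,\pi)$ of labelled cospan categories whose underlying functor $L$ is an equivalence of categories, so by \cref{prop characterization of equivs} it is an equivalence in $\lcc$; in particular $\pi$ and $\pi''$ are equivalent objects of $\lcc$. Chaining this equivalence with the isomorphism $\pi''\cong\tilde\pi'$ exhibits $\pi$ as equivalent in $\lcc$ to the image $\tilde\pi'$ of a strict labelled cospan category. As $\pi$ was arbitrary, $\slcc\to\lcc$ is surjective up to equivalence. Everything here beyond the strictification of the second paragraph is formal, relying only on \cref{lem strictify lcc} and \cref{prop characterization of equivs}.
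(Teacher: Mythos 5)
Your proposal is correct and follows essentially the same route as the paper: both construct a May-style permutative strictification built on words of \emph{connected} objects (so that the object monoid is free on $\ob^{\mathrm{c}}C$), observe that precomposing $\pi$ with the resulting symmetric monoidal equivalence gives a labelled cospan category satisfying the hypotheses of \cref{lem strictify lcc}, and then conclude via \cref{prop characterization of equivs}. The only cosmetic difference is that the paper presents the equivalence as a functor out of the word category given by a chosen (right-nested) iterated tensor, whereas you define the word category by transporting hom-sets along that same evaluation; these are the same construction.
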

\begin{proof}
Suppose $\pi \colon C \to \csp$ is a labelled cospan category. 
A variation on the proof of \cite[Proposition 4.2]{May74} lets us define a permutative category $D$ and a symmetric monoidal equivalence $f\colon D \to C$ so that $\ob D$ is the free monoid on the set $\ob^{\mathrm{c}} C$ of connected objects of the labelled cospan category $C$.
The functor $f$ takes an object $[x_1 x_2 \cdots x_{n-1} x_n]$ (where each $x_i \in \ob^{\mathrm{c}} C$) to $x_1 \otimes (x_2 \otimes ( \cdots \otimes (x_{n-1} \otimes x_n)\cdots ))$.
One can check that $\pi f \colon D \to \csp$ is a labelled cospan category.
By \cref{prop characterization of equivs}, $\pi$ and $\pi f$ are equivalent objects in $\lcc$, and by \cref{lem strictify lcc}, $\pi f$ is isomorphic to a strict labelled cospan category.
Hence $\pi$ is equivalent to a strict labelled cospan category.
\end{proof}

We now have enough machinery to establish the following uniqueness result for (strict) labelled cospan categories with a given domain. 

\begin{proposition}[Barkan--Steinebrunner]\label{prop uniqueness of lcc structure}
If $D$ is a permutative category, there is at most one strict labelled cospan category $D \to C(\ast)$.
If $C$ is a symmetric monoidal category, then any two labelled cospan categories $C \to \csp$ are equivalent in $\lcc$.
\end{proposition}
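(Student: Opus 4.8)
The plan is to prove the two statements in turn, deducing the second from the first. For the first statement, suppose $\pi,\pi'\colon D\to C(\ast)$ are two strict labelled cospan category structures on the permutative category $D$. The first step is to observe that the two notions of \emph{connected object} agree and are intrinsic: by \eqref{slcc: obj decomp redo} the connected objects are exactly the free generators of the monoid $\ob(D)$ in each case, and the free generators of a free monoid are precisely its indecomposable elements, a property of $\ob(D)$ alone. Since $\pi$ and $\pi'$ are strict monoidal and send each connected object to $\underline 1$, both send a length-$n$ object to $\underline n$, so $\pi$ and $\pi'$ agree on objects. Next I would show connectedness of \emph{morphisms} is likewise intrinsic: I would characterize a connected morphism as one that is not $\id_{\mathbf 1}$ and cannot be written as $t\circ(g\otimes h)\circ s$ with $g,h\neq\id_{\mathbf 1}$ and $s,t$ permutation isomorphisms. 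One direction uses that a cospan with empty middle has empty domain and codomain, so $\id_{\mathbf 1}$ is the unique morphism whose image has middle of cardinality $0$; hence any tensor factor of a connected morphism is trivial. For a connected morphism $x_1\otimes\dots\otimes x_m\to y_1\otimes\dots\otimes y_n$, the cospan $\pi(f)$ has middle $\underline 1$ and is therefore the unique corolla class $\mathfrak{c}_{m,n}$, determined by the (now intrinsic) lengths $m,n$; thus $\pi$ and $\pi'$ agree on connected morphisms.

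To finish the first statement I would express an arbitrary morphism $f$ through these building blocks. Using \cref{def lcc}\eqref{lcc: reduced and free} I would write $f=f^{\mathrm r}\otimes f^0$ with $f^{\mathrm r}$ reduced and $f^0\in\hom(\mathbf 1,\mathbf 1)$; by \eqref{lcc: free gen} the latter is a composite of connected endomorphisms of $\mathbf 1$. For the reduced part, exactly as in the surjectivity argument of \cref{lem mu bar bijective}, I would choose permutation isomorphisms $\hat\sigma,\hat\rho$ so that $\pi(\hat\rho^{-1}f^{\mathrm r}\hat\sigma)$ becomes a tensor of connected cospans, then invoke \cref{def lcc}\eqref{lcc: pullback} to obtain $f^{\mathrm r}=\hat\rho\circ(p_1\otimes\dots\otimes p_k)\circ\hat\sigma^{-1}$ with each $p_i$ connected. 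Since $\pi$ and $\pi'$ are strict symmetric monoidal functors agreeing on connected morphisms, on permutations, and on identities, and since this exhibits $f$ as a composite of tensors of such morphisms, we conclude $\pi(f)=\pi'(f)$, whence $\pi=\pi'$.

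For the second statement, let $\pi_1,\pi_2\colon C\to\csp$ be labelled cospan categories on a symmetric monoidal category $C$. I would first check that the two classes of connected objects coincide \emph{as subsets} of $\ob C$: an object is connected exactly when it is not isomorphic to the tensor unit and is indecomposable up to isomorphism, by \cref{def lcc}\eqref{lcc: obj decomp} together with additivity of cardinality under $\otimes$ in $\csp$, a condition independent of the $\pi_i$. Consequently the strictification of \cref{lem slcc lcc surj equiv} produces the \emph{same} permutative category $D$ and equivalence $f\colon D\to C$ for both, and $\pi_1f,\pi_2f\colon D\to\csp$ are labelled cospan categories whose connected objects are precisely the free generators of $\ob D$. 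Applying \cref{lem strictify lcc} to each yields strict labelled cospan categories $\pi_1',\pi_2'\colon D\to C(\ast)$ with $\tilde{\pi_i'}\cong\pi_if$ in $\lcc$; by the first statement $\pi_1'=\pi_2'$, so $\pi_1f\cong\pi_2f$ in $\lcc$. Finally, since $f$ is an equivalence of categories, \cref{prop characterization of equivs} makes $(f,\id)$ an equivalence in $\lcc$, so $\pi_i\simeq\pi_if$; chaining these gives $\pi_1\simeq\pi_1f\cong\pi_2f\simeq\pi_2$ in $\lcc$.

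The main obstacle is the first statement, and within it the verification that connectedness of morphisms is genuinely intrinsic (independent of the chosen structure map) together with the decomposition of an arbitrary morphism into connected morphisms and permutations. This is where the full force of the axioms \eqref{lcc: free gen}--\eqref{lcc: pullback} and the reordering technique from \cref{lem mu bar bijective} are needed; once they are in place, agreement of $\pi$ and $\pi'$ on the generators propagates formally because both are strict symmetric monoidal functors.
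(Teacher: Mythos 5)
Your proposal is correct and follows essentially the same route as the paper: reduce the second statement to the first via the strictification of \cref{lem strictify lcc,lem slcc lcc surj equiv} and \cref{prop characterization of equivs}, and prove the first by showing the two structure maps agree on connected objects, permutation isomorphisms, and connected morphisms, then propagating to all morphisms via the decomposition $\hat\sigma\circ(p_1\otimes\dots\otimes p_k)\circ\hat\rho$ supplied by \cref{def lcc}\eqref{lcc: free gen}--\eqref{lcc: pullback}. The only cosmetic difference is that you package the key step as an intrinsic (structure-independent) characterization of connectedness, whereas the paper compares the two structures directly by counting connected factors; the content is the same.
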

\begin{proof}
Suppose $\pi, \mu \colon C \to \csp$ are two labelled cospan categories with the same source category; we will reduce the existence of an equivalence $\pi \simeq \mu$ to the first statement.
Note that $\pi(c)$ has cardinality zero if and only if $\mu(c)$ does by \cref{def lcc}\eqref{lcc: obj decomp}; then if a $\pi$-connected object $c$ decomposes as $c \cong c_1 \otimes \dots \otimes c_n$ for $\mu$-connected objects $c_i$, we must have $n=1$.
It follows that the permuative category $D$ and the functor $f$ from the proof of \cref{lem slcc lcc surj equiv} are the same for both $\pi$ and $\mu$. 
Applying \cref{lem strictify lcc} we have $\pi \simeq \pi f \cong \tilde \pi'$ and $\mu \simeq \mu f \cong \tilde \mu'$ for strict labelled cospan categories $\pi', \mu' \colon D \to C(\ast)$.
These last two maps will turn out to be equal, establishing the second claim.

Given two strict labelled cospan categories $\pi', \mu' \colon D \to C(\ast)$ with the same domain permutative category $D$, we now show that $\pi' = \mu'$. 
Since free monoids have unique generating sets and the set of objects of $D$ is the free monoid on the set of connected objects, the connected objects of $\pi'$ and $\mu'$ are the same.
Hence, on objects, the maps $\pi'$ and $\mu'$ coincide; further, since $\pi'$ and $\mu'$ are strict maps of permutative categories, they agree on the maps $\hat \sigma \colon x_{\sigma(1)}\otimes \dots \otimes x_{\sigma(n)} \to x_1 \otimes \dots \otimes x_n$ for each permutation $\sigma$ of $\un$.
A free abelian monoid has a unique generating set, so the connected endomorphisms of $\mathbf{1}$ coincide for $\pi'$ and $\mu'$ by \cref{def lcc}\eqref{lcc: free gen}.
Since $\pi',\mu' \colon \hom(\mathbf{1},\mathbf{1}) \to \hom(\underline{0}, \underline{0})$ send the generators to the unique generator $\underline{0} \to \underline{1} \leftarrow \underline{0}$, these maps coincide on endomorphisms of the tensor unit.
Suppose that $p \colon c\to d$ is a morphism of $D$ which is connected with respect to $\pi'$.
We can write $p = \hat \sigma \circ (p_1 \otimes \dots \otimes p_k) \circ \hat \rho$ where each $p_i$ is connected with respect to $\mu'$.
Then since no $\pi'(p_i)$ is the identity on $\underline{0}$, we must have $k=1$, so $p$ is also connected with respect to $\mu'$.
Since $\pi'$ and $\mu'$ agree on objects and have the same connected morphisms, they agree on connected morphisms.
But then $\pi'(p) = \mu'(p)$ for arbitrary morphisms $p$, since we can write any such $p$ as $\hat \sigma \circ (p_1 \otimes \dots \otimes p_k) \circ \hat \rho$ with the $p_i$ connected, and $\pi'$ and $\mu'$ are strict maps of permutative categories.
Thus $\pi' = \mu'$.
\end{proof}

\begin{remark}
Suppose $f\colon C \to C'$ is a symmetric monoidal equivalence and $\pi \colon C \to \csp$, $\pi' \colon C' \to \csp$ are labelled cospan categories.
Then $f$ automatically determines a 1-morphism in $\lcc$.
Indeed, in this situation it is automatic that $\pi' f$ is a labelled cospan category, so unraveling the proof of \cref{prop uniqueness of lcc structure} we obtain a natural isomorphism $\pi' f \cong \pi$ as the following pasting composite.
\[ \begin{tikzcd}
& &[-0.5cm] C \ar[rr,"f"] &[-0.5cm] &[-0.5cm] C'  \ar[dr,"\pi'"] &[-0.5cm] \\
C \rar{\simeq} 
  \ar[urr,bend left,"\id"] \ar[drr,bend right,"\id"'] 
  \ar[urr,bend left=6, phantom,"\cong"] \ar[drr,bend right=6, phantom,"\cong"] 
  & D \ar[ur,"\simeq"] \ar[dr,"\simeq"] \ar[rr]
    \ar[rrrr, bend left=25,"\cong", phantom] \ar[rrrr, bend right=25,"\cong", phantom]
   & & C(\ast)  \ar[rr,"\simeq"] & & \csp \\
& & C \ar[urrr, bend right=20,"\pi"' near end]
\end{tikzcd} \]
Here, the functor $D\to C(\ast)$ is the strict labelled cospan category constructed in the first paragraph of the proof.
\end{remark}

\begin{theorem}\label{theorem slcc lcc biequiv}
The 2-functor $\slcc \to \lcc$ is a biequivalence.
\end{theorem}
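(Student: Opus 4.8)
The plan is to invoke the standard recognition principle for biequivalences: a 2-functor $F \colon \mathcal{K} \to \mathcal{L}$ is a biequivalence if and only if it is \emph{locally an equivalence of categories} (each functor $\mathcal{K}(A,B) \to \mathcal{L}(FA,FB)$ is an equivalence) and \emph{surjective on objects up to equivalence} (every object of $\mathcal{L}$ is equivalent to one in the image of $F$). Both properties have already been established for $\slcc \to \lcc$ in the preceding lemmas, so the proof amounts to assembling them.

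First I would cite \cref{lem slcc lcc loc equiv cat}, which provides the local equivalence: for any strict labelled cospan categories $C$ and $C'$, the functor $\slcc(C,C') \to \lcc(C,C')$ is an equivalence of categories (fully faithful by the unnamed preceding lemma, and essentially surjective by the construction carried out there). Then I would cite \cref{lem slcc lcc surj equiv}, which gives the remaining hypothesis: every labelled cospan category is equivalent in $\lcc$ to the image of a strict labelled cospan category. These are exactly the two clauses of the recognition principle, so together they yield that $\slcc \to \lcc$ is a biequivalence.

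There is no genuine obstacle remaining at this stage: all the substantive work has been done in the construction of the strictification (\cref{lem strictify lcc,lem slcc lcc surj equiv}) and in the local fullness, faithfulness, and essential surjectivity feeding into \cref{lem slcc lcc loc equiv cat}. The only minor point to be careful about is stating the recognition principle in a form matching the ambient 2-categorical (indeed $(2,1)$-categorical) structure; since every relevant 2-cell here is a monoidal natural isomorphism over $\csp$, and the cited lemmas are phrased for the full 2-categorical hom-categories, this causes no difficulty.
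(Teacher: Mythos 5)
Your proposal matches the paper's proof exactly: the paper also deduces the biequivalence directly from \cref{lem slcc lcc loc equiv cat} (local equivalence of hom-categories) and \cref{lem slcc lcc surj equiv} (surjectivity up to equivalence), which together constitute the standard recognition principle you invoke. No differences worth noting.
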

\begin{proof}
This follows from \cref{lem slcc lcc loc equiv cat} and \cref{lem slcc lcc surj equiv}.
\end{proof}

We conclude by combining this theorem with the biequivalence of \cref{thm properad vs slcc 2-cat}.

\begin{corollary}\label{main theorem as corollary}
The composite 2-functor $\properad \to \lcc$ is a biequivalence. 
\qed
\end{corollary}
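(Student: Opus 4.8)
The plan is to factor the composite as a composite of two biequivalences and to observe that biequivalences are closed under composition. First I would note that, by construction, the 2-functor $\properad \to \lcc$ is the composite
\[
  \properad \to \slcc \to \lcc,
\]
where the first arrow is the 2-functor of \cref{thm properad vs slcc 2-cat} and the second is the inclusion 2-functor of \cref{theorem slcc lcc biequiv}.

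Next I would record that each factor is a biequivalence. The first factor $\properad \to \slcc$ induces isomorphisms on hom-categories and is surjective on objects up to isomorphism by \cref{thm properad vs slcc 2-cat}; in particular it is locally an equivalence of categories and surjective up to equivalence, so it is a biequivalence. The second factor $\slcc \to \lcc$ is a biequivalence directly by \cref{theorem slcc lcc biequiv}.

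It then remains to check that a composite of two biequivalences is again a biequivalence, where I take a biequivalence to mean a 2-functor that is both locally an equivalence of categories and surjective up to equivalence (as in \cref{lem slcc lcc loc equiv cat,lem slcc lcc surj equiv}). For the local condition, the hom-category functor of the composite is the composite of the hom-category functors of the two factors, and a composite of equivalences of categories is again an equivalence. For surjectivity up to equivalence, given a labelled cospan category $C$, I would first use surjectivity of $\slcc \to \lcc$ to produce a strict labelled cospan category equivalent in $\lcc$ to $C$, then use surjectivity up to isomorphism of $\properad \to \slcc$ to realize that strict labelled cospan category (up to isomorphism, hence up to equivalence) from a properad; transitivity of equivalence in $\lcc$ yields the desired conclusion.

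I expect \emph{no substantive obstacle} here: all the real content has already been front-loaded into \cref{thm properad vs slcc 2-cat} and \cref{theorem slcc lcc biequiv}. The only point requiring any attention is the purely formal bookkeeping that biequivalences compose, which is immediate from the two defining conditions above. This is why the statement is recorded with a \qed.
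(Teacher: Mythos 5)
Your proposal is correct and follows exactly the paper's route: the corollary is obtained by composing the strict 2-equivalence of \cref{thm properad vs slcc 2-cat} with the biequivalence of \cref{theorem slcc lcc biequiv}, the only (implicit) formal point being that biequivalences compose, which you spell out correctly.
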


\bibliographystyle{amsalpha}
\bibliography{refs}
\end{document}